\definecolor{grey1}{rgb}{0.5,0.5,0.5}
\definecolor{grau}{rgb}{0.8,0.8,0.8}
\newcommand{\chen}[1]{\color{orange}}
\numberwithin{equation}{section}
\theoremstyle{plain}
\newtheorem{assumption}{Assumption}
\newtheorem{theorem}{Theorem}[section]
\newtheorem{corollary}{Corollary}[section]
\newtheorem{lemma}[theorem]{Lemma}
\theoremstyle{remark}
\newtheorem{definition}[theorem]{Definition}
\newtheorem{result}{Result}[section]
\newtheorem{remark}{Remark}
\DeclareMathOperator*{\argmax}{arg\,max}
\DeclareMathOperator*{\Log}{Log}
\newcommand{\prob}{{\mathbb{P}}}
\newcommand{\var}{{\mathrm{var}}}
\newcommand{\expect}{\mathbb{E}}
\newcommand{\vect}{\mathrm{vec}}
\newcommand{\transpose}{^{\mathrm{T}}}
\newcommand{\calA}{{\mathcal{A}}}
\newcommand{\calE}{{\mathcal{E}}}
\newcommand{\calI}{{\mathcal{I}}}
\newcommand{\calJ}{{\mathcal{J}}}
\newcommand{\calL}{{\mathcal{L}}}
\newcommand{\calO}{{\mathcal{O}}}
\newcommand{\calS}{{\mathcal{S}}}
\newcommand{\calX}{{\mathcal{X}}}
\newcommand{\bT}{{\mathbf{T}}}
\newcommand{\bU}{{\mathbf{U}}}
\newcommand{\bP}{{\mathbf{P}}}
\newcommand{\bv}{{\mathbf{v}}}
\newcommand{\bx}{{\mathbf{x}}}
\newcommand{\bX}{{\mathbf{X}}}
\newcommand{\bA}{{\mathbf{A}}}
\newcommand{\bB}{{\mathbf{B}}}
\newcommand{\bD}{{\mathbf{D}}}
\newcommand{\bE}{{\mathbf{E}}}
\newcommand{\bG}{{\mathbf{G}}}
\newcommand{\bH}{{\mathbf{H}}}
\newcommand{\bM}{{\mathbf{M}}}
\newcommand{\bR}{{\mathbf{R}}}
\newcommand{\bQ}{{\mathbf{Q}}}
\newcommand{\bS}{{\mathbf{S}}}
\newcommand{\bW}{{\mathbf{W}}}
\newcommand{\bZ}{{\mathbf{Z}}}
\newcommand{\bw}{{\mathbf{w}}}
\newcommand{\bg}{{\mathbf{g}}}
\newcommand{\bh}{{\mathbf{h}}}
\newcommand{\br}{{\mathbf{r}}}
\newcommand{\bt}{{\mathbf{t}}}
\newcommand{\bu}{{\mathbf{u}}}
\newcommand{\by}{{\mathbf{y}}}
\newcommand{\bz}{{\mathbf{z}}}
\newcommand{\be}{{\mathbf{e}}}
\newcommand{\bV}{{\mathbf{V}}}
\newcommand{\bGamma}{{\bm{\Gamma}}}
\newcommand{\bDelta}{{\bm{\Delta}}}
\newcommand{\bOmega}{{\bm{\Omega}}}
\newcommand{\bomega}{{\bm{\omega}}}
\newcommand{\bSigma}{{\bm{\Sigma}}}
\newcommand{\bxi}{{\boldsymbol{\xi}}}
\newcommand{\eye}{{\mathbf{I}}}
\newcommand{\one}{{\mathbf{1}}}
\newcommand{\bTheta}{{\bm{\Theta}}}
\newcommand{\btheta}{{\bm{\theta}}}
\newcommand{\bmu}{{\bm{\mu}}}
\newcommand{\bPi}{{\bm{\Pi}}}
\newcommand{\bphi}{{\bm{\phi}}}
\newcommand{\zero}{{\bm{0}}}
\newcommand{\eps}{\epsilon}
\newcommand{\keywords}[1]{\par\addvspace\baselineskip\noindent\enspace\ignorespaces \textbf{Keywords: }#1}
\author{Fangzheng Xie}
\author{Fangzheng Xie\thanks{Department of Statistics, Indiana University}
}
\title{\bf Entrywise limit theorems of eigenvectors for signal-plus-noise matrix models with weak signals}
\begin{document}
\allowdisplaybreaks

\maketitle

\begin{abstract}
We establish a finite-sample Berry-Esseen theorem for the entrywise limits of the eigenvectors 
for a broad collection of signal-plus-noise random matrix models under challenging weak signal regimes. The signal strength is characterized by a scaling factor $\rho_n$ through $n\rho_n$, where $n$ is the dimension of the random matrix, and we allow $n\rho_n$ to grow at the rate of $\log n$.
The key technical contribution is a sharp finite-sample entrywise eigenvector perturbation bound. The existing error bounds on the two-to-infinity norms of the higher-order remainders are not sufficient when $n\rho_n$ is proportional to $\log n$. We apply the general entrywise eigenvector analysis results to the symmetric noisy matrix completion problem, random dot product graphs, and two subsequent inference tasks for random graphs: the estimation of pure nodes in mixed membership stochastic block models and the hypothesis testing of the equality of latent positions in random graphs. 
\end{abstract}

\keywords{Berry-Esseen theorems, Entrywise eigenvector analysis, Random dot product graphs, Signal-plus-noise matrix model, Symmetric noisy matrix completion}

\tableofcontents

\section{Introduction} 
\label{sec:introduction}


In the contemporary world of data science, many statistical problems involve random matrix models with low-rank structures. Random matrices with low expected rank, also referred to as the signal-plus-noise matrix models, are pervasive in many applications, including social networks \cite{HOLLAND1983109,nickel2008random,young2007random}, compressed sensing \cite{1614066,eldar2012compressed}, and recommendation systems \cite{bennett2007netflix,goldberg1992using}. A broad range of statistical models also fall into the category of signal-plus-noise matrix models, such as the low-rank matrix denoising model \cite{chatterjee2015,10.1214/14-AOS1257,SHABALIN201367}, matrix completion problems \cite{tight_oracle_inequalities,candes2009exact,keshavan2010matrix}, principal component analysis \cite{Anderson2003multivariate,doi:10.1198/jasa.2009.0121}, and stochastic block models \cite{abbe2017community,HOLLAND1983109}. 

In
signal-plus-noise matrix models, spectral estimators and eigenvectors of random matrices have been extensively explored. These estimators can either be applied to obtain the desired inference results \cite{868688,rohe2011,sussman2012consistent} or serve as ideal initial guesses of certain iterative algorithms \cite{gao2017achieving,keshavan2010matrix,xie2019efficient}. 
The theoretical support of spectral estimators
is fundamentally backboned by the matrix perturbation theory \cite{10.1214/17-AOS1541,doi:10.1137/0707001,Stewart90,wedin1972perturbation} and the recent progress in random matrix theory \cite{bai2010spectral,BENAYCHGEORGES2011494,PAUL20141,yao2015sample}. From the practical perspective, the implementation of these spectral-based estimators typically only requires the truncated spectral/singular value decomposition of the data matrix, which is computationally cheap. In contrast, the maximum likelihood estimators for low-rank matrix models are less preferred because they are intractable to compute in general due to the nonconvex optimization problems involved \cite{10.1214/19-AOS1854}.


\subsection{Overview} 
\label{sub:overview}

This paper investigates the entrywise behavior of the leading eigenvectors of a symmetric random  matrix $\bA$ whose expected value $\bP = \expect\bA$ has a low rank. This class of random matrix models is referred to as the signal-plus-noise matrix models (see Section \ref{sub:setup} for the formal description). We establish a generic finite-sample Berry-Esseen theorem for the rows of the leading eigenvectors under challenging weak signal regimes.
The resulting Berry-Esseen bound is quite general and allows for a possibly increasing $\mathrm{rank}(\bP)$. 

As a special case of the entrywise eigenvector limit theorem for the signal-plus-noise matrix models, we obtain the Berry-Esseen bounds for the rows of the eigenvectors of a random matrix generated from the symmetric noisy matrix completion model (see Section \ref{sub:SNMC} for the formal definition). 
Our analysis is sharper than the  two-to-infinity norm error bounds for the eigenvectors
obtained by \cite{10.1214/19-AOS1854}.

Our generic entrywise Berry-Esseen theorem leads to the limit results of the rows of the adjacency spectral embedding of the random dot product graph model (see Section \ref{sub:RDPG} for the formal definition) under the sparse regime that the graph average expected degree is at the order of $\Omega(\log n)$, where $n$ is the number of vertices. The sparsity assumption is minimal because the graph adjacency matrix $\bA$ no longer concentrates around its expected value $\bP$ when the average expected degree is $o(\log n)$. Our result also relaxes the sparsity assumptions posited in \cite{cape2019signal,tang2018,xie2019efficient}. 

Leveraging the generic entrywise eigenvector concentration bound for the signal-plus-noise matrix models, we further study the entrywise limit theorem of the one-step refinement of the eigenvectors for random dot product graphs proposed in \cite{xie2019efficient}. 
The corresponding covariance matrix of the rows of the one-step estimator is no greater than that of the rows of the eigenvectors. 
We then investigate the impact of the one-step estimator for two subsequent inference tasks. Specifically, the one-step estimator has smaller asymptotic variances than the eigenvectors for estimating the pure nodes in mixed membership stochastic block models; It also leads to a more powerful test than the eigenvectors for testing the equality of latent positions in random dot product graphs.


\subsection{A motivating example} 
\label{sub:a_motivating_example}

Let us take a glimpse into a simple yet popular random graph model that has attracted much attention in the recent decade: the stochastic block model. Consider a graph with $n$ vertices that are labeled as $[n]:=\{1,2,\ldots,n\}$. These vertices are partitioned into two communities by a community assignment rule $\tau:[n]\to\{1,2\}$, where $\tau(i) = 1$ indicates that vertex $i$ lies in the first community, and $\tau(i) = 2$ otherwise. Let $\bA = [A_{ij}]_{n\times n}$ be the adjacency matrix of the stochastic block model, $\rho_n\in (0, 1]$ be the sparsity factor, and $a,b\in (0, 1)$ be constants. For each vertex pair $(i, j)$ with $i \leq j$, $(A_{ij})_{i\leq j}$ are independent, $A_{ij}\sim \mathrm{Bernoulli}(\rho_n a)$ if $\tau(i) = \tau(j)$, $A_{ij}\sim \mathrm{Bernoulli}(\rho_n b)$ if $\tau(i) \neq \tau(j)$,
and $A_{ij} = A_{ji}$ for all $i > j$. Here, $\rho_na$ and $\rho_nb$ represent the within-community probability and between-community probability, respectively, and $n\rho_n$ governs the growing rate of the graph average expected degree as a function of $n$.  

The stochastic block models were first introduced in \cite{HOLLAND1983109} and have motivated the development of network science and analysis substantially in recent years. There have also been countless papers addressing statistical analyses of stochastic block models and their fundamental limits. The readers are referred to \cite{abbe2017community} for a survey. 

A fundamental inference task for stochastic block models is the community detection, namely, the recovery of the cluster assignment rule $\tau$. In the context of the aforementioned two-block stochastic block model, we are particularly interested in the case where $n\rho_n = \alpha\log n$ for some constant $\alpha > 0$. 
There are, however, other fundamental aspects of the behavior of the leading eigenvectors of the adjacency matrix $\bA$ beyond the community detection. In this work, we focus on the asymptotic distribution of the rows of the leading eigenvector matrix of $\bA$. We begin the analysis with the population eigenvectors. For simplicity, we assume that $n$ is an even integer, $\tau(i) = 1$ if $i = 1,\ldots,n/2$, and $\tau(i) = 2$ if $i = n/2 + 1,\ldots,n$. Namely, the first $n/2$ vertices are in the first community, and the rest of the $n/2$ vertices fall into the second community. 
The non-zero eigenvalues of $\expect\bA$ are $\lambda_1 = n\rho_n(a + b)/2$ and $\lambda_2 = n\rho_n(a - b)/2$, and the associated eigenvectors are 
$\bu_1 = n^{-1/2}[1,\ldots,1]\transpose$ and $\bu_2 = n^{-1/2}[1,\ldots,1,-1,\ldots,-1]\transpose$. 
We also consider the scaled eigenvectors $\bv_1 = \lambda_1^{1/2}\bu_1$ and $\bv_2 = \lambda_2^{1/2}\bu_2$. 
Because $\bv_1$ and $\bu_1$ are non-informative for the community structure whereas the signs of $\bv_2$ and $\bu_2$ encode the community assignment, we focus on $\bv_2$ and $\bu_2$. Let $\widehat{\bv}_2 = [\widehat{v}_{12},\ldots,\widehat{v}_{n2}]\transpose{}$ be the eigenvector of $\bA$ associated with the second largest eigenvalue $\widehat{\lambda}_2$ of $\bA$ and $\widehat{\bu}_2 = \widehat{\bv}_2/\|\widehat{\bv}_2\|_2$. We scale $\widehat{\bv}_2$ such that $\|\widehat{\bv}_2\|_2 = |\widehat{\lambda}_2|^{1/2}$ to keep the scaling consistent. 

To explore the entrywise asymptotic distributions of $\widehat{\bv}_2$ and $\widehat{\bu}_2$, 
we consider
the following decompositions motivated by \cite{10.1214/19-AOS1854} and \cite{cape2019signal} for each fixed $i\in[n]$:
\begin{align}
\label{eqn:motivating_example_scaled_eigenvector_decomposition}
\sqrt{n}(\widehat{v}_{i2} - v_{i2}) &= \sqrt{n}\sum_{j = 1}^n\frac{(A_{ij} - \expect A_{ij})v_{2j}}{\lambda_2} + \sqrt{n}\mathrel{\Big(}\widehat{v}_{i2} - \sum_{j = 1}^n\frac{A_{ij}v_{j2}}{\lambda_2}\mathrel{\Big)},\\
\label{eqn:motivating_example_unscaled_eigenvector_decomposition}
n\rho_n^{1/2}(\widehat{u}_{i2} - u_{i2}) &= n\rho_n^{1/2}\sum_{j = 1}^n\frac{(A_{ij} - \expect A_{ij})u_{2j}}{\lambda_2} + n\rho_n^{1/2}\mathrel{\Big(}\widehat{u}_{i2} - \sum_{j = 1}^n\frac{A_{ij}u_{j2}}{\lambda_2}\mathrel{\Big)}.
\end{align}
The key observation is that the first terms on the right-hand sides of \eqref{eqn:motivating_example_scaled_eigenvector_decomposition} and \eqref{eqn:motivating_example_unscaled_eigenvector_decomposition} are two sums of independent mean-zero random variables. These two terms converge to $\mathrm{N}(0, (a + b)/(a - b))$ and $\mathrm{N}(0, 2(a + b)/(a - b)^2)$ in distribution, respectively, by Lyapunov's central limit theorem (see, e.g., Theorem 7.1.2. in \cite{chung2001course}). The technical challenge lies in sharp controls of the second terms arising in these equations.

We pause the theoretical discussion for a moment and turn to a simulation study. The parameters for the simulation are set as follows: $n = 5000$, $\alpha = 5$, $a = 0.9$, $b = 0.05$, and $n\rho_n = \alpha\log n$. We then generate $3000$ independent Monte Carlo replicates of $\bA$ and compute the corresponding eigenvectors $\widehat{\bv}_2$ and $\widehat{\bu}_2$.
Below, the left panels of Figures \ref{fig:motivating_example_scaled_eigenvector} and \ref{fig:motivating_example_unscaled_eigenvector} visualize the histograms of $\sqrt{n}(\widehat{v}_{12} - v_{12})$ and $n\rho_n^{1/2}(\widehat{u}_{12} - u_{12})$ (for the vertex $i = 1$), respectively. The shapes of the two histograms are closely aligned with the corresponding asymptotic normal densities.
This observation leads to the conjecture that $\sqrt{n}(\widehat{v}_{i2} - v_{i2})$ and $n\rho_n^{1/2}(\widehat{u}_{i2} - u_{i2})$ are asymptotically normal.
\begin{figure}[t]
\includegraphics[width = 15cm]{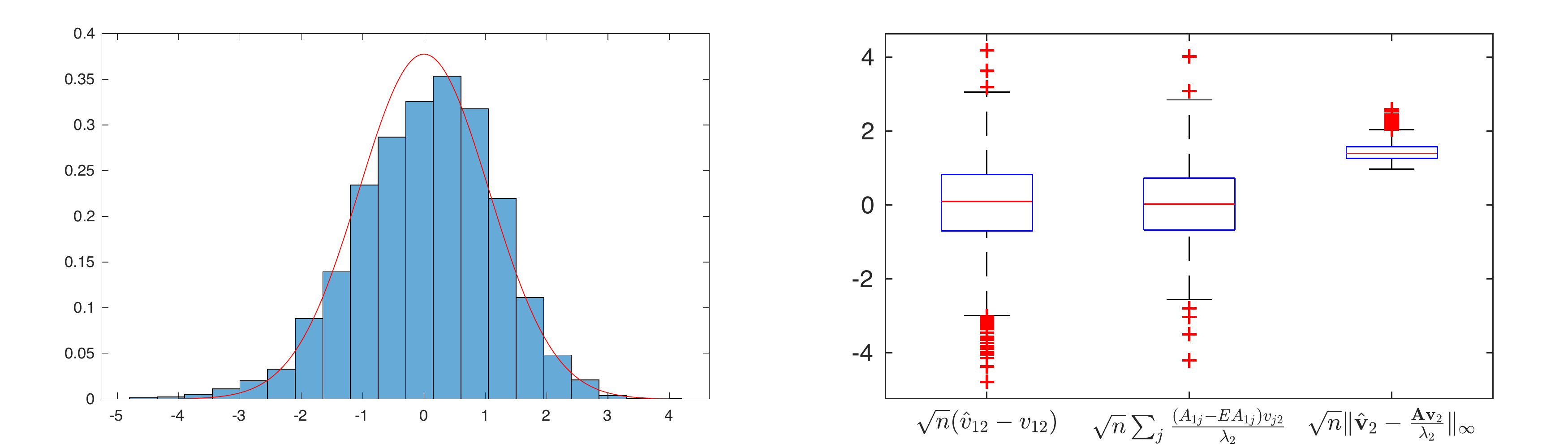}
\caption{Left panel: The histogram of $\sqrt{n}(\widehat{v}_{12} - v_{12})$ over the $3000$ Monte Carlo replicates with the density of $\mathrm{N}(0, (a + b)/(a - b))$ highlighted in the red curve. Right panel: The boxplots of $\sqrt{n}(\widehat{v}_{12} - v_{12})$, its linear approximation $\sqrt{n}\sum_j(A_{1j} - \expect A_{1j})v_{j2}/\lambda_2$, and the infinity norm of the remainder $\sqrt{n}\|\widehat{\bv}_2 - \bA\bv_2/\lambda_2\|_\infty$ across the $3000$ Monte Carlo replicates. }
\label{fig:motivating_example_scaled_eigenvector}
\end{figure}

\begin{figure}[t]
\includegraphics[width = 15cm]{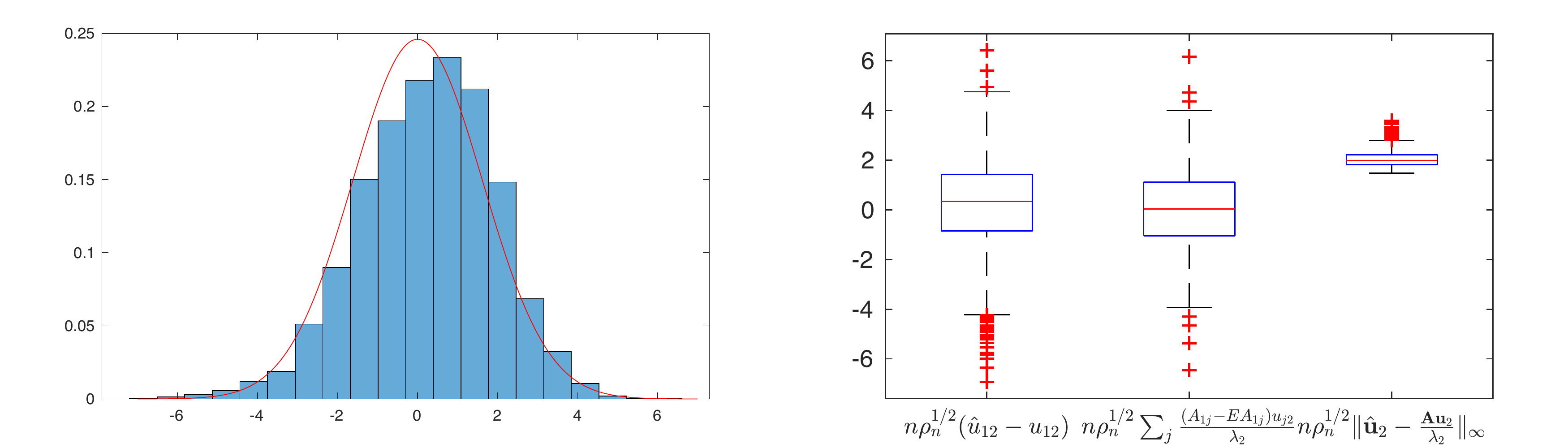}
\caption{Left panel: The histogram of $n\rho_n^{1/2}(\widehat{u}_{12} - u_{12})$ over the $3000$ Monte Carlo replicates with the density of $\mathrm{N}(0, 2(a + b)/(a - b)^2)$ highlighted in the red curve. Right panel: The boxplots of $n\rho_n^{1/2}(\widehat{u}_{12} - u_{12})$, its linear approximation $n\rho_n^{1/2}\sum_j(A_{1j} - \expect A_{1j})u_{j2}/\lambda_2$, and the infinity norm of the remainder $n\rho_n^{1/2}\|\widehat{\bu}_2 - \bA\bu_2/\lambda_2\|_\infty$ across the $3000$ Monte Carlo replicates. }
\label{fig:motivating_example_unscaled_eigenvector}
\end{figure}

Continuing the theoretical investigation of $\sqrt{n}(\widehat{v}_{i2} - v_{i2})$ and $n\rho_n^{1/2}(\widehat{u}_{i2} - u_{i2})$,
we can write \eqref{eqn:motivating_example_scaled_eigenvector_decomposition} and \eqref{eqn:motivating_example_unscaled_eigenvector_decomposition} alternatively as
\begin{align*}
\sqrt{n}(\widehat\bv_2 - \bv_2) & = \sqrt{n}\frac{(\bA - \mathbb{E}\bA)\bv_2}{\lambda_2} + \sqrt{n}\mathrel{\Big(}\widehat{\bv}_{2} - \frac{\bA\bv_2}{\lambda_2}\mathrel{\Big)},\\
n\rho_n^{1/2}(\widehat\bu_2 - \bu_2) & = n\rho_n^{1/2}\frac{(\bA - \mathbb{E}\bA)\bu_2}{\lambda_2} + n\rho_n^{1/2}\mathrel{\Big(}\widehat{\bu}_{2} - \frac{\bA\bu_2}{\lambda_2}\mathrel{\Big)}.
\end{align*}
One seemingly plausible approach is to show that $\sqrt{n}\|\widehat\bv_2 - \bA\bv_2/\lambda_2\|_\infty$ and $n\rho_n^{1/2}\|\widehat\bu_2 - \bA\bu_2/\lambda_2\|_\infty$ are $o_{\prob}(1)$
using the recently developed tools in 
\cite{10.1214/19-AOS1854,cape2017two,cape2019signal,pmlr-v83-eldridge18a,fan2018eigenvector,lei2019unified,doi:10.1080/01621459.2020.1751645}. However, the right panels of Figures \ref{fig:motivating_example_scaled_eigenvector} and \ref{fig:motivating_example_unscaled_eigenvector} suggest that this strategy may fail. Taking the unscaled eigenvectors for example, we present the boxplots of $n\rho_n^{1/2}(\widehat{u}_{12} - u_{12})$, $n\rho_n^{1/2}\sum_{j = 1}^n(A_{1j} - \expect A_{1j})u_{j2}/\lambda_2$, and $n\rho_n^{1/2}\|\widehat\bu_2 - \bA\bu_2/\lambda_2\|_\infty$ over the aforementioned $3000$ Monte Carlo replicates in the right panel of Figure \ref{fig:motivating_example_unscaled_eigenvector}. The boxplots suggest that $n\rho_n^{1/2}\|\widehat\bu_2 - \bA\bu_2/\lambda_2\|_\infty\neq o_{\prob}(1)$. 
A similar phenomenon for the scaled eigenvectors can also be observed from the right panel of Figure \ref{fig:motivating_example_scaled_eigenvector}. These numerical results motivate us to explore the entrywise limits of the eigenvectors for signal-plus-noise matrices beyond the two-to-infinity error bounds.


\subsection{Related work} 
\label{sub:related_work}

Entrywise limit theorems for the eigenvectors of random matrices first appeared in the context of network models. Based on the random dot product graph model \cite{nickel2008random,young2007random}, the authors of \cite{athreya2016limit} explored the asymptotic distributions of the rows of the eigenvectors of the random adjacency matrix for dense graphs. Generalizations of \cite{athreya2016limit} to sparse graphs were later explored in \cite{tang2018} and \cite{xie2019efficient} under a weaker condition that the average expected degree scales at $\omega((\log n)^4)$. The authors of \cite{cape2019signal} established a general entrywise limit theorem for the eigenvectors of random matrices with low expected rank by exploiting the von-Neumann matrix series expansion of the solution to a matrix Sylvester equation \cite{bhatia2013matrix,pmlr-v83-eldridge18a}. 
Recently, a general framework for studying the asymptotic theory of eigenvectors for generalized spiked Wigner models has been developed in \cite{doi:10.1080/01621459.2020.1840990}.

Another line of the related research is on the two-to-infinity norm error bounds for eigenvectors of random matrices. 
Previously, the authors of \cite{lyzinski2014} have explored the exact community detection of stochastic block models by studying the eigenvector error bound with respect to the two-to-infinity norm. 
Recently, the authors of \cite{cape2017two} established a general framework for studying the two-to-infinity norm eigenvector perturbation bounds.
However, the deterministic nature of their approach may lead to sub-optimal results in challenging low signal-to-noise ratio regimes \cite{10.1214/19-AOS1854}. 
Since then, several related papers have emerged to address the entrywise eigenvector estimation problems under various contexts \cite{10.1214/19-AOS1854,agterberg2021entrywise,cape2019signal,lei2019unified,xia2019sup}.

The literature on the specific applications considered in this paper is quite rich. The symmetric noisy matrix completion models can be viewed as a special case of the general noisy matrix completion models for rectangular random matrices, which have seen enormous progress in the past decades. For an incomplete list of reference, see \cite{bennett2007netflix,tight_oracle_inequalities,candes2009exact,5452187,chatterjee2015,5714248,jain2013low,keshavan2010matrix,10.1214/11-AOS894}. The random dot product graphs, which were originally developed for social networks \cite{nickel2008random,young2007random}, have been extensively studied in recent years, including the theoretical properties \cite{athreya2016limit,sussman2012consistent,tang2017,tang2018,tang2013,xie2019efficient,xie2019optimal} and the involved applications \cite{priebe2017semiparametric,8570772}. We refer to the survey paper \cite{JMLR:v18:17-448} for a review of random dot product graphs.



\subsection{Organization} 
\label{sub:organization}

The rest of the paper is structured as follows. Section \ref{sec:preliminaries} sets the stage for the generic signal-plus-noise matrix models and introduces the corresponding entrywise eigenvector analysis framework. Section \ref{sec:entrywise_limit_theorem_for_the_eigenvectors}, which is the main technical contribution of this paper, elaborates on the Berry-Esseen theorem for the rows of the eigenvectors of the signal-plus-noise matrix models.
We apply the main results to the symmetric noisy matrix completion models and random dot product graphs in Section \ref{sec:applications}.  
Section \ref{sec:simulation_study} provides illustrative numerical examples, and we conclude the paper with some discussions concerning future extensions in Section \ref{sec:discussion}. 


\subsection{Notations} 
\label{sub:notations}

The symbol $:=$ is used to assign mathematical definitions. For any positive integer $n$, let $[n]:= \{1,2,\ldots,n\}$. The set of all positive integers is denoted by $\mathbb{N}_+$. For any $a,b\in\mathbb{R}$, we denote $a\wedge b := \min(a,b)$ and $a\vee b := \max(a,b)$.  For any two non-negative sequences $(a_n)_{n\in\mathbb{N}_+}$, $(b_n)_{n\in\mathbb{N}_+}$, we write $a_n\lesssim b_n$ ($a_n\gtrsim b_n$, resp.), if there exists some absolute constant $C > 0$, such that $a_n\leq Cb_n$ ($a_n\geq Cb_n$, resp.) for all $n\in\mathbb{N}_+$. If the constant $C$ also depends on another parameter $c$ that is independent of $n\in\mathbb{N}_+$, then we write $a_n\lesssim_cb_n$ ($a_n\gtrsim_c b_n$, resp.). We use $K_c,N_c,\ldots$ to denote constants that may depend on another parameter $c$ but is independent of the varying index $n\in\mathbb{N}_+$. Absolute constants are usually hidden using notations $\lesssim$ and $\gtrsim$, and, when necessary, we use $C_0$ and $c_0$ to denote generic absolute constants that may vary from line to line. 
We use the notation $a_n\asymp b_n$ to indicate that $a_n\lesssim b_n$ and $a_n\gtrsim b_n$. 
If $a_n/b_n$ stays bounded away from $+\infty$, we write $a_n = O(b_n)$ and $b_n = \Omega(a_n)$, and if $a_n/b_n\to 0$, we denote $a_n = o(b_n)$ and $b_n = \omega(a_n)$. 
For any symmetric positive semidefinite matrices $\bSigma$ and $\bGamma$, we denote $\bSigma\succeq \bGamma$ ($\bSigma\preceq \bGamma$, resp.), if $\bSigma - \bGamma$ ($\bGamma - \bSigma$, resp.) is positive semidefinite. When $\bSigma - \bGamma$ ($\bGamma - \bSigma$, resp.) is strictly positive definite, we use the notation $\bSigma\succ \bGamma$ ($\bSigma\prec \bGamma$, resp.). For any $d\in\mathbb{N}_+$, we use $\eye_d$ to denote the $d\times d$ identity matrix and $\zero_d$ to denote the zero vector in $\mathbb{R}^d$. For $n, d\in\mathbb{N}_+$, $n\geq d$, let $\mathbb{O}(n, d) := \{\bU\in\mathbb{R}^{n\times d}:\bU\transpose\bU = \eye_d\}$ denote the set of all orthonormal $d$-frames in $\mathbb{R}^n$. When $n = d$, we simply write $\mathbb{O}(d) = \mathbb{O}(d, d)$. 
For an $n\times d$ matrix $\bM$, we denote $\sigma_k(\bM)$ the $k$th largest singular value of $\bM$, $k \in [\min(n, d)]$. For any $j\in [n]$ and $k\in [d]$, we use $[\bM]_{j*}$ to denote its $j$th row, $[\bM]_{*k}$ to denote its $k$th column, and $[\bM]_{jk}$ to denote its $(j, k)$th entry. When $\bM\in\mathbb{R}^{n\times n}$ is a square symmetric matrix, we use $\lambda_k(\bM)$ to denote the $k$th largest eigenvalue of $\bM$, namely, $\lambda_1(\bM)\geq\lambda_2(\bM)\geq\ldots\geq \lambda_n(\bM)$, and $\mathrm{tr}(\bM)$ the trace of $\bM$ (the sum of its diagonal elements). 
If $\bM\in\mathbb{R}^{d\times d}$ is positive definite, then we let $\kappa(\bM):=\lambda_1(\bM)/\lambda_d(\bM)$ denote the condition number of $\bM$. 
The spectral norm of a rectangular matrix $\bM$, denoted by $\|\bM\|_2$, is defined as the largest singular value of $\bM$. The Frobenius norm of a rectangular matrix $\bM$, denoted by $\|\bM\|_{\mathrm{F}}$, is defined as $\|\bM\|_{\mathrm{F}} = \sqrt{\mathrm{tr}(\bM\transpose\bM)}$. We use $\|\bM\|_{2\to\infty}$ to denote the two-to-infinity norm of a matrix $\bM = [M_{jk}]_{n\times d}$, defined as $\|\bM\|_{2\to\infty} = \max_{j\in [n]}\sqrt{\sum_{k = 1}^dM_{jk}^2}$, and $\|\bM\|_\infty$ to denote the matrix infinity norm $\|\bM\|_\infty = \max_{j \in [n]}\sum_{k = 1}^d|M_{jk}|$. 
Given $d$ real numbers $a_1,\ldots,a_d\in\mathbb{R}$, we let $\mathrm{diag}(a_1,\ldots,a_d)$ to denote the $d\times d$ diagonal matrix whose $(k, k)$th element is $a_k$ for $k \in [d]$. 
For a Euclidean vector $\bx = [x_1,\ldots,x_d]\transpose\in\mathbb{R}^d$, $\|\bx\|_2$ denotes the Euclidean norm of $\bx$ given by $\|\bx\|_2 = \sqrt{\sum_{k = 1}^dx_k^2}$ and $\|\bx\|_\infty$ denotes the infinity norm of $\bx$ defined as $\|\bx\|_\infty = \max_{k\in [d]}|x_k|$. When the dimension of the underlying Euclidean space is clear, with use $\be_i$ to denote the unit basis vector whose $i$th coordinate is one and the rest of the coordinates are zeros. 



\section{Preliminaries} 
\label{sec:preliminaries}

\subsection{Setup} 
\label{sub:setup}

Consider an $n\times n$ symmetric observable data matrix $\bA$ that can be viewed as a noisy version of an unobserved low-rank signal matrix $\bP$ through the following signal-plus-noise matrix model: 
\begin{align}\label{eqn:signal_plus_noise}
\bA = \bP + \bE,
\end{align}
where $\bE$ is an $n\times n$ symmetric noise matrix that is unobserved. Suppose $\mathrm{rank}(\bP) = d$ and $d\ll n$. Let $p\in\{1,2,\ldots,d\}$ be the number of positive eigenvalues of $\bP$ and $q \overset{\Delta}{ = }d - p$ be the number of negative eigenvalues of $\bP$. Namely, 
$\lambda_1(\bP)\geq\ldots\geq\lambda_p(\bP) > 0 > \lambda_{n - q + 1}(\bP) \geq\ldots\geq\lambda_n(\bP)$.
Let $\bU_{\bP_+}\in\mathbb{O}(n, p)$ be the eigenvector matrix of $\bP$ corresponding to the positive eigenvalues $\lambda_1(\bP),\ldots,\lambda_p(\bP)$, $\bU_{\bP_-}\in\mathbb{O}(n, q)$ be the eigenvector matrix of $\bP$ corresponding to the negative eigenvalues $\lambda_{n - q + 1}(\bP),\ldots,\lambda_n(\bP)$, $\bU_\bP:=[\bU_{\bP_+}, \bU_{\bP_-}]$, $\bS_{\bP_+} := \mathrm{diag}\{\lambda_1(\bP),\ldots,\lambda_p(\bP)\}$, $\bS_{\bP_-} := \mathrm{diag}\{\lambda_{n - q + 1}(\bP),\ldots,\lambda_n(\bP)\}$,
and $\bS_\bP := \mathrm{diag}(\bS_{\bP_+},\bS_{\bP_-})$. 

The signal matrix $\bP$ is associated with a scaling factor $\rho_n\in (0, 1]$ that governs the signal strength of the model \eqref{eqn:signal_plus_noise} through $n\rho_n$. For example, in the context of network models, $n\rho_n$ controls the average expected degree of the resulting random graphs. Note that the spectral decomposition of $\bP$ can be written as $\bP = \bU_{\bP_+}|\bS_{\bP_+}|\bU_{\bP_+}\transpose - \bU_{\bP_-}|\bS_{\bP_-}|\bU_{\bP_-}\transpose$, where the absolute value $|\cdot|$ is applied entrywise on the eigenvalues. 
We define
$\bX_{\pm} := \rho_n^{-1/2}\bU_{\bP_\pm}|\bS_{\bP_\pm}|^{1/2}\bW_{\bX_{\pm}}$, where $\bW_{\bX_+}\in\mathbb{O}(p)$ and $\bW_{\bX_-}\in\mathbb{O}(q)$ are deterministic orthogonal matrices. This allows us to write $\bP$ alternatively as $\bP = \rho_n\bX_+\bX_+\transpose - \rho_n\bX_-\bX_-\transpose$. Denote $\bX := [\bX_+, \bX_-]$, $\bDelta_{n\pm} := (1/n)\bX_{\pm}\transpose\bX_{\pm}$, $\bDelta_n := (1/n)\bX\transpose\bX$, and $\bW_\bX:=\mathrm{diag}(\bW_{\bX_+}, \bX_{\bX_-})$. Clearly, $\bDelta_n = \mathrm{diag}(\bDelta_{n+},\bDelta_{n-})$ because of the orthogonality between $\bX_+$ and $\bX_-$.

The focus of this work is to characterize the entrywise limit behavior of the eigenvector matrices of the data matrix $\bA$ as the sample versions of their population counterparts $\bU_{\bP_\pm}$ and $\bX_{\pm}$. To this end, we let $\bU_{\bA_+}\in\mathbb{O}(n, p)$ be the eigenvector matrix of $\bA$ corresponding to the positive sample eigenvalues $\lambda_1(\bA),\ldots,\lambda_p(\bA)$, $\bU_{\bA_-}\in\mathbb{O}(n, q)$ be the eigenvector matrix of $\bA$ corresponding to the negative sample eigenvalues $\lambda_{n - q + 1}(\bA),\ldots,\lambda_n(\bA)$, $\bU_\bA := [\bU_{\bA_+}, \bU_{\bA_-}]$, $\bS_{\bA_+} = \mathrm{diag}\{\lambda_1(\bA),\ldots,\lambda_p(\bA)\}$,\\ $\bS_{\bA_-} = \mathrm{diag}\{\lambda_{n - q + 1}(\bA),\ldots,\lambda_n(\bA)\}$,
and $\bS_\bA := \mathrm{diag}(\bS_{\bA_+},\bS_{\bA_-})$. Let $\widetilde\bX_\pm := \bU_{\bA_\pm}|\bS_{\bA_\pm}|^{1/2}$ and $\widetilde{\bX} := [\widetilde{\bX}_+, \widetilde{\bX}_-]$. To reiterate, $\bU_{\bA_\pm}$ and $\widetilde{\bX}_\pm$ play the role of the population counterparts of $\bU_{\bP_\pm}$ and $\rho_n^{1/2}\bX_{\pm}$, respectively.

\subsection{Entrywise eigenvector analysis framework} 
\label{sub:spectral_analysis_framework_for_random_graphs}

We now briefly discuss the entrywise eigenvector analysis framework for the signal-plus-noise matrix model \eqref{eqn:signal_plus_noise}. Unlike the case in Section \ref{sub:a_motivating_example}, $\bX_\pm$ and $\bU_{\bP_\pm}$ are only identifiable up to an orthogonal matrix due to the potential multiplicity of the non-zero eigenvalues of $\bP$. To find the suitable orthogonal alignment matrix, we follow the Procrustes analysis idea in \cite{7298436,cape2020orthogonal,5714248,schonemann1966generalized}. Let $\bU_{\bP_\pm}\transpose{}\bU_{\bA_\pm}$ yield the singular value decomposition 
$\bU_{\bP_\pm}\transpose{}\bU_{\bA_\pm} = \bW_{1\pm}\mathrm{diag}\{\sigma_1(\bU_{\bP_\pm}\transpose{}\bU_{\bA_\pm}),\ldots,\sigma_d(\bU_{\bP_\pm}\transpose{}\bU_{\bA_\pm})\}\bW_{2\pm}\transpose$,
where $\bW_{1+},\bW_{2+}\in\mathbb{O}(p)$ and $\bW_{1-},\bW_{2-}\in\mathbb{O}(q)$. Denote $\bW^*_{\pm} = \mathrm{sgn}(\bU_{\bP_\pm}\transpose\bU_{\bA_\pm})$ the matrix sign of $\bU_{\bP_\pm}\transpose\bU_{\bA_\pm}$ defined as $\bW^*_{\pm} = \bW_{1\pm}\bW_{2\pm}\transpose$ \cite{10.1214/19-AOS1854,5714248} and let $\bW^* := \mathrm{diag}(\bW_+^*, \bW_-^*)$. Then the orthogonal alignment matrix between $\widetilde\bX_\pm$ and $\bX_\pm$ is selected as $\bW_\pm = (\bW^*_\pm)\transpose{}\bW_{\bX_\pm}$.
It is believable that $\widetilde\bX_\pm\bW_\pm$ and $\bU_{\bA_\pm}$ are reasonable approximations to $\rho_n^{1/2}\bX_\pm$ and $\bU_{\bP_\pm}\bW_\pm^*$, respectively.
For convenience, we denote $\bW:=\mathrm{diag}(\bW_+, \bW_-)$. 

The keystone observation of the framework lies in the following two decompositions:
\begin{align}
\label{eqn:keystone_decomposition}
\widetilde\bX_\pm\bW_\pm - \rho_n^{1/2}\bX_\pm & = \pm\rho_n^{-1/2}{\bE\bX_\pm(\bX_\pm\transpose{}\bX_\pm)^{-1}} + \left\{\widetilde\bX_\pm\bW_\pm \mp \rho_n^{-1/2}\bA\bX_\pm(\bX_\pm\transpose{}\bX_\pm)^{-1}\right\},\\
\label{eqn:keystone_decomposition_unscaled_eigenvector}
\bU_{\bA_\pm} - \bU_{\bP_\pm}\bW_\pm^* & = \bE\bU_{\bP_\pm}\bS_{\bP_\pm}^{-1}\bW^*_\pm + (\bU_{\bA_\pm} - \bA\bU_{\bP_\pm}\bS_{\bP_\pm}^{-1}\bW_\pm^*)
.
\end{align}
To see why equation \eqref{eqn:keystone_decomposition} holds, we first observe that $\bP = \rho_n\bX_+\bX_+\transpose - \rho_n\bX_-\bX_-\transpose$, so that $\bP\bX_\pm = \pm\rho_n\bX_\pm(\bX_\pm\transpose{}\bX_\pm)$,
implying that
$\pm\rho_n^{-1/2}{\bE\bX_\pm(\bX_\pm\transpose{}\bX_\pm)^{-1}} = \pm\rho_n^{-1/2}{\bA\bX_\pm(\bX_\pm\transpose{}\bX_\pm)^{-1}}  - \rho_n^{1/2}\bX_\pm$
since we assume that $\lambda_p(\bP) > 0$ and $\lambda_{n - q + 1}(\bP) < 0$.
Substituting $\rho_n^{1/2}\bX_\pm$ above to the left-hand side of \eqref{eqn:keystone_decomposition} leads to the right-hand side of \eqref{eqn:keystone_decomposition}. The argument for \eqref{eqn:keystone_decomposition_unscaled_eigenvector} is similar.
As observed in \cite{10.1214/19-AOS1854} and \cite{cape2019signal}, viewing $(\widetilde\bX_\pm,\bU_{\bA_\pm})$ and $(\rho_n^{1/2}\bX_\pm,\bU_{\bP_\pm})$ as functionals of $\bA$ and $\bP$, we see that the first terms on the right-hand sides of \eqref{eqn:keystone_decomposition} and \eqref{eqn:keystone_decomposition_unscaled_eigenvector} are linear approximations to $\widetilde\bX_\pm\bW_\pm - \rho_n^{1/2}\bX_\pm$ and $\bU_{\bA_\pm} - \bU_{\bP_\pm}\bW^*_\pm$, respectively, whereas the second terms are the higher-order remainders. 

To shed some light on the entrywise limits of $\widetilde\bX_\pm$, we fix the vertex $i\in [n]$ and re-write \eqref{eqn:keystone_decomposition} as
\begin{align*}
\bW\transpose{}(\widetilde\bx_i)_\pm - \rho_n^{1/2}(\bx_i)_\pm
& = \pm\sum_{j = 1}^n\frac{[\bE]_{ij}(\bX_\pm\transpose{}\bX_\pm)^{-1}(\bx_j)_\pm}{\rho_n^{1/2}}
 + \left\{\widetilde\bX_\pm\bW_\pm \mp \frac{\bA\bX_\pm(\bX_\pm\transpose{}\bX_\pm)^{-1}}{\rho_n^{1/2}}\right\}\transpose{}\be_i,
\end{align*}
where $(\widetilde\bx_i)_\pm$ and $(\bx_i)_\pm$ denote the $i$th row of $\widetilde{\bX}_\pm$ and $\bX_\pm$, respectively.
An immediate observation is that the first term above is a sum of independent mean-zero random variables, which is quite accessible for the analysis. The non-trivial part is a sharp control of the second term above. Using the fact that $\rho_n\bX_\pm\transpose{}\bX_\pm = \bW_{\bX_\pm}\transpose{}|\bS_{\bP_\pm}|\bW_{\bX_\pm}$, we further write
\begin{align}
\label{eqn:keystone_remainder_term}
\begin{aligned}
\widetilde\bX_\pm\bW_\pm - \frac{\pm\bA\bX_\pm(\bX_\pm\transpose{}\bX_\pm)^{-1}}{\rho_n^{1/2}}
& = \bU_{\bA_\pm}(\bW^*_\pm|\bS_{\bA_\pm}|^{1/2} - |\bS_{\bP_\pm}|^{1/2}\bW_\pm^*)\transpose\bW_{\bX_\pm}\\
&\quad + (\bU_{\bA_\pm} - \bA\bU_{\bP_\pm}\bS_{\bP_\pm}^{-1}\bW^*_\pm)(\bW^*_\pm)\transpose{}|\bS_{\bP_\pm}|^{1/2}\bW_{\bX_\pm}.
\end{aligned}
\end{align}
Because the analysis of the first line in \eqref{eqn:keystone_remainder_term} is relatively easy (see, for example, Lemma 49 in \cite{JMLR:v18:17-448}), we focus on the entrywise control of $\bU_{\bA_\pm} - \bA\bU_{\bP_\pm}\bS_{\bP_\pm}^{-1}\bW_\pm^*$ in the second line in \eqref{eqn:keystone_remainder_term}, which is also related to the entrywise limit of $\bU_{\bA_\pm}$ through \eqref{eqn:keystone_decomposition_unscaled_eigenvector}. Although there has been some recent progress on the uniform control $\|\bU_{\bA_\pm} - \bA\bU_{\bP_\pm}\bS_{\bP_\pm}^{-1}\bW_\pm^*\|_{2\to\infty}$ (see \cite{10.1214/19-AOS1854,cape2019signal,lei2019unified}), the numerical experiment in Section \ref{sub:a_motivating_example} suggests that the uniform error bound may not be sufficient for studying the entrywise limits of $\widetilde\bX_\pm$ and $\bU_{\bA_\pm}$. This motivates us to develop a sharp control of $\|\be_i\transpose(\bU_{\bA_\pm} - \bA\bU_{\bP_\pm}\bS_{\bP_\pm}^{-1}\bW_\pm^*)\|_2$ for each fixed $i\in [n]$. 



\section{Entrywise limit theorem for the eigenvectors} 
\label{sec:entrywise_limit_theorem_for_the_eigenvectors}

\subsection{Main results}
\label{sub:main_results}

This section establishes the entrywise limit results for the eigenvectors $\bU_{\bA_\pm}$ and $\bX_\pm$. We first present several necessary assumptions for the signal-plus-noise matrix model \eqref{eqn:signal_plus_noise}. 

\begin{assumption}\label{assumption:incoherence}
$\|\bX\|_{2\to\infty}$ is upper bounded by a constant.
\end{assumption}

\begin{assumption}\label{assumption:sparsity}
$\rho_n\in (0, 1]$, $\rho:=\lim_{n\to\infty}\rho_n$ exists, and $n\rho_n = \Omega(\log n)$. 
\end{assumption}

\begin{assumption}\label{assumption:distribution}
The upper diagonal entries of $\bE$, $([\bE]_{ij}:1\leq i\leq j\leq n)$, are independent mean-zero random variables; There exists mean-zero random variables $([\bE_1]_{ij}$, $[\bE_2]_{ij}:1\leq i\leq j\leq n)$, such that $[\bE]_{ij} = [\bE_1]_{ij} + [\bE_2]_{ij}$, and they satisfy the following conditions:
\begin{enumerate}[(i)]
  \item There exists constants $B,\sigma^2 > 0$ independent of $n$ such that $\max_{i,j\in [n]}|[\bE_1]_{ij}|\leq B$ with probability one and $\max_{i,j\in [n]}\var([\bE_1]_{ij})\leq \sigma^2\rho_n$. 

  \item The random variables $([\bE_2]_{ij}:1\leq i\leq j\leq n)$ are uniformly sub-Gaussian in the following sense: $\max_{i,j\in [n]}\|[\bE_2]_{ij}\|_{\psi_2}\leq \sigma\rho_n^{1/2}$ for some constant $\sigma > 0$ independent of $n$, where $\|\cdot\|_{\psi_2}$ is the sub-Gaussian norm of a random variable (see, for example, \cite{kosorok2008introduction,vershynin2010introduction}).
\end{enumerate}
\end{assumption}

\begin{assumption}\label{assumption:rowwise_concentration}
There exist absolute constants $c_0 > 0$, $\xi \geq 1$, and a non-decreasing function $\varphi(\cdot):\mathbb{R}_+\to\mathbb{R}_+$ with $\varphi(0) = 0$, $\varphi(x)/x$ non-increasing in $\mathbb{R}_+$, such that for all $i,m\in [n]$ and any deterministic $n\times d$ matrix $\bV$, with probability at least $1 - c_0n^{-(1 + \xi)}$,
\begin{align*}
\max\{\|\be_i\transpose\bE^{(m)}\bV\|_2,\|\be_i\transpose\bE\bV\|_2\}\leq n\rho_n\lambda_d(\bDelta_n)\|\bV\|_{2\to\infty}\varphi\left(\frac{\|\bV\|_{\mathrm{F}}}{\sqrt{n}\|\bV\|_{2\to\infty}}\right),
\end{align*}
where $\bE^{(m)}$ is obtained by replacing the $m$th row and $m$th column of $\bE$ by zeros. 
\end{assumption}

\begin{assumption}\label{assumption:spectral_norm_concentration}
There exist absolute constants $K, c_0 > 0$, $\zeta\geq 1$, such that $\|\bE\|_2\leq K(n\rho_n)^{1/2}$ with probability at least $1 - c_0n^{-\zeta}$ and
$32\kappa(\bDelta_n)\max\{\gamma, \varphi(\gamma)\}\leq 1$, where $\varphi(\cdot)$ is the function in Assumption \ref{assumption:rowwise_concentration} and
$\gamma := {\max\{3K, \|\bX\|_{2\to\infty}^2\}}/\{(n\rho_n)^{1/2}\lambda_d(\bDelta_{n})\}\to 0$
.
\end{assumption}

Several remarks regarding Assumptions \ref{assumption:incoherence}-\ref{assumption:spectral_norm_concentration} are in order. 
Assumption \ref{assumption:incoherence} is related to the notion of bounded coherence in random matrix theory and matrix recovery \cite{tight_oracle_inequalities,candes2009exact}. Indeed, observe that
$\|\bU_\bP\|_{2\to\infty} 
\leq {\|\bX\|_{2\to\infty}}/\sqrt{n\lambda_d(\bDelta_n)}$.
Therefore, Assumption \ref{assumption:incoherence} implies the bounded coherence of $\bU_\bP$ (i.e., $\|\bU_\bP\|_{2\to\infty}\leq C_\mu\sqrt{d/n}$ for some constant $C_\mu \geq 1$) as long as $d\lambda_d(\bDelta_n) = \Omega(1)$, which is a mild condition. 
Assumption \ref{assumption:sparsity} requires that $n\rho_n = \Omega(\log n)$. In the context of the two-block stochastic block model illustrated in Section \ref{sub:a_motivating_example}, this amounts to requiring that the average graph expected degree is $\Omega(\log n)$. 
Assumption \ref{assumption:distribution} is a general requirement for the tail of the distributions of the noise $\bE$. 
It includes a variety of popular random matrix models such as random dot product graphs, the low-rank matrix denoising model, and the matrix completion model.
Assumption \ref{assumption:rowwise_concentration} is motivated by the row-wise concentration assumption in \cite{10.1214/19-AOS1854}. The row-wise concentration behavior of $\bE$ is characterized by a function $\varphi(\cdot)$ that depends on the distributions of $\bE$ fundamentally. 
Assumption \ref{assumption:spectral_norm_concentration} is a standard assumption on the spectral concentration of the noise matrix $\bE$ and is satisfied under the binary random graph model by \cite{lei2015} and the matrix completion model by \cite{keshavan2010matrix}.

Theorem \ref{thm:ASE_Berry_Esseen} below is the main result of this section. It asserts that when $n\rho_n = \Omega(\log n)$, the distributions of the rows of $\widetilde{\bX}_\pm\bW_\pm - \rho_n^{1/2}\bX_\pm$ and $\bU_{\bA_\pm} - \bU_{\bP_\pm}\bW^*_\pm$ are approximately Gaussians. 
\begin{theorem}
\label{thm:ASE_Berry_Esseen}
Suppose Assumptions \ref{assumption:incoherence}-\ref{assumption:spectral_norm_concentration} hold. 
For each $i\in [n]$, let
\[
\bSigma_{ni\pm} = \bDelta_{n\pm}^{-1}\mathrel{\Bigg\{}\frac{1}{n\rho_n}\sum_{j = 1}^n\expect([\bE]_{ij}^2)
(\bx_{j})_\pm(\bx_{j})_\pm\transpose\mathrel{\Bigg\}}\bDelta_{n\pm}^{-1},\quad\bGamma_{ni\pm} = \bDelta_{n\pm}^{-1/2}\bSigma_{ni\pm}\bDelta_{n\pm}^{-1/2}
\]
be invertible. Define $\chi = \varphi(1) + (\|\bX\|_{2\to\infty}^2\vee 1)/\lambda_d(\bDelta_n)$.
Then for each fixed index $i\in[n]$ and for any sufficiently large $n$,
\begin{align}
\label{eqn:ASE_normality}
\begin{aligned}
&\sup_{A\in\calA}\left|\prob\left\{\sqrt{n}\bSigma_{ni\pm}^{-1/2}(\bW\transpose(\widetilde\bx_i)_\pm - \rho_n^{1/2}(\bx_{i})_\pm)\in A\right\} - \prob\left(\bz\in A\right)\right|\\
&\quad\lesssim_\sigma \frac{d^{1/2}\chi\|\bSigma_{ni\pm}^{-1/2}\|_2\|\bX\|_{2\to\infty}^2}{(n\rho_n)^{1/2}\lambda_d(\bDelta_{n})^{3/2}}
\max\left\{\frac{(\|\bX\|_{2\to\infty}^2\vee1)(\log n\rho_n)^{1/2}}{\lambda_d(\bDelta_n)^2}, \frac{\kappa(\bDelta_n)}{\lambda_d(\bDelta_n)^2}, {\log n\rho_n} \right\}\\
&\quad\quad + \frac{d^{1/2}\|\bSigma_{n\pm}^{-1/2}\|_2\|\bX\|_{2\to\infty}}{(n\rho_n)^{3/2}\lambda_d(\bDelta_n)}\sum_{j = 1}^n\expect|[\bE]_{ij}|^3(\bx_{j})_\pm\transpose{}\bDelta_{n\pm}^{-1}\bSigma_{ni\pm}^{-1}\bDelta_{n\pm}^{-1}(\bx_{j})_\pm,
\end{aligned}
\end{align}
and
\begin{align}
\label{eqn:Eigenvector_normality}
\begin{aligned}
&\sup_{A\in\calA}\left|\prob\left\{n\rho_n^{1/2}\bGamma_{ni\pm}^{-1/2}\bW_{\bX_\pm}\transpose(\bW_\pm^*[\bU_{\bA_\pm}]_{i*} - [\bU_{\bP_\pm}]_{i*})\in A\right\} - \prob\left(\bz\in A\right)\right|\\
&\quad\lesssim_\sigma \frac{d^{1/2}\chi\|\bGamma_{ni\pm}^{-1/2}\|_2\|\bX\|_{2\to\infty}}{(n\rho_n)^{1/2}\lambda_d(\bDelta_n)^{3/2}}\max\left\{\frac{(\log n\rho_n)^{1/2}}{\lambda_d(\bDelta_n)}, \frac{1}{\lambda_d(\bDelta_n)^2}, {\log n\rho_n} \right\}\\
&\quad\quad + \frac{d^{1/2}\|\bGamma_{ni\pm}^{-1/2}\|_2\|\bX\|_{2\to\infty}}{(n\rho_n)^{3/2}\lambda_d(\bDelta_{n})^{3/2}}\sum_{j = 1}^n\expect|[\bE]_{ij}|^3(\bx_{j})_\pm\transpose{}\bDelta_{n\pm}^{-3/2}\bGamma_{ni\pm}^{-1}\bDelta_{n\pm}^{-3/2}(\bx_{j})_\pm,
\end{aligned}
\end{align}
where $\calA$ is the collection of all convex measurable sets in $\mathbb{R}^d$ and $\bz\sim\mathrm{N}_d(\zero_d, \eye_d)$. 
\end{theorem}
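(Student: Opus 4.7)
The plan is to apply the decompositions \eqref{eqn:keystone_decomposition} and \eqref{eqn:keystone_decomposition_unscaled_eigenvector} row by row and treat the two summands separately: a sum of independent mean-zero random vectors, to which a multivariate Berry-Esseen bound is applied, and a higher-order remainder, whose $\ell_2$-norm is controlled with high probability and then transferred into the convex-set distance via Gaussian anti-concentration. For \eqref{eqn:ASE_normality}, fix $i \in [n]$, left-multiply \eqref{eqn:keystone_decomposition} by $\sqrt{n}\,\bSigma_{ni\pm}^{-1/2}\bW_{\bX_\pm}\transpose$, and extract the $i$th row. Using $\rho_n \bX_\pm\transpose \bX_\pm = \bW_{\bX_\pm}\transpose |\bS_{\bP_\pm}| \bW_{\bX_\pm} = n\rho_n \bW_{\bX_\pm}\transpose \bDelta_{n\pm}\bW_{\bX_\pm}$, the linear term becomes
\[
\bT_i := \sum_{j=1}^{n}\bxi_{ij},\qquad \bxi_{ij} := \frac{\pm[\bE]_{ij}}{\sqrt{n}\,\rho_n^{1/2}}\,\bSigma_{ni\pm}^{-1/2}\bDelta_{n\pm}^{-1}(\bx_j)_\pm,
\]
whose summands are independent, mean-zero, and satisfy $\sum_j \expect[\bxi_{ij}\bxi_{ij}\transpose] = \eye_d$ by definition of $\bSigma_{ni\pm}$.

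Second, I would invoke the Bentkus multivariate Berry-Esseen theorem for convex sets, which yields
\[
\sup_{A\in\calA}\bigl|\prob(\bT_i\in A) - \prob(\bz\in A)\bigr|\;\lesssim\; d^{1/4}\sum_{j=1}^n \expect\|\bxi_{ij}\|_2^3.
\]
Substituting the expression for $\bxi_{ij}$ and using $\|\bSigma_{ni\pm}^{-1/2}\bDelta_{n\pm}^{-1}(\bx_j)_\pm\|_2 \le \|\bSigma_{ni\pm}^{-1/2}\|_2 \|\bX\|_{2\to\infty}/\lambda_d(\bDelta_n)$ delivers exactly the second (cubic-moment) term on the right-hand side of \eqref{eqn:ASE_normality}.

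Third, and this is the main obstacle, I have to bound the $\ell_2$ norm of the remainder
\[
\bR_i := \sqrt{n}\,\bSigma_{ni\pm}^{-1/2}\bW_{\bX_\pm}\transpose\bigl[\widetilde{\bX}_\pm\bW_\pm \mp \rho_n^{-1/2}\bA\bX_\pm(\bX_\pm\transpose\bX_\pm)^{-1}\bigr]\transpose\be_i
\]
and show that it is of smaller order than the Berry-Esseen rate above. Via the further decomposition \eqref{eqn:keystone_remainder_term}, $\bR_i$ splits into a piece governed by $\bW_\pm^*|\bS_{\bA_\pm}|^{1/2} - |\bS_{\bP_\pm}|^{1/2}\bW_\pm^*$, handled by a Davis-Kahan/Weyl argument together with a Lemma-49-of-\cite{JMLR:v18:17-448}-type estimate, and the genuinely delicate piece $\be_i\transpose(\bU_{\bA_\pm} - \bA\bU_{\bP_\pm}\bS_{\bP_\pm}^{-1}\bW_\pm^*)$. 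As the simulations in Section \ref{sub:a_motivating_example} demonstrate, the existing two-to-infinity bounds in \cite{10.1214/19-AOS1854,cape2019signal,lei2019unified} lose a factor of roughly $\sqrt{n}$ and are insufficient in the $n\rho_n\asymp\log n$ regime. To save that factor I would employ a leave-one-out coupling: let $\bA^{(m)}$ be $\bA$ with its $m$th row and column replaced by zeros, with eigenvector matrix $\bU_{\bA_\pm^{(m)}}$, and expand $\be_i\transpose(\bU_{\bA_\pm} - \bA\bU_{\bP_\pm}\bS_{\bP_\pm}^{-1}\bW_\pm^*)$ through a second-order Neumann expansion of the Sylvester equation relating $\bU_{\bA_\pm}$ and $\bU_{\bP_\pm}$. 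The first-order term is handled by Assumption \ref{assumption:rowwise_concentration} once $\bU_{\bA_\pm^{(i)}}$ (independent of row $i$ of $\bE$) is substituted for $\bU_{\bA_\pm}$, with the coupling error controlled by a Davis-Kahan bound on $\|\bU_{\bA_\pm} - \bU_{\bA_\pm^{(i)}}\bW^{(i)}\|_{\mathrm{F}}$ in terms of $\|\bE - \bE^{(i)}\|_2$. The second-order term contributes the $\log n\rho_n$ and $\kappa(\bDelta_n)$ factors in the max on the right-hand side of \eqref{eqn:ASE_normality}. Taking a union bound over the $O(\log n)$ events used yields the stated high-probability control.

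Finally, combining Steps 2 and 3 by the triangle inequality plus the Gaussian anti-concentration estimate $\prob(\bz\in A^{\varepsilon}\setminus A)\lesssim d^{1/4}\varepsilon$ for convex $A$ converts the remainder bound $\|\bR_i\|_2 \le \varepsilon$ into a contribution of the same order to $\sup_{A\in\calA}|\prob(\cdot) - \prob(\bz\in A)|$, producing the first term on the right-hand side of \eqref{eqn:ASE_normality}. An entirely parallel argument applied to \eqref{eqn:keystone_decomposition_unscaled_eigenvector}, normalised by $n\rho_n^{1/2}\bGamma_{ni\pm}^{-1/2}\bW_{\bX_\pm}\transpose$ instead of $\sqrt{n}\,\bSigma_{ni\pm}^{-1/2}\bW_{\bX_\pm}\transpose$ and with $\bS_{\bP_\pm}^{-1}$ replacing $(\bX_\pm\transpose\bX_\pm)^{-1}\rho_n^{-1/2}$ (which accounts for the factor $\lambda_d(\bDelta_n)^{1/2}$ shifts in the exponents on the right-hand side), establishes \eqref{eqn:Eigenvector_normality}.
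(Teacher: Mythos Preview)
Your high-level architecture matches the paper's: decompose via \eqref{eqn:keystone_decomposition}--\eqref{eqn:keystone_decomposition_unscaled_eigenvector}, apply a multivariate Berry--Esseen bound to the linear part, control the row-wise remainder by a leave-one-out coupling, and merge. The paper makes different technical choices at each stage, however. First, instead of Bentkus plus a separate Gaussian anti-concentration step, the paper invokes the Berry--Esseen theorem for \emph{nonlinear} statistics of Shao and Zhang (Theorem~\ref{thm:Berry_Esseen_Multivariate}), which packages the linear Berry--Esseen term, the remainder $\|\bD\|_2\mathbbm{1}(\calO)$, and the exceptional probability $\prob(\calO^c)$ into a single inequality; this produces the $d^{1/2}$ prefactor in the statement (your Bentkus route would give $d^{1/4}$, slightly sharper, but requires the extra anti-concentration argument you sketch). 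Second, the leave-one-out matrix $\bA^{(m)}$ in the paper replaces the $m$th row and column of $\bA$ by their \emph{expected values} rather than by zeros (see \eqref{eqn:auxiliary_matrix}); this keeps $\expect\bA^{(m)}=\bP$, so Theorem~\ref{thm:AFWZ2020AoS_corr2.1} applies directly to $\bU_{\bA_\pm}^{(m)}$ versus $\bU_{\bP_\pm}$, whereas zeroing changes the mean and forces an additional comparison (the paper remarks on exactly this in Section~\ref{sub:proof_of_lemma_lemma:auxiliary_matrix}). Third, the paper does not use a Neumann/Sylvester expansion; it derives the explicit five-term decomposition \eqref{eqn:ASE_eigenvector_decomposition_I}--\eqref{eqn:ASE_eigenvector_decomposition_V} and isolates the single delicate piece $\be_m\transpose\bE(\bU_{\bA_\pm}-\bU_{\bP_\pm}\bW_\pm^*)\bS_{\bA_\pm}^{-1}$, handled via the decoupling inequality \eqref{eqn:crucial_inequality} and Lemma~\ref{lemma:auxiliary_matrix}. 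That entrywise remainder bound is packaged as the standalone Theorem~\ref{thm:eigenvector_deviation}, after which Theorem~\ref{thm:ASE_Berry_Esseen} follows by a short application of Theorem~\ref{thm:Berry_Esseen_Multivariate} with $t=\log(n\rho_n)$. Your route is viable, but be aware that a pure Neumann/Sylvester expansion in the style of \cite{cape2019signal,pmlr-v83-eldridge18a} is precisely what loses the extra logarithmic factors in the $n\rho_n\asymp\log n$ regime (cf.\ \eqref{eqn:Cape_two_to_infinity_norm_bound}); the leave-one-out step, not the series expansion, is what actually saves the $\sqrt{n}$.
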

\begin{remark}[Generality of Theorem \ref{thm:ASE_Berry_Esseen}]
Theorem \ref{thm:ASE_Berry_Esseen} is stated in terms of Berry-Esseen type bounds for $\sqrt{n}\bSigma_{ni\pm}^{-1/2}\{\bW_\pm\transpose(\widetilde{\bx}_i)_\pm - \rho_n^{1/2}(\bx_i)_\pm\}$ and $\sqrt{n}\bGamma_{ni\pm}^{-1/2}\bW_{\bX_\pm}\transpose(\bW_\pm^*[\bU_{\bA_\pm}]_{i*} - [\bU_{\bP_\pm}]_{i*})$. The upper bounds only depend on $n\rho_n$, the rank of $\bP$, the eigenvalues of $\bDelta_n$, a constant depending on $\sigma$, $\|\bX\|_{2\to\infty}$, and the third absolute moments of $[\bE]_{ij}$'s. Compared to the limit theorems in \cite{cape2019signal,tang2018,xie2019efficient}, Theorem \ref{thm:ASE_Berry_Esseen} allows the rank $d$ and the eigenvalues of $\bP$ to vary with the number of vertices $n$. Consequently, as long as the right-hand sides of \eqref{eqn:ASE_normality} and \eqref{eqn:Eigenvector_normality} converge to $0$ as $n\to\infty$, the asymptotic shapes of the distributions of $\bW_\pm\transpose(\widetilde{\bx}_i)_\pm - \rho_n^{1/2}(\bx_i)_\pm$ and $\bW_{\bX_\pm}\transpose(\bW_\pm^*[\bU_{\bA_\pm}]_{i*} - [\bU_{\bP_\pm}]_{i*})$ can be approximated by multivariate Gaussians. 
\end{remark}

The key to the proof of Theorem \ref{thm:ASE_Berry_Esseen} is Theorem \ref{thm:eigenvector_deviation} below. It provides the entrywise perturbation bounds for the eigenvectors $\bU_{\bA_\pm}$ and $\widetilde\bX_{\pm}$. 
\begin{theorem}\label{thm:eigenvector_deviation}
Suppose the conditions of Theorem \ref{thm:ASE_Berry_Esseen} hold. 
Then there exists an absolute constant $c_0 > 0$, such that for each fixed $m\in [n]$, for all $t\geq 1$, $t\lesssim n\rho_n$, for sufficiently large $n$, with probability at least $1 - c_0n^{-\zeta\wedge\xi} - c_0de^{-t}$,
\begin{align*}
&\|\be_m\transpose{}(\bU_{\bA_\pm} - \bA\bU_{\bP_\pm}\bS_{\bP_\pm}^{-1}\bW_\pm^*)\|_2
\lesssim_\sigma
\frac{\chi\|\bU_\bP\|_{2\to\infty}}{n\rho_n \lambda_d(\bDelta_n)}\max\left\{
  \frac{t^{1/2}}{\lambda_d(\bDelta_n)}, \frac{1}{\lambda_d(\bDelta_n)^2}, t\right\},\\
  &\|\be_i\transpose{}\{\widetilde\bX_\pm\bW_\pm - (\pm)\rho_n^{-1/2}\bA\bX_\pm(\bX_\pm\transpose{}\bX_\pm)^{-1}\}\|_2\\
&\quad\lesssim_\sigma \frac{\chi\|\bX\|_{2\to\infty}\|\bU_\bP\|_{2\to\infty}}{(n\rho_n)^{1/2}\lambda_d(\bDelta_n)}\max\left\{\frac{(\|\bX\|_{2\to\infty}^2\vee1)t^{1/2}}{\lambda_d(\bDelta_n)^2}, \frac{\kappa(\bDelta_n)}{\lambda_d(\bDelta_n)^2}, t\right\}.
\end{align*}
Furthermore, if $\zeta\wedge\xi > 1$, then for sufficiently large $n$, with probability at least $1 - c_0n^{-\zeta\wedge\xi}$, 
\begin{align*}
&\|\bU_{\bA_\pm} - \bA\bU_{\bP_\pm}\bS_{\bP_\pm}^{-1}\bW_\pm^*\|_{2\to\infty}
\lesssim_\sigma
\frac{\chi\|\bU_\bP\|_{2\to\infty}}{n\rho_n \lambda_d(\bDelta_n)}\max\left\{
  \frac{(\log n)^{1/2}}{\lambda_d(\bDelta_n)}, \frac{1}{\lambda_d(\bDelta_n)^2}, \log n\right\},\\
  &\|\widetilde\bX_\pm\bW_\pm - (\pm)\rho_n^{-1/2}\bA\bX_\pm(\bX_\pm\transpose{}\bX_\pm)^{-1}\|_{2\to\infty}\\
&\quad\lesssim_\sigma \frac{\chi\|\bX\|_{2\to\infty}\|\bU_\bP\|_{2\to\infty}}{(n\rho_n)^{1/2}\lambda_d(\bDelta_n)}\max\left\{\frac{(\|\bX\|_{2\to\infty}^2\vee1)(\log n)^{1/2}}{\lambda_d(\bDelta_n)^2}, \frac{\kappa(\bDelta_n)}{\lambda_d(\bDelta_n)^2}, \log n\right\}.
\end{align*}
\end{theorem}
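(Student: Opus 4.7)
The plan is to anchor everything on the eigen-identity $\bU_{\bA_\pm} = \bA\bU_{\bA_\pm}\bS_{\bA_\pm}^{-1}$, which yields the decomposition
\begin{align*}
\bU_{\bA_\pm} - \bA\bU_{\bP_\pm}\bS_{\bP_\pm}^{-1}\bW_\pm^*
= \bA(\bU_{\bA_\pm} - \bU_{\bP_\pm}\bW_\pm^*)\bS_{\bA_\pm}^{-1}
+ \bA\bU_{\bP_\pm}(\bW_\pm^*\bS_{\bA_\pm}^{-1} - \bS_{\bP_\pm}^{-1}\bW_\pm^*).
\end{align*}
The second summand is the easier one: $\be_m\transpose\bA\bU_{\bP_\pm}$ splits as $\be_m\transpose\bP\bU_{\bP_\pm} + \be_m\transpose\bE\bU_{\bP_\pm}$ (the first piece being $\be_m\transpose\bU_{\bP_\pm}\bS_{\bP_\pm}$, the second bounded by Assumption \ref{assumption:rowwise_concentration} with $\bV = \bU_{\bP_\pm}$, which is deterministic), and $\|\bW_\pm^*\bS_{\bA_\pm}^{-1} - \bS_{\bP_\pm}^{-1}\bW_\pm^*\|_2$ is controlled via Weyl plus $\|\bE\|_2\lesssim (n\rho_n)^{1/2}$ and the eigengap $|\lambda_d(\bDelta_n)|n\rho_n$.

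For the first summand I would further decompose $\be_m\transpose\bA = \be_m\transpose\bP + \be_m\transpose\bE$. The $\bP$-part is handled through $\be_m\transpose\bP = \be_m\transpose\bU_{\bP}\bS_\bP\bU_\bP\transpose$, combined with a Davis--Kahan / sine-$\Theta$ bound on $\|\bU_{\bP_\pm}\transpose(\bU_{\bA_\pm} - \bU_{\bP_\pm}\bW_\pm^*)\|_{\mathrm{F}}$ (which is second order in $\|\bE\|_2/\lambda_d(\bP)$). For the $\bE$-part, I would introduce the leave-one-out auxiliary matrix $\bA^{(m)} = \bP + \bE^{(m)}$ and the corresponding $(\bU_{\bA_\pm}^{(m)}, \bW_\pm^{(m)*})$, and write
\begin{align*}
\be_m\transpose\bE(\bU_{\bA_\pm} - \bU_{\bP_\pm}\bW_\pm^*)
&= \be_m\transpose\bE(\bU_{\bA_\pm}^{(m)}\bW_\pm^{(m)*} - \bU_{\bP_\pm}\bW_\pm^*)
+ \be_m\transpose\bE\,(\bU_{\bA_\pm} - \bU_{\bA_\pm}^{(m)}\bW_\pm^{(m)*}).
\end{align*}
On the first piece, $\bU_{\bA_\pm}^{(m)}\bW_\pm^{(m)*} - \bU_{\bP_\pm}\bW_\pm^*$ is measurable with respect to $\bE^{(m)}$ and hence independent of the $m$th row of $\bE$, so Assumption \ref{assumption:rowwise_concentration} applies conditionally on $\bE^{(m)}$; the required bounds on $\|\cdot\|_{\mathrm{F}}$ and $\|\cdot\|_{2\to\infty}$ of this difference come from Davis--Kahan together with a deterministic two-to-infinity perturbation inequality (e.g.\ the Cape--Tang--Priebe style bound). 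On the second piece, Cauchy--Schwarz gives $\|\be_m\transpose\bE\|_2\cdot\|\bU_{\bA_\pm} - \bU_{\bA_\pm}^{(m)}\bW_\pm^{(m)*}\|_{\mathrm{F}}$, and the latter is again controlled by Davis--Kahan because $\bA - \bA^{(m)}$ is a rank-two perturbation with $\|\bA-\bA^{(m)}\|_2\lesssim \|\be_m\transpose\bE\|_2$.

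Once the entrywise bound on $\bU_{\bA_\pm} - \bA\bU_{\bP_\pm}\bS_{\bP_\pm}^{-1}\bW_\pm^*$ is in hand, the $\widetilde\bX_\pm$ bound is immediate from the decomposition \eqref{eqn:keystone_remainder_term}: the second line there is $\|\be_m\transpose(\bU_{\bA_\pm}-\bA\bU_{\bP_\pm}\bS_{\bP_\pm}^{-1}\bW_\pm^*)\|_2\cdot\||\bS_{\bP_\pm}|^{1/2}\|_2$, contributing the factor $\|\bX\|_{2\to\infty}\cdot(n\rho_n\lambda_1(\bDelta_n))^{1/2}$, and the first line is a routine square-root eigenvalue Procrustes estimate via Lemma 49 of \cite{JMLR:v18:17-448}. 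The two-to-infinity statements follow by taking a union bound over $m\in[n]$ with $t\asymp\log n$, which is absorbed by $n^{-\zeta\wedge\xi}$ provided $\zeta\wedge\xi>1$.

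The main obstacle is producing a \emph{two-to-infinity} (not just spectral) control of $\bU_{\bA_\pm}^{(m)}\bW_\pm^{(m)*} - \bU_{\bP_\pm}\bW_\pm^*$ uniformly in $m$, which is needed to invoke Assumption \ref{assumption:rowwise_concentration} with the sharp $\varphi(\|\bV\|_{\mathrm{F}}/(\sqrt{n}\|\bV\|_{2\to\infty}))$ scaling. This is the step where the monotonicity of $\varphi(x)/x$, the incoherence Assumption \ref{assumption:incoherence}, and the spectral-norm Assumption \ref{assumption:spectral_norm_concentration} must be combined carefully, most likely via a bootstrap/self-bounding argument in which a crude two-to-infinity bound is iteratively refined; precisely tracking how the factors $\chi$, $\kappa(\bDelta_n)$, and the three powers of $\lambda_d(\bDelta_n)$ propagate through this iteration is what produces the $\max\{t^{1/2}/\lambda_d(\bDelta_n),\,1/\lambda_d(\bDelta_n)^2,\,t\}$ trichotomy in the final bound.
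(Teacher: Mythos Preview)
Your high-level strategy is the paper's: use the eigen-identity to isolate the hard term $\be_m\transpose\bE(\bU_{\bA_\pm}-\bU_{\bP_\pm}\bW_\pm^*)\bS_{\bA_\pm}^{-1}$, attack it by leave-one-out, handle the remaining pieces by Weyl, Davis--Kahan, and the square-root Procrustes estimate, and pass to $\widetilde\bX_\pm$ via \eqref{eqn:keystone_remainder_term}. The union-bound step for the $2\to\infty$ statements is also the same.

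Three execution details deviate from the paper, and two of them prevent you from recovering the stated bound.

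\emph{(Measurability.)} The matrix $\bU_{\bA_\pm}^{(m)}\bW_\pm^{(m)*}-\bU_{\bP_\pm}\bW_\pm^*$ is \emph{not} $\bE^{(m)}$-measurable, because $\bW_\pm^*=\mathrm{sgn}(\bU_{\bP_\pm}\transpose\bU_{\bA_\pm})$ depends on the full $\bA$. The paper avoids this by first pulling out the orthogonal $\bW_\pm^*$ (norms are unchanged) and then inserting $\bU_{\bA_\pm}^{(m)}\bH_\pm^{(m)}$ with $\bH_\pm^{(m)}:=(\bU_{\bA_\pm}^{(m)})\transpose\bU_{\bP_\pm}$, which \emph{is} $\bE^{(m)}$-measurable; the residual $\mathrm{sgn}(\bH_\pm)-\bH_\pm$ is second order in $\sin\Theta$ and handled separately (see \eqref{eqn:crucial_inequality}). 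This is easily repaired.

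\emph{(Which concentration on the leave-one-out piece.)} On $\be_m\transpose\bE(\bU_{\bA_\pm}^{(m)}\bH_\pm^{(m)}-\bU_{\bP_\pm})$ the paper does \emph{not} invoke Assumption~\ref{assumption:rowwise_concentration}; it uses the Bernstein/sub-Gaussian inequalities supplied by Assumption~\ref{assumption:distribution} (Lemmas~\ref{lemma:Bernstein_concentration_EW}--\ref{lemma:Bernstein_concentration_EW_subGaussian}) conditionally on $\bA^{(m)}$. This is precisely what introduces the free parameter $t$ and produces the $t$ term in the trichotomy. Assumption~\ref{assumption:rowwise_concentration} only yields a fixed-probability $\varphi$-type bound of order $\chi\varphi(\cdot)\|\bU_\bP\|_{2\to\infty}$ after dividing by $\bS_{\bA_\pm}$, which is larger than the target $\chi t\|\bU_\bP\|_{2\to\infty}/(n\rho_n\lambda_d(\bDelta_n))$ by a factor comparable to $n\rho_n$ in the RDPG case; it would not close.

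\emph{(Two-to-infinity control.)} The paper does not bootstrap, nor does it use a Cape--Tang--Priebe deterministic bound (which, as noted in Section~\ref{sub:related_work}, is too weak when $n\rho_n\asymp\log n$). It applies Theorem~2.1 of \cite{10.1214/19-AOS1854} (restated as Theorem~\ref{thm:AFWZ2020AoS_corr2.1}) directly to $\bA^{(m)}$; this is where Assumption~\ref{assumption:rowwise_concentration} and $\varphi$ actually enter, giving $\|\bU_{\bA_\pm}^{(m)}\|_{2\to\infty}\lesssim\{\kappa(\bDelta_n)+\varphi(1)\}\|\bU_\bP\|_{2\to\infty}$ and the analogous bound on $\bU_{\bA_\pm}^{(m)}\mathrm{sgn}(\bH_\pm^{(m)})-\bU_{\bP_\pm}$ (Lemma~\ref{lemma:auxiliary_matrix}), hence the factor $\chi$. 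Relatedly, your Davis--Kahan control of $\|\bU_{\bA_\pm}-\bU_{\bA_\pm}^{(m)}\bW_\pm^{(m)*}\|_{\mathrm{F}}$ via $\|\bA-\bA^{(m)}\|_2$ alone loses a factor of order $\sqrt{n}\|\bU_\bP\|_{2\to\infty}$; the paper instead bounds $\|(\bA-\bA^{(m)})\bU_{\bA_\pm}^{(m)}\|_{\mathrm{F}}$ using the independence of row $m$ of $\bE$ from $\bU_{\bA_\pm}^{(m)}$ together with the $2\to\infty$ bound above (last assertion of Lemma~\ref{lemma:auxiliary_matrix}).
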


\subsection{Comparison with existing results}
\label{sub:comparison_with_existed_results}

We first briefly compare Theorem \ref{thm:ASE_Berry_Esseen} with some existing entrywise limit theorems for the eigenvectors of signal-plus-noise matrix models. For simplicity, we assume that the non-zero eigenvalues of $\bP$ are positive. In \cite{cape2019signal}, the authors established the asymptotic normality of $\bW_\bX\transpose(\bW^*[\bU_\bA]_{i*} - [\bU_\bP]_{i*})$ when $n\rho_n = \omega((\log n)^{4\xi})$ for some constant $\xi > 1$, provided that $d$ is fixed across all $n$ and $\bDelta_n$ converges to some positive definite $\bDelta$. Later, the requirement for $n\rho_n$ is relaxed to $n\rho_n = \omega((\log n)^4)$ in \cite{xie2019efficient} for the rows of the scaled eigenvectors in the context of random dot product graphs (see Section \ref{sub:RDPG} for the formal definition). The same sparsity requirement for $n\rho_n$ was required in \cite{tang2018} when the rows of $\bX$ are i.i.d. latent random vectors, and their limit result is stated as multivariate normal mixtures.
In contrast, Theorem \ref{thm:ASE_Berry_Esseen} only requires that $n\rho_n = \Omega(\log n)$ when $d$ is fixed and $\lambda_d(\bDelta_n)$ is bounded away from $0$. 

We next provide several remarks regarding Theorem \ref{thm:eigenvector_deviation} and compare it with some results in the literature. Again, for simplicity, we assume that the non-zero eigenvalues of $\bP$ are positive and $\lambda_d(\bDelta_n)$ stays bounded away from $0$. 
Then the asymptotic normality of \eqref{eqn:Eigenvector_normality} holds only if 
\begin{align}\label{eqn:eigenvector_remainder}
&\|\be_m\transpose(\bU_\bA - \bA\bU_\bP\bS_\bP^{-1}\bW^*)\|_2 = o_{\prob}\left(\frac{1}{n\rho_n^{1/2}}\right),
\end{align}
which can be obtained from Theorem \ref{thm:eigenvector_deviation} with $t = \log n\rho_n$. 
We argue that the concentration bound \eqref{eqn:eigenvector_remainder} is sharper than the recently developed  concentration bounds for $\|\bU_\bA - \bA\bU_\bP\bS_\bP^{-1}\bW^*\|_{2\to\infty}$ in \cite{10.1214/19-AOS1854,cape2019signal,lei2019unified}.
In \cite{cape2019signal}, the authors assumed that $n\rho_n = \omega((\log n)^{2\xi})$ for some $\xi > 1$ and showed that 
\begin{align}\label{eqn:Cape_two_to_infinity_norm_bound}
\|\bU_\bA - \bA\bU_\bP\bS_\bP^{-1}\bW^*\|_{2\to\infty} = O_{\prob}\left\{\frac{1}{n\rho_n^{1/2}}\times\frac{(\log n)^{2\xi}}{(n\rho_n)^{1/2}}\right\}.
\end{align}
The bound \eqref{eqn:Cape_two_to_infinity_norm_bound} is not sufficient for \eqref{eqn:eigenvector_remainder} to occur unless $n\rho_n = \omega((\log n)^{4\xi})$. Under the most challenging regime that $n\rho_n = \Omega(\log n)$, in the context of random graph models, the authors of \cite{10.1214/19-AOS1854} and \cite{lei2019unified} have established that 
\begin{align}\label{eqn:AFWZ_two_to_infinity_norm_bound}
\|\bU_\bA - \bA\bU_\bP\bS_\bP^{-1}\bW^*\|_{2\to\infty} = O_{\prob}\left(\frac{1}{\sqrt{n}\log\log n}\right).
\end{align}
This bound leads to a sharp analysis of the community detection using the signs of the second leading eigenvector of $\bA$ for a two-block stochastic block model but does not imply \eqref{eqn:eigenvector_remainder} either. The underlying reason is that these two-to-infinity norm error bounds are obtained using a union bound, leading to sub-optimal entrywise concentration bounds for $\bU_\bA - \bA\bU_\bP\bS_\bP^{-1}\bW^*$. 



\section{Applications}
\label{sec:applications}

\subsection{Symmetric noisy matrix completion}
\label{sub:SNMC}

The matrix completion problem has been extensively explored in recent decades, and the literature review included here is by no means complete and exhaustive. It refers to a large class of random matrix problems where the observed data matrix contains partial observations, and the task of interest is to predict the missing entries. A canonical real-world application is the ``Netflix problem'' \cite{bennett2007netflix}, where the data matrix consists of multiple users' ratings of multiple movies. 
The missingness is intrinsic to the nature of the problem because it is unlikely to have the users watch all movies available in the database. 
Predicting the missing entries is worthwhile because accurate predictions allow the system to make appropriate individual-wise recommendations to the users. Theoretical properties of the matrix completion model have also been well studied. For example, the theory of noiseless matrix completion has been explored in \cite{candes2009exact,5452187,5714248}, whereas the extensions for more general noisy matrix completion problems have been developed in \cite{tight_oracle_inequalities,chatterjee2015,jain2013low,keshavan2010matrix,10.1214/11-AOS894}. 

This subsection considers a special case of the noisy matrix completion problem where the data matrix is a symmetric random matrix with missing observations, also referred to as the symmetric noisy matrix completion (SNMC) model. It also appears in the context of network cross-validation by edge sampling \cite{10.1093/biomet/asaa006}.
We follow the definition from \cite{10.1214/19-AOS1854} and assume that the non-zero eigenvalues of $\bP$ are  positive for the ease of exposition.
\begin{definition}
Let $\bX = [\bx_1,\ldots,\bx_n]\transpose\in\mathbb{R}^{n\times d}$. The symmetric noisy matrix completion model, denoted by $\mathrm{SNMC}(\rho_n\bX\bX\transpose, \rho_n, \sigma)$, is the distribution of a symmetric random matrix $\bA = [A_{ij}]_{n\times n}$ given by $A_{ij} = (\rho_n\bx_i\transpose\bx_j + \eps_{ij})I_{ij}/\rho_n$, where $(I_{ij}, \eps_{ij}:1\leq i\leq j\leq n)$ are jointly independent, $I_{ij}\sim\mathrm{Bernoulli}(\rho_n)$, $\eps_{ij}\sim\mathrm{N}(0, \sigma^2)$, and $A_{ij} = A_{ji}$ for all $i > j$. 
\end{definition}

Below, Theorem \ref{thm:SNMC} establishes the entrywise Berry-Esseen bounds for the eigenvectors of $\bA$ generated from $\mathrm{SNMC}(\rho_n\bX\bX\transpose, \rho_n, \tau\rho_n^2)$ under the conditions that $n\rho_n\lambda_d(\bDelta_n)^2 = \omega(\kappa(\bDelta_n)^2\log n)$ and $n\rho_n\geq 6\log n$. We follow the same notations and definitions in Sections \ref{sec:preliminaries} and \ref{sec:entrywise_limit_theorem_for_the_eigenvectors}. 

\begin{theorem}\label{thm:SNMC}
Suppose $\bA\sim\mathrm{SNMC}(\rho_n\bX\bX\transpose, \rho_n, \sigma)$. Assume that $n\rho_n\geq 6\log n$, $\sigma = \tau\rho_n^2$ for some constant $\tau > 0$, $\|\bX\|_{2\to\infty}$ is upper bounded by a constant, and $n\rho_n\lambda_d(\bDelta_n)^2 = \omega(\kappa(\bDelta_n)^2\log n)$. Let
\[
\bSigma_{ni} = \bDelta_n^{-1}\left\{\frac{1}{n}\sum_{j = 1}^n\{(1 - \rho_n)(\bx_i\transpose\bx_j)^2 + \tau^2\rho_n^3\}\bx_j\bx_j\transpose\right\}\bDelta_n^{-1},\quad
\bGamma_{ni} = \bDelta_n^{-1/2}\bSigma_{ni}\bDelta_n^{-1/2}.
\]
If $\bSigma_{ni}$ and $\bGamma_{ni}$ are invertible, then for each fixed index $i\in[n]$ and for any sufficiently large $n$,
\begin{align*}
&\sup_{A\in\calA}\left|\prob\left\{\sqrt{n}\bSigma_{ni}^{-1/2}(\bW\transpose\widetilde\bx_i - \rho_n^{1/2}\bx_{i})\in A\right\} - \prob\left(\bz\in A\right)\right|\\
&\quad\lesssim_\tau 
\frac{d^{1/2}\|\bSigma_{ni}^{-1/2}\|_2(\|\bX\|_{2\to\infty}^7\vee 1)}{(n\rho_n)^{1/2}\lambda_d(\bDelta_{n})^{5/2}}
\max\left\{\frac{(\|\bX\|_{2\to\infty}^2\vee1)(\log n\rho_n)^{1/2}}{\lambda_d(\bDelta_n)^2}, \frac{\kappa(\bDelta_n)}{\lambda_d(\bDelta_n)^2}, {\log n\rho_n} \right\},
\end{align*}
and
\begin{align*}
&\sup_{A\in\calA}\left|\prob\left\{n\rho_n^{1/2}\bGamma_{ni}^{-1/2}\bW_{\bX}\transpose(\bW^*[\bU_{\bA }]_{i*} - [\bU_{\bP}]_{i*})\in A\right\} - \prob\left(\bz\in A\right)\right|\\
&\quad\lesssim_\tau 
\frac{d^{1/2}\|\bGamma_{ni}^{-1/2}\|_2(\|\bX\|_{2\to\infty}^5\vee 1)}{(n\rho_n)^{1/2}\lambda_d(\bDelta_n)^{5/2}}
\max\left\{\frac{(\log n\rho_n)^{1/2}}{\lambda_d(\bDelta_n)}, \frac{1}{\lambda_d(\bDelta_n)^2}, {\log n\rho_n} \right\},
\end{align*}
where $\calA$ is the collection of all convex measurable sets in $\mathbb{R}^d$ and $\bz\sim\mathrm{N}_d(\zero_d, \eye_d)$. 
\end{theorem}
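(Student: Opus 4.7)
The strategy is straightforward in principle: Theorem~\ref{thm:SNMC} is a specialization of Theorem~\ref{thm:ASE_Berry_Esseen}, so the plan is to verify Assumptions \ref{assumption:incoherence}--\ref{assumption:spectral_norm_concentration} for the SNMC model, identify explicit forms for the function $\varphi$ and the constants $B,\sigma,K$, and substitute these into \eqref{eqn:ASE_normality}--\eqref{eqn:Eigenvector_normality}. The conceptual starting point is the additive noise decomposition
\[
[\bE]_{ij} = A_{ij} - \rho_n\bx_i\transpose\bx_j = \bx_i\transpose\bx_j(I_{ij}-\rho_n) + \tfrac{1}{\rho_n}\eps_{ij}I_{ij} =: [\bE_1]_{ij} + [\bE_2]_{ij},
\]
where $[\bE_1]_{ij}$ is bounded with variance of order $\rho_n$, and $[\bE_2]_{ij}$ is sub-Gaussian with $\|[\bE_2]_{ij}\|_{\psi_2}\lesssim \tau\rho_n\le\tau\rho_n^{1/2}$. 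This immediately delivers Assumption~\ref{assumption:distribution} with $B=\|\bX\|_{2\to\infty}^2$ and $\sigma\lesssim_\tau \|\bX\|_{2\to\infty}^2\vee 1$. Assumption~\ref{assumption:incoherence} is given, and Assumption~\ref{assumption:sparsity} follows from $n\rho_n\ge 6\log n$.

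The first real step is verifying Assumption~\ref{assumption:spectral_norm_concentration}. Applying matrix Bernstein separately to $\bE_1$ (bounded, variance $O(\rho_n)$) and $\bE_2$ (sub-Gaussian with scale $\rho_n^{1/2}$) yields $\|\bE\|_2\lesssim_\tau (n\rho_n)^{1/2}$ with probability $1-O(n^{-\zeta})$ for some $\zeta>1$, exactly as in \cite{10.1214/19-AOS1854,keshavan2010matrix}. The smallness condition $32\kappa(\bDelta_n)\max\{\gamma,\varphi(\gamma)\}\le 1$ is then implied by the hypothesis $n\rho_n\lambda_d(\bDelta_n)^2 = \omega(\kappa(\bDelta_n)^2\log n)$ together with $\|\bX\|_{2\to\infty}=O(1)$, since $\gamma\asymp 1/\{(n\rho_n)^{1/2}\lambda_d(\bDelta_n)\}\to 0$.

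The main technical step is verifying Assumption~\ref{assumption:rowwise_concentration}, i.e.\ extracting an explicit $\varphi$. For a fixed deterministic $\bV$, write $\be_i\transpose\bE\bV=\sum_j [\bE]_{ij}[\bV]_{j*}$ and bound each coordinate using the sharp mixed Bernstein / sub-Gaussian tail inequality, noting that
\[
\sum_j\var([\bE]_{ij})\|[\bV]_{j*}\|_2^2 \lesssim_\tau \rho_n\|\bV\|_{\mathrm F}^2,\qquad \max_{i,j}|[\bE_1]_{ij}|\le\|\bX\|_{2\to\infty}^2,
\]
so that a union bound over the $d$ coordinates together with the deviation inequalities for bounded plus sub-Gaussian sums yields, with probability $\ge 1-c_0n^{-(1+\xi)}$,
\[
\|\be_i\transpose\bE\bV\|_2 \lesssim_\tau \rho_n^{1/2}\|\bV\|_{\mathrm F}\sqrt{\log n} + (\|\bX\|_{2\to\infty}^2\vee 1)\|\bV\|_{2\to\infty}\log n.
\]
Rearranging and normalizing produces the required form with
\[
\varphi(x) \;\asymp_\tau\; \frac{(\|\bX\|_{2\to\infty}^2\vee 1)}{\lambda_d(\bDelta_n)}\max\!\Bigl\{x\sqrt{\log n},\; \frac{\log n}{(n\rho_n)^{1/2}}\Bigr\},
\]
which is non-decreasing and satisfies the monotonicity of $\varphi(x)/x$. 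The same Bernstein-type bound applies verbatim to $\bE^{(m)}$. This is the step I expect to be most delicate: one must track the $\|\bX\|_{2\to\infty}$ and $\lambda_d(\bDelta_n)$ dependencies carefully so that the final $\chi = \varphi(1)+(\|\bX\|_{2\to\infty}^2\vee 1)/\lambda_d(\bDelta_n)$ comes out proportional to $(\|\bX\|_{2\to\infty}^2\vee 1)\log n/\lambda_d(\bDelta_n)$ (up to constants in $\tau$), which is what feeds the stated rate.

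The remaining steps are bookkeeping. Compute the variance $\expect[\bE]_{ij}^2 = (\bx_i\transpose\bx_j)^2\rho_n(1-\rho_n)+\tau^2\rho_n^3$, which after division by $n\rho_n$ and conjugation by $\bDelta_n^{-1}$ produces the stated $\bSigma_{ni}$ and $\bGamma_{ni}$. Bound the third moment via $\expect|[\bE]_{ij}|^3\lesssim_\tau (\|\bX\|_{2\to\infty}^6\vee 1)\rho_n$, so that the third-moment term in \eqref{eqn:ASE_normality} contributes at most
\[
\frac{d^{1/2}\|\bSigma_{ni}^{-1/2}\|_2(\|\bX\|_{2\to\infty}^7\vee 1)}{(n\rho_n)^{1/2}\lambda_d(\bDelta_n)^{5/2}}
\]
after pulling out one factor of $\bx_j\transpose\bDelta_n^{-1}\bSigma_{ni}^{-1}\bDelta_n^{-1}\bx_j$ uniformly, and analogously for \eqref{eqn:Eigenvector_normality} with exponent $5$ instead of $7$ because of the extra $\bDelta_n^{-1/2}$ factor in $\bGamma_{ni}$. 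Plugging the explicit $\chi$ into the first line of each bound and absorbing the log factor in $\chi$ into the $\max\{\cdot\}$ yields the two displayed inequalities. The main obstacle, as noted, is the precise derivation of $\varphi$; once it is in hand, the remainder is algebraic substitution.
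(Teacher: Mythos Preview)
Your overall strategy---verify Assumptions \ref{assumption:incoherence}--\ref{assumption:spectral_norm_concentration} for the SNMC noise decomposition and then plug into Theorem~\ref{thm:ASE_Berry_Esseen}---is exactly what the paper does. Two of your execution steps, however, do not deliver the stated bound and need repair.

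\textbf{The $\varphi$ normalization.} From your own row-concentration estimate
$\|\be_i\transpose\bE\bV\|_2\lesssim_\tau \rho_n^{1/2}\|\bV\|_{\mathrm F}\sqrt{\log n}+(\|\bX\|_{2\to\infty}^2\vee1)\|\bV\|_{2\to\infty}\log n$,
dividing by $n\rho_n\lambda_d(\bDelta_n)\|\bV\|_{2\to\infty}$ gives
\[
\varphi(x)\;\asymp_\tau\;\frac{1}{\lambda_d(\bDelta_n)}\max\!\left\{\frac{x\sqrt{\log n}}{(n\rho_n)^{1/2}},\ \frac{(\|\bX\|_{2\to\infty}^2\vee1)\log n}{n\rho_n}\right\},
\]
a factor $(n\rho_n)^{-1/2}$ smaller than what you wrote. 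With this correction $\varphi(1)\lesssim_\tau(\|\bX\|_{2\to\infty}^2\vee1)\sqrt{\log n/(n\rho_n)}/\lambda_d(\bDelta_n)\le(\|\bX\|_{2\to\infty}^2\vee1)/\lambda_d(\bDelta_n)$, so $\chi$ carries \emph{no} $\log n$ factor. Your plan to ``absorb the log factor in $\chi$ into the $\max$'' would produce $(\log n)^2$ inside the final rate and would not match the statement.

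\textbf{The third-moment term.} This is the substantive gap. Bounding $\expect|[\bE]_{ij}|^3\lesssim_\tau(\|\bX\|_{2\to\infty}^6\vee1)\rho_n$ uniformly and then pulling $\bx_j\transpose\bDelta_n^{-1}\bSigma_{ni}^{-1}\bDelta_n^{-1}\bx_j$ out uniformly forces an extra $\|\bSigma_{ni}^{-1}\|_2=\|\bSigma_{ni}^{-1/2}\|_2^2$ (and an extra $\lambda_d(\bDelta_n)^{-1/2}$) into the prefactor; you would obtain $\|\bSigma_{ni}^{-1/2}\|_2^3$, not the single power in the theorem. The paper's device is to bound the third moment \emph{pointwise by a constant times the second moment}: since $|[\bE_1]_{ij}|\le\|\bX\|_{2\to\infty}^2$ a.s.\ and $\expect|[\bE_2]_{ij}|^3\lesssim\tau^3\rho_n^4$, one gets
\[
\expect|[\bE]_{ij}|^3\;\lesssim_\tau\;(\|\bX\|_{2\to\infty}^2\vee1)\,\expect[\bE]_{ij}^2.
\]
Substituting this and using that $\bSigma_{ni}$ is \emph{defined} via $\expect[\bE]_{ij}^2$ collapses the trace:
\[
\frac{1}{n\rho_n}\sum_{j}\expect|[\bE]_{ij}|^3\,\bx_j\transpose\bDelta_n^{-1}\bSigma_{ni}^{-1}\bDelta_n^{-1}\bx_j
\;\lesssim_\tau\;(\|\bX\|_{2\to\infty}^2\vee1)\,\mathrm{tr}\bigl(\bSigma_{ni}^{-1/2}\bSigma_{ni}\bSigma_{ni}^{-1/2}\bigr)=d(\|\bX\|_{2\to\infty}^2\vee1),
\]
and similarly for the $\bGamma_{ni}$ term. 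Combined with $d\le\|\bX\|_{2\to\infty}^2/\lambda_d(\bDelta_n)$, this yields exactly the stated prefactors with a single $\|\bSigma_{ni}^{-1/2}\|_2$ (resp.\ $\|\bGamma_{ni}^{-1/2}\|_2$). Once you make these two fixes, the remaining substitution into \eqref{eqn:ASE_normality}--\eqref{eqn:Eigenvector_normality} is straightforward and agrees with the paper.
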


We now argue that Theorem \ref{thm:SNMC} is sharper than the two-to-infinity norm error bounds obtained in \cite{10.1214/19-AOS1854}. Again, we assume that $d$ is fixed and $\lambda_d(\bDelta_n),\lambda_1(\bDelta_n)$ are bounded away from $0$ and $\infty$ for simplicity. Under the condition that $n\rho_n = \omega(\log n)$, the asymptotic normality of the rows of $\bU_\bA$ in Theorem \ref{thm:SNMC} implies the entrywise error bound \eqref{eqn:eigenvector_remainder}:
$\|\be_m\transpose(\bU_\bA - \bA\bU_\bP\bS_\bP^{-1}\bW^*)\|_2 = o_{\prob}\{(n\rho_n^{1/2})^{-1}\}$.
In contrast, Lemma 13 in \cite{10.1214/19-AOS1854} implies that
$\|\bU_\bA - \bA\bU_\bP\bS_\bP^{-1}\bW^*\|_{2\to\infty} = O_{\prob}\{{(\log n)^{1/2}}/{(n\rho_n^{1/2})}\}$ under the same conditions.
Similar to the reasoning in Section \ref{sub:comparison_with_existed_results}, the above two-to-infinity norm error bound does not imply the error bound \eqref{eqn:eigenvector_remainder}. Therefore, Theorem \ref{thm:SNMC} provides a sharper entrywise eigenvector analysis compared to \cite{10.1214/19-AOS1854} for the symmetric noisy matrix completion model. 



\subsection{Eigenvectors of random dot product graphs}
\label{sub:RDPG}

In recent years, statistical network analysis has attracted much attention and has gained substantial progress in theoretical foundations and methodological development. Network data are also pervasive in numerous application domains, including social networks \cite{Girvan7821,wasserman1994social,young2007random}, neuroscience \cite{priebe2017semiparametric,8570772}, and computer networks \cite{6623779,7745482}. 
In the statistical analyses of network data, spectral methods and eigenvector analysis of random adjacency matrices are of fundamental interest because the eigenvectors not only contain the underlying network latent structure but also provide gateways to various subsequent inference tasks, such as community detection \cite{rohe2011,sussman2012consistent}, vertex classification \cite{6565321,tang2013}, and nonparametric graph testing \cite{tang2017}. 

In this subsection, we focus on the random dot product graph model \cite{young2007random} and study the behavior of its eigenvectors. It is a class of random graphs in which each vertex is assigned a latent position vector encoding the vertex-wise information. 
The random dot product graph model is easy to interpret (especially in social networks) and rich enough to include a variety of popular network models, including stochastic block models \cite{HOLLAND1983109} and their offspring \cite{airoldi2008mixed,PhysRevE.83.016107,7769223}. Below, we first provide the formal definition of the random dot product graph model. 


\begin{definition}
Consider a graph with $n$ vertices that are labeled as $[n] = \{1,2,\ldots,n\}$. Let $\calX$ be a subset of $\mathbb{R}^d$ such that $\bx_1\transpose{}\bx_2\in [0, 1]$ for all $\bx_1,\bx_2\in\calX$, where $d\leq n$, and let $\rho_n\in (0, 1]$ be a sparsity factor. Each vertex $i\in [n]$ is associated with a vector $\bx_i\in\calX$, referred to as the latent position for vetex $i$. We say that a symmetric random matrix $\bA = [A_{ij}]_{n\times n}\in\{0, 1\}^{n\times n}$ is the adjacency matrix of a random dot product graph with latent position matrix $\bX = [\bx_1,\ldots,\bx_n]\transpose{}$ and sparsity factor $\rho_n$, denoted by $\bA\sim\mathrm{RDPG}(\rho_n^{1/2}\bX)$, if the random variables $A_{ij}\sim\mathrm{Bernoulli}(\rho_n\bx_{i}\transpose{}\bx_{j})$ independently for all $i,j\in [n]$, $i\leq j$, and $A_{ij} = A_{ji}$ for all $i > j$. 
\end{definition}
The sparsity factor $\rho_n$ in a random dot product graph model $\mathrm{RDPG}(\rho_n^{1/2}\bX)$ fundamentally controls the graph average expected degree through $n\rho_n$ as a function of the number of vertices, provided that $\sum_{i,j}\bx_i\transpose\bx_j = \Omega(n^2)$. When $\rho_n\equiv 1$, the resulting graph is dense, and the average expected degree scales as $\Omega(n)$. The more interesting scenario happens when $\rho_n\to 0$ as $n\to\infty$, which gives rise to a sparse random graph whose average expected degree is a vanishing proportion of the number of vertices. A fast decaying $\rho_n$ corresponds to a challenging weak signal regime, which is one of the focuses of this subsection.

We now present the Berry-Esseen theorem for the rows of the leading eigenvectors for random dot product graphs. The scaled eigenvector matrix $\widetilde{\bX}$ is also referred to as the adjacency spectral embedding of $\bA$ into $\mathbb{R}^d$ \cite{sussman2012consistent}. 

\begin{theorem}
\label{thm:ASE_Berry_Esseen_RDPG}
Let $\bA\sim\mathrm{RDPG}(\rho_n^{1/2}\bX)$ with $n\rho_n\gtrsim \log n$. Denote $\bDelta_n = (1/n)\bX\transpose{}\bX$ and suppose there exists a constant $\delta > 0$ such that $\min_{i\in [n]}(1/n)\sum_{j = 1}^n\bx_i\transpose\bx_j\geq\delta$. For each $i\in [n]$, let
\[
\bSigma_n(\bx_{i}) = \bDelta_n^{-1}\left\{\frac{1}{n}\sum_{j = 1}^n\bx_{i}\transpose{}\bx_{j}(1 - \rho_n\bx_{i}\transpose\bx_{j})\bx_{j}\bx_{j}\transpose\right\}\bDelta^{-1},\quad\bGamma_n(\bx_i) = \bDelta_n^{-1/2}\bSigma_n(\bx_i)\bDelta_n^{-1/2}.
\]
If $\bSigma_n(\bx_i)$ and $\bGamma_n(\bx_i)$ are invertible and
${\kappa(\bDelta_n)}/{\lambda_d(\bDelta_n)}\{{(n\rho_n)^{-1/2}}\vee
{\log( {n\rho_n}\lambda_d(\bDelta_n)^2)}^{-1}
\}\to 0$,
then for each fixed index $i\in[n]$ and for any sufficiently large $n$,
\begin{align*}
&\begin{aligned}
&\sup_{A\in\calA}\left|\prob\left\{\sqrt{n}\bSigma_n(\bx_{i})^{-1/2}(\bW\transpose\widetilde\bx_i  - \rho_n^{1/2}\bx_{i})\in A\right\} - \prob\left(\bz\in A\right)\right|\\
&\quad\lesssim 
\frac{d^{1/2}\|\bSigma_{n}(\bx_i)^{-1/2}\|_2}{(n\rho_n)^{1/2}\lambda_d(\bDelta_n)^{5/2}}\max\left\{\frac{(\log n\rho_n)^{1/2}}{\lambda_d(\bDelta_n)^2}, \frac{\kappa(\bDelta_n)}{\lambda_d(\bDelta_n)^2}, {\log n\rho_n} \right\},
\end{aligned}\\
&\begin{aligned}
&\sup_{A\in\calA}\left|\prob\left\{n\rho_n^{1/2}\bGamma_n(\bx_{i})^{-1/2}\bW_\bX\transpose(\bW^*[\bU_\bA]_{i*} - [\bU_\bP]_{i*})\in A\right\} - \prob\left(\bz\in A\right)\right|\\
&\quad\lesssim \frac{d^{1/2}\|\bGamma_n(\bx_i)^{-1/2}\|_2}{(n\rho_n)^{1/2}\lambda_d(\bDelta_n)^{5/2}}\max\left\{\frac{(\log n\rho_n)^{1/2}}{\lambda_d(\bDelta_n)}, \frac{1}{\lambda_d(\bDelta_n)^2}, {\log n\rho_n}\right\},
\end{aligned}
\end{align*}
where $\calA$ is the collection of all convex measurable sets in $\mathbb{R}^d$ and $\bz\sim\mathrm{N}_d(\zero_d, \eye_d)$. 
\end{theorem}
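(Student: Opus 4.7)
My plan is to deduce Theorem \ref{thm:ASE_Berry_Esseen_RDPG} from the general result Theorem \ref{thm:ASE_Berry_Esseen} by verifying that the $\mathrm{RDPG}(\rho_n^{1/2}\bX)$ model fits into the signal-plus-noise framework of Section \ref{sub:setup} and satisfies Assumptions \ref{assumption:incoherence}--\ref{assumption:spectral_norm_concentration}. Writing $\bP = \rho_n\bX\bX\transpose$, the rank of $\bP$ is $d$ and all its nonzero eigenvalues are positive, so $p = d$ and $q = 0$; in particular the negative part vanishes and both inequalities in Theorem \ref{thm:ASE_Berry_Esseen} reduce to their ``$+$'' versions. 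The noise $\bE = \bA - \bP$ has independent, mean-zero, uniformly bounded upper-triangular entries with $|[\bE]_{ij}|\leq 1$ and $\var([\bE]_{ij}) = \rho_n\bx_i\transpose\bx_j(1-\rho_n\bx_i\transpose\bx_j)\leq \rho_n$. Thus Assumption \ref{assumption:distribution} holds with the decomposition $\bE_1 = \bE$, $\bE_2 = \zero$, $B = 1$, and $\sigma^2 = 1$. Assumption \ref{assumption:incoherence} follows from $\bx_i\transpose\bx_j\in[0,1]$, which forces $\|\bx_i\|_2\leq 1$ and hence $\|\bX\|_{2\to\infty}\leq 1$. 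Assumption \ref{assumption:sparsity} is exactly the hypothesis $n\rho_n\gtrsim \log n$.

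Next I would verify the two concentration assumptions. For Assumption \ref{assumption:rowwise_concentration}, I would apply a Bernstein-type inequality to each coordinate of $\be_i\transpose\bE\bV$ (and similarly for $\bE^{(m)}$), using $|[\bE]_{ij}|\leq 1$ and $\sum_j\var([\bE]_{ij})[V_{j*}]_k^2\leq \rho_n\|\bV\|_{2\to\infty}^2\|\bV\|_{\mathrm{F}}^2/n\cdot\text{(adjustments)}$; combining with a union bound over the $d$ coordinates and over the $n$ choices of $m$ produces the claimed row-wise bound with $\varphi(x) \lesssim x + \tau$ for a logarithmic factor $\tau$ absorbed into constants, verified at the scale $n\rho_n\lambda_d(\bDelta_n)$ dictated by the assumption. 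Assumption \ref{assumption:spectral_norm_concentration} is the well-known spectral-norm concentration $\|\bE\|_2 \lesssim \sqrt{n\rho_n}$ with high probability for RDPG/stochastic-block-type adjacency matrices; this is established, e.g., by \cite{lei2015}, and the second part of the assumption ($32\kappa(\bDelta_n)\max\{\gamma,\varphi(\gamma)\}\leq 1$) is guaranteed by the hypothesis $\kappa(\bDelta_n)/\lambda_d(\bDelta_n)\cdot\{(n\rho_n)^{-1/2}\vee (\log(n\rho_n\lambda_d(\bDelta_n)^2))^{-1}\}\to 0$ for large enough $n$, since $\gamma\asymp (n\rho_n)^{-1/2}/\lambda_d(\bDelta_n)$.

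With the assumptions in place, I would then directly specialize the two Berry--Esseen bounds in Theorem \ref{thm:ASE_Berry_Esseen}. For the variance, the direct calculation
\[
\frac{1}{n\rho_n}\sum_{j=1}^n\expect([\bE]_{ij}^2)\bx_j\bx_j\transpose = \frac{1}{n}\sum_{j=1}^n\bx_i\transpose\bx_j(1-\rho_n\bx_i\transpose\bx_j)\bx_j\bx_j\transpose
\]
shows $\bSigma_{ni+}$ coincides with $\bSigma_n(\bx_i)$, and similarly $\bGamma_{ni+} = \bGamma_n(\bx_i)$. For the cubic remainder term, $|[\bE]_{ij}|\leq 1$ gives $\expect|[\bE]_{ij}|^3\leq \expect[\bE]_{ij}^2\leq \rho_n\bx_i\transpose\bx_j$, so by the trace identity $\sum_j(\bx_j)\transpose\bM\bx_j = n\,\mathrm{tr}(\bM\bDelta_n)$ applied with $\bM = \bDelta_n^{-1}\bSigma_{ni}^{-1}\bDelta_n^{-1}$, the third-moment term in \eqref{eqn:ASE_normality} is $O(d^{1/2}\|\bSigma_n(\bx_i)^{-1/2}\|_2/\{(n\rho_n)^{1/2}\lambda_d(\bDelta_n)^{3/2}\})$, which is absorbed into the first ``main'' term; the same reasoning works for \eqref{eqn:Eigenvector_normality}. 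Finally, since $\|\bX\|_{2\to\infty}\leq 1$ and $\|\bU_\bP\|_{2\to\infty}\leq \|\bX\|_{2\to\infty}/\sqrt{n\lambda_d(\bDelta_n)}$, and $\chi = \varphi(1) + 1/\lambda_d(\bDelta_n)\lesssim 1/\lambda_d(\bDelta_n)$, the prefactors in Theorem \ref{thm:ASE_Berry_Esseen} collapse to the clean forms stated in Theorem \ref{thm:ASE_Berry_Esseen_RDPG}.

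The main obstacle I anticipate is the careful verification of Assumption \ref{assumption:rowwise_concentration} with an explicit, sharp function $\varphi$; generic Bernstein estimates overshoot and destroy the optimality of the final bound, so one must be careful to split the deviation into a sub-Gaussian regime (controlled by $\|\bV\|_{\mathrm{F}}$) and a sub-exponential regime (controlled by $\|\bV\|_{2\to\infty}$), matching the normalization $n\rho_n\lambda_d(\bDelta_n)\|\bV\|_{2\to\infty}\varphi(\|\bV\|_{\mathrm{F}}/(\sqrt{n}\|\bV\|_{2\to\infty}))$ on the right-hand side. Once $\varphi$ is chosen, the assumption $n\rho_n\gtrsim\log n$ combined with the scaling condition on $\kappa(\bDelta_n)/\lambda_d(\bDelta_n)$ makes the Berry--Esseen remainder vanish and yields the stated normality.
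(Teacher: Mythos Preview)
Your overall plan is exactly right and matches the paper: verify Assumptions \ref{assumption:incoherence}--\ref{assumption:spectral_norm_concentration} in the RDPG setting and then invoke Theorem \ref{thm:ASE_Berry_Esseen}. Your treatment of Assumptions \ref{assumption:incoherence}, \ref{assumption:sparsity}, \ref{assumption:distribution}, the spectral-norm half of Assumption \ref{assumption:spectral_norm_concentration}, the identification of $\bSigma_{ni+}$ with $\bSigma_n(\bx_i)$, the cubic-moment bound via $\expect|[\bE]_{ij}|^3\le\expect[\bE]_{ij}^2$, and the simplification $\chi\lesssim 1/\lambda_d(\bDelta_n)$ are all correct and essentially identical to the paper.

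The genuine gap is in your choice of $\varphi$ for Assumption \ref{assumption:rowwise_concentration}. A Bernstein-type bound (even with the sub-Gaussian/sub-exponential split you propose) yields at best $\varphi(x)\asymp \frac{\log n}{n\rho_n\lambda_d(\bDelta_n)}+\frac{\sqrt{\log n}}{\sqrt{n\rho_n}\,\lambda_d(\bDelta_n)}\,x$ after setting $t\asymp\log n$ for the required tail probability. Evaluating at $\gamma\asymp (n\rho_n)^{-1/2}/\lambda_d(\bDelta_n)$ gives $\varphi(\gamma)\asymp \frac{\log n}{n\rho_n\lambda_d(\bDelta_n)}$, and hence $\kappa(\bDelta_n)\varphi(\gamma)\asymp \frac{\kappa(\bDelta_n)}{\lambda_d(\bDelta_n)}\cdot\frac{\log n}{n\rho_n}$. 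In the critical regime $n\rho_n\asymp\log n$ this is bounded below by $\kappa(\bDelta_n)/\lambda_d(\bDelta_n)\ge 1$, so the condition $32\kappa(\bDelta_n)\max\{\gamma,\varphi(\gamma)\}\le 1$ in Assumption \ref{assumption:spectral_norm_concentration} \emph{fails}, and the reduction to Theorem \ref{thm:ASE_Berry_Esseen} breaks down. The paper avoids this by proving a Bernoulli-specific Chernoff bound (Lemma \ref{lemma:Sharpened_concentration_EW}, a $d$-dimensional generalization of Lemma~7 in \cite{10.1214/19-AOS1854}) that gives the qualitatively different shape $\varphi(x)=\frac{C}{\lambda_d(\bDelta_n)\,\Log(1/x)}$. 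With this $\varphi$, one gets $\varphi(\gamma)\asymp \{\lambda_d(\bDelta_n)\log(n\rho_n\lambda_d(\bDelta_n)^2)\}^{-1}$, so $\kappa(\bDelta_n)\varphi(\gamma)\to 0$ is \emph{exactly} the second half of the theorem's hypothesis ${\kappa(\bDelta_n)}/{\lambda_d(\bDelta_n)}\cdot\{\log(n\rho_n\lambda_d(\bDelta_n)^2)\}^{-1}\to 0$. In other words, the logarithmic form of $\varphi$ is not a technicality but the reason the stated condition suffices; the Bernstein-style $\varphi$ you propose would require the strictly stronger assumption $n\rho_n=\omega(\log n)$.
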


Compared to the eigenvector limit theorems for random dot product graphs in \cite{athreya2016limit,cape2019signal,tang2018,xie2019efficient}, Theorem \ref{thm:ASE_Berry_Esseen_RDPG} requires a much weaker sparsity condition on $\rho_n$. Specifically, the authors \cite{athreya2016limit} explored the entrywise eigenvector limits by assuming that $\rho_n\equiv 1$ and the minimal sparsity condition in \cite{cape2019signal,tang2018,xie2019efficient} is $n\rho_n = \omega((\log n)^4)$. In contrast, in Theorem \ref{thm:ASE_Berry_Esseen_RDPG}, we only require that $n\rho_n = \Omega(\log n)$ if the eigenvalues of $\bDelta_n$ are bounded away from $0$ and $\infty$. As mentioned in Section \ref{sub:overview}, our sparsity assumption $n\rho_n = \Omega(\log n)$ is minimal because $\bA$ no longer concentrates around $\bP$ in spectral norm when $n\rho_n = o(\log n)$ \cite{tang2018}. 

Next, we establish the two-to-infinity norm perturbation bounds for the eigenvectors of random dot product graphs in Corollary \ref{corr:Two_to_infinity_norm_eigenvector_bound} below. 
\begin{corollary}\label{corr:Two_to_infinity_norm_eigenvector_bound}
Suppose $\bA\sim\mathrm{RDPG}(\rho_n^{1/2}\bX)$ and the conditions of Theorem \ref{thm:ASE_Berry_Esseen} hold. Denote $\bDelta_n = (1/n)\bX\transpose{}\bX$. 
Then there exists an absolute constant $c_0 > 0$, such that given any fixed $c > 0$, 
\begin{align*}
\|\bU_\bA - \bA\bU_\bP\bS_\bP^{-1}\bW^*\|_{2\to\infty}&\lesssim_c \frac{\|\bU_\bP\|_{2\to\infty}}{n \rho_n \lambda_d(\bDelta_n)^2}\max\left\{\frac{(\log n)^{1/2}}{\lambda_d(\bDelta_n)}, \frac{1}{ \lambda_d(\bDelta_n)^2},{\log n} \right\},\\
\|\bU_\bA -  \bU_\bP\bW^*\|_{2\to\infty}&\lesssim_c 
\|\bU_\bA - \bA\bU_\bP\bS_\bP^{-1}\bW^*\|_{2\to\infty} + \frac{(\log n)^{1/2}\|\bU_\bP\|_{2\to\infty}}{(n\rho_n)^{1/2}\lambda_d(\bDelta_n)},
\\
\left\|\widetilde{\bX}\bW - \frac{\bA\bX(\bX\transpose\bX)^{-1}}{\rho_n^{1/2}}\right\|_{2\to\infty}
&\lesssim_c
\frac{\|\bU_\bP\|_{2\to\infty}}{(n\rho_n)^{1/2}\lambda_d(\bDelta_n)^{2}}\max\left\{
\frac{(\log n)^{1/2}}{\lambda_d(\bDelta_n)^2}, \frac{\kappa(\bDelta_n)}{\lambda_d(\bDelta_n)^2}, \log n
\right\},\\
\|\widetilde\bX\bW - \rho_n^{1/2}\bX\|_{2\to\infty}
&\lesssim_c \left\|\widetilde{\bX}\bW - \frac{\bA\bX(\bX\transpose\bX)^{-1}}{\rho_n^{1/2}}\right\|_{2\to\infty} + \frac{(\log n)^{1/2}\|\bU_\bP\|_{2\to\infty}}{\lambda_d(\bDelta_n)^{1/2}}
\end{align*}
with probability at least $1 - c_0n^{-c}$ for sufficiently large $n$. 
\end{corollary}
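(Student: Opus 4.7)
\medskip
\noindent\textbf{Proof proposal.} The plan is to reduce the corollary entirely to Theorem \ref{thm:eigenvector_deviation} applied to the RDPG model, since the first and third claimed bounds are literally the two instances of the $\|\cdot\|_{2\to\infty}$ conclusion of Theorem \ref{thm:eigenvector_deviation} (with $q=0$ as the non-zero eigenvalues of $\bP$ are positive). To invoke that theorem I will verify that Assumptions \ref{assumption:incoherence}--\ref{assumption:spectral_norm_concentration} hold for $\bA\sim\mathrm{RDPG}(\rho_n^{1/2}\bX)$ with $\zeta\wedge\xi$ chosen arbitrarily large, so that the exceptional event probability $c_0 n^{-\zeta\wedge\xi}$ can be made $\leq c_0 n^{-c}$ for any prescribed $c>0$. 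Assumption \ref{assumption:incoherence} is part of the hypothesis via the conditions of Theorem \ref{thm:ASE_Berry_Esseen}; Assumption \ref{assumption:sparsity} is immediate from $n\rho_n\gtrsim\log n$. Assumption \ref{assumption:distribution} holds with the decomposition $\bE_1=\bE$, $\bE_2=\mathbf{0}$, because $|A_{ij}-\rho_n\bx_i\transpose\bx_j|\leq 1$ and $\var(A_{ij})\leq\rho_n\bx_i\transpose\bx_j\lesssim\rho_n$. Assumption \ref{assumption:spectral_norm_concentration} on $\|\bE\|_2$ for a random-graph adjacency matrix is standard (e.g.\ \cite{lei2015}) and delivers $\zeta$ arbitrarily large once $n\rho_n\gtrsim\log n$; the smallness condition $32\kappa(\bDelta_n)\max\{\gamma,\varphi(\gamma)\}\leq 1$ will follow from the hypothesis $\kappa(\bDelta_n)/\lambda_d(\bDelta_n)\cdot(n\rho_n)^{-1/2}\to 0$ combined with the form of $\varphi$ identified next.

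Assumption \ref{assumption:rowwise_concentration} is the heart of the verification. The plan is to apply a scalar Bernstein inequality to each coordinate $\be_i\transpose\bE\bV\be_k$ and $\be_i\transpose\bE^{(m)}\bV\be_k$ (using $|[\bE]_{ij}|\leq 1$ and $\var([\bE]_{ij})\lesssim\rho_n$), then sum over $k\in[d]$ after a standard deviation-level choice $t\asymp (1+\xi)\log n$. This yields Assumption \ref{assumption:rowwise_concentration} with an explicit $\varphi$ essentially of the form $\varphi(y)\asymp\sqrt{y}+y^{2}$ (absorbing a log factor), valid with any pre-specified $\xi$. Moving a row and column to zero, which defines $\bE^{(m)}$, only sharpens each Bernstein bound, so the same $\varphi$ works. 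Plugging these verified assumptions into Theorem \ref{thm:eigenvector_deviation} produces the first and third inequalities of the corollary with probability at least $1-c_0 n^{-c}$.

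The second and fourth inequalities follow by combining the already-proved first and third with a linearization-plus-Bernstein argument. For the second, write
\[
\bU_\bA-\bU_\bP\bW^*
=\bigl(\bU_\bA-\bA\bU_\bP\bS_\bP^{-1}\bW^*\bigr)+\bE\bU_\bP\bS_\bP^{-1}\bW^*,
\]
using $\bP\bU_\bP=\bU_\bP\bS_\bP$. The first summand has been bounded; for the second, apply Bernstein row by row to $\be_i\transpose\bE\bU_\bP\bS_\bP^{-1}$ (coordinatewise in $k\in[p]$, with $\|\bS_\bP^{-1}\|_2\lesssim 1/(n\rho_n\lambda_d(\bDelta_n))$ and $\|[\bU_\bP]_{*k}\|_2=1$), then union-bound over $i\in[n]$. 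After using $\|\bU_\bP\|_{2\to\infty}\gtrsim\sqrt{d/n}$ to absorb the variance term, the resulting entrywise bound is
$(\log n)^{1/2}\|\bU_\bP\|_{2\to\infty}/\{(n\rho_n)^{1/2}\lambda_d(\bDelta_n)\}$, exactly the claimed second term. For the fourth inequality, use decomposition \eqref{eqn:keystone_decomposition} (with $q=0$):
\[
\widetilde\bX\bW-\rho_n^{1/2}\bX=\rho_n^{-1/2}\bE\bX(\bX\transpose\bX)^{-1}+\bigl\{\widetilde\bX\bW-\rho_n^{-1/2}\bA\bX(\bX\transpose\bX)^{-1}\bigr\}.
\]
The identity $\rho_n^{-1/2}\bX(\bX\transpose\bX)^{-1}=\bU_\bP|\bS_\bP|^{-1/2}\bW_\bX$ reduces the linear term to row-wise control of $\bE\bU_\bP|\bS_\bP|^{-1/2}$; Bernstein-plus-union gives the extra term $(\log n)^{1/2}\|\bU_\bP\|_{2\to\infty}/\lambda_d(\bDelta_n)^{1/2}$ (the missing factor $(n\rho_n)^{1/2}$ relative to the second inequality comes from the square-root scaling of $|\bS_\bP|^{-1/2}$).

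The main obstacle is purely bookkeeping: identifying the function $\varphi$ in Assumption \ref{assumption:rowwise_concentration} with enough precision that the smallness condition in Assumption \ref{assumption:spectral_norm_concentration} and the dependence on $\log n$ in Theorem \ref{thm:eigenvector_deviation} are both satisfied under the mildest hypothesis $n\rho_n\gtrsim\log n$. Everything else is a mechanical combination of Bernstein's inequality, Weyl-type perturbation, and the triangle inequality.
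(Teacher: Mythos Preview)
Your overall architecture matches the paper's proof: verify Assumptions~\ref{assumption:incoherence}--\ref{assumption:spectral_norm_concentration} for the RDPG model with $\xi=\zeta=c$, apply Theorem~\ref{thm:eigenvector_deviation} (with $t=(c+1)\log n$ and a union bound over $m\in[n]$) to obtain the first and third inequalities, then handle the second and fourth via the decompositions you wrote together with a row-wise Bernstein bound and a union bound. Your treatment of the second and fourth inequalities is exactly what the paper does.

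The genuine gap is in your verification of Assumption~\ref{assumption:rowwise_concentration}, and through it the smallness requirement $32\kappa(\bDelta_n)\max\{\gamma,\varphi(\gamma)\}\le 1$ of Assumption~\ref{assumption:spectral_norm_concentration}. A scalar (or vector) Bernstein bound on $\be_i\transpose\bE\bV$ with $t\asymp\log n$ does \emph{not} give $\varphi(y)\asymp\sqrt{y}+y^{2}$; it gives
\[
\varphi(y)\;\asymp\;\frac{1}{\lambda_d(\bDelta_n)}\sqrt{\frac{\log n}{n\rho_n}}\,\max\!\left(y,\ \sqrt{\tfrac{\log n}{n\rho_n}}\right).
\]
Plugging in $\gamma\asymp(n\rho_n)^{-1/2}\lambda_d(\bDelta_n)^{-1}$ yields $\varphi(\gamma)\asymp(\log n)/\{n\rho_n\,\lambda_d(\bDelta_n)\}$ whenever $\lambda_d(\bDelta_n)$ is bounded away from~$0$. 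In the boundary regime $n\rho_n\asymp\log n$ this is $\asymp\lambda_d(\bDelta_n)^{-1}$, so $\kappa(\bDelta_n)\varphi(\gamma)\gtrsim 1$ and Assumption~\ref{assumption:spectral_norm_concentration} \emph{fails}; you then cannot invoke Theorem~\ref{thm:eigenvector_deviation} at all. This is not bookkeeping: Bernstein is genuinely too weak here.

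The paper closes this gap with a sharpened row-concentration inequality specific to centered Bernoulli sums (Lemma~\ref{lemma:Sharpened_concentration_EW}), proved via the symmetric dilation trick and a matrix Chernoff bound with a careful moment-generating-function estimate. That lemma gives the much slower-growing
\[
\varphi(x)\;\propto\;\frac{1}{\lambda_d(\bDelta_n)\,\Log(1/x)},
\]
so that $\varphi(\gamma)\asymp\{\lambda_d(\bDelta_n)\log(n\rho_n\lambda_d(\bDelta_n)^2)\}^{-1}\to 0$ under the RDPG hypotheses, and Assumption~\ref{assumption:spectral_norm_concentration} is satisfied even when $n\rho_n\asymp\log n$. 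Your plan as written therefore works only when $n\rho_n=\omega(\log n)$; to cover the full range claimed by the corollary you need this sharper $\varphi$.
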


Corollary \ref{corr:Two_to_infinity_norm_eigenvector_bound} provides a sharp concentration bound for $\|\bU_\bA - \bU_\bP\bW^*\|_{2\to\infty}$ compared to some recently obtained results. Assuming that $\lambda_d(\bDelta_n)$ is bounded away from $0$ for simplicity, we see that Corollary \ref{corr:Two_to_infinity_norm_eigenvector_bound} leads to 
$\|\bU_\bA - \bU_\bP\bW^*\|_{2\to\infty}\lesssim_{\lambda_d(\bDelta_n)} \sqrt{{(\log n)}/{(n\rho_n)}}\|\bU_\bP\|_{2\to\infty}$
with high probability. This also coincides with the concentration bound obtained in \cite{lei2019unified}. 
In \cite{cape2019signal} and \cite{doi:10.1080/01621459.2020.1751645}, it has been shown that 
$\|\bU_\bA - \bU_\bP\bW^*\|_{2\to\infty}\lesssim_{\lambda_d(\bDelta_n)} \sqrt{{(\log n)^{2\xi}}/{(n\rho_n)}}\|\bU_\bP\|_{2\to\infty}$
with high probability under a stronger assumption that $n\rho_n = \omega((\log n)^{2\xi})$ for some $\xi > 1$. Our result is tighter than the above large probability bound by a $(\log n)^{\xi - 1/2}$ factor. In \cite{10.1214/19-AOS1854}, the authors proved that 
$\|\bU_\bA - \bU_\bP\bW^*\|_{2\to\infty}\lesssim_{\lambda_d(\bDelta_n)} \|\bU_\bP\|_{2\to\infty}$
with high probability, which coincides with Corollary \ref{corr:Two_to_infinity_norm_eigenvector_bound} when $n\rho_n\asymp \log n$ but deteriorates when $n\rho_n = \omega(\log n)$. 

We also remark that the concentration bound on $\|\widetilde\bX\bW - \rho_n^{1/2}\bX\|_{2\to\infty}$ plays a fundamental role in establishing the entrywise limit theorem for the one-step estimator in Section \ref{sub:entrywise_limit_theorem_for_the_one_step_estimator} next. 

\subsection{One-step estimator for random dot product graphs} 
\label{sub:entrywise_limit_theorem_for_the_one_step_estimator}

We continue the investigation of the entrywise estimation of the eigenvectors of random dot product graphs. As observed in \cite{xie2019efficient}, the adjacency spectral embedding (the scaled eigenvector matrix $\widetilde{\bX}$) can be further refined by a one-step procedure implemented in the following vertex-wise fashion. 
\begin{definition}
Let $\bA\sim\mathrm{RDPG}(\rho_n^{1/2}\bX)$ and $\widetilde\bX = [\widetilde\bx_1,\ldots,\widetilde\bx_n]\transpose\in\mathbb{R}^{n\times d}$ be the adjacency spectral embedding of $\bA$ into $\mathbb{R}^d$. Then the one-step refinement of $\widetilde\bX$ is the $n\times d$ matrix $\widehat\bX = [\widehat\bx_1,\ldots,\widehat\bx_n]\transpose{}$, whose $i$th row $\widehat\bx_i$ is given by
\begin{align}\label{eqn:one_step_estimator}
\widehat\bx_i = \widetilde\bx_i + \left\{ \sum_{j = 1}^n\frac{\widetilde\bx_{j}\widetilde\bx_{j}\transpose}{\widetilde\bx_{i}\transpose\widetilde\bx_{j}(1 - \widetilde\bx_i\transpose\widetilde\bx_j)}\right\}^{-1}\left\{ \sum_{j = 1}^n\frac{(A_{ij} - \widetilde\bx_i\transpose\widetilde\bx_j)\widetilde\bx_j}{\widetilde\bx_{i}\transpose\widetilde\bx_{j}(1 - \widetilde\bx_i\transpose\widetilde\bx_j)}\right\},\quad i = 1,2,\ldots,n.
\end{align}
\end{definition}
The one-step refinement above is motivated by the one-step estimator in the classical M-estimation theory for parametric models (see, for example, Section 5.7 in \cite{van2000asymptotic}). In short, under mild conditions, given a root-n consistent initial estimator, the one-step refinement achieves the information lower bound in a parametric model asymptotically. The same idea also applies to the random dot product graph model. Denote $\ell_\bA(\bX)$ the log-likelihood function of $\mathrm{RDPG}(\rho_n^{1/2}\bX)$. Then a straightforward computation shows that the score function and the Fisher information matrix with regard to $\bx_i$ are
\[
\nabla_{\bx_i}\ell_\bA(\bX) = \sum_{j = 1}^n\frac{(A_{ij} - \rho_n\bx_i\transpose{}\bx_j)\bx_j}{\bx_i\transpose{}\bx_j(1 - \rho_n\bx_i\transpose{}\bx_j)}\quad\text{and}\quad
{\calI}_i(\bX) = \rho_n\sum_{j = 1}^n\frac{\bx_j\bx_j\transpose{}}{\bx_i\transpose{}\bx_j(1 - \rho_n\bx_i\transpose{}\bx_j)}.
\]
Given the adjacency spectral embedding $\widetilde\bX$ as an initial guess, the right-hand side of \eqref{eqn:one_step_estimator} is precisely the updating rule of the Newton-Raphson algorithm for $\rho_n^{1/2}\bx_i$ initialized at $\widetilde\bx_i$, with the Hessian replaced by the negative Fisher information matrix. 

Below, Theorem \ref{thm:Berry_Esseen_OSE_multivariate} presents
the Berry-Esseen bound for the rows of the one-step refinement $\widehat{\bX}$ of the adjacency spectral embedding (the scaled eigenvectors $\widetilde\bX$). 
\begin{theorem}\label{thm:Berry_Esseen_OSE_multivariate}
Let $\bA\sim\mathrm{RDPG}(\rho_n^{1/2}\bX)$ and suppose the conditions of Theorem \ref{thm:ASE_Berry_Esseen} hold. Further assume that there exists a constant $\delta > 0$ such that $\min_{i,j\in [n]}\{\bx_i\transpose\bx_j\wedge(1 - \bx_i\transpose\bx_j)\}\geq \delta$. 
Denote $\bDelta_n = (1/n)\bX\transpose\bX$ and 
$\bG_n(\bx_{i}) = (1/n)\sum_{j = 1}^n{\bx_{j}\bx_{j}\transpose}\{\bx_{i}\transpose\bx_{j}(1 - \rho_n\bx_{i}\transpose\bx_{j})\}^{-1}$
for each $i\in [n]$. 
If 
\[
\frac{1}{(n\rho_n)\lambda_d(\bDelta_n)^{5/2}}\max\left\{
\frac{(\log n)^{1/2}}{\lambda_d(\bDelta_n)^2}, \frac{\kappa(\bDelta_n)}{\lambda_d(\bDelta_n)^2}, \log n
\right\} \to 0,
\]
then 
for each fixed index $i\in [n]$ and for all sufficiently large $n$,
\begin{align}\label{eqn:OSE_normality}
\begin{aligned}
&\sup_{A\in\calA}\left|
\prob\left\{\sqrt{n}\bG_n(\bx_{i})^{1/2}(\bW\transpose{}\widehat{\bx}_i - \rho_n^{1/2}\bx_{i})\in A\right\}
 - \prob\left(\bz\in A\right)
\right|\\
&\quad\lesssim \frac{d^{1/2}}{n\rho_n^{1/2}\delta^8\lambda_d(\bDelta_n)^{9/2}}\max\left\{
\frac{\log n\rho_n}{\lambda_d(\bDelta_n)^4}, \frac{\kappa(\bDelta_n)^2}{\lambda_d(\bDelta_n)^4}, (\log n\rho_n)^2
\right\},
\end{aligned}
\end{align}
where $\calA$ is the set of all convex measurable sets in $\mathbb{R}^d$ and $\bz\sim\mathrm{N}_d(\zero_d, \eye_d)$. 
\end{theorem}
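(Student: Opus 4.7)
The plan is to reduce the distributional statement for the plug-in one-step estimator $\widehat{\bx}_i$ to a distributional statement for an oracle version $\widehat{\bx}_i^\star$ that uses the true latent positions, and then invoke a multivariate Berry-Esseen theorem on $\widehat{\bx}_i^\star$. Concretely, I would define
\[
\widehat{\bx}_i^\star = \rho_n^{1/2}\bx_i + \frac{1}{n\rho_n^{1/2}}\bG_n(\bx_i)^{-1}\sum_{j=1}^n \frac{(A_{ij}-\rho_n\bx_i\transpose\bx_j)\bx_j}{\bx_i\transpose\bx_j(1-\rho_n\bx_i\transpose\bx_j)},
\]
which, being the oracle Newton step applied at $\rho_n^{1/2}\bx_i$ itself, uses the exact Fisher information and score. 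The strategy is: (i) apply a multivariate Berry-Esseen bound to $\sqrt{n}\bG_n(\bx_i)^{1/2}(\widehat{\bx}_i^\star - \rho_n^{1/2}\bx_i)$; (ii) show $\|\bW\transpose\widehat{\bx}_i - \widehat{\bx}_i^\star\|_2 = o_\prob(n^{-1/2})$ at the precise rate claimed; (iii) transfer the oracle bound via the standard Gaussian anti-concentration lemma for convex sets, at a cost proportional to $\|\bG_n(\bx_i)^{1/2}\|_2$ times the remainder size.

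For step (i), the summands are independent mean-zero $d$-vectors with aggregate covariance exactly $\eye_d$ after the $\bG_n(\bx_i)^{1/2}$ prefactor, because $\var(A_{ij}) = \rho_n\bx_i\transpose\bx_j(1-\rho_n\bx_i\transpose\bx_j)$ cancels the squared denominator. Bentkus's multivariate Berry-Esseen bound over the convex class $\calA$ then gives an error of the Lyapunov form $C d^{1/2} n^{-3/2}\sum_j \expect\|\bG_n(\bx_i)^{-1/2}\cdots\|_2^3$, and the uniform bounds $\min_{i,j}\bx_i\transpose\bx_j\wedge(1-\bx_i\transpose\bx_j)\geq\delta$ and $\|\bX\|_{2\to\infty} = O(1)$ together with $\|\bG_n(\bx_i)^{-1}\|_2 \lesssim 1/\lambda_d(\bDelta_n)$ reduce this to a term dominated by the right-hand side of \eqref{eqn:OSE_normality}.

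For step (ii), write $\widehat{\bH}_i = \sum_j \widetilde\bx_j\widetilde\bx_j\transpose/[\widetilde\bx_i\transpose\widetilde\bx_j(1-\widetilde\bx_i\transpose\widetilde\bx_j)]$ and $\widehat{\bg}_i = \sum_j (A_{ij}-\widetilde\bx_i\transpose\widetilde\bx_j)\widetilde\bx_j/[\widetilde\bx_i\transpose\widetilde\bx_j(1-\widetilde\bx_i\transpose\widetilde\bx_j)]$, so that $\bW\transpose\widehat{\bx}_i - \rho_n^{1/2}\bx_i = (\bW\transpose\widetilde\bx_i - \rho_n^{1/2}\bx_i) + \bW\transpose\widehat{\bH}_i^{-1}\widehat{\bg}_i$. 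Substituting $\widetilde\bx_j = (\bW\transpose)^{-1}\rho_n^{1/2}\bx_j + \bdelta_j$ with $\bdelta_j$ the row-wise perturbation, I would Taylor-expand the reciprocal weight $[\widetilde\bx_i\transpose\widetilde\bx_j(1-\widetilde\bx_i\transpose\widetilde\bx_j)]^{-1}$ to first order around its population counterpart. The zeroth-order term in $\widehat{\bH}_i^{-1}\widehat{\bg}_i$ equals the oracle Newton correction applied to the residual $-\bH_i(\widetilde\bx_i - \rho_n^{1/2}\bW^{-\top}\bx_i)$, which exactly cancels the linear-in-$\bdelta_i$ term $(\bW\transpose\widetilde\bx_i - \rho_n^{1/2}\bx_i)$ after another Taylor step of the score — this is the hallmark of the one-step estimator's efficiency. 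The remaining contributions are quadratic in $\|\widetilde\bX\bW - \rho_n^{1/2}\bX\|_{2\to\infty}$, which by the last display of Corollary \ref{corr:Two_to_infinity_norm_eigenvector_bound} is $O_\prob((\log n)^{1/2}/\lambda_d(\bDelta_n)^{1/2})$, together with a noise$\times$plug-in cross-term $\sum_j (A_{ij}-\rho_n\bx_i\transpose\bx_j)\bdelta_j$.

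The main obstacle is this noise-cross-term, because $\widetilde\bx_j$ depends on the entire $\bA$ and in particular on the $i$th row $\{A_{ij}\}_{j}$, so independence fails. I would handle it by a leave-one-out decoupling: replace $\widetilde\bx_j$ with its leave-one-out analogue $\widetilde\bx_j^{(i)}$ computed from the matrix with the $i$th row and column zeroed out, apply the entrywise bound of Theorem \ref{thm:eigenvector_deviation} to both $\widetilde\bx_j$ and $\widetilde\bx_j^{(i)}$ (the difference is negligible by the same eigenvector perturbation bounds applied to $\bA - \bA^{(i)}$), and then use Assumption \ref{assumption:rowwise_concentration} together with sub-Gaussian concentration on the independent cross-term $\sum_j (A_{ij}-\rho_n\bx_i\transpose\bx_j)\bdelta_j^{(i)}$. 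Summing all error contributions and comparing with the rate $d^{1/2}/(n\rho_n^{1/2})$ of the oracle Berry-Esseen bound, the plug-in remainder matches the polylog$(n\rho_n)/\lambda_d(\bDelta_n)^{O(1)}$ inflation on the right-hand side of \eqref{eqn:OSE_normality} under the hypothesized scaling, completing the proof.
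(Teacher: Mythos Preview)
Your proposal is essentially correct and, for the hard part (step~(ii), controlling $\bW\transpose\widehat{\bx}_i-\widehat{\bx}_i^\star$), follows the same route as the paper: a Taylor expansion of the score in the plug-in arguments, the Newton cancellation of the linear-in-$\bdelta_i$ piece, and a leave-one-out decoupling for the noise$\times$plug-in cross term. This is exactly the content of the paper's Theorem~\ref{thm:asymptotic_normality_OS} and its proof (Lemmas~\ref{lemma:R12k_analysis}--\ref{lemma:Taylor_expansion_phi}).

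Where you differ is in the Berry--Esseen mechanism. You split into an oracle statistic plus remainder and propose Bentkus on the oracle followed by Gaussian anti-concentration on convex sets to absorb the remainder. The paper instead invokes a single result---Shao and Zhang's Berry--Esseen bound for nonlinear statistics (Theorem~\ref{thm:Berry_Esseen_Multivariate})---which takes $\bT=\sum_j\bxi_j+\bD$ and directly bounds the Kolmogorov distance over $\calA$ by $d^{1/2}\gamma+\expect\{\|\sum_j\bxi_j\|_2\Delta\}+\prob(\calO^c)$, with $\Delta$ a deterministic high-probability bound on $\|\bD\|_2$. Since $\Delta$ is constant the paper sets $\Delta^{(j)}=\Delta$ and the cross term vanishes; the middle term is then just $d^{1/2}\Delta$. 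Your route would yield the same order (possibly with $d^{1/4}$ in place of $d^{1/2}$ via Nazarov), so both approaches land on~\eqref{eqn:OSE_normality}. The paper's packaging is a bit cleaner because it avoids the separate anti-concentration step.

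Two small points to tighten: (a) your claim that the residual after cancellation is ``quadratic in $\|\widetilde\bX\bW-\rho_n^{1/2}\bX\|_{2\to\infty}$ plus the noise cross-term'' omits a further \emph{linear}-in-$\bdelta_j$ term with deterministic weights, namely $(n\rho_n^{1/2})^{-1}\sum_j \rho_n\bx_j\bx_i\transpose(\rho_n^{-1/2}\bW\transpose\widetilde\bx_j-\bx_j)/[\bx_i\transpose\bx_j(1-\rho_n\bx_i\transpose\bx_j)]$; the paper handles it separately (Lemma~\ref{lemma:R12k_analysis}) via the Frobenius bound on $\widetilde\bX\bW-\rho_n^{-1/2}\bA\bX(\bX\transpose\bX)^{-1}$, and it is easier than the noise cross-term. (b) Your quoted size for $\|\widetilde\bX\bW-\rho_n^{1/2}\bX\|_{2\to\infty}$ drops the $\|\bU_\bP\|_{2\to\infty}\lesssim n^{-1/2}$ factor from Corollary~\ref{corr:Two_to_infinity_norm_eigenvector_bound}; with it restored the quadratic contribution is indeed negligible at the target rate.
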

\begin{remark}
Theorem \ref{thm:Berry_Esseen_OSE_multivariate} generalizes Theorem 5 in \cite{xie2019efficient} in the following aspects: First, we allow $n\rho_n$ to grow at $\omega(\log n)$ when $\lambda_d(\bDelta_n) = \Omega(1)$, which is significantly weaker than the assumption $n\rho_n^5 = \omega((\log n)^2)$ in \cite{xie2019efficient}; Secondly, we have the least requirement on the embedding dimension $d$ and the latent position matrix $\bX$, whereas the authors of \cite{xie2019efficient} assumed that $d$ is fixed and $\bX$ satisfies a Glivenko-Cantelli type condition. In addition, Theorem \ref{thm:Berry_Esseen_OSE_multivariate} is also stated in terms of a Berry-Esseen type bound that only depends on $n\rho_n$, the embedding dimension $d$, the eigenvalues of $\bDelta_n$, and a constant $\delta$ governing the entries of $\bX\bX\transpose$. Hence, the rows of $\widehat{\bX}\bW - \rho_n^{1/2}\bX$ can be approximated by a multivariate Gaussian as long as the right-hand side of \eqref{eqn:OSE_normality} converges to $0$. 
\end{remark}

The authors of \cite{xie2019efficient} have shown that the covariance matrix $\bG_n(\bx_i)^{-1}$ for the rows of $\widehat{\bX}$ satisfies $\bG_n(\bx_i)^{-1}\preceq \bSigma_n(\bx_i)$. Consequently, the one-step refinement of $\widetilde\bX$ reduces the asymptotic variance of the rows of the scaled eigenvectors $\widetilde\bX$ in spectra. This result is particularly useful in stochastic block models whose block probability matrix is rank-deficient (see Section \ref{sec:simulation_study} below for a numerical example). 

Theorem \ref{thm:asymptotic_normality_OS} below provides a row-wise concentration bound for the one-step refinement $\widehat{\bX}$ and is instrumental towards establishing Theorem \ref{thm:asymptotic_normality_OS}. It also generalizes Theorem 4 in \cite{xie2019efficient}.
\begin{theorem}\label{thm:asymptotic_normality_OS}
Let $\bA\sim\mathrm{RDPG}(\rho_n^{1/2}\bX)$ and assume the conditions of Theorem \ref{thm:Berry_Esseen_OSE_multivariate} hold. Then 
\begin{align}
\label{eqn:OSE_main_decomposition}
\bG_n(\bx_{i})^{1/2}(\bW\transpose{}\widehat{\bx}_i - \rho_n^{1/2}\bx_{i})
& = \frac{1}{n\rho_n^{1/2}}\sum_{j = 1}^n\frac{(A_{ij} - \rho_n\bx_{i}\transpose{}\bx_{j})}{\bx_{i}\transpose{}\bx_{j}(1 - \rho_n\bx_i\transpose{}\bx_{j})}\bG_n(\bx_{i})^{-1/2}\bx_{j} + \widehat{\br}_i,
\end{align}
where, given any fixed $c > 0$, 
for all 
$t\geq 1$, $t\lesssim \log n$, and sufficiently large $n$,
the remainder $\widehat{\br}_i$ satisfies
\[
\|\widehat{\br}_i\|_2\lesssim_c \frac{1}{n\rho_n^{1/2}\delta^8\lambda_d(\bDelta_n)^{9/2}}\max\left\{
\frac{t}{\lambda_d(\bDelta_n)^4}, \frac{\kappa(\bDelta_n)^2}{\lambda_d(\bDelta_n)^4}, t^2
\right\}
\]
with probability at least $1 - c_0n^{-c} - c_0e^{-t}$ for some absolute constant $c_0 > 0$. 
\end{theorem}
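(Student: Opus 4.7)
The strategy is a second-order Taylor expansion of the one-step update around the true latent position $\rho_n^{1/2}\bx_i$, combined with a Newton--Raphson cancellation and the sharp two-to-infinity bounds already supplied by Corollary \ref{corr:Two_to_infinity_norm_eigenvector_bound}. First, I would exploit the orthogonal invariance of every inner product $\widetilde\bx_i\transpose\widetilde\bx_j$ appearing in \eqref{eqn:one_step_estimator} to write, with $\bar\bX := \widetilde\bX\bW$,
\[
\bW\transpose\widehat\bx_i = \bar\bx_i + \widehat\bG_n^{-1}\widehat\bs_i,\qquad
\widehat\bG_n = \frac{1}{n}\sum_{j=1}^n\frac{\bar\bx_j\bar\bx_j\transpose}{\bar\bx_i\transpose\bar\bx_j(1-\bar\bx_i\transpose\bar\bx_j)},\quad
\widehat\bs_i = \frac{1}{n}\sum_{j=1}^n\frac{(A_{ij}-\bar\bx_i\transpose\bar\bx_j)\bar\bx_j}{\bar\bx_i\transpose\bar\bx_j(1-\bar\bx_i\transpose\bar\bx_j)},
\]
so the entire analysis reduces to comparing $\bar\bX$ with $\rho_n^{1/2}\bX$.

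Let $\bs_i^\star$ and $\bG_n^\star$ denote these quantities when $\bar\bX$ is replaced by $\rho_n^{1/2}\bX$. Direct substitution gives $\bG_n^\star = \bG_n(\bx_i)$, and $\bG_n(\bx_i)^{1/2}(\bG_n^\star)^{-1}\bs_i^\star$ already coincides with the leading term claimed in \eqref{eqn:OSE_main_decomposition}. At the truth, the partial Jacobian $\partial_{\bar\bx_i}\widehat\bs_i$ equals $-\bG_n^\star$ plus a mean-zero Bernstein-type fluctuation, so Taylor expansion in $\bar\bx_i$ and in $\{\bar\bx_j\}_{j\neq i}$ yields
\[
\widehat\bs_i = \bs_i^\star - \bG_n^\star\bdelta_i + \br_i^{(1)},\qquad \bdelta_i := \bar\bx_i - \rho_n^{1/2}\bx_i,
\]
where $\br_i^{(1)}$ absorbs the quadratic remainder together with the linear effect of $\{\bar\bx_j - \rho_n^{1/2}\bx_j\}_{j\neq i}$. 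Substituting into the previous display, the linear error $\bdelta_i$ cancels by the Newton--Raphson property, leaving
\[
\bW\transpose\widehat\bx_i - \rho_n^{1/2}\bx_i = (\bG_n^\star)^{-1}\bs_i^\star + \bigl\{\widehat\bG_n^{-1} - (\bG_n^\star)^{-1}\bigr\}\bs_i^\star + \widehat\bG_n^{-1}\br_i^{(1)} + \bigl\{\eye_d - \widehat\bG_n^{-1}\bG_n^\star\bigr\}\bdelta_i,
\]
whose first term produces the announced linear approximation after multiplication by $\bG_n(\bx_i)^{1/2}$.

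The concentration bound on $\widehat\br_i$ then assembles four ingredients: (a) $\|\bdelta_i\|_2\leq\|\bar\bX - \rho_n^{1/2}\bX\|_{2\to\infty}$ is controlled at the two-to-infinity rate of Corollary \ref{corr:Two_to_infinity_norm_eigenvector_bound}; (b) $\|\bs_i^\star\|_2$ is a sum of $n$ independent mean-zero bounded vectors and is of order $(\log n/n)^{1/2}$ up to $\delta$-factors by Bernstein; (c) $\|\widehat\bG_n - \bG_n^\star\|_2$ is controlled by Taylor-expanding the reciprocals once the denominators are shown to stay above $\delta/2$ via (a) and the lower bound $\min_{i,j}\{\bx_i\transpose\bx_j\wedge(1-\bx_i\transpose\bx_j)\}\geq\delta$; (d) the remainder $\|\br_i^{(1)}\|_2$ expands into sums of products of $\bdelta$-factors with either deterministic vectors or centered noise vectors. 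Combining with $\|\widehat\bG_n^{-1}\|_2\asymp\lambda_d(\bDelta_n)^{-1}$ and tracking powers of $\delta$ and $\lambda_d(\bDelta_n)$ produces the stated bound on $\|\widehat\br_i\|_2$.

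The delicate part is item (d): the reciprocals $\{\bar\bx_i\transpose\bar\bx_j(1-\bar\bx_i\transpose\bar\bx_j)\}^{-1}$ couple every summand to both $\bar\bx_i$ and $\bar\bx_j$ through nested perturbations, so the second-order Taylor remainder decomposes into several cross-term groups, including sums of the form $\sum_j(A_{ij}-\rho_n\bx_i\transpose\bx_j)(\bdelta_j\transpose\bw)$ for data-dependent $\bw$. A naive triangle-inequality bound on such expressions would inflate the $\lambda_d(\bDelta_n)$-exponent in the announced rate; the fix is to recognize each such term as a row-wise inner product $\be_i\transpose\bE\bV$ with $\bV$ built from perturbations of $\bar\bX$, and to invoke Assumption \ref{assumption:rowwise_concentration} (the same device used in Theorem \ref{thm:eigenvector_deviation}) to obtain the entry-level sharpness.
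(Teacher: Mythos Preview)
Your overall architecture matches the paper's: the orthogonal-invariance reduction to $\bar\bX=\widetilde\bX\bW$, the Taylor expansion of the score around the truth, the Newton--Raphson cancellation of the first-order term $\bdelta_i$, and the separation into the pieces you label (a)--(d) correspond precisely to the paper's decomposition into $\br_{i1}$, $\bR_{i2}$, $\br_{i3}$ in Section~\ref{sub:outline_of_the_proof_of_theorem_thm:asymptotic_normality_os} and Lemmas~\ref{lemma:R12k_analysis}--\ref{lemma:Lipschitz_continuity_Hmatrix}.

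The gap is in your handling of (d). The cross-terms $\sum_j(A_{ij}-\rho_n\bx_i\transpose\bx_j)\bB_{nij}\bdelta_j$ cannot be controlled by Assumption~\ref{assumption:rowwise_concentration} as you propose: that assumption (and the Bernstein inequality behind it) requires the matrix $\bV$ to be \emph{deterministic}, or at least independent of the $i$th row of $\bE$. Here $\bV$ is built from the $\bdelta_j=\bar\bx_j-\rho_n^{1/2}\bx_j$, which depend on all of $\bA$---including $\be_i\transpose\bE$---through the sample eigenvectors $\bU_\bA$. Conditioning does not break this dependence, and a naive use of $\|\bE\|_\infty$ or $\|\bE\|_{2\to\infty}$ loses exactly the factor needed to reach the claimed rate when $n\rho_n\asymp\log n$. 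The paper's proof (Lemmas~\ref{Lemma:R14k_analysis} and~\ref{Lemma:Taylor_remainder_second_order_analysis}) resolves this by the leave-one-out construction of Section~\ref{sub:proof_sketch_for_theorem_thm:eigenvector_deviation}: one rewrites $\widetilde\bX\bW-\rho_n^{1/2}\bX$ through the auxiliary eigenvectors $\bU_\bA^{(i)}$ of the surrogate matrix $\bA^{(i)}$ (the $i$th row and column replaced by their expectations), exploits the genuine independence between $\be_i\transpose\bE$ and $\bU_\bA^{(i)}$ to apply Bernstein conditionally, and separately bounds the discrepancy $\bU_\bA\bH-\bU_\bA^{(i)}\bH^{(i)}$ via Lemma~\ref{lemma:auxiliary_matrix}. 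The paper flags this explicitly as the step that extends the argument from the regime $n\rho_n^5=\omega((\log n)^2)$ of \cite{xie2019efficient} down to $n\rho_n=\omega(\log n)$. Your parenthetical ``the same device used in Theorem~\ref{thm:eigenvector_deviation}'' is pointed in the right direction, but the device is not Assumption~\ref{assumption:rowwise_concentration} itself; it is the leave-one-out decoupling that makes a conditional Bernstein bound legitimate.
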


\subsection{Eigenvector-based subsequent inference for random graphs}
\label{sub:subsequent_graph_inference}

In this subsection, we apply the theory in Sections \ref{sub:RDPG} and \ref{sub:entrywise_limit_theorem_for_the_one_step_estimator} to two subsequent random graph inference problems: the estimation of pure nodes in mixed membership stochastic block models and the hypothesis testing of the equality of latent positions in random dot product graphs. 

\subsubsection*{Pure node estimation in mixed membership stochastic block models}

The mixed membership stochastic block model \cite{airoldi2008mixed} generalizes the stochastic block model \cite{HOLLAND1983109} in which the community memberships are continuously relaxed. Each vertex can have multiple community memberships governed by a probability vector called the community membership profile. There have been several works that explore the computation algorithms for mixed membership stochastic block models \cite{airoldi2008mixed,Gopalan14534}. 
There have also been several recent attempts in exploring the theoretical aspects of mixed membership stochastic block models (see, for example, \cite{anandkumar2014tensor,hopkins2017efficient,jin2017estimating,pmlr-v70-mao17a,doi:10.1080/01621459.2020.1751645,doi:10.1137/19M1272238}). 

We first introduce the formal definition of the mixed membership stochastic block models. 

\begin{definition}
Let $\bTheta = [\theta_{jk}]_{n\times d}\in[0, 1]^{n\times d}$ be the membership profile matrix with $\sum_{k = 1}^d\theta_{jk} = 1$ for all $j\in [n]$, $\bB\in (0, 1)^{d\times d}$ be the block probability matrix, and $\rho_n\in (0, 1]$ be the sparsity factor. We say that a symmetric random matrix $\bA\in\{0, 1\}^{n\times n}$ is the adjacency matrix of a mixed membership stochastic block model $\mathrm{MMSBM}(\bTheta, \bB, \rho_n)$, if $A_{ij}\sim\mathrm{Bernoulli}(\rho_n\btheta_i\transpose\bB\btheta_j)$ independently for all $i,j\in [n]$, $i\leq j$, and $A_{ij} = A_{ji}$ for all $i > j$, where $\btheta_i = [\theta_{i1},\ldots,\theta_{id}]\transpose$. 
\end{definition}

For simplicity, we assume that the block probability matrix $\bB$ is positive definite and there exist $\bX^* = [\bx_1^*,\ldots,\bx_d^*]\transpose\in\mathbb{R}^{d\times d}$ such that $\bB = (\bX^*)(\bX^*)\transpose$. 
Namely,
$\bA\sim\mathrm{MMSBM}(\bTheta, (\bX^*)(\bX^*)\transpose, \rho_n)$ implies that $\bA\sim\mathrm{RDPG}(\rho_n^{1/2}\bTheta\bX^*)$. 
Geometrically, the latent positions (i.e., the rows of $\bTheta\bX^*$) can be viewed as scatter points taken from a simplex whose corners are the rows of $\bX^*$, and the rows of $\bX^*$ are referred to as the pure nodes \cite{doi:10.1080/01621459.2020.1751645}. 
A standard condition for estimating the membership profile matrix $\bTheta$ is the existence of a pure node for each community \cite{doi:10.1080/01621459.2020.1751645}. Formally, we say that each of the $d$ communities contains at least one pure node, if the vertex set $\{i\in [n]:\btheta_i = \be_k\}$ is non-empty for each $k\in [d]$. Then there exists $d$ distinct row indices $i_1,\ldots,i_d\in[n]$ such that $i_k = \min\{i\in [n]:\btheta_i = \be_k\}$, where $\btheta_i$ is the $i$th row of $\bTheta$, $i\in [n]$. Namely, $\{i_1,\ldots,i_d\}$ are the vertices in the graph whose latent positions are exactly given by one of the pure nodes. 

Given $\bA\sim\mathrm{MMSBM}(\bTheta, (\bX^*)(\bX^*)\transpose, \rho_n)$, an important inference task is to detect and estimate the pure nodes $\bx_1^*,\ldots,\bx_d^*$. There are several earlier attempts in detecting the row indices corresponding to the pure nodes \cite{6656801,jin2017estimating,pmlr-v70-mao17a,doi:10.1080/01621459.2020.1751645}. These algorithms are based on the finding that the corners of a simplex have the highest norm (see Lemma 2.1 in \cite{doi:10.1080/01621459.2020.1751645}). Here, we adopt the successive projection algorithm proposed in \cite{6656801}. The detailed algorithm is provided in the Supplementary Material for completeness. 

We now construct two estimators for the pure nodes in a mixed membership stochastic block model based on the adjacency spectral embedding $\widetilde{\bX} = [\widetilde{\bx}_1,\ldots,\widetilde{\bx}_n]\transpose$ (i.e., the scaled eigenvectors) and its one-step refinement $\widehat{\bX} = [\widehat{\bx}_1,\ldots,\widehat{\bx}_n]\transpose$. Let $J := \texttt{SPA}(\bA, d)$ be the output row indices of the successive projection algorithm (see Algorithm 1 in the Supplementary Material) and $\bV_\bA\in\mathbb{R}^{d\times d}$ the sub-matrix of $\bU_\bA$ corresponding to the row indices in $J$. We then estimate the membership profile matrix $\bTheta$ by $\widehat{\bTheta} := \bU_\bA\bV_\bA^{-1}$. Define 
\begin{align}\label{eqn:pure_node_index}
\iota_k := \min\left\{i\in [n]:\|\be_i\transpose\widehat{\bTheta} - \be_k\transpose\|_2\leq \eta\right\},\quad k\in[d],
\end{align}
where $\eta > 0$ is a tuning parameter taken to be sufficiently small. 
Note that the membership profile matrix is only identifiable up to a permutation. The two estimators for $\bx_k^*$ (modulus a permutation) are then given by $\widetilde{\bx}_{\iota_k}$ and $\widehat{\bx}_{\iota_k}$, which are based on $\widetilde{\bX}$ and $\widehat{\bX}$, respectively. Leveraging Corollary \ref{corr:Two_to_infinity_norm_eigenvector_bound}, Theorem \ref{thm:ASE_Berry_Esseen_RDPG}, and Theorem \ref{thm:asymptotic_normality_OS}, we establish the two-to-infinity norm error bound for $\widehat{\bTheta}$ and the asymptotic normality of $\widetilde{\bx}_{\iota_k}$ and $\widehat{\bx}_{\iota_k}$ in Theorem \ref{thm:MMSBM} below.


\begin{theorem}
\label{thm:MMSBM}
Suppose $\bA\sim\mathrm{MMSBM}(\bTheta, (\bX^*)(\bX^*)\transpose, \rho_n)$ and the following conditions hold:
\begin{enumerate}[(i)]
  \item There exists at least one pure node for each of the $d$ communities.

  \item $n\rho_n = \omega(\log n)$ and $d$ is fixed.

  \item There exists a positive constant $c_1 > 0$ such that $\min\{n^{-1/2}\sigma_d(\bTheta), \sigma_d(\bX^*)\}\geq c_1$. 

  \item There exists a positive constant $\delta > 0$ such that $\min_{k,l\in [d]}[(\bx_k^*)\transpose(\bx_l^*)\wedge \{1 - (\bx_k^*)\transpose(\bx_l^*)\}] \geq \delta$. 

\end{enumerate}
Then for each sufficiently large $n$, there exists a permutation matrix $\bPi_n\in\{0, 1\}^{d\times d}$, such that with probability at least $1 - c_0n^{-2}$, $\|\widehat{\bTheta} - \bTheta\bPi_n\|_{2\to\infty}\leq K\sqrt{{(\log n)}/{(n\rho_n)}}$ 
for some constants $K, c_0 > 0$. Furthermore, if $\min_{i\in [n],\btheta_i\neq \be_k}\|\btheta_i - \be_k\|_2 \geq c_2$ for a constant $c_2 > 0$ for all $k\in [d]$ and $\eta\leq c_2/2$, then there exists a sequence of permutations $(\pi_n)_n$ over $[d]$, such that for each $k\in [d]$,
\begin{align*}
\sqrt{n}\bSigma_n(\bx_{\pi_n(k)}^*)^{-1/2}(\bW\transpose\widetilde{\bx}_{\iota_k} - \rho_n^{1/2}\bx_{\pi_n(k)}^*)\overset{\calL}{\to}\mathrm{N}_d(\zero_d, \eye_d),\\
\sqrt{n}\bG_n(\bx_{\pi_n(k)}^*)^{1/2}(\bW\transpose\widehat{\bx}_{\iota_k} - \rho_n^{1/2}\bx_{\pi_n(k)}^*)\overset{\calL}{\to}\mathrm{N}_d(\zero_d, \eye_d),
\end{align*}
where $\bSigma_n(\cdot)$ and $\bG_n(\cdot)$ are defined in Theorem \ref{thm:ASE_Berry_Esseen_RDPG} and Theorem \ref{thm:Berry_Esseen_OSE_multivariate}, respectively, with $\bX:=\bTheta\bX^*$. 
\end{theorem}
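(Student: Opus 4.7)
The plan is to establish the three conclusions in sequence. For the two-to-infinity bound on $\widehat\bTheta$, I first exploit the structural identity $\bP=\bTheta\bX^*(\bX^*)^\top\bTheta^\top$, which together with the eigendecomposition setup of Section~\ref{sub:setup} gives $\bU_\bP=\bTheta\bZ$ for the invertible $d\times d$ matrix $\bZ:=\rho_n^{1/2}\bX^*\bW_\bX^\top|\bS_\bP|^{-1/2}$; in particular, the pure-node rows of $\bU_\bP$ are exactly the rows of $\bZ$, so $\bTheta=\bU_\bP\bZ^{-1}$ at the population level. Corollary~\ref{corr:Two_to_infinity_norm_eigenvector_bound} supplies $\|\bU_\bA-\bU_\bP\bW^*\|_{2\to\infty}\lesssim\sqrt{(\log n)/(n\rho_n)}\,\|\bU_\bP\|_{2\to\infty}$ with high probability, since assumption (iii) implies $\lambda_d(\bDelta_n)\gtrsim 1$, $\kappa(\bDelta_n)\lesssim 1$, and $\|\bX\|_{2\to\infty}\lesssim 1$. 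Invoking the perturbation stability of the successive projection algorithm (e.g., \cite{6656801,doi:10.1080/01621459.2020.1751645}) then shows that the output $J=\{j_1,\ldots,j_d\}$ of SPA consists of genuine pure nodes on a high-probability event and thereby induces a random permutation $\pi_n$ of $[d]$ via $\btheta_{j_k}=\be_{\pi_n(k)}$. Letting $\bPi_n$ be the associated permutation matrix, $\bV_\bA:=[\bU_\bA]_{J*}=\bPi_n\bZ\bW^*+\bE_\bV$ with $\|\bE_\bV\|_2\leq\sqrt{d}\,\|\bU_\bA-\bU_\bP\bW^*\|_{2\to\infty}$; a standard matrix-inverse perturbation identity combined with the bound on $\sigma_d(\bZ)$ then yields $\|\bV_\bA^{-1}-(\bW^*)^\top\bZ^{-1}\bPi_n^\top\|_2$ of comparable order. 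Writing
\begin{align*}
\widehat\bTheta-\bTheta\bPi_n^\top &= (\bU_\bA-\bU_\bP\bW^*)\bV_\bA^{-1}+\bU_\bP\bW^*\bigl(\bV_\bA^{-1}-(\bW^*)^\top\bZ^{-1}\bPi_n^\top\bigr)
\end{align*}
and bounding each summand in $\|\cdot\|_{2\to\infty}$ (using $\|\bU_\bP\bW^*\|_{2\to\infty}=\|\bU_\bP\|_{2\to\infty}\lesssim\sqrt{d/n}$) yields the claimed rate, after relabeling $\bPi_n\leftarrow\bPi_n^\top$.

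With this in hand, I verify that $\iota_k$ identifies the smallest pure-node index for community $\pi_n(k)$. For any pure node $i^*$ with $\btheta_{i^*}=\be_{\pi_n(k)}$, $[\bTheta\bPi_n^\top]_{i^**}=\btheta_{i^*}^\top\bPi_n^\top=\be_k^\top$, so the bound from Part 1 gives $\|[\widehat\bTheta]_{i^**}-\be_k^\top\|_2=o(1)<\eta$ for sufficiently large $n$. Conversely, for any $i'$ with $\btheta_{i'}\neq\be_{\pi_n(k)}$, the separation assumption yields $\|[\bTheta\bPi_n^\top]_{i'*}-\be_k^\top\|_2=\|\btheta_{i'}-\be_{\pi_n(k)}\|_2\geq c_2$, and the triangle inequality yields $\|[\widehat\bTheta]_{i'*}-\be_k^\top\|_2\geq c_2-o(1)>\eta$ since $\eta\leq c_2/2$. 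Hence on a high-probability event $\iota_k=\min\{i\in[n]:\btheta_i=\be_{\pi_n(k)}\}$, so $\bx_{\iota_k}=\bx_{\pi_n(k)}^*$. Since $d$ is fixed, $\pi_n$ takes at most $d!$ values; conditioning on $\{\pi_n=\pi\}$ reduces $\iota_k$ to the \emph{deterministic} index $i_\pi^*(k):=\min\{i:\btheta_i=\be_{\pi(k)}\}$. Applying Theorem~\ref{thm:ASE_Berry_Esseen_RDPG} at $i_\pi^*(k)$ yields the distributional convergence for $\widetilde\bx_{\iota_k}$, and Theorem~\ref{thm:Berry_Esseen_OSE_multivariate} applied at the same fixed index yields it for $\widehat\bx_{\iota_k}$; in both cases the Berry-Esseen error bounds tend to $0$ because $n\rho_n=\omega(\log n)$ together with the boundedness of $\lambda_d(\bDelta_n)$, $\kappa(\bDelta_n)$, and $\|\bX\|_{2\to\infty}$ established above.

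The main obstacle lies in the first step, namely the SPA stability analysis. SPA correctly identifies pure nodes provided the eigenvector perturbation is small relative to the geometric gap between the rows of $\bZ$ (the pure-node rows of $\bU_\bP$) and their convex combinations (the rows corresponding to non-pure nodes); this must be carefully argued by combining existing SPA stability results with the sharp two-to-infinity norm bound from Corollary~\ref{corr:Two_to_infinity_norm_eigenvector_bound}. The remaining ingredients — the matrix-inverse perturbation, the separation-based certification of $\iota_k$, and the invocations of Theorems~\ref{thm:ASE_Berry_Esseen_RDPG} and~\ref{thm:Berry_Esseen_OSE_multivariate} — are comparatively routine given the machinery developed in Sections~\ref{sec:entrywise_limit_theorem_for_the_eigenvectors}, \ref{sub:RDPG}, and~\ref{sub:entrywise_limit_theorem_for_the_one_step_estimator}.
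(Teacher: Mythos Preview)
Your overall plan mirrors the paper's, but the SPA step in Part~1 has a genuine gap. You assert that the SPA output $J$ consists of \emph{genuine} pure nodes, and then deduce $\|\bE_\bV\|_2\le\sqrt{d}\,\|\bU_\bA-\bU_\bP\bW^*\|_{2\to\infty}$ from that. However, the first conclusion of the theorem places no separation assumption on $\bTheta$: a non-pure row $\btheta_i$ may be arbitrarily close to some $\be_k$, so under noise SPA can select a non-pure index, and neither the Gillis--Vavasis robustness theorem \cite{6656801} nor the analysis in \cite{doi:10.1080/01621459.2020.1751645} yields exact vertex recovery in that regime. The paper circumvents this by using only what SPA stability actually delivers: it first bounds $\|\bU_\bA\bU_\bA^\top-\bU_\bP\bU_\bP^\top\|_{2\to\infty}$, then invokes Theorem~3 of \cite{6656801} to obtain $\max_k\|(\bU_\bA\bV_\bA^\top-\bU_\bP\bV_\bP^\top\bPi_n)\be_k\|_2\lesssim (\log n)^{1/2}/(n\rho_n^{1/2})$, and from this (together with a Davis--Kahan correction for $\bW^*-\bU_\bP^\top\bU_\bA$) extracts a bound on $\|\bV_\bA-\bPi_n^\top\bV_\bP\bW^*\|_2$ and hence on $\|\bV_\bA^{-1}\|_2$ --- never needing $J$ to be exact pure nodes. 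Your decomposition of $\widehat\bTheta-\bTheta\bPi_n$ is essentially the paper's, so once you replace ``SPA outputs pure nodes'' by this route, Part~1 goes through.

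For the asymptotic normality, your ``condition on $\{\pi_n=\pi\}$ and apply Theorem~\ref{thm:ASE_Berry_Esseen_RDPG}'' step is also problematic: $\pi_n$ is data-dependent, so conditioning on it changes the law of $\widetilde\bx_{i_\pi^*(k)}$, and Theorems~\ref{thm:ASE_Berry_Esseen_RDPG} and~\ref{thm:Berry_Esseen_OSE_multivariate} are unconditional Berry--Esseen bounds that cannot be applied inside such a conditioning. The paper instead argues via a max/min over the finitely many permutations: it shows $\iota_k=i_{\pi_n(k)}$ with probability $1-c_0n^{-2}$ (your Part~2 argument, which matches the paper's), observes that for each \emph{fixed} $\pi$ the statistic $\widetilde\bt_{n\pi(k)}$ at the deterministic index $i_{\pi(k)}$ converges to $\mathrm{N}_d(\zero_d,\eye_d)$, and then sandwiches $\prob\{\widetilde\bt_{n\pi_n(k)}\in A\}$ between $\min_\pi$ and $\max_\pi$ of $\prob\{\widetilde\bt_{n\pi(k)}\in A\}$ up to the $c_0n^{-2}$ error.
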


The implication of Theorem \ref{thm:MMSBM} is two-fold. Firstly, we establish the following uniform error bound for the membership profile estimator $\widehat{\bTheta}$: 
$\|\widehat{\bTheta} - \bTheta\bPi_n\|_{2\to\infty} = O\{\sqrt{{(\log n)}{(n\rho_n)}}\}$ with probability at least $1 - O(n^{-2})$,
where $\bPi_n$ is a $d\times d$ permutation matrix. 
This concentration bound is sharper than that in \cite{doi:10.1080/01621459.2020.1751645} by a poly-$\log n$ factor and our sparsity assumption is weaker: we only assume that $n\rho_n = \omega(\log n)$, whereas the authors of \cite{doi:10.1080/01621459.2020.1751645} required that $n\rho_n = \Omega((\log n)^{2\xi})$ for some constant $\xi > 1$. Secondly, we show the asymptotic normality for the pure node estimators $\widetilde{\bx}_{\iota_k}$ based on the adjacency spectral embedding and $\widehat{\bx}_{\iota_k}$ based on the one-step estimator, with the asymptotic covariance matrices being $\bSigma_n(\bx_{\pi_n(k)}^*)$ and $\bG_n(\bx_{\pi_n(k)}^*)^{-1}$, respectively. By Theorem 2 in \cite{xie2019efficient}, we have $\bG_n(\bx_{\pi_n(k)}^*)^{-1}\preceq \bSigma_n(\bx_{\pi_n(k)}^*)$. Therefore, the estimator $\widehat{\bx}_{\iota_k}$, which is derived from the one-step estimator $\widehat{\bX}$, improves upon the eigenvector-based estimator $\widetilde{\bx}_{\iota_k}$ with a smaller asymptotic covariance matrix in spectra. 

\subsubsection*{Hypothesis testing for equality of latent positions}

The second subsequent inference problem is to test whether the latent positions of two given vertices are the same or not in a random dot product graph. This subsequent network inference task is inspired by the hypothesis testing of the membership profiles in degree-corrected mixed membership stochastic block models proposed in \cite{fan2019simple}. The testing procedure could be useful in, e.g., diversifying the portfolios in the stock market investment and maximizing the expected returns \cite{fan2019simple}. Formally, given $\bA\sim\mathrm{RDPG}(\rho_n^{1/2}\bX)$ and fixed vertex indices $i,j\in [n]$, $i\neq j$, we consider testing the null hypothesis $H_0:\bx_i = \bx_j$ against the alternative hypothesis $H_A:\bx_i\neq \bx_j$. Motivated by the asymptotic normality in Theorems \ref{thm:ASE_Berry_Esseen_RDPG} and \ref{thm:Berry_Esseen_OSE_multivariate}, we consider the following two test statistics associated with the adjacency spectral embedding $\widetilde\bX$ and its one-step refinement $\widehat{\bX}$, respectively:
$T_{ij}^{(\mathrm{ASE})} = n(\widetilde{\bx}_i - \widetilde{\bx}_j)\transpose\widetilde{\bSigma}_{ij}^{-1}(\widetilde{\bx}_i - \widetilde{\bx}_j)$ and $T_{ij}^{(\mathrm{OSE})} = n(\widehat{\bx}_i - \widehat{\bx}_j)\transpose\widetilde{\bG}_{ij}^{-1}(\widehat{\bx}_i - \widehat{\bx}_j)$,
where $\widetilde{\bSigma}_{ij} = \widetilde{\bSigma}_n(\widetilde{\bx}_i) + \widetilde{\bSigma}_n(\widetilde{\bx}_j)$, $\widetilde{\bG}_{ij} = \widetilde{\bG}_n(\widetilde{\bx}_i)^{-1} + \widetilde{\bG}_n(\widetilde{\bx}_j)^{-1}$, 
\begin{align*}
\widetilde{\bSigma}_n(\bx) = \widetilde{\bDelta}_n^{-1}\left\{\frac{1}{n}\sum_{j = 1}^n\bx_i\transpose\widetilde{\bx}_j(1 - \bx_i\transpose\widetilde{\bx}_j)\widetilde{\bx}_j\widetilde{\bx}_j\transpose\right\}\widetilde{\bDelta}_n,\quad
\widetilde{\bG}_n(\bx) = \frac{1}{n}\sum_{j = 1}^n\frac{\widetilde\bx_j\widetilde\bx_j\transpose}{\bx_i\transpose\widetilde{\bx}_j(1 - \bx_i\transpose\widetilde{\bx}_j)},
\end{align*}
and $\widetilde{\bDelta}_n = (1/n)\widetilde{\bX}\transpose\widetilde{\bX}$. In what follows, we establish the asymptotic distributions of the test statistics $T_{ij}^{(\mathrm{ASE})}$ and $T_{ij}^{(\mathrm{OSE})}$ under the null and alternative hypotheses. 

\begin{theorem}\label{thm:ASE_OSE_testing}
Let $\bA\sim\mathrm{RDPG}(\rho_n^{1/2}\bX)$ and assume the conditions of Theorem \ref{thm:Berry_Esseen_OSE_multivariate} hold. Further assume that $d$ is fixed and $\lambda_d(\bDelta_n)$ is bounded away from $0$. 
\begin{enumerate}[(i)]
  \item Under the null hypothesis $H_0:\bx_i = \bx_j$, we have
  $T_{ij}^{(\mathrm{ASE})}\overset{\calL}{\to}\chi^2_d$
  and
  $T_{ij}^{(\mathrm{OSE})}\overset{\calL}{\to}\chi^2_d$.
  \item Under the alternative hypothesis $H_A:\bx_i\neq \bx_j$, if $(n\rho_n)^{1/2}(\bx_i - \bx_j)\to \bmu$ for some non-zero vector $\bmu\in\mathbb{R}^d$, $\bSigma_n(\bx_i)\to \bSigma_i$, and $\bG_n(\bx_i) \to\bG_i$ for some fixed positive definite $\bSigma_i$ and $\bG_i$ as $n\to\infty$, then
  $T_{ij}^{(\mathrm{ASE})}\overset{\calL}{\to}\chi^2_d(\bmu\transpose(\bSigma_i + \bSigma_j)^{-1}\bmu)$
  and
  $T_{ij}^{(\mathrm{OSE})}\overset{\calL}{\to}\chi^2_d(\bmu\transpose(\bG_i^{-1} + \bG_j^{-1})^{-1}\bmu)$,
  where, for any $a > 0$, $\chi_d^2(a)$ is the noncentral chi-squared distribution with noncentral parameter $a$ and degree of freedom $d$.
\end{enumerate}
\end{theorem}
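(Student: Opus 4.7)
The plan is to work in the $\bW$-rotated coordinate frame, establish joint asymptotic normality of the pairs $(\bW\transpose\widetilde\bx_i, \bW\transpose\widetilde\bx_j)$ and $(\bW\transpose\widehat\bx_i, \bW\transpose\widehat\bx_j)$, prove consistency of the plug-in variance estimators, and then conclude via Slutsky's theorem and the continuous mapping theorem. First, I would verify that both quadratic forms $T_{ij}^{(\mathrm{ASE})}$ and $T_{ij}^{(\mathrm{OSE})}$ are invariant under a common orthogonal rotation applied simultaneously to every row of $\widetilde\bX$: under $\widetilde\bx_k \mapsto \bW\transpose\widetilde\bx_k$, the matrix $\widetilde\bDelta_n$ is conjugated to $\bW\transpose\widetilde\bDelta_n\bW$, the inner products $\widetilde\bx_k\transpose\widetilde\bx_\ell$ are preserved, and consequently $\widetilde{\bSigma}_n(\widetilde\bx_i)$ and $\widetilde{\bG}_n(\widetilde\bx_i)$ transform as $\bW\transpose(\cdot)\bW$, leaving the sandwich forms unchanged. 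This reduction lets me replace $\widetilde\bx_i,\widetilde\bx_j$ throughout by the aligned versions, for which Theorem \ref{thm:ASE_Berry_Esseen_RDPG} (resp.\ Theorem \ref{thm:asymptotic_normality_OS}) supplies the marginal Gaussian approximations.

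Next, I would upgrade the marginal limits to a joint one. The proof of Theorem \ref{thm:ASE_Berry_Esseen_RDPG} via decomposition \eqref{eqn:keystone_decomposition} shows that $\sqrt{n}\{\bW\transpose\widetilde\bx_i - \rho_n^{1/2}\bx_i\}$ equals a linear sum $(n\rho_n)^{-1/2}\bDelta_n^{-1}\sum_k (A_{ik}-P_{ik})\bx_k$ plus an $o_{\prob}(1)$ remainder, and similarly for index $j$. The two linear sums depend on disjoint families of noise variables except for the single shared edge $A_{ij}$, whose contribution is $O_{\prob}(n^{-1/2})$ and hence asymptotically negligible. A multivariate Lyapunov CLT applied to the stacked $2d$-dimensional linear approximation then yields joint convergence to $\mathrm{N}_{2d}(\zero,\mathrm{diag}(\bSigma_n(\bx_i),\bSigma_n(\bx_j)))$, from which $\sqrt{n}\{\bW\transpose(\widetilde\bx_i-\widetilde\bx_j) - \rho_n^{1/2}(\bx_i-\bx_j)\} \overset{\calL}{\to}\mathrm{N}(\zero,\bSigma_i+\bSigma_j)$. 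An analogous argument using \eqref{eqn:OSE_main_decomposition} from Theorem \ref{thm:asymptotic_normality_OS} handles the one-step version with limiting covariance $\bG_i^{-1}+\bG_j^{-1}$.

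Third, I would establish consistency of the plug-in covariance estimators. Corollary \ref{corr:Two_to_infinity_norm_eigenvector_bound} yields $\|\widetilde\bX\bW - \rho_n^{1/2}\bX\|_{2\to\infty} = o_{\prob}(1)$; combined with the lower bound $\bx_k\transpose\bx_\ell \wedge (1-\bx_k\transpose\bx_\ell)\geq \delta$ and the Lipschitz dependence of $\widetilde{\bSigma}_n(\cdot)$ and $\widetilde{\bG}_n(\cdot)$ on their arguments, this delivers $\bW\transpose\widetilde{\bSigma}_{ij}\bW \overset{\prob}{\to} \bSigma_i+\bSigma_j$ and $\bW\transpose\widetilde{\bG}_{ij}\bW \overset{\prob}{\to} \bG_i^{-1}+\bG_j^{-1}$. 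Slutsky's theorem together with continuous mapping then identifies the limits: under $H_0$, $T_{ij}^{(\mathrm{ASE})}\overset{\calL}{\to}\bZ\transpose(\bSigma_i+\bSigma_j)^{-1}\bZ \sim \chi^2_d$ with $\bZ\sim \mathrm{N}(\zero,\bSigma_i+\bSigma_j)$, and likewise for $T_{ij}^{(\mathrm{OSE})}$. Under $H_A$, the drift $(n\rho_n)^{1/2}(\bx_i-\bx_j)\to\bmu$ translates to $\sqrt{n}\bW\transpose(\widetilde\bx_i-\widetilde\bx_j)\overset{\calL}{\to}\mathrm{N}(\bmu,\bSigma_i+\bSigma_j)$, yielding the noncentral chi-squared law $\chi^2_d(\bmu\transpose(\bSigma_i+\bSigma_j)^{-1}\bmu)$ by the standard quadratic-form identification $(\bSigma_i+\bSigma_j)^{-1/2}\bZ\sim\mathrm{N}((\bSigma_i+\bSigma_j)^{-1/2}\bmu,\eye_d)$.

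The main obstacle is the decoupling step: the higher-order remainders in \eqref{eqn:keystone_decomposition} and \eqref{eqn:OSE_main_decomposition} depend on the entire matrix $\bA$ and are not automatically independent across rows, so establishing the joint CLT requires invoking Theorem \ref{thm:eigenvector_deviation} (and Theorem \ref{thm:asymptotic_normality_OS}) simultaneously at both $i$ and $j$, together with a direct check that the shared edge $A_{ij}$ contributes at most $O_{\prob}(n^{-1/2})$ to each linear sum. Once this row-wise decoupling is secured, the remaining consistency and Slutsky arguments are essentially routine.
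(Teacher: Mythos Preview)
Your proposal is correct and follows essentially the same route as the paper: linearize both estimators via \eqref{eqn:keystone_decomposition} and \eqref{eqn:OSE_main_decomposition} with $o_{\prob}(n^{-1/2})$ remainders (Theorem \ref{thm:eigenvector_deviation} and Theorem \ref{thm:asymptotic_normality_OS}), discard the single shared edge $A_{ij}$ as $o_{\prob}(1)$, apply Lyapunov's CLT to the difference, establish $\bW\transpose\widetilde{\bSigma}_{ij}\bW = \bSigma_n(\bx_i)+\bSigma_n(\bx_j)+o_{\prob}(1)$ and $\bW\transpose\widetilde{\bG}_{ij}\bW = \bG_n(\bx_i)^{-1}+\bG_n(\bx_j)^{-1}+o_{\prob}(1)$, and conclude by Slutsky. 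The paper works directly with the difference rather than the stacked $2d$-vector and uses the Frobenius-norm bound of Lemma \ref{lemma:RX_Frobenius_norm_bound} (not only the two-to-infinity bound) for the $\widetilde{\bSigma}_n$ consistency step, but these are cosmetic variations of the same argument.
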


An important consequence of Theorem \ref{thm:ASE_OSE_testing} is the power comparison between the two test statistics. It turns out that the test based on $T_{ij}^{(\mathrm{OSE})}$ is more powerful than the test based on $T_{ij}^{(\mathrm{ASE})}$ under the conditions of Theorem \ref{thm:ASE_OSE_testing} (ii). Given a significance level $\alpha\in (0, 1)$, we can construct the following test functions:
\[
\phi_{ij}^{(\mathrm{ASE})} = \mathbbm{1}\left\{T_{ij}^{(\mathrm{ASE})} > q_{\chi^2_d}(1 - \alpha)\right\}\quad\mbox{and}\quad
\phi_{ij}^{(\mathrm{OSE})} = \mathbbm{1}\left\{T_{ij}^{(\mathrm{OSE})} > q_{\chi^2_d}(1 - \alpha)\right\},
\]
where $ q_{\chi^2_d}(1 - \alpha)$ is the $(1 - \alpha)$ quantile of the chi-squared distribution with degree of freedom $d$. 
Then under the conditions of Theorem \ref{thm:ASE_OSE_testing} (i), we see that the two tests are asymptotically valid level-$\alpha$ tests, i.e., $\expect_{H_0}\phi_{ij}^{(\mathrm{ASE})} \to \alpha$ and $\expect_{H_0}\phi_{ij}^{(\mathrm{OSE})} \to \alpha$ as $n\to\infty$. To compare the power of the two tests under the alternative $H_A:\bx_i\neq \bx_j$ under the conditions of Theorem \ref{thm:ASE_OSE_testing} (ii), we first observe that the non-central chi-squared distribution is stochastic increasing in its non-central parameter \cite{5429099}. By Theorem 2 in \cite{xie2019efficient}, the non-central parameters for $T_{ij}^{(\mathrm{ASE})}$ and $T_{ij}^{(\mathrm{OSE})}$ satisfy the inequality $\bmu\transpose(\bSigma_i + \bSigma_j)^{-1}\bmu\leq \bmu\transpose(\bG_i^{-1} + \bG_j^{-1})^{-1}\bmu$. Therefore, under the alternative hypothesis $H_A:\bx_i\neq \bx_j$ and the conditions of Theorem \ref{thm:ASE_OSE_testing} (ii), we conclude that 
$\lim_{n\to\infty}\expect_{H_A}\phi_{ij}^{(\mathrm{ASE})}\leq\lim_{n\to\infty}\expect_{H_A}\phi_{ij}^{(\mathrm{OSE})}$.
Namely, the test based on $T_{ij}^{(\mathrm{OSE})}$ is asymptotically more powerful than the test based on $T_{ij}^{(\mathrm{ASE})}$.

\section{Simulation study} 
\label{sec:simulation_study}


In this section, we present a simulated example of random dot product graphs. 
Consider a stochastic block model on $n$ vertices with a cluster assignment rule $\tau:[n]\to\{1,2\}$ and a block probability matrix
\[
\bB = \rho_n\begin{bmatrix*}
p ^ 2 & p q\\
p q   & q ^ 2 
\end{bmatrix*},
\]
where $\rho_n\in (0, 1)$ is a sparsity factor and $p, q\in (0, 1)$. The adjacency matrix $\bA = [A_{ij}]_{n\times n}$ is generated as follows: For all $i\leq j$, $i,j\in [n]$, let $A_{ij}\sim\mathrm{Bernoulli}([\bB]_{\tau(i)\tau(j)})$ independently for $i\leq j$ and let $A_{ij} = A_{ji}$ for all $i > j$. 
We take $\tau(i) = 1$ if $i = 1,\ldots,n/2$, and $\tau(i) = 2$ if $i = n/2 + 1,\ldots,n$ for simplicity. The number of vertices $n$ is set to $5000$ and we take $n\rho_n = 5(\log n)^{3/2}$ such that the conditions of Theorem \ref{thm:ASE_Berry_Esseen} and Theorem \ref{thm:Berry_Esseen_OSE_multivariate} are both satisfied. The values of $p$ and $q$ are selected to be $p = 0.95$ and $q = 0.3$.

We generate $3000$ independent copies of the adjacency matrix $\bA$ from the aforementioned stochastic block model. For each realization of $\bA$, we compute the adjacency spectral embedding $\widetilde\bx$ of $\bA$ into $\mathbb{R}$, the unscaled top eigenvector $\bu_\bA$ of $\bA$, and the one-step refinement $\widehat\bx$ of $\widetilde\bx$. The population scaled eigenvector and the unscaled eigenvector are denoted by $\rho_n^{1/2}\bx$ and $\bu_\bP$, respectively. For this specific model, it is straightforward to obtain $\bx = \begin{bmatrix*}
p & \ldots & p & q & \ldots & q
\end{bmatrix*}$ and $\bu_\bP =  (np^2/2 + nq^2/2)^{-1/2}\begin{bmatrix*}
p & \ldots & p & q & \ldots & q
\end{bmatrix*}$.
The only non-zero eigenvalue of $\bP$ is $\lambda = n\rho_n(p ^ 2 / 2 + q ^ 2 / 2)$. 
For each $i\in [n]$, we denote $\widetilde{x}_i$, $[\bu_\bA]_i$, $\widehat{x}_i$, $x_i$, and $[\bu_\bP]_i$ the $i$th coordinates of $\widetilde{\bx}$, $\bu_\bA$, $\widehat{\bx}$, $\bx$, and $\bu_\bP$, respectively. 
Then by Theorem \ref{thm:ASE_Berry_Esseen} and Theorem \ref{thm:Berry_Esseen_OSE_multivariate}, for each $i\in [n]$, the random variables $\sqrt{n}(\widetilde{x}_i - \rho_n^{1/2}x_i)$, $\sqrt{n}(\widehat{x}_i - \rho_n^{1/2}x_i)$, and $n\rho_n^{1/2}([\bu_\bA]_i - [\bu_\bP]_i)$ converge to mean-zero Gaussians in distribution with the variances depending on $p$,$q$, and the community membership $\tau(i)$.

We take $i = 1$ as an illustrative vertex and visualize the numerical performance of $\widetilde\bx$ and $\bu_\bA$ in Figures \ref{fig:Simulation_ASE} and \ref{fig:Simulation_unscaled_eigenvectors}. The left panels of Figures \ref{fig:Simulation_ASE}, \ref{fig:Simulation_unscaled_eigenvectors} are the histograms of $\sqrt{n}(\widetilde{x}_1 - \rho_n^{1/2}x_1)$ and $n\rho_n^{1/2}([\bu_\bA]_1 - [\bu_\bP]_1)$ with the corresponding asymptotic normal densities highlighted in the red curves. We see that the shapes of the two histograms are closely aligned with the limit densities, verifying the conclusion of Theorem \ref{thm:ASE_Berry_Esseen} empirically. 
\begin{figure}[htbp]
\includegraphics[width = 15cm]{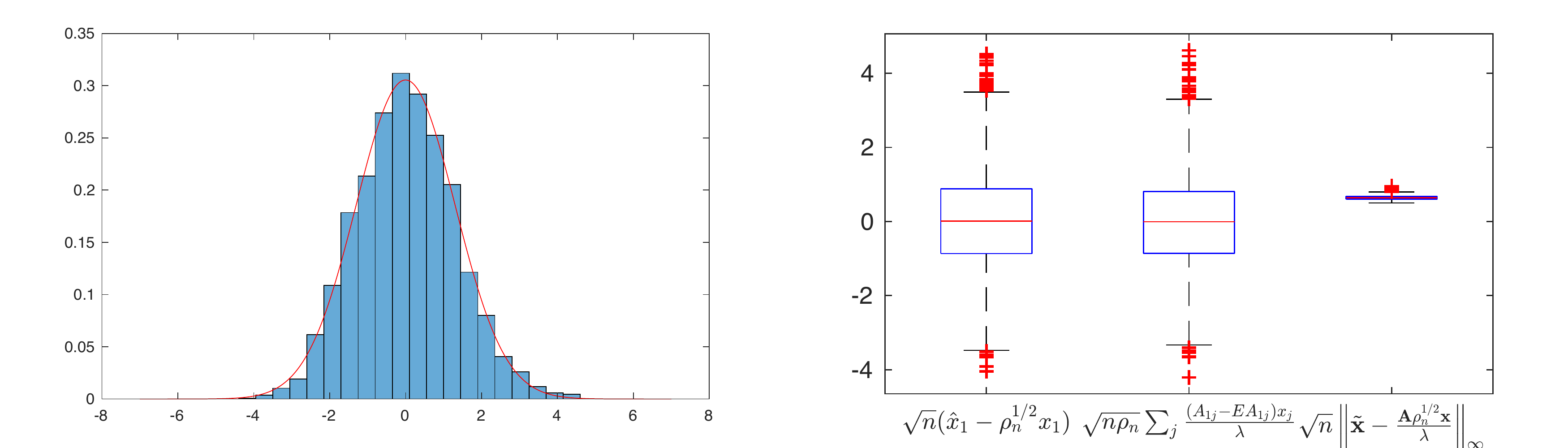}
\caption{Numerical results for Section \ref{sec:simulation_study}. Left panel: The histogram of $\sqrt{n}(\widetilde{x}_{1} - \rho_n^{1/2}x_{1})$ over the $3000$ Monte Carlo replicates with the asymptotic normal density highlighted in the red curve. Right panel: The boxplots of $\sqrt{n}(\widehat{x}_{1} - \rho_n^{1/2}x_{1})$, its linear approximation $\sqrt{n\rho_n}\sum_j(A_{1j} - \expect A_{1j})x_{j}/\lambda_2$, and the infinity norm of the higher-order remainder $\sqrt{n}\|\widetilde{\bx} - \bA\rho_n^{1/2}\bx/\lambda\|_\infty$ across the $3000$ Monte Carlo replicates. }
\label{fig:Simulation_ASE}
\end{figure}
The right panels of Figures \ref{fig:Simulation_ASE} and \ref{fig:Simulation_unscaled_eigenvectors} present the boxplots of $\sqrt{n}(\widehat{x}_{1} - \rho_n^{1/2}x_{1})$ and $n\rho_n^{1/2}([\bu_\bA]_1 - [\bu_\bP]_1)$, their linear approximations, and the infinity norms of the corresponding higher-order remainders. From the right panel of Figure \ref{fig:Simulation_ASE}, we can see that the dominating term for  $\sqrt{n}(\widehat{x}_{1} - \rho_n^{1/2}x_{1})$ is $\sqrt{n\rho_n}\sum_j(A_{1j} - \expect A_{1j})x_j/\lambda$. However, the infinity norm of the higher-order remainder $\sqrt{n}\|\widetilde\bx - \bA\rho_n^{1/2}\bx/\lambda\|_\infty$ is not necessarily negligible. This agrees with the observation in Section \ref{sub:a_motivating_example}. A similar observation regarding the unscaled eigenvector can be found in the right panel of Figure \ref{fig:Simulation_unscaled_eigenvectors} as well. 
\begin{figure}[htbp]
\includegraphics[width = 15cm]{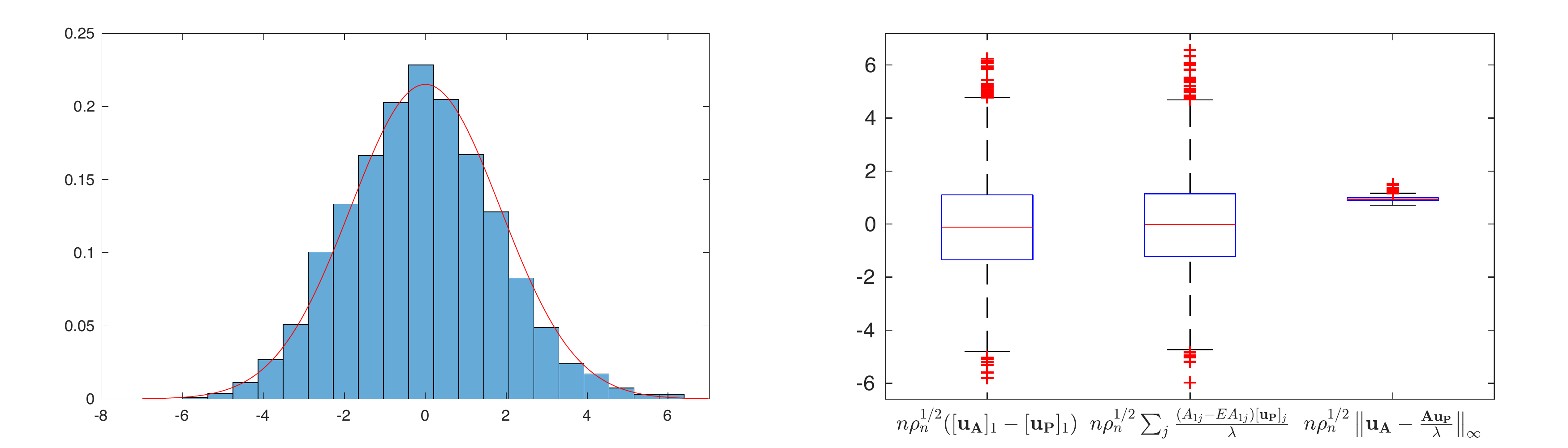}
\caption{Numerical results for Section \ref{sec:simulation_study}. Left panel: The histogram of $n\rho_n^{1/2}([\bu_\bA]_1 - [\bu_\bP]_1)$ over the $3000$ Monte Carlo replicates with the asymptotic normal density highlighted in the red curve. Right panel: The boxplots of $n\rho_n^{1/2}([\bu_\bA]_1 - [\bu_\bP]_1)$, its linear approximation $n\rho_n^{1/2}\sum_j(A_{1j} - \expect A_{1j})[\bu_\bP]_j/\lambda$, and the infinity norm of the higher-order remainder $n\rho_n^{1/2}\|\bu_\bA - \bA\bu_\bP/\lambda\|_\infty$ across the $3000$ Monte Carlo replicates. }
\label{fig:Simulation_unscaled_eigenvectors}
\end{figure}

We also compare the performance between the adjacency spectral embedding $\widetilde\bx$ and its one-step refinement $\widehat{\bx}$ in Figure \ref{fig:Simulation_ASE_OSE_comparison} below. Taking $i = 1$ as an illustrative vertex, we visualize the histogram of $\sqrt{n}(\widehat{x}_1 - \rho_n^{1/2}x_1)$ in the left panel of Figure \ref{fig:Simulation_ASE_OSE_comparison}, overlaid with the corresponding asymptotic normal density in the red curve. The limit normal density is almost perfectly aligned with the histogram, verifying Theorem \ref{thm:Berry_Esseen_OSE_multivariate} numerically. The right panel compares the boxplot of $\|\widetilde{\bx}w - \rho_n^{1/2}x\|_2^2$ and that of $\|\widehat{\bx}w - \rho_n^{1/2}x\|_2^2$ across the $3000$ Monte Carlo replicates, where $w$ is the sign of $\bu_\bA\transpose\bu_\bP$. It is clear that the errors of the one-step refinement $\widehat{\bx}$ are smaller than those of the adjacency spectral embedding, which also agrees with the observation in \cite{xie2019efficient} but under a much sparser regime that $n\rho_n\propto (\log n)^{3/2}$. 
\begin{figure}[htbp]
\includegraphics[width = 15cm]{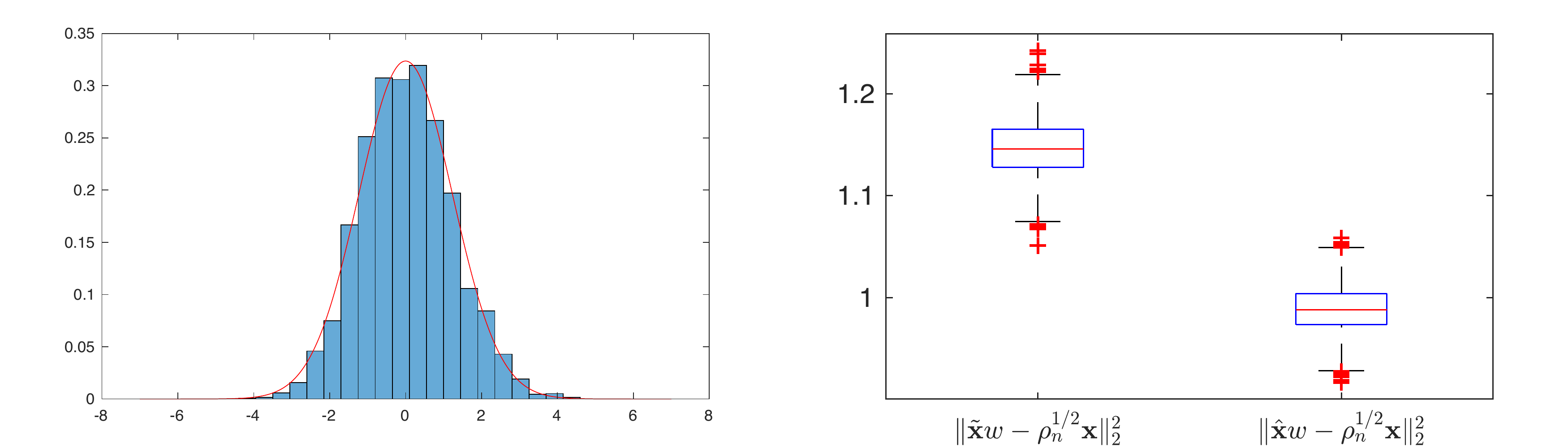}
\caption{Numerical results for Section \ref{sec:simulation_study}. Left panel: The histogram of $\sqrt{n}(\widehat{x}_{1} - \rho_n^{1/2}x_{1})$ over the $3000$ Monte Carlo replicates with the asymptotic normal density highlighted in the red curve. Right panel: The boxplots of $\|\widetilde{\bx}w - \rho_n^{1/2}\bx\|_2^2$ for the adjacency spectral embedding and $\|\widehat{\bx}w - \rho_n^{1/2}\bx\|_2^2$ for its one-step refinement across the $3000$ Monte Carlo replicates, where $w$ is the sign of $\bu_\bA\transpose\bu_\bP$. }
\label{fig:Simulation_ASE_OSE_comparison}
\end{figure}


\section{Discussion} 
\label{sec:discussion}

In this paper, we establish the Berry-Esseen theorems for the entrywise limits of the eigenvectors for a broad class of random matrix models with low expected rank, referred to as the signal-plus-noise matrix model. Our generic entrywise eigenvector limit theorem leads to new and sharp results for several concrete statistical applications: the symmetric noisy matrix completion model, the eigenvectors and their one-step refinement of random dot product graphs, the estimation of pure nodes in mixed membership stochastic block models, and the hypothesis testing of the equality of latent positions in random graphs.

Several potential future research directions are worth exploring. In terms of the general signal-plus-noise matrix model framework, we restrict ourselves within the class of symmetric random matrices whose upper diagonal entries are independent random variables. Extensions to singular vectors of rectangular random matrices may be interesting for rectangular noisy matrix completion problems, bipartite network analysis, and high-dimensional principal component analysis \cite{agterberg2021entrywise}. 

For the symmetric matrix completion problem, we require that the variance of the mean-zero normal errors scales at the rate $\rho_n^4$. It is possible to relax this requirement and assume that $\mathrm{var}(\eps_{ij})$ scales at the rate $\rho_n^2$ by modifying the proof technique in \cite{10.1214/19-AOS1854}. This relaxation may require additional work because Assumption \ref{assumption:distribution} no longer holds when $\mathrm{var}(\eps_{ij}) \propto \rho_n^2$. 

For random dot product graphs, we have focused on the eigenvector analysis of the graph adjacency matrix. It has also been observed that the eigenvectors of the normalized Laplacian matrix have decent performance when the graph becomes sparse \cite{sarkar2015,tang2018}. The entrywise limit theorems for the eigenvectors of the normalized Laplacian have been established in \cite{tang2018} under the sparsity assumption that $n\rho_n = \omega((\log n)^4)$. An interesting future research direction is to explore the entrywise limit theorems for the eigenvectors of the normalized Laplacian when $n\rho_n = \Omega(\log n)$. 
In addition, there has also been a growing interest in developing limit theorems for spectral analysis of multiple graphs \cite{arroyo2021inference,levin2017central}. We believe that the results and the approach developed in the present work may shed some light on the entrywise estimation of the eigenvectors for multiple random graph models. 



\vspace*{4ex}
\begin{appendix}

\begin{center}
  \begin{Large}
    \textbf{Supplement: Proofs and Additional Implementation Details}
  \end{Large}
\end{center}

\section{Technical preparations} 
\label{sec:technical_preparations}


The supplementary material begins with several auxiliary results that have already been established in the literature. We first present a theorem due to \cite{10.1214/19-AOS1854}. It is quite useful to obtain sharp concentration bounds for $\|\bU_{\bA_\pm}\|_{2\to\infty}$, $\|\bU_{\bA_\pm}^{(m)}\|_{2\to\infty}$, and $\|\bU_{\bA_\pm}^{(m)}\mathrm{sgn}(\bH_\pm^{(m)}) - \bU_{\bP_\pm}\|_{2\to\infty}$. Although it can also lead to a sharp error bound for $\|\bU_{\bA_\pm} - \bA\bU_{\bP_\pm}\bS_{\bP_\pm}^{-1}\bW_\pm^*\|_{2\to\infty}$ when $\log n\asymp n\rho_n$, it does not provide an enough control of the entrywise error $\|\be_m\transpose(\bU_{\bA_\pm} - \bA\bU_{\bP_\pm}\bS_{\bP_\pm}^{-1}\bW_\pm^*)\|_2$ for each individual $m\in [n]$. 
\begin{theorem}[Theorem 2.1 in \cite{10.1214/19-AOS1854}]
\label{thm:AFWZ2020AoS_corr2.1}
Let $\bM$ be an $n\times n$ symmetric random matrices with $\expect\bM = \bP$. 
Suppose $r,s$ are integers with $1\leq r\leq n$, $0\leq s\leq n - r$. Let $\bU,\bU_\bP\in\mathbb{O}(n, r)$ be the eigenvector matrices of $\bM$ and $\bP$, respectively, such that $\bM\bU = \bU\bS$ and $\bP\bU_\bP = \bU_\bP\bS_\bP$, where $\bS = \mathrm{diag}\{\lambda_{s + 1}(\bM),\ldots,\lambda_{s + r}(\bM)\}$, and $\bS_\bP = \mathrm{diag}\{\lambda_{s + 1}(\bP),\ldots,\lambda_{s + r}(\bP)\}$. 
We adopt the convention that $\lambda_{0}(\bP) = +\infty$ and $\lambda_n(\bP) = -\infty$. 
Define the eigengap
\[
\Delta = \min\{\lambda_s(\bP) - \lambda_{s + 1}(\bP),\lambda_{s + r}(\bP) - \lambda_{s + r + 1}(\bP)\}\wedge\min_{k\in[r]}|\lambda_{s + k}(\bP)|
\]
and $\kappa = \max_{k\in [r]}\lambda_{s + k}(\bP)/\Delta$. Suppose there exists some $\bar{\gamma} > 0$ and a function $\omega(\cdot):\mathbb{R}_+\to\mathbb{R}_+$, such that the following conditions hold:
\begin{itemize}
  \item[(A1)] (Incoherence) $\|\bP\|_{2\to\infty}\leq \bar{\gamma}\Delta$.
  \item[(A2)] (Row and columnwise independence) For any $i\in [n]$, the entries in the $i$th row and column of $\bM$ are independent of others. 
  \item[(A3)] (Spectral norm concentration) $32\kappa\max\{\bar{\gamma},\omega(\bar{\gamma})\}\leq 1$ and $\prob(\|\bM - \bP\|_2 > \bar{\gamma}\Delta) \leq \delta_0$ for some $\delta_0\in (0, 1)$. 
  \item[(A4)] (Row concentration) Suppose $\omega(x)$ is non-decreasing in $\mathbb{R}_+$ with $\omega(0) = 0$, $\omega(x)/x$ is non-increasing in $\mathbb{R}_+$. There exists some $\delta_1 \in (0, 1)$, such that for all $i\in [n]$ and any $n\times r$ matrix $\bV$, 
  \[
  \prob\left\{\|\be_i\transpose{}(\bM - \bP)\bV\|_2\leq \Delta\|\bV\|_{2\to\infty}\omega\left(\frac{\|\bV\|_{\mathrm{F}}}{\sqrt{n}\|\bV\|_{2\to\infty}}\right)\right\}\geq 1 - \frac{\delta_1}{n}.
  \]
\end{itemize}
Then with probability at least $1 - \delta_0 - 2\delta_1$, we have
\begin{align*}
\|\bU\|_{2\to\infty}&\lesssim \{\kappa + \omega(1)\}\|\bU_\bP\|_{2\to\infty} + \frac{\bar{\gamma}\|\bP\|_{2\to\infty}}{\Delta},\\
\|\bU\mathrm{sgn}(\bU\transpose{}\bU_\bP) - \bU_\bP\|_{2\to\infty}&\lesssim \kappa\{\kappa + \omega(1)\}\{\bar{\gamma} + \omega(\bar{\gamma})\}\|\bU_\bP\|_{2\to\infty} + \frac{\bar{\gamma}\|\bP\|_{2\to\infty}}{\Delta} + \omega(1)\|\bU_\bP\|_{2\to\infty}.
\end{align*}
\end{theorem}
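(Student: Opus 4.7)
\medskip

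The plan is to carry out a leave-one-out argument that decouples the dependence between the random eigenvector matrix $\bU$ and any individual row of $\bM$. For each index $m\in[n]$, introduce the surrogate matrix $\bM^{(m)}$ obtained by replacing the $m$th row and column of $\bM$ with the corresponding entries of $\bP$. By assumption (A2), $\bM^{(m)}$ is independent of the $m$th row of $\bM$, and hence so is its leading eigenvector matrix $\bU^{(m)}\in\mathbb{O}(n,r)$ (chosen for the same index range $[s+1,s+r]$). Let $\bH = \mathrm{sgn}(\bU\transpose\bU_\bP)$ and $\bH^{(m)} = \mathrm{sgn}((\bU^{(m)})\transpose\bU)$ denote the relevant orthogonal alignments.

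The main decomposition starts from the eigenvector identity $\bU\bS = \bM\bU$, which together with $\bP\bU_\bP = \bU_\bP\bS_\bP$ yields, after rearrangement,
\begin{align*}
\bU\bH - \bU_\bP
&= (\bM-\bP)\bU_\bP\bS_\bP^{-1}\bH + \bU_\bP(\bS_\bP^{-1}\bH\bS - \bH) + (\bM-\bP)(\bU\bH - \bU_\bP)\bS^{-1}\bH.
\end{align*}
Hitting this identity with $\be_m\transpose$, the first (linear) term is bounded directly by (A4) applied with $\bV = \bU_\bP\bS_\bP^{-1}$, yielding a contribution of order $\omega(1)\|\bU_\bP\|_{2\to\infty}$. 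The middle term is handled by a Davis--Kahan/Procrustes estimate for $\bS_\bP^{-1}\bH\bS - \bH$, which is controlled in terms of $\bar\gamma$ and $\kappa$. The delicate piece is the third term, which contains $\be_m\transpose(\bM-\bP)\bU$; here I would insert the leave-one-out splitting
\[
\be_m\transpose(\bM-\bP)\bU = \be_m\transpose(\bM-\bP)\bU^{(m)}\bH^{(m)} + \be_m\transpose(\bM-\bP)\bigl(\bU - \bU^{(m)}\bH^{(m)}\bigr).
\]
The first summand satisfies (A4) with $\bV = \bU^{(m)}\bH^{(m)}$, which is legitimate because of the independence obtained from the leave-one-out construction. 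The second summand is bounded by $\|(\bM-\bP)\transpose\be_m\|_2\,\|\bU - \bU^{(m)}\bH^{(m)}\|_2$, with the spectral-norm factor controlled by Davis--Kahan applied to the pair $(\bM,\bM^{(m)})$: since $\bM - \bM^{(m)}$ has rank at most $2$ with spectral norm at most $2\|\be_m\transpose(\bM-\bP)\|_2$, and the eigengap of $\bM$ is at least $\Delta/2$ on the event $\|\bM-\bP\|_2\leq \bar\gamma\Delta$ from (A3), one obtains $\|\bU - \bU^{(m)}\bH^{(m)}\|_2\lesssim \|\be_m\transpose(\bM-\bP)\|_2/\Delta$.

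Combining these bounds and applying (A4) once more to $\be_m\transpose(\bM-\bP)$ (with $\bV = \eye_n$ restricted appropriately, or directly via the norm bound $\|\be_m\transpose(\bM-\bP)\|_2 \lesssim \Delta\,\omega(1)$), one arrives at a self-bounding inequality of the form
\[
\|\bU\bH - \bU_\bP\|_{2\to\infty} \leq C_1\kappa\max\{\bar\gamma,\omega(\bar\gamma)\}\cdot\|\bU\bH - \bU_\bP\|_{2\to\infty} + C_2\bigl(\kappa + \omega(1)\bigr)\bigl\{\bar\gamma + \omega(\bar\gamma)\bigr\}\|\bU_\bP\|_{2\to\infty} + C_3\bar\gamma\|\bP\|_{2\to\infty}/\Delta.
\]
The coefficient $C_1\kappa\max\{\bar\gamma,\omega(\bar\gamma)\}$ is strictly less than $1/2$ by the condition $32\kappa\max\{\bar\gamma,\omega(\bar\gamma)\}\leq 1$ in (A3), so rearranging gives the stated bound on $\|\bU\bH - \bU_\bP\|_{2\to\infty}$. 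The bound on $\|\bU\|_{2\to\infty}$ then follows by the triangle inequality $\|\bU\|_{2\to\infty}\leq \|\bU\bH - \bU_\bP\|_{2\to\infty} + \|\bU_\bP\|_{2\to\infty}$ together with bounding the leading $\bU_\bP$ via its incoherence.

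The main obstacle is keeping the absolute constants in the self-bounding inequality tight enough that (A3) can close the argument; this requires careful bookkeeping at every intermediate step, in particular the Davis--Kahan bound for $\|\bU - \bU^{(m)}\bH^{(m)}\|_2$ and the verification that $\bU^{(m)}$'s spectrum interlaces closely enough with $\bM$'s that the alignment $\bH^{(m)}$ is well defined on the relevant high-probability event. A secondary technical point is taking the union bound over $m\in[n]$: the row-concentration event in (A4) has failure probability $\delta_1/n$ precisely so that the union over the $n$ leave-one-out constructions costs at most $\delta_1$, keeping the total bad-event probability at $\delta_0 + 2\delta_1$ as stated.
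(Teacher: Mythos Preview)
The paper does not prove this theorem at all: it is quoted as Theorem 2.1 of Abbe, Fan, Wang and Zhong (\cite{10.1214/19-AOS1854}) and used as a black box throughout (see the opening line of Section~\ref{sec:technical_preparations}). There is therefore no ``paper's own proof'' to compare your proposal against.

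That said, your plan is essentially the strategy of the original AFWZ paper: a leave-one-out construction $\bM^{(m)}$ to decouple $\be_m\transpose(\bM-\bP)$ from the sample eigenvectors, followed by a self-bounding inequality that closes under the smallness condition $32\kappa\max\{\bar\gamma,\omega(\bar\gamma)\}\leq 1$. Two points to watch if you flesh this out. First, the algebraic identity you write for $\bU\bH-\bU_\bP$ is not quite consistent: since $\bU\bS=\bM\bU$, the factor $\bS^{-1}$ sits between $\bU$ and $\bH$, not after $\bU\bH$, so the third term should involve $(\bM-\bP)(\bU-\bU_\bP\bH\transpose)\bS^{-1}\bH$ rather than $(\bM-\bP)(\bU\bH-\bU_\bP)\bS^{-1}\bH$; and because $\bP$ is \emph{not} assumed to have rank $r$ in this theorem, the term $\bP\bU\bS^{-1}\bH$ is not simply $\bU_\bP\bS_\bP\bU_\bP\transpose\bU\bS^{-1}\bH$---the contribution from the complementary eigenspace of $\bP$ must be tracked, and this is precisely where the residual $\bar\gamma\|\bP\|_{2\to\infty}/\Delta$ in the final bound comes from. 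Second, your appeal to (A4) with ``$\bV=\eye_n$'' to bound $\|\be_m\transpose(\bM-\bP)\|_2$ is not admissible as written, since (A4) requires $\bV\in\mathbb{R}^{n\times r}$; in the AFWZ argument this row norm is instead controlled via the leave-one-out Davis--Kahan step combined with the spectral-norm event from (A3).
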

We next state a vector version of the Bernstein's inequality due to \cite{MINSKER2017111}. The advantage of this concentration inequality is that it is dimension free. 
\begin{lemma}[Corollary 4.1 in \cite{MINSKER2017111}]
\label{lemma:Bernstein_inequality_vector}
Let $\by_1,\ldots,\by_n\in\mathbb{R}^d$ be a sequence of independent random vectors such that $\expect(\by_i) = \zero_d$ and $\|\by_i\|_2\leq U$ almost surely for all $1\leq i\leq n$ and some $U > 0$. Denote $\tau^2 := \sum_{i = 1}^n\expect\|\by_i\|_2^2$. Then for all $t \geq (U + \sqrt{U^2 + 36\tau^2})/6$, 
\[
\prob\left(\mathrel{\Big\|} \sum_{i = 1}^n\by_i \mathrel{\Big\|_2}  > t\right)\leq 28\exp\left(-\frac{3t^2}{6\tau^2 + 2Ut}\right)
\]
\end{lemma}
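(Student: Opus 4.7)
My plan is to lift the vector sum to a symmetric matrix sum via the Hermitian dilation trick and then invoke a dimension-free matrix Bernstein inequality. Define
\[
\bH_i := \begin{pmatrix} 0 & \by_i\transpose \\ \by_i & \zero_{d \times d}\end{pmatrix} \in \mathbb{R}^{(d+1)\times(d+1)}, \quad i \in [n].
\]
Each $\bH_i$ is symmetric and mean zero, and its spectral norm equals $\|\by_i\|_2$, so $\|\bH_i\|_2\leq U$ almost surely. Moreover $\bigl(\sum_i \bH_i\bigr)^2$ is block diagonal with blocks $\bigl\|\sum_i \by_i\bigr\|_2^2$ (scalar) and $\bigl(\sum_i \by_i\bigr)\bigl(\sum_i \by_i\bigr)\transpose$, so $\bigl\|\sum_i \bH_i\bigr\|_2 = \bigl\|\sum_i \by_i\bigr\|_2$. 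It therefore suffices to control $\prob\bigl(\bigl\|\sum_i \bH_i\bigr\|_2 > t\bigr)$.

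Next I would identify the matrix variance proxy. Since $\bH_i^2$ is block diagonal with blocks $\|\by_i\|_2^2$ and $\by_i \by_i\transpose$,
\[
\bV := \sum_{i=1}^n \expect \bH_i^2 = \mathrm{diag}\!\left(\sum_{i=1}^n \expect\|\by_i\|_2^2,\;\sum_{i=1}^n \expect \by_i\by_i\transpose\right),
\]
so $\|\bV\|_2 \leq \tau^2$ and $\mathrm{tr}(\bV) = 2\tau^2$. The crucial observation is that the intrinsic dimension $\mathrm{tr}(\bV)/\|\bV\|_2$ is bounded by $2$ regardless of the ambient dimension $d$.

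Now I would invoke a matrix Bernstein bound refined by intrinsic dimension. The standard Tropp-style argument proceeds by the matrix Laplace transform method combined with Lieb's concavity theorem, producing an MGF estimate of the form $\expect\,\mathrm{tr}\exp\bigl(\theta \sum_i \bH_i\bigr) \leq \mathrm{tr}\exp\bigl(g(\theta)\bV\bigr)$ for a suitable convex $g$. Bounding this trace crudely by $(d+1)\|\exp(g(\theta)\bV)\|_2$ would introduce the unwanted ambient-dimension prefactor; instead, I would estimate $\mathrm{tr}\exp(g(\theta)\bV)$ by comparing $\exp(g(\theta)\bV)$ spectrally with $\bV/\|\bV\|_2$, pulling out a prefactor of order $\mathrm{tr}(\bV)/\|\bV\|_2 \leq 2$. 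A Markov step together with optimization over $\theta$ in the Bernstein regime then produces a tail of the form $C\exp\{-3t^2/(6\tau^2 + 2Ut)\}$, and the lower threshold $t \geq (U + \sqrt{U^2 + 36\tau^2})/6$ is precisely what is needed so that the Bernstein optimizer is in the interior of the admissible range and the leading constant consolidates to $28$.

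The main obstacle is the dimension-free prefactor. A routine application of matrix Bernstein yields a factor $(d+1)$ that can dwarf the exponential when $d \gg \tau^2$. The technical heart is replacing the crude bound $\mathrm{tr}(\exp(\bM))\leq (d+1)\|\exp(\bM)\|_2$ by a spectral-function inequality that extracts the intrinsic dimension of $\bV$ in place of the ambient $d$, in the spirit of Minsker's refinement of Tropp. Tracking constants through Lieb's concavity, the dilation reduction, and the Bernstein optimization then yields the stated prefactor $28$.
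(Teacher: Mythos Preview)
The paper does not prove this lemma at all: it is stated verbatim as Corollary~4.1 of \cite{MINSKER2017111} and simply cited as an external input, so there is no ``paper's own proof'' to compare against. Your sketch is a faithful outline of how Minsker actually establishes the result---Hermitian dilation to a $(d+1)\times(d+1)$ symmetric matrix, identification of the matrix variance proxy with intrinsic dimension at most $2$, and then the intrinsic-dimension refinement of the matrix Laplace/Lieb argument to avoid the ambient $(d+1)$ prefactor---so as a proof plan it is correct in spirit, but for the purposes of this paper no proof is required.
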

Lemma \ref{lemma:Matrix_concentration} below is a generic matrix Chernoff bound due to \cite{tropp2012user}. In the context of random dot product graphs, it allows us to construct the required function $\omega(\cdot)$ in condition A4 of Theorem \ref{thm:AFWZ2020AoS_corr2.1}. See Section \ref{sub:proof_of_ASE_Berry_Esseen_RDPG} for more details. 
\begin{lemma}[Corollary 3.7 in \cite{tropp2012user}]
\label{lemma:Matrix_concentration}
Let $\bZ_1,\ldots,\bZ_n $ be a sequence of symmetric independent random matrices in $\mathbb{R}^{d\times d}$. Assume that there is a function $g:(0,+\infty)\to[0, +\infty]$ and a sequence of deterministic symmetric matrices $\bM_1,\ldots,\bM_n\in\mathbb{R}^{d\times d}$ such that
$\expect e^{\theta\bZ_i}\preceq e^{g(\theta)\bM_i}$ for all $\theta > 0$.
Define the scale parameter 
$\rho = \lambda_{\max}\left(\sum_{i = 1}^n\bM_n\right)$.
Then for all $t\in\mathbb{R}$, 
\[
\prob\left\{\lambda_{\max}\left(\sum_{i = 1}^n\bZ_i\right) \geq t\right\}\leq d\inf_{\theta > 0}\exp\{-\theta t + g(\theta)\rho\}. 
\]
\end{lemma}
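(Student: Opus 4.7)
The plan is to follow the standard matrix Laplace transform approach pioneered by Ahlswede--Winter and sharpened by Tropp. First I would reduce a tail probability on $\lambda_{\max}$ to a trace-exponential moment bound via a matrix Markov step. Specifically, for any $\theta > 0$, since the map $x \mapsto e^{\theta x}$ is monotone increasing and $\lambda_{\max}(e^{\theta \bX}) = e^{\theta \lambda_{\max}(\bX)}$ for Hermitian $\bX$,
\[
\prob\left\{\lambda_{\max}\left(\sum_{i=1}^n \bZ_i\right) \geq t\right\}
= \prob\left\{\lambda_{\max}\left(e^{\theta \sum_i \bZ_i}\right) \geq e^{\theta t}\right\}
\leq e^{-\theta t}\,\expect\,\mathrm{tr}\left(e^{\theta \sum_i \bZ_i}\right),
\]
where I have used $\lambda_{\max}(\bY) \leq \mathrm{tr}(\bY)$ for any positive semidefinite $\bY$, followed by Markov's inequality.

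Next I would invoke the subadditivity of the matrix cumulant generating function due to Lieb's concavity theorem (the map $\bH \mapsto \mathrm{tr}\exp(\bC + \log \bH)$ is concave on the positive definite cone). Iterating this with the tower property of conditional expectation for the independent summands yields the master bound
\[
\expect\,\mathrm{tr}\left(e^{\theta \sum_i \bZ_i}\right)
\leq \mathrm{tr}\exp\left(\sum_{i=1}^n \log \expect e^{\theta \bZ_i}\right).
\]
Now I would translate the hypothesis $\expect e^{\theta \bZ_i} \preceq e^{g(\theta)\bM_i}$ into a bound on the logarithms: since the matrix logarithm is operator monotone on the positive definite cone, this gives $\log \expect e^{\theta \bZ_i} \preceq g(\theta) \bM_i$. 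Summing and applying monotonicity of the trace exponential with respect to the Loewner order, together with $\mathrm{tr}(e^{\bA}) \leq d\,e^{\lambda_{\max}(\bA)}$, produces
\[
\mathrm{tr}\exp\left(\sum_i \log \expect e^{\theta \bZ_i}\right)
\leq \mathrm{tr}\exp\left(g(\theta) \sum_i \bM_i\right)
\leq d\,\exp\!\left(g(\theta)\,\rho\right).
\]
Combining the three displays and taking the infimum over $\theta > 0$ yields the stated Chernoff bound.

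The hard part will be Lieb's concavity theorem itself, which is a genuinely deep result with a nontrivial proof (typically via the joint convexity of relative entropy or Epstein's concavity theorem). Taking Lieb's inequality as a black box, however, the remaining steps are standard matrix-analytic manipulations. A secondary subtlety is the operator monotonicity of $\log(\cdot)$, which requires strict positive definiteness of $\expect e^{\theta \bZ_i}$; this is automatic because the matrix exponential of any Hermitian matrix is positive definite, so the hypothesis $\expect e^{\theta \bZ_i} \preceq e^{g(\theta)\bM_i}$ can be safely passed through the logarithm.
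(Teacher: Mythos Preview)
Your proposal is correct and follows exactly the matrix Laplace transform argument of Tropp (2012). Note, however, that the paper does not prove this lemma at all: it is stated as Corollary~3.7 of \cite{tropp2012user} and used as a black box, so there is no ``paper's own proof'' to compare against; your sketch simply reproduces Tropp's original argument.
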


We conclude this section with the following Berry-Esseen bound for multivariate nonlinear statistics due to \cite{shao2021berry}, which is useful for us to prove Theorem \ref{thm:ASE_Berry_Esseen} and Theorem \ref{thm:Berry_Esseen_OSE_multivariate}. 
\begin{theorem}[Corollary 2.2 in \cite{shao2021berry}]
\label{thm:Berry_Esseen_Multivariate}
Let $\bxi_1,\ldots,\bxi_n$ be independent random vectors in $\mathbb{R}^d$ such that $\expect\bxi_j = \zero_d$, $j\in [n]$ and $\sum_{j = 1}^n\expect(\bxi_j\bxi_j\transpose) = \eye_d$. Let $\bT = \sum_{j = 1}^n\bxi_j + \bD(\bxi_1,\ldots,\bxi_n)$ be a nonlinear statistic, where $\bD(\cdot)$ is a measurable function from $\mathbb{R}^{n\times d}\to\mathbb{R}^d$. Let $\calO$ be an event and $\Delta$ be a random variable such that $\Delta \geq \|\bD(\bxi_1,\ldots,\bxi_n)\|_2\mathbbm{1}(\calO)$, and suppose $\{\Delta^{(j)}\}_{j = 1}^n$ are random variables such that $\Delta^{(j)}$ is independent of $\bxi_j$, $j \in [n]$. Denote $\gamma: = \sum_{j = 1}^n\expect(\|\xi_j\|_2^3)$ and $\calA$ the collection of all convex measurable sets in $\mathbb{R}^d$. Then
\begin{align*}
\sup_{A\in\calA}|\prob(\bT \in A) - \prob(\bz\in A)|&\lesssim d^{1/2}\gamma + \expect\mathrel{\Big\{}\mathrel{\Big\|}\sum_{j = 1}^n\bxi_j\mathrel{\Big\|_2}\Delta\mathrel{\Big\}} + \sum_{j = 1}^n\expect\{\|\bxi_j\|_2|\Delta - \Delta^{(j)}|\}
 + \prob(\calO^c).
\end{align*}
\end{theorem}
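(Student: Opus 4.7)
The plan is to combine Stein's method for multivariate normal approximation over convex sets with a leave-one-out decomposition that exploits the near-independence between $\bxi_j$ and $\Delta^{(j)}$. First, for any convex $A\subseteq\mathbb{R}^d$ use a Bentkus-type smoothing inequality: convolve $\mathbbm{1}_A$ with a Gaussian mollifier of bandwidth $\epsilon>0$ to obtain a smooth test function $h=h_{A,\epsilon}$, and bound
\[
\sup_{A\in\calA}|\prob(\bT\in A)-\prob(\bz\in A)|\;\lesssim\;\sup_h\bigl|\expect h(\bT)-\expect h(\bz)\bigr|\;+\;d^{1/2}\epsilon,
\]
where the penalty comes from the Nazarov/Bentkus bound on the Gaussian surface area of convex bodies in $\mathbb{R}^d$. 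Then introduce the multivariate Stein equation $\mathrm{tr}(\nabla^2 f)-\bx\transpose\nabla f = h-\expect h(\bz)$, whose solution $f=f_h$ satisfies $\|\nabla^k f_h\|_\infty\lesssim \|\nabla^{k-1}h\|_\infty$, so the task reduces to estimating $\expect\{\mathrm{tr}(\nabla^2 f_h(\bT))-\bT\transpose\nabla f_h(\bT)\}$.

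Second, write $\bW:=\sum_{j=1}^n\bxi_j$ and $\bT=\bW+\bD$, and Taylor expand the Stein expression around $\bW$:
\[
\mathrm{tr}(\nabla^2 f_h(\bT))-\bT\transpose\nabla f_h(\bT) = \bigl\{\mathrm{tr}(\nabla^2 f_h(\bW))-\bW\transpose\nabla f_h(\bW)\bigr\} - \bD\transpose\nabla f_h(\bW) + R,
\]
with $|R|\lesssim \|\bD\|_2^2\|\nabla^3 f_h\|_\infty + \|\bD\|_2\|\bW\|_2\|\nabla^2 f_h\|_\infty$. For the linear Stein functional of $\bW$ alone, the classical coupling argument for sums of independent mean-zero vectors with $\expect(\bW\bW\transpose)=\eye_d$ yields the bound $\lesssim \gamma=\sum_j\expect\|\bxi_j\|_2^3$, contributing $d^{1/2}\gamma$ after the smoothing reduction. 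The cross term $\expect[\bD\transpose\nabla f_h(\bW)]$ is where the perturbation enters: on $\calO$ one has $\|\bD\|_2\leq\Delta$ and the $\calO^c$-part is absorbed into $\prob(\calO^c)$ by crude bounds, while the $R$-term yields the factor $\expect\{\|\bW\|_2\Delta\}$ once $\|\nabla^2 f_h\|_\infty$ is controlled.

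Third, the cross term is further decomposed using the leave-one-out variables. Summing per coordinate and inserting $\Delta^{(j)}$,
\[
\expect[\bxi_j\transpose\nabla f_h(\bW)\,\Delta] = \expect[\bxi_j\transpose\{\nabla f_h(\bW)-\nabla f_h(\bW-\bxi_j)\}\Delta^{(j)}] + \expect[\bxi_j\transpose\nabla f_h(\bW)(\Delta-\Delta^{(j)})],
\]
where the first summand uses independence of $\bxi_j$ from $(\bW-\bxi_j,\Delta^{(j)})$ and $\expect\bxi_j=\zero_d$ to vanish at leading order, leaving only a second-derivative remainder of order $\expect\{\|\bxi_j\|_2^2\Delta^{(j)}\}\|\nabla^2 f_h\|_\infty$ that is in turn absorbed into the $\expect\{\|\bW\|_2\Delta\}$ bookkeeping via Cauchy--Schwarz; the second summand yields exactly $\sum_j\expect\{\|\bxi_j\|_2|\Delta-\Delta^{(j)}|\}$. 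Optimizing the smoothing bandwidth $\epsilon$ against the Stein derivative norms produces the claimed sum of four terms.

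\textbf{Main obstacle.} The delicate point is the smoothing step in dimension $d$: without the sharp Nazarov bound one loses a full factor of $d^{1/2}$, and one must propagate the Lipschitz/Hessian norms of the mollified $h$ carefully through each of the three error sources so that the overall $\gamma$-term comes out with the optimal $d^{1/2}$ prefactor rather than $d$ or $d^{3/2}$. The accompanying bookkeeping for the $\|\bW\|_2\Delta$ and $|\Delta-\Delta^{(j)}|$ terms is routine once the smoothing framework is in place.
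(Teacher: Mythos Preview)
The paper does not prove this statement: it is quoted verbatim as Corollary~2.2 of Shao and Zhang (2021) and used as a black box in the proofs of Theorems~\ref{thm:ASE_Berry_Esseen} and~\ref{thm:Berry_Esseen_OSE_multivariate}. There is therefore no ``paper's own proof'' to compare your proposal against.

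That said, your sketch is broadly in the spirit of the Shao--Zhang argument, which does proceed via multivariate Stein's method with smoothing over convex sets and a leave-one-out handling of the nonlinear perturbation $\bD$. A couple of points in your outline are loose. First, in the leave-one-out step you claim the remainder $\expect\{\|\bxi_j\|_2^2\Delta^{(j)}\}\|\nabla^2 f_h\|_\infty$ can be ``absorbed into the $\expect\{\|\bW\|_2\Delta\}$ bookkeeping via Cauchy--Schwarz''; this does not follow directly, since $\sum_j\expect\|\bxi_j\|_2^2=d$ is not controlled by $\expect\|\bW\|_2$, and $\Delta^{(j)}$ is not $\Delta$. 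In Shao--Zhang the analogous cross terms are handled by a more careful Stein-equation identity together with a randomized concentration inequality, not by a bare Cauchy--Schwarz. Second, the smoothing in $d$ dimensions needed to get the $d^{1/2}\gamma$ prefactor (rather than $d$) requires the specific recursive smoothing scheme of Bentkus/Ra\v{i}\v{c} or the approach in Fang--Koike; a single Gaussian mollification with Nazarov's surface bound gives only a suboptimal power of $d$. Your ``main obstacle'' paragraph flags this, but the plan as written does not actually resolve it.
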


\section{Auxiliary results}\label{app:auxiliary_results}

In this section, we introduce some technical tools that serve as the building blocks for our theory. We first present several useful results that are applied throughout the proofs. 

\begin{result}[Concentration of eigenvalues]
\label{result:eigenvalue_concentration}
Under Assumption \ref{assumption:spectral_norm_concentration}, by Weyl's inequality, with probability at least $1 - c_0n^{-\zeta}$, the $p$ largest eigenvalues of $\bA$ are bounded above by $(1/2)n\rho_n\lambda_d(\bDelta_{n})$, the $q$ smallest eigenvalues of $\bA$ are bounded below by $-(1/2)n\rho_n\lambda_d(\bDelta_{n})$, and the absolute values of the remaining eigenvalues of $\bA$ are bounded by a constant multiple of $(n\rho_n)^{1/2}$. In other words, for sufficiently large $n$, with probability at least $1 - c_0n^{-\zeta}$,
\begin{align}\label{eqn:eigenvalue_concentration}
\begin{aligned}
&\lambda_1(\bA)\geq\ldots\geq\lambda_p(\bA)\geq \frac{1}{2}\lambda_p(\bP) = \frac{1}{2}n\rho_n\lambda_p(\bDelta_{n+})\geq\frac{1}{2}n\rho_n\lambda_d(\bDelta_n),\\
&\max_{p + 1\leq i\leq n - q}|\lambda_i(\bA)|\leq K(n\rho_n)^{1/2},\\
&\lambda_n(\bA)\leq \ldots\leq \lambda_{n - q + 1}(\bA)\leq \frac{1}{2}\lambda_{n - q + 1}(\bP) = -\frac{1}{2}n\rho_n\lambda_q(\bDelta_{n-})\leq -\frac{1}{2}n\rho_n\lambda_d(\bDelta_n).
\end{aligned}
\end{align}
\end{result}

\begin{result}[Concentration of $\bS_\bA$]
\label{result:S_A_concentration}
Suppose Assumptions \ref{assumption:incoherence}-\ref{assumption:spectral_norm_concentration} hold. Then for sufficiently large $n$,
$\|\bS_\bA\|_2\leq 2n\rho_n\lambda_1(\bDelta_{n})$ and $\|\bS_\bA^{-1}\|_2\leq \{2n\rho_n\lambda_d(\bDelta_n)\}^{-1}$with probability at least $1 - c_0n^{-\zeta}$, where $c_0 > 0, \zeta\geq 1$ are absolute constants. This can be implied by the concentration of eigenvalues in Result \ref{result:eigenvalue_concentration} and Assumption \ref{assumption:spectral_norm_concentration}. 
\end{result}

\begin{result}[Eigenvector delocalization]
\label{result:UP_delocalization}
$\bU_\bP$ satisfies that
$\|\bU_\bP\|_{2\to\infty}\leq \|\bX\|_{2\to\infty}/\{n\lambda_d(\bDelta_n)\}^{1/2}$. Consequently, $\lambda_d(\bDelta_n)\leq \|\bX\|_{2\to\infty}^2/d$. To see why these results hold, we first observe that
  \begin{align*}
  \|\bU_{\bP}\|_{2\to\infty}
  & = \|\rho_n^{1/2}[\bX_+,\bX_-]\mathrm{diag}(|\bS_{\bP_+}|,|\bS_{\bP_-}|)^{-1/2}\|_{2\to\infty}\\
  &\leq \rho_n^{1/2}\|\bX\|_{2\to\infty}\max(\|\bS_{\bP_+}^{-1}\|_2, \|\bS_{\bP_-}^{-1}\|_2)^{1/2}
  = \frac{\|\bX\|_{2\to\infty}}{\sqrt{n}\lambda_d(\bDelta_n)^{1/2}}.
  \end{align*}
  Since $\|\bU_\bP\|_{2\to\infty}\geq \sqrt{d/n}$ by the fact that $\bU_\bP\in\mathbb{O}(n, d)$, we obtain $\lambda_d(\bDelta_n)\leq \|\bX\|_{2\to\infty}^2/d$. 
\end{result}

We next present a collection of auxiliary lemmas, the proofs of which are relegated to the Supplementary Material. 
Lemma \ref{lemma:Bernstein_concentration_EW} below essentially states the concentration property of $\be_i\transpose\bE\bV$ for any deterministic matrix $\bV\in\mathbb{R}^{n\times d}$ and can be proved using a matrix Bernstein's inequality \cite{tropp2012user}. 
\begin{lemma}\label{lemma:Bernstein_concentration_EW}
Let $(y_i)_{i = 1}^n$ be independent random variables, $|y_i|\leq 1$ with probability one, and $\max_{i\in [n]}\var(y_i)\leq \sigma^2\rho$ for some constant $\sigma^2 > 0$. Suppose $\bV\in\mathbb{R}^{n\times d}$ is a deterministic matrix. Let $\bv_i = \bV\transpose{}\be_i$, $i\in [n]$. Then there exist constants $C > 0$, such that for any $t \geq 1$, 
\begin{align*}
&\prob\left\{\left\|\sum_{i = 1}^n(y_i - \expect y_i)\bv_i\right\|_2 > 3 t^2\|\bV\|_{2\to\infty} + \sqrt{6}\sigma\rho^{1/2}t\|\bV\|_{\mathrm{F}}\right\}\leq 28e^{-t^2},\\ 
&\prob\left\{\left\|\sum_{i = 1}^n(|y_i - \expect y_i| - \expect|y_i - \expect y_i|)\bv_i\right\|_2 > 3 t^2\|\bV\|_{2\to\infty} + \sqrt{6}\sigma\rho^{1/2}t\|\bV\|_{\mathrm{F}}\right\}\leq 28e^{-t^2}.
\end{align*}
\end{lemma}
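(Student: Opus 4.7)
\textbf{Proof proposal for Lemma \ref{lemma:Bernstein_concentration_EW}.} The plan is to reduce each of the two displays to a direct application of the dimension-free vector Bernstein inequality stated as Lemma \ref{lemma:Bernstein_inequality_vector}. Set $\by_i := (y_i - \expect y_i)\bv_i$ for $i \in [n]$. These are independent, mean-zero random vectors in $\mathbb{R}^d$. Since $|y_i| \leq 1$, we have $|y_i - \expect y_i| \leq 2$, so
\[
\|\by_i\|_2 \leq 2\|\bv_i\|_2 \leq 2\|\bV\|_{2\to\infty} =: U,
\]
and using $\expect(y_i - \expect y_i)^2 = \var(y_i) \leq \sigma^2\rho$,
\[
\tau^2 := \sum_{i = 1}^n \expect\|\by_i\|_2^2 = \sum_{i = 1}^n \var(y_i)\|\bv_i\|_2^2 \leq \sigma^2\rho\|\bV\|_{\mathrm{F}}^2.
\]

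Next I would choose the threshold $r := 3t^2\|\bV\|_{2\to\infty} + \sqrt{6}\sigma\rho^{1/2}t\|\bV\|_{\mathrm{F}}$ for $t \geq 1$ and verify the admissibility condition $r \geq (U + \sqrt{U^2 + 36\tau^2})/6$ of Lemma \ref{lemma:Bernstein_inequality_vector}. Using $\sqrt{U^2 + 36\tau^2} \leq U + 6\tau$, the right-hand side is at most $U/3 + \tau \leq \tfrac{2}{3}\|\bV\|_{2\to\infty} + \sigma\rho^{1/2}\|\bV\|_{\mathrm{F}}$, which is dominated by $r$ term by term whenever $t \geq 1$. It then remains to show that the exponent $3r^2/(6\tau^2 + 2Ur)$ produced by Lemma \ref{lemma:Bernstein_inequality_vector} is at least $t^2$. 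Expanding
\[
3r^2 = 27t^4\|\bV\|_{2\to\infty}^2 + 18\sigma^2\rho t^2\|\bV\|_{\mathrm{F}}^2 + 18\sqrt{6}\sigma\rho^{1/2}t^3\|\bV\|_{2\to\infty}\|\bV\|_{\mathrm{F}},
\]
and bounding $t^2(6\tau^2 + 2Ur)$ above by
\[
6\sigma^2\rho t^2\|\bV\|_{\mathrm{F}}^2 + 12t^4\|\bV\|_{2\to\infty}^2 + 4\sqrt{6}\sigma\rho^{1/2}t^3\|\bV\|_{2\to\infty}\|\bV\|_{\mathrm{F}},
\]
the comparison holds term by term ($27 \geq 12$, $18 \geq 6$, $18\sqrt{6} \geq 4\sqrt{6}$). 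Invoking Lemma \ref{lemma:Bernstein_inequality_vector} then gives the first displayed inequality.

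For the second display, let $z_i := |y_i - \expect y_i| - \expect|y_i - \expect y_i|$ and $\bz_i := z_i\bv_i$. Since $|y_i - \expect y_i|$ takes values in $[0,2]$, its centered version satisfies $|z_i| \leq 2$, and
\[
\var(z_i) = \var(|y_i - \expect y_i|) \leq \expect(y_i - \expect y_i)^2 = \var(y_i) \leq \sigma^2\rho.
\]
Thus $\{\bz_i\}$ satisfies exactly the same almost-sure bound $\|\bz_i\|_2 \leq 2\|\bV\|_{2\to\infty}$ and the same second-moment bound as $\{\by_i\}$, so the identical application of Lemma \ref{lemma:Bernstein_inequality_vector} yields the second inequality with the same constants. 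I expect the only delicate step to be the verification of the admissibility threshold and the term-by-term exponent comparison above; once those are in hand the proof is routine. No additional tools beyond Lemma \ref{lemma:Bernstein_inequality_vector} are needed.
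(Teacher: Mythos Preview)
Your proposal is correct and follows essentially the same route as the paper: both apply the vector Bernstein inequality (Lemma~\ref{lemma:Bernstein_inequality_vector}) directly to $\by_i=(y_i-\expect y_i)\bv_i$ (resp.\ to $(|y_i-\expect y_i|-\expect|y_i-\expect y_i|)\bv_i$), verify the admissibility threshold via $\sqrt{U^2+36\tau^2}\leq U+6\tau$, and then do a term-by-term comparison to show the exponent dominates $t^2$. The only cosmetic difference is that you take $U=2\|\bV\|_{2\to\infty}$ (correctly reflecting $|y_i-\expect y_i|\leq 2$) while the paper takes $U=\|\bV\|_{2\to\infty}$; your choice is slightly more careful and still yields the stated bound $28e^{-t^2}$.
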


\begin{lemma}\label{lemma:Bernstein_concentration_EW_subGaussian}
Let $(y_i)_{i = 1}^n$ be independent random variables such that $\max_{i\in [n]}\|y_i - \expect y_i\|_{\psi_2}\leq \sigma\rho^{1/2}$ for some constant $\sigma > 0$. Suppose $\bV\in\mathbb{R}^{n\times d}$ is a deterministic matrix. Let $\bv_i = \bV\transpose{}\be_i$, $i\in [n]$. Then there exist a constant $C_0 > 0$, such that for any $t \geq 1$, 
\begin{align*}
&\prob\left\{\left\|\sum_{i = 1}^n(y_i - p_i)\bv_i\right\|_2 > C_0\sigma \rho^{1/2}t\|\bV\|_{\mathrm{F}}\right\}\leq 2(d + 1)e^{-t^2},\\ 
&\prob\left\{\left\|\sum_{i = 1}^n(|y_i - p_i| - \expect|y_i - p_i|)\bv_i\right\|_2 > 2C_0\sigma \rho^{1/2}t\|\bV\|_{\mathrm{F}}\right\}\leq 2(d + 1)e^{-t^2}.
\end{align*}
\end{lemma}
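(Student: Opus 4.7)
The plan is to reduce the vector-norm concentration to scalar sub-Gaussian tail bounds via a coordinate-wise Hoeffding estimate combined with a union bound over the $d$ coordinates, and to handle the two inequalities in parallel. The setup mirrors the strategy of Lemma \ref{lemma:Bernstein_concentration_EW}, with a scalar sub-Gaussian Hoeffding bound playing the role of matrix Bernstein.

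First, set $\bxi_i := (y_i - p_i)\bv_i$ for $i\in [n]$, so that $(\bxi_i)_{i=1}^n$ is a sequence of independent mean-zero random vectors in $\mathbb{R}^d$. For each coordinate $k\in [d]$, the scalar $S_k := \sum_{i=1}^n (y_i - p_i)[\bV]_{ik}$ is a sum of independent mean-zero random variables with $\|(y_i - p_i)[\bV]_{ik}\|_{\psi_2} \leq \sigma\rho^{1/2}|[\bV]_{ik}|$. I would invoke the standard scalar sub-Gaussian Hoeffding bound to conclude that $\|S_k\|_{\psi_2}^2 \lesssim \sigma^2\rho\sum_{i=1}^n [\bV]_{ik}^2 = \sigma^2\rho\|[\bV]_{*k}\|_2^2$, and hence
\[
\prob\bigl\{|S_k| > C_0\sigma\rho^{1/2}t\|[\bV]_{*k}\|_2\bigr\} \leq 2e^{-t^2}
\]
for a suitable absolute constant $C_0$.

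Next I would take a union bound over $k\in [d]$: with probability at least $1 - 2de^{-t^2}$, the inequality $|S_k| \leq C_0\sigma\rho^{1/2}t\|[\bV]_{*k}\|_2$ holds simultaneously for every $k$. Squaring and summing via Pythagoras then gives
\[
\left\|\sum_{i=1}^n (y_i - p_i)\bv_i\right\|_2^2
= \sum_{k=1}^d S_k^2
\leq C_0^2\sigma^2\rho t^2 \sum_{k=1}^d \|[\bV]_{*k}\|_2^2
= C_0^2\sigma^2\rho t^2 \|\bV\|_{\mathrm{F}}^2,
\]
and taking square roots yields the first inequality; the extra unit in the prefactor $2(d+1)$ can be absorbed by a rescaling of $C_0$ or by adding a trivial null event.

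For the second inequality, I would exploit the closure of the sub-Gaussian class under absolute values and centering. Namely, $|y_i - p_i| - \expect|y_i - p_i|$ is a mean-zero random variable with $\bigl\||y_i - p_i| - \expect|y_i - p_i|\bigr\|_{\psi_2} \lesssim \|y_i - p_i\|_{\psi_2} \leq \sigma\rho^{1/2}$, because $\||X|\|_{\psi_2} \leq \|X\|_{\psi_2}$ and centering inflates the $\psi_2$-norm by at most an absolute multiplicative constant. Substituting these centered absolute-value variables into the first inequality (with $C_0$ enlarged by a universal factor) produces the second inequality with the multiplicative constant $2C_0$. I do not anticipate any real obstacle; the one subtle point is that one must sum squared coordinate bounds via Pythagoras rather than using $\|\cdot\|_2 \leq \sqrt{d}\|\cdot\|_\infty$, so that the aggregate error depends on $\|\bV\|_{\mathrm{F}}$ rather than the looser $\sqrt{d}\max_k\|[\bV]_{*k}\|_2$.
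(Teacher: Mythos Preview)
Your proof is correct but takes a genuinely different route from the paper. The paper applies a \emph{symmetric dilation} trick: it embeds each vector $\bv_i$ into the $(d+1)\times(d+1)$ symmetric matrix $\bT(\bv_i)=\begin{bmatrix}\zero_{d\times d} & \bv_i\\ \bv_i\transpose & 0\end{bmatrix}$, bounds the matrix moment generating function of $\bZ_i=(y_i-p_i)\bT(\bv_i)$ using the sub-Gaussian assumption, and then invokes Tropp's matrix Chernoff bound (Lemma~\ref{lemma:Matrix_concentration}); the dimension $d+1$ of the dilated matrix is precisely what produces the prefactor $2(d+1)$. Your coordinate-wise argument is more elementary---it needs only the scalar sub-Gaussian Hoeffding inequality and a union bound over $d$ coordinates, and in fact yields the slightly sharper prefactor $2d$ (so nothing needs to be ``absorbed''; your bound already implies the stated one). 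The Pythagoras step you single out is exactly what makes the coordinate-wise route work here, recovering $\|\bV\|_{\mathrm{F}}$ rather than a lossy $\sqrt{d}\max_k\|[\bV]_{*k}\|_2$. The paper presumably prefers the dilation machinery because it reuses the same template later (Lemma~\ref{lemma:Sharpened_concentration_EW}) in a Bernoulli setting where a delicate MGF computation is required and a coordinate-wise Hoeffding bound would not give the sharp $\Log(\cdot)^{-1}$ dependence; for the present sub-Gaussian lemma, however, your approach is equally valid and arguably cleaner.
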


One of the difficulties in generalizing the perturbation bounds for a single eigenvector to an eigenvector matrix lies in the control of $\bW^*_\pm\bS_{\bA_\pm} - \bS_{\bP_\pm}\bW_\pm^*$ because the matrix multiplication is not commutative. The following Lemma \ref{lemma:WS_interchange_bound} allows us to tackle this type of technical barrier. 
\begin{lemma}\label{lemma:WS_interchange_bound}
Suppose Assumptions \ref{assumption:incoherence}-\ref{assumption:spectral_norm_concentration} hold. Then there exists a absolute constant $c_0 > 0$, such that for sufficiently large $n$, the following events hold with probability at least $1 - c_0n^{-\zeta} - c_0e^{-t}$ for all $t > 0$:
\begin{align*}
\|\bW^*_{\pm}\bS_{\bA_{\pm}} - \bS_{\bP_{\pm}}\bW^*_{\pm}\|_{2} &\lesssim_\sigma \max\left\{\frac{\kappa(\bDelta_n)}{\lambda_d(\bDelta_n)}, d^{1/2}, t^{1/2}\right\},\\
\|\bW^*_{\pm}|\bS_{\bA_{\pm}}|^{1/2} - |\bS_{\bP_{\pm}}|^{1/2}\bW^*_{\pm}\|_{\mathrm{F}} &\lesssim_\sigma 
\frac{d^{1/2}}{(n\rho_n)^{1/2}\lambda_d(\bDelta_n)^{1/2}}\max\left\{\frac{\kappa(\bDelta_n)}{\lambda_d(\bDelta_n)}, d^{1/2}, t^{1/2}\right\}\\
&\leq\frac{\|\bX\|_{2\to\infty}}{(n\rho_n)^{1/2}\lambda_d(\bDelta_n)}\max\left\{\frac{\kappa(\bDelta_n)}{\lambda_d(\bDelta_n)}, d^{1/2}, t^{1/2}\right\},\\
\|\bW^*_{\pm}|\bS_{\bA_{\pm}}|^{-1/2} - |\bS_{\bP_{\pm}}|^{-1/2}\bW^*_{\pm}\|_{\mathrm{F}} &\lesssim_\sigma \frac{\|\bX\|_{2\to\infty}}{(n\rho_n)^{3/2}\lambda_d(\bDelta_n)^2}\max\left\{\frac{\kappa(\bDelta_n)}{\lambda_d(\bDelta_n)}, d^{1/2}, t^{1/2}\right\}.
\end{align*}
\end{lemma}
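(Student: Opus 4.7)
The plan is to derive the three bounds sequentially, with the spectral-norm bound feeding into the latter two via a Sylvester-equation trick. For the first bound, I start from the identity obtained by left-multiplying $\bA\bU_{\bA_\pm} = \bU_{\bA_\pm}\bS_{\bA_\pm}$ and $\bP\bU_{\bP_\pm} = \bU_{\bP_\pm}\bS_{\bP_\pm}$ by $\bU_{\bP_\pm}\transpose$ and subtracting, which yields
\begin{align*}
\bU_{\bP_\pm}\transpose\bU_{\bA_\pm}\bS_{\bA_\pm} - \bS_{\bP_\pm}\bU_{\bP_\pm}\transpose\bU_{\bA_\pm} = \bU_{\bP_\pm}\transpose\bE\bU_{\bA_\pm}.
\end{align*}
Inserting the SVD $\bU_{\bP_\pm}\transpose\bU_{\bA_\pm} = \bW_{1\pm}\bD_\pm\bW_{2\pm}\transpose$ and using $\bW^*_\pm = \bW_{1\pm}\bW_{2\pm}\transpose$, this rearranges into
\begin{align*}
\bW^*_\pm\bS_{\bA_\pm} - \bS_{\bP_\pm}\bW^*_\pm
= \bW_{1\pm}(\eye - \bD_\pm)\bW_{2\pm}\transpose\bS_{\bA_\pm} + \bS_{\bP_\pm}\bW_{1\pm}(\bD_\pm - \eye)\bW_{2\pm}\transpose + \bU_{\bP_\pm}\transpose\bE\bU_{\bA_\pm}.
\end{align*}
The first two summands are controlled via Davis-Kahan: since $\eye - \bD_\pm \preceq \eye - \bD_\pm^2 = \sin^2\Theta$, I have $\|\eye - \bD_\pm\|_2 \lesssim \|\bE\|_2^2/\lambda_d(\bP)^2 \lesssim 1/\{n\rho_n\lambda_d(\bDelta_n)^2\}$ under Assumption~\ref{assumption:spectral_norm_concentration}, which combines with $\|\bS_{\bA_\pm}\|_2 \vee \|\bS_{\bP_\pm}\|_2 \lesssim n\rho_n\lambda_1(\bDelta_n)$ from Result~\ref{result:S_A_concentration} to produce the term $\kappa(\bDelta_n)/\lambda_d(\bDelta_n)$. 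The cross term I split further as $\bU_{\bP_\pm}\transpose\bE\bU_{\bP_\pm}\bW^*_\pm + \bU_{\bP_\pm}\transpose\bE(\bU_{\bA_\pm} - \bU_{\bP_\pm}\bW^*_\pm)$; the second piece is bounded by $\|\bE\|_2\|\bU_{\bA_\pm}-\bU_{\bP_\pm}\bW^*_\pm\|_2 \lesssim 1/\lambda_d(\bDelta_n)$ via another application of Davis-Kahan, while the leading piece $\|\bU_{\bP_\pm}\transpose\bE\bU_{\bP_\pm}\|_2$ will supply the $d^{1/2}\vee t^{1/2}$ contribution (see below).

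For the second bound, let $\bY_\pm := \bW^*_\pm|\bS_{\bA_\pm}|^{1/2} - |\bS_{\bP_\pm}|^{1/2}\bW^*_\pm$. Premultiplying the definition by $|\bS_{\bP_\pm}|^{1/2}$ and postmultiplying by $|\bS_{\bA_\pm}|^{1/2}$, the mixed terms cancel and I obtain the Sylvester-type identity
\begin{align*}
|\bS_{\bP_\pm}|^{1/2}\bY_\pm + \bY_\pm|\bS_{\bA_\pm}|^{1/2} = \pm(\bW^*_\pm\bS_{\bA_\pm} - \bS_{\bP_\pm}\bW^*_\pm),
\end{align*}
the outer sign absorbing $|\bS_{\bP_\pm}| = \pm\bS_{\bP_\pm}$ on each branch. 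Because both $|\bS_{\bP_\pm}|^{1/2}$ and $|\bS_{\bA_\pm}|^{1/2}$ are positive definite with smallest eigenvalue $\gtrsim\{n\rho_n\lambda_d(\bDelta_n)\}^{1/2}$ by Result~\ref{result:eigenvalue_concentration}, the standard Sylvester-equation inversion gives $\|\bY_\pm\|_{\mathrm{F}} \lesssim \|\bW^*_\pm\bS_{\bA_\pm} - \bS_{\bP_\pm}\bW^*_\pm\|_{\mathrm{F}}/\{n\rho_n\lambda_d(\bDelta_n)\}^{1/2}$. Since the matrix is at most $d\times d$, $\|\cdot\|_{\mathrm{F}} \leq d^{1/2}\|\cdot\|_2$ converts the first bound into the sharp $d^{1/2}$ prefactor, and the alternative form in the statement follows from $d \leq \|\bX\|_{2\to\infty}^2/\lambda_d(\bDelta_n)$ (Result~\ref{result:UP_delocalization}). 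For the third bound, the algebraic rearrangement
\begin{align*}
\bW^*_\pm|\bS_{\bA_\pm}|^{-1/2} - |\bS_{\bP_\pm}|^{-1/2}\bW^*_\pm = -|\bS_{\bP_\pm}|^{-1/2}\bY_\pm|\bS_{\bA_\pm}|^{-1/2}
\end{align*}
reduces the task to multiplying the second bound by $\||\bS_{\bP_\pm}|^{-1/2}\|_2\||\bS_{\bA_\pm}|^{-1/2}\|_2 \lesssim 1/\{n\rho_n\lambda_d(\bDelta_n)\}$, again via Result~\ref{result:eigenvalue_concentration}.

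The key obstacle is the sharp concentration of $\|\bU_{\bP_\pm}\transpose\bE\bU_{\bP_\pm}\|_2$: the naive estimate $\|\bE\|_2\|\bU_{\bP_\pm}\|_2^2$ carries an unacceptable $(n\rho_n)^{1/2}$ factor. My plan is to decompose $\bU_{\bP_\pm}\transpose\bE\bU_{\bP_\pm} = \sum_{i\leq j}[\bE]_{ij}(\bu_i\bu_j\transpose + \bu_j\bu_i\transpose - \mathbbm{1}(i=j)\bu_i\bu_i\transpose)$ with $\bu_i := \bU_{\bP_\pm}\transpose\be_i$ into independent symmetric random matrices, and apply matrix Bernstein separately to the bounded part $\bE_1$ and a matrix sub-Gaussian tail to the sub-Gaussian part $\bE_2$, as permitted by Assumption~\ref{assumption:distribution}. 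The crucial gain comes from the delocalization $\|\bu_i\|_2 \leq \|\bX\|_{2\to\infty}/\{n\lambda_d(\bDelta_n)\}^{1/2}$ (Result~\ref{result:UP_delocalization}), which shrinks both the individual jump sizes and the variance proxy $\sum_{i,j}\expect([\bE]_{ij}^2)(\|\bu_i\|_2^2 + \|\bu_j\|_2^2) = O(\sigma^2\rho_n d\|\bX\|_{2\to\infty}^2/\lambda_d(\bDelta_n))$; after collecting all factors this delivers a tail of order $d^{1/2}\vee t^{1/2}$, which can be absorbed into the stated maximum. The remaining bookkeeping is routine probability accounting so that all the good events (spectral concentration, eigenvalue separation, matrix Bernstein) hold simultaneously with probability at least $1 - c_0 n^{-\zeta} - c_0 e^{-t}$.
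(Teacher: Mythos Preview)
Your decomposition for the first bound is essentially the paper's: the paper writes
\[
\bW^*_\pm\bS_{\bA_\pm} - \bS_{\bP_\pm}\bW^*_\pm
= (\bW^*_\pm - \bU_{\bP_\pm}\transpose\bU_{\bA_\pm})\bS_{\bA_\pm}
+ \bU_{\bP_\pm}\transpose\bE(\bU_{\bA_\pm} - \bU_{\bP_\pm}\bU_{\bP_\pm}\transpose\bU_{\bA_\pm})
+ \bU_{\bP_\pm}\transpose\bE\bU_{\bP_\pm}\bU_{\bP_\pm}\transpose\bU_{\bA_\pm}
+ \bS_{\bP_\pm}(\bU_{\bP_\pm}\transpose\bU_{\bA_\pm} - \bW^*_\pm),
\]
and bounds the pieces exactly as you describe. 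Your route for the second and third bounds is genuinely different and arguably cleaner: the paper instead works entrywise, writing
$[\bW^*_\pm|\bS_{\bA_\pm}|^{1/2} - |\bS_{\bP_\pm}|^{1/2}\bW^*_\pm]_{kl}
= [\bW^*_\pm]_{kl}\,(\lambda_l - \mu_k)/(\lambda_l^{1/2} + \mu_k^{1/2})$
(and analogously for the inverse square root), then passing to the Frobenius norm. Your Sylvester identity $|\bS_{\bP_\pm}|^{1/2}\bY_\pm + \bY_\pm|\bS_{\bA_\pm}|^{1/2} = \pm(\bW^*_\pm\bS_{\bA_\pm} - \bS_{\bP_\pm}\bW^*_\pm)$ and the factorization $-|\bS_{\bP_\pm}|^{-1/2}\bY_\pm|\bS_{\bA_\pm}|^{-1/2}$ achieve the same conclusions more compactly; both arguments ultimately reduce to the first bound plus eigenvalue control.

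The one place to be careful is the concentration of $\|\bU_{\bP_\pm}\transpose\bE\bU_{\bP_\pm}\|_2$. The paper proves this as a separate lemma via an $\epsilon$-net on the sphere combined with scalar Hoeffding for $\bE_1$ and scalar sub-Gaussian concentration for $\bE_2$: for fixed unit vectors $\bw_1,\bw_2$ the quadratic form $\bw_1\transpose\bU_1\transpose\bE\bU_2\bw_2 = \sum_{i\leq j}c_{ij}[\bE]_{ij}$ has $\sum c_{ij}^2 \leq 5$, so the tail is $\exp(-cu^2)$ independently of $n$, and the union bound over $18^{O(d)}$ net points produces the additive $d^{1/2}$ in the exponent. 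This gives exactly $d^{1/2}+t^{1/2}$. Your matrix-Bernstein route, with matrix variance $V \asymp \sigma^2\rho_n d$ (your written variance proxy is a bit loose; the operator norm of the matrix variance is $O(\sigma^2\rho_n d)$, not $O(\sigma^2\rho_n d\|\bX\|_{2\to\infty}^2/\lambda_d(\bDelta_n))$), yields a bound of order $\sqrt{\rho_n d(t+\log d)}$ after absorbing the dimensional prefactor. This is \emph{not} the same as $d^{1/2}\vee t^{1/2}$: when $d$ and $t$ are comparable and $\rho_n$ is not small, you get $\sqrt{dt}$ rather than $\sqrt{d}+\sqrt{t}$, losing a $\sqrt{d}$ factor. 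In the sparse regimes that drive the paper ($\rho_n d$ bounded) your bound is actually fine and even a bit sharper, but to recover the lemma exactly as stated you should switch to the $\epsilon$-net/scalar-Hoeffding argument for this step.
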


With the help of Lemma \ref{lemma:higher_order_remainder} below, we are able to provide a sharp control of several remainder terms. The analyses of these remainders are necessary, as will be seen in Section \ref{sub:proof_sketch_for_theorem_thm:eigenvector_deviation} (see lines \eqref{eqn:ASE_eigenvector_decomposition_II}, \eqref{eqn:ASE_eigenvector_decomposition_III}, and \eqref{eqn:ASE_eigenvector_decomposition_IV}). 
\begin{lemma}\label{lemma:higher_order_remainder}
Suppose Assumptions \ref{assumption:incoherence}-\ref{assumption:spectral_norm_concentration} hold. Let $m\in [n]$ be a fixed row index. Then there exists an absolute constant $c_0 > 0$, such that for all $t\geq 1$, $t\lesssim n\rho_n$, the following events hold with probability at least $1 - c_0n^{-\zeta} - c_0de^{-t}$ for sufficiently large $n$:
\begin{align*}
&\|\bU_{\bP_\pm}\bS_{\bP_\pm}(\bU_{\bP_\pm}\transpose{}\bU_{\bA_\pm}\bS_{\bA_\pm}^{-1} - \bS_{\bP_\pm}^{-1}\bU_{\bP_\pm}\transpose\bU_{\bA_\pm})\|_{2\to\infty} 
\lesssim_\sigma \frac{\|\bU_\bP\|_{2\to\infty}}{n\rho_n\lambda_d(\bDelta_n)}
\max\left\{
t^{1/2},d^{1/2},\frac{1}{\lambda_d(\bDelta_n)}
\right\},\\
&\|\be_m\transpose\bE\bU_{\bP_\pm}(\bW^*_\pm\bS_{\bA_\pm}^{-1} - \bS_{\bP_\pm}^{-1}\bW_\pm^*)\|_{2} 
\lesssim_\sigma \frac{t^{1/2}\|\bU_{\bP}\|_{2\to\infty}}{(n\rho_n)^{3/2}\lambda_d(\bDelta_n)^{2}}\max\left\{
  \frac{\kappa(\bDelta_n)}{\lambda_d(\bDelta_n)},d^{1/2}, t^{1/2}\right\},\\
&\|\bU_{\bP_\mp}\bS_{\bP_\mp}\bU_{\bP_\mp}\transpose\bU_{\bA_\pm}\bS_{\bA_\pm}^{-1}\|_{2\to\infty}
\lesssim_\sigma 
\frac{d^{1/2}\|\bU_\bP\|_{2\to\infty}}{n\rho_n\lambda_d(\bDelta_n)}\max\left\{\frac{1}{\lambda_d(\bDelta_n)},t^{1/2},d^{1/2}\right\}
.   
\end{align*}
Also, for sufficiently large $n$, with probability at least $1 - c_0n^{-\zeta}$, 
\[
\|\bU_{\bP_\pm}(\bU_{\bP_\pm}\transpose{}\bU_{\bA_\pm} - \bW^*_\pm)\|_{2\to\infty} \lesssim \frac{\|\bU_\bP\|_{2\to\infty}}{n\rho_n\lambda_d(\bDelta_n)^{2}}.
\]
\end{lemma}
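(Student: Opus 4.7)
The plan is to establish the four bounds separately; each rests on an algebraic identity that converts the target into a simple function of $\bE$, after which the remainder is controlled by combining vector-Bernstein concentration (Lemmas \ref{lemma:Bernstein_concentration_EW}, \ref{lemma:Bernstein_concentration_EW_subGaussian}, \ref{lemma:Bernstein_inequality_vector}) with the spectral-norm and eigenvalue bounds from Assumption \ref{assumption:spectral_norm_concentration} and Results \ref{result:eigenvalue_concentration}--\ref{result:UP_delocalization}, Davis-Kahan perturbation, and the interchange estimates in Lemma \ref{lemma:WS_interchange_bound}.

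For the first bound I would start from the Sylvester identity $\bS_{\bP_\pm}\bU_{\bP_\pm}\transpose\bU_{\bA_\pm} - \bU_{\bP_\pm}\transpose\bU_{\bA_\pm}\bS_{\bA_\pm} = -\bU_{\bP_\pm}\transpose\bE\bU_{\bA_\pm}$, obtained from $\bA\bU_{\bA_\pm} = \bU_{\bA_\pm}\bS_{\bA_\pm}$ and $\bU_{\bP_\pm}\transpose\bP = \bS_{\bP_\pm}\bU_{\bP_\pm}\transpose$. Multiplying by $\bS_{\bP_\pm}^{-1}$ on the left and $\bS_{\bA_\pm}^{-1}$ on the right, then by $\bU_{\bP_\pm}\bS_{\bP_\pm}$ on the left, collapses the target to $\|\bU_{\bP_\pm}\bU_{\bP_\pm}\transpose\bE\bU_{\bA_\pm}\bS_{\bA_\pm}^{-1}\|_{2\to\infty}$. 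Factoring out $\|\bU_{\bP_\pm}\|_{2\to\infty}$ and $\|\bS_{\bA_\pm}^{-1}\|_2 \asymp \{n\rho_n\lambda_d(\bDelta_n)\}^{-1}$ via Result \ref{result:S_A_concentration}, I would split $\bU_{\bA_\pm} = \bU_{\bP_\pm}\bW^*_\pm + (\bU_{\bA_\pm} - \bU_{\bP_\pm}\bW^*_\pm)$. The residual piece is absorbed via Davis-Kahan $\|\bU_{\bA_\pm} - \bU_{\bP_\pm}\bW^*_\pm\|_2 \lesssim \{(n\rho_n)^{1/2}\lambda_d(\bDelta_n)\}^{-1}$ paired with $\|\bE\|_2 \lesssim (n\rho_n)^{1/2}$, which contributes the $1/\lambda_d(\bDelta_n)$ term. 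The aligned piece $\bU_{\bP_\pm}\transpose\bE\bU_{\bP_\pm}$ is a deterministic bilinear form in $\bE$ whose operator norm concentrates at rate $\sigma\rho_n^{1/2}(d^{1/2} + t^{1/2})$ via Lemma \ref{lemma:Bernstein_inequality_vector} applied coordinate-wise with a union bound of probability $c_0 d e^{-t}$; this contributes the $d^{1/2} + t^{1/2}$ terms.

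For the second bound I would apply the commutator identity $\bW^*_\pm\bS_{\bA_\pm}^{-1} - \bS_{\bP_\pm}^{-1}\bW_\pm^* = \bS_{\bP_\pm}^{-1}(\bS_{\bP_\pm}\bW^*_\pm - \bW^*_\pm\bS_{\bA_\pm})\bS_{\bA_\pm}^{-1}$; Lemma \ref{lemma:WS_interchange_bound} controls the middle factor by $\max\{\kappa(\bDelta_n)/\lambda_d(\bDelta_n), d^{1/2}, t^{1/2}\}$, and the outer inverses contribute $\{n\rho_n\lambda_d(\bDelta_n)\}^{-2}$. The remaining factor $\|\be_m\transpose\bE\bU_{\bP_\pm}\|_2$ is a sum of $n$ independent $d$-vectors, for which Lemma \ref{lemma:Bernstein_inequality_vector} yields $\lesssim \sigma\rho_n^{1/2}\|\bU_\bP\|_{2\to\infty}(d t)^{1/2}$ on an event of probability at least $1 - c_0 e^{-t}$; multiplying the two gives the claim. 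For the third bound, combining $\bU_{\bP_\mp}\transpose\bA\bU_{\bA_\pm} = \bU_{\bP_\mp}\transpose\bU_{\bA_\pm}\bS_{\bA_\pm}$ with $\bU_{\bP_\mp}\transpose\bP = \bS_{\bP_\mp}\bU_{\bP_\mp}\transpose$ yields $\bS_{\bP_\mp}\bU_{\bP_\mp}\transpose\bU_{\bA_\pm}\bS_{\bA_\pm}^{-1} = \bU_{\bP_\mp}\transpose\bU_{\bA_\pm} - \bU_{\bP_\mp}\transpose\bE\bU_{\bA_\pm}\bS_{\bA_\pm}^{-1}$; pre-multiplying by $\bU_{\bP_\mp}$ splits the target into $\bU_{\bP_\mp}\bU_{\bP_\mp}\transpose\bU_{\bA_\pm}$ plus a replica of the first-statement quantity with the projector flipped. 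The replica term is treated exactly as in the first bound; for $\bU_{\bP_\mp}\bU_{\bP_\mp}\transpose\bU_{\bA_\pm}$, the off-sign block $\bU_{\bP_\mp}\transpose\bU_{\bA_\pm}$ satisfies the Sylvester equation $\bU_{\bP_\mp}\transpose\bU_{\bA_\pm}\bS_{\bA_\pm} - \bS_{\bP_\mp}\bU_{\bP_\mp}\transpose\bU_{\bA_\pm} = \bU_{\bP_\mp}\transpose\bE\bU_{\bA_\pm}$ with spectral gap $\gtrsim n\rho_n\lambda_d(\bDelta_n)$ (since $\bS_{\bP_\mp}$ and $\bS_{\bA_\pm}$ have opposite signs), giving $\|\bU_{\bP_\mp}\transpose\bU_{\bA_\pm}\|_{\mathrm{F}} \lesssim \|\bU_{\bP_\mp}\transpose\bE\bU_{\bA_\pm}\|_{\mathrm{F}}/\{n\rho_n\lambda_d(\bDelta_n)\}$, again handled by the split-and-Bernstein argument. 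For the fourth bound I would use the standard SVD identity $\|\bU_{\bP_\pm}\transpose\bU_{\bA_\pm} - \bW^*_\pm\|_2 \leq \|\sin\Theta(\bU_{\bP_\pm},\bU_{\bA_\pm})\|_2^2/2$ relating the matrix sign to the Procrustes alignment, coupled with Davis-Kahan $\|\sin\Theta\|_2 \lesssim \{(n\rho_n)^{1/2}\lambda_d(\bDelta_n)\}^{-1}$; factoring out $\|\bU_{\bP_\pm}\|_{2\to\infty} \leq \|\bU_\bP\|_{2\to\infty}$ produces the claim on the event $\{\|\bE\|_2\leq K(n\rho_n)^{1/2}\}$ of probability $1 - c_0 n^{-\zeta}$.

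The main obstacles are the first and third bounds: the naive spectral estimate $\|\bU_{\bP_\pm}\transpose\bE\bU_{\bA_\pm}\|_2 \leq \|\bE\|_2 \lesssim (n\rho_n)^{1/2}$ is too crude by a factor of $(n\rho_n)^{1/2}$. The required sharpening comes from observing that, after decomposing $\bU_{\bA_\pm}$ along and perpendicular to the column span of $\bU_{\bP_\pm}$, only the aligned projection $\bU_{\bP_\pm}\transpose\bE\bU_{\bP_\pm}$ enters bilinearly in $\bE$ and hence concentrates at the improved rate $\sigma\rho_n^{1/2}(d^{1/2} + t^{1/2})$, while the orthogonal residual supplies an additional Davis-Kahan factor $\{(n\rho_n)^{1/2}\lambda_d(\bDelta_n)\}^{-1}$ that precisely cancels the $(n\rho_n)^{1/2}$ coming from $\|\bE\|_2$.
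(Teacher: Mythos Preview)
Your proposal is correct and follows essentially the same route as the paper: the same Sylvester identities reducing each term to $\bU_{\bP}\transpose\bE\bU_{\bA}$, the same aligned/orthogonal split of $\bU_{\bA_\pm}$ relative to $\bU_{\bP_\pm}$, the same opposite-sign spectral gap for the cross term $\bU_{\bP_\mp}\transpose\bU_{\bA_\pm}$, and the same $\sin^2\Theta$ bound for the matrix-sign discrepancy. The only minor difference is in controlling $\|\bU_{\bP_\pm}\transpose\bE\bU_{\bP_\pm}\|_2$: the paper invokes a dedicated $\epsilon$-net/Hoeffding lemma (Lemma \ref{lemma:UtransposeEU_bound}) yielding $\lesssim_\sigma d^{1/2}+t^{1/2}$ with tail $c_0e^{-t}$, whereas you propose vector Bernstein coordinate-wise with a union bound; both work, though the paper's route handles the symmetry $[\bE]_{ij}=[\bE]_{ji}$ more directly and uses the projector split $\bU_{\bA_\pm}=\bU_{\bP_\pm}\bU_{\bP_\pm}\transpose\bU_{\bA_\pm}+(\eye-\bU_{\bP_\pm}\bU_{\bP_\pm}\transpose)\bU_{\bA_\pm}$ rather than your $\bW^*_\pm$-based split.
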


We conclude this section with the following lemma, which asserts that the two-to-infinity norm of $\bU_\bA$ can be upper bounded by the two-to-infinity norm of $\bU_\bP$ with large probability. It is a direct consequence of Theorem \ref{thm:AFWZ2020AoS_corr2.1}. 
\begin{lemma}\label{lemma:U_A_two_to_infinity_norm}
Suppose Assumptions \ref{assumption:incoherence}-\ref{assumption:spectral_norm_concentration} hold. Then there exists an absolute constant $c_0 > 0$, such that for sufficiently large $n$, with probability at least $1 - c_0n^{-\zeta\wedge\xi}$,
\[
\|\bU_{\bA_\pm}\|_{2\to\infty}\lesssim \{\kappa(\bDelta_n) + \varphi(1)\}\|\bU_\bP\|_{2\to\infty}.
\]
\end{lemma}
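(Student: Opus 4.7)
The plan is to invoke Theorem~\ref{thm:AFWZ2020AoS_corr2.1} with $\bM = \bA$ and $\bP = \expect\bA$. For the positive block, I take $r = p$ and $s = 0$ so that the eigenvector output is $\bU_{\bA_+}$ (with $\bU_\bP$ replaced by $\bU_{\bP_+}$); the argument for $\bU_{\bA_-}$ is symmetric, using $r = q$ and $s = n-q$. Because $\mathrm{rank}(\bP) = d = p+q$, there are $n-d$ zero eigenvalues separating the positive block from the negative block, so for the positive case $\lambda_{p+1}(\bP) = 0$ and therefore
\[
\Delta \;=\; \lambda_p(\bP)\;=\;n\rho_n\lambda_p(\bDelta_{n+})\;\geq\; n\rho_n\lambda_d(\bDelta_n),
\qquad \kappa \;=\; \lambda_1(\bP)/\lambda_p(\bP) \;=\; \kappa(\bDelta_{n+})\;\leq\;\kappa(\bDelta_n).
\]

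Next I verify the four conditions of Theorem~\ref{thm:AFWZ2020AoS_corr2.1} with $\omega(\cdot) := \varphi(\cdot)$ and with $\bar\gamma$ taken to be a sufficiently large absolute-constant multiple of $\gamma$ (as defined in Assumption~\ref{assumption:spectral_norm_concentration}). For (A1), I bound
\[
\|\bP\|_{2\to\infty} \;\leq\; \|\bU_\bP\|_{2\to\infty}\|\bS_\bP\|_2 \;\leq\; \frac{\|\bX\|_{2\to\infty}}{\sqrt{n\lambda_d(\bDelta_n)}}\cdot n\rho_n\lambda_1(\bDelta_n)
\]
and use $\lambda_1(\bDelta_n)\leq \|\bX\|_{2\to\infty}^2$ together with Assumption~\ref{assumption:incoherence} to show that $\|\bP\|_{2\to\infty}/\Delta$ is dominated by $\bar\gamma$ for $n$ large. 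Condition (A2) is immediate from the independence in Assumption~\ref{assumption:distribution}. Condition (A3) follows directly from Assumption~\ref{assumption:spectral_norm_concentration}: the spectral tail bound gives $\|\bE\|_2\leq K(n\rho_n)^{1/2}\leq \bar\gamma\Delta$ with probability $1-c_0n^{-\zeta}$ (so $\delta_0\lesssim n^{-\zeta}$), and the assumed inequality $32\kappa(\bDelta_n)\max\{\gamma,\varphi(\gamma)\}\leq 1$ yields $32\kappa\max\{\bar\gamma,\varphi(\bar\gamma)\}\leq 1$ up to absolute constants, using $\varphi(x)/x$ non-increasing to control the slack. Condition (A4) is precisely Assumption~\ref{assumption:rowwise_concentration} applied to deterministic $\bV$, giving $\delta_1\lesssim n^{-\xi}$. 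A union bound over the positive and negative eigenvector analyses yields failure probability $\lesssim n^{-\zeta\wedge\xi}$.

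The conclusion of Theorem~\ref{thm:AFWZ2020AoS_corr2.1} then reads
\[
\|\bU_{\bA_\pm}\|_{2\to\infty} \;\lesssim\; \{\kappa + \omega(1)\}\|\bU_{\bP_\pm}\|_{2\to\infty} \;+\; \frac{\bar\gamma\,\|\bP\|_{2\to\infty}}{\Delta}.
\]
To complete the proof I absorb the second term into the first using
\[
\frac{\|\bP\|_{2\to\infty}}{\Delta} \;\leq\; \frac{\|\bU_\bP\|_{2\to\infty}\|\bS_\bP\|_2}{\lambda_p(\bP)} \;=\; \kappa(\bDelta_{n+})\,\|\bU_\bP\|_{2\to\infty} \;\leq\; \kappa(\bDelta_n)\,\|\bU_\bP\|_{2\to\infty},
\]
together with $\bar\gamma\lesssim 1$ (since $\gamma\to 0$ by Assumption~\ref{assumption:spectral_norm_concentration}). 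Combining this with $\kappa\leq \kappa(\bDelta_n)$, $\omega(1) = \varphi(1)$, and $\|\bU_{\bP_\pm}\|_{2\to\infty}\leq \|\bU_\bP\|_{2\to\infty}$ gives the claimed bound.

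The only real obstacle is the bookkeeping in (A1) and (A3): the value of $\bar\gamma$ must simultaneously dominate $\|\bP\|_{2\to\infty}/\Delta$ (for incoherence) and satisfy the $32\kappa\cdot\max\{\bar\gamma,\varphi(\bar\gamma)\}\leq 1$ clause (for spectral concentration). Both demands are met by absolute-constant inflations of $\gamma$ because $\lambda_1(\bDelta_n)\leq \|\bX\|_{2\to\infty}^2\lesssim 1$ and $\varphi(c\gamma)\leq c\,\varphi(\gamma)$ for $c\geq 1$, so no new assumption is needed; everything else is a direct substitution.
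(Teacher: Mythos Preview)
Your approach is correct and coincides with the paper's: the lemma is obtained by applying Theorem~\ref{thm:AFWZ2020AoS_corr2.1} to $\bM=\bA$, with the verification of (A1)--(A4) being identical to the one the paper carries out in the proof of Lemma~\ref{lemma:auxiliary_matrix} for $\bM=\bA^{(m)}$.

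One simplification: you do not need to inflate $\bar\gamma$ to a constant multiple of $\gamma$. The paper takes
\[
\bar\gamma_\pm \;=\; \frac{\max\{3K,\|\bX\|_{2\to\infty}^2\}}{(n\rho_n)^{1/2}\lambda_{\min}(\bDelta_{n\pm})}\;\leq\;\gamma,
\]
and because the numerator already contains $\max\{3K,\|\bX\|_{2\to\infty}^2\}$, both (A1) and the spectral part of (A3) are satisfied exactly: $\|\bP\|_{2\to\infty}\leq \sqrt{n}\rho_n\|\bX\|_{2\to\infty}^2\leq \bar\gamma_\pm\Delta_\pm$ and $\|\bE\|_2\leq K(n\rho_n)^{1/2}\leq \bar\gamma_\pm\Delta_\pm$. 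Then $32\kappa_\pm\max\{\bar\gamma_\pm,\varphi(\bar\gamma_\pm)\}\leq 32\kappa(\bDelta_n)\max\{\gamma,\varphi(\gamma)\}\leq 1$ holds with the literal constant $32$ required by the theorem, avoiding the ``up to absolute constants'' caveat you flagged. Also, in your absorption step, $\|\bS_\bP\|_2/\lambda_p(\bP)=\lambda_1(\bDelta_n)/\lambda_p(\bDelta_{n+})\leq \kappa(\bDelta_n)$ rather than $=\kappa(\bDelta_{n+})$, since the largest singular value of $\bS_\bP$ may come from the negative block; the final inequality is unaffected.
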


\section{Proofs of the Lemmas in Section \ref{app:auxiliary_results}} 
\label{sec:additional_technical_results}

In this section, we provide the proofs of Lemmas \ref{lemma:Bernstein_concentration_EW}, \ref{lemma:Bernstein_concentration_EW_subGaussian}, \ref{lemma:WS_interchange_bound}, and \ref{lemma:higher_order_remainder} in Section \ref{app:auxiliary_results} above. 

\subsection{Proof of Lemma \ref{lemma:Bernstein_concentration_EW}} 
\label{sub:proof_of_lemma_lemma:bernstein_concentration_ew}

The proof is a straightforward application of the vector version of the Bernstein's inequality (Lemma \ref{lemma:Bernstein_inequality_vector}). We set $\by_i = (y_i - \expect y_i)\bv_i$ and $\by_i = (|y_i - \expect y_i| - \expect|y_i - \expect y_i|)\bv_i$, respectively. 
Clearly,
\begin{align*}
\tau^2_1 & = \sum_{i = 1}^n\|\bv_i\|_2^2\var(y_i)\leq \sigma^2\rho\|\bV\|_{\mathrm{F}}^2,\\
\tau^2_2 & = \sum_{i = 1}^n\|\bv_i\|_2^2\var(|y_i - \expect y_i|)\leq \sum_{i = 1}^n\|\bv_i\|_2^2\expect\{(y_i - \expect y_i)^2\}\leq \sigma^2\rho\|\bV\|_{\mathrm{F}}^2
\end{align*}
and we take $U = \|\bV\|_{2\to\infty}$. Note that
\[
\frac{1}{6}(U + \sqrt{U^2 + 36\tau_k^2})\leq \frac{1}{3}U + \tau_k\leq \frac{1}{3}\|\bV\|_{2\to\infty} + \sigma\rho^{1/2}\|\bV\|_{\mathrm{F}}\leq 3t^2\|\bV\|_{2\to\infty} + \sqrt{6}\sigma\rho^{1/2}t\|\bV\|_{\mathrm{F}} 
\]
for any $t \geq 1$ and $k = 1,2$. Then Lemma \ref{lemma:Bernstein_inequality_vector} implies
\begin{align*}
&\prob\left\{\left\|\sum_{i = 1}^n(y_i - p_i)\bv_i\right\|_2 > 3t^2\|\bV\|_{2\to\infty} + \sqrt{6}\sigma\rho^{1/2}t\|\bV\|_{\mathrm{F}}\right\}\\
&\quad\leq 28\exp\left(
-3
\frac
{9t^4\|\bV\|_{2\to\infty}^2 + 6\sigma^2\rho t^2\|\bV\|_{\mathrm{F}}^2 + 6\sqrt{6}\sigma \rho^{1/2}t^3\|\bV\|_{\mathrm{F}}\|\bV\|_{2\to\infty}}
{6\sigma^2\rho\|\bV\|_{\mathrm{F}}^2 + 6t^2\|\bV\|_{2\to\infty}^2 + 2\sqrt{6}\sigma\rho^{1/2}t\|\bV\|_{\mathrm{F}}\|\bV\|_{2\to\infty} }
\right)\\
&\quad\leq 28\exp\left(-3\frac{9t^4\|\bV\|_{2\to\infty}^2 + 6\sigma^2\rho t^2\|\bV\|_{\mathrm{F}}^2 + 6\sqrt{6}\sigma\rho^{1/2}t^3\|\bV\|_{\mathrm{F}}\|\bV\|_{2\to\infty}}{9t^2\|\bV\|_{2\to\infty}^2 + 6\sigma^2\rho\|\bV\|_{\mathrm{F}}^2 + 6\sqrt{6}\sigma\rho^{1/2}t\|\bV\|_{\mathrm{F}}\|\bV\|_{2\to\infty} }\right)
 = 28e^{-3t^2},
\end{align*}
and similarly,
\begin{align*}
&\prob\left\{\left\|\sum_{i = 1}^n(|y_i - p_i| - \expect|y_i - p_i|)\bv_i\right\|_2 > 3t^2\|\bV\|_{2\to\infty} + \sqrt{6}\sigma\rho^{1/2}t\|\bV\|_{\mathrm{F}}\right\}\leq 28e^{-3t^2}. 
\end{align*}


\subsection{Proof of Lemma \ref{lemma:Bernstein_concentration_EW_subGaussian}}
\label{sub:proof_of_lemma_Bernstein_concentration_EW_subGaussian}

We apply a ``symmetric dilation'' trick \cite{10.1214/19-AOS1854,paulsen2002completely} and the matrix Chernoff bound (Lemma \ref{lemma:Matrix_concentration}). 
Define
\[
\bT(\bv_i) = \begin{bmatrix*}
\zero_{d\times d} & \bv_i\\
\bv_i\transpose{} & 0
\end{bmatrix*},\quad
\bZ_i = (y_i - p_i)\bT(\bv_i),\quad i = 1,2,\ldots,n,
\]
and let $\bS_n = \sum_{i = 1}^n\bZ_i$. Clearly, $\|\bS_n\|_2 = \max\{\lambda_{\max}(\bS_n), \lambda_{\max}(-\bS_n)\}$ and $-\bS_n = \sum_{i = 1}^n(-\bZ_i)$. 
Observe that the spectral decomposition of $\bT(\bv_i)$ is given by
\[
\bT(\bv_i)
 = \bQ_i\begin{bmatrix*}
 \|\bv_i\|_2 & \\ & -\|\bv_i\|_2
 \end{bmatrix*}\bQ_i\transpose + 0\times\bQ_{i\perp}\bQ_{i\perp}\transpose,
\]
where 
\[
\bQ_i = \frac{1}{\sqrt{2}}\begin{bmatrix*}
 \frac{\bv_i}{\|\bv_i\|_2} & \frac{\bv_i}{\|\bv_i\|_2}\\
 1 & -1
 \end{bmatrix*}
\]
and $\bQ_{i\perp}\in\mathbb{O}(d + 1, d - 1)$ is the orthogonal complement matrix of $\bQ_i$. 
Then for any $\theta > 0$, we use the above spectral decomposition and Lemma 5.5 in \cite{vershynin2010introduction} to compute the matrix moment generating function of $\bZ_i$:
\begin{align*}
\expect e^{\theta\bZ_i}
& = \expect \exp\{\theta(y_i - \expect y_i)\bT(\bv_i)\}\\
& = \expect\left\{\bQ_i\begin{bmatrix*}
 \exp\{\theta(y_i - \expect y_i)\|\bv_i\|_2\} & \\ & \exp\{-\theta(y_i - \expect y_i)\|\bv_i\|_2\}
 \end{bmatrix*}\bQ_i\transpose + \bQ_{i\perp}\bQ_{i\perp}\transpose\right\}\\
&\preceq \exp\{C_0\sigma^2\rho\theta^2\|\bv_i\|_2^2\}\bQ_i\bQ_i\transpose + \bQ_{i\perp}\bQ_{i\perp}\transpose\\
& = \exp(C_0\sigma^2\rho\theta^2\|\bv_i\|_2^2\bQ_i\bQ_i\transpose) = \exp\{g(\theta)\bM_i\},
\end{align*}
where $C_0 > 0$ is an absolute constant, 
\[
g(\theta) = C_0\sigma^2\rho\theta^2,\quad\mbox{and}\quad \bM_i = \|\bv_i\|_2^2\bQ_i\bQ_i\transpose.
\]
Similarly, $\expect e^{\theta(-\bZ_i)}\preceq \exp\{g(\theta)\bM_i\}$. The corresponding scale parameter can be bounded by
\[
0 < \lambda_{\max}\left(\sum_{i = 1}^m\bM_i\right)\leq \sum_{i = 1}^n\|\bM_i\|_2\leq \sum_{i = 1}^n\|\bv_i\|_2 = \|\bV\|_{\mathrm{F}}^2.
\]
Therefore, by Lemma \ref{lemma:Matrix_concentration}, for any $t > 0$, 
\begin{align*}
\prob\left(\|\bS_n\|_2 > t\right)
&\leq \prob\{\lambda_{\max}(\bS_n) > t\} + \prob\{\lambda_{\max}(-\bS_n) > t\}\\
&\leq 2(d + 1)\exp\left\{\inf_{\theta > 0}\left(-\theta t + C_0\theta^2\sigma^2\rho\|\bV\|_{\mathrm{F}}^2\right)\right\}\\
& = 2(d + 1)\exp\left(-\frac{t^2}{4C_0^2\sigma^2\rho\|\bV\|_{\mathrm{F}}^2}\right).
\end{align*}
Now replacing $t$ by $2C_0\sigma\rho^{1/2}\|\bV\|_{\mathrm{F}}$ and adjust $C_0$ properly leads to the first assertion. The second assertion follows from the first assertion and the fact that
\[
\||y_i - \expect y_i| - \expect|y_i - \expect y_i|\|_{\psi_2}\leq 2\|y_i - \expect y_i\|_{\psi_2}\leq 2\sigma\rho^{1/2}.
\]


\subsection{Proof of Lemma \ref{lemma:WS_interchange_bound}} 
\label{sub:proof_of_lemma_lemma:ws_interchange_bound}

To prove Lemma \ref{lemma:WS_interchange_bound}, we first establish the following concentration bound for $\|\bU_\bP\transpose(\bA - \bP)\bU_\bP\|_2$. 
\begin{lemma}\label{lemma:UtransposeEU_bound}
Suppose Assumptions \ref{assumption:incoherence}-\ref{assumption:spectral_norm_concentration} hold. Let $\bU_1\in\mathbb{O}(n, r)$, $\bU_2\in\mathbb{O}(n, s)$ be two matrices with $r,s\geq 1$, $r,s\leq n$. 
Then there exists a constant $C > 0$ depending on $\sigma$, such that for all $t > 0$, with probability at least $1 - (2 + e)e^{-t}$,
\begin{align*}
\|\bU_1\transpose\bE\bU_2\|_2 \leq C\max(r, s)^{1/2} + Ct^{1/2}.
\end{align*} 
\end{lemma}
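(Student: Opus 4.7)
The plan is to establish the bound via a standard $\epsilon$-net argument combined with sub-Gaussian concentration of scalar bilinear forms in $\bE$. The key observation is that Assumption \ref{assumption:distribution} implies each upper-triangular entry $[\bE]_{ij}$ is sub-Gaussian with norm bounded by an absolute constant: decomposing $[\bE]_{ij} = [\bE_1]_{ij} + [\bE_2]_{ij}$, the bounded part satisfies $\|[\bE_1]_{ij}\|_{\psi_2}\lesssim B$ by Hoeffding's lemma, while $\|[\bE_2]_{ij}\|_{\psi_2}\leq \sigma\rho_n^{1/2}\leq \sigma$, so $\|[\bE]_{ij}\|_{\psi_2}\leq C'$ for some constant $C'$ depending only on $B$ and $\sigma$.

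First, I would pick $(1/4)$-nets $\calN_1\subset S^{r-1}$ and $\calN_2\subset S^{s-1}$ of the Euclidean unit spheres with cardinalities $|\calN_1|\leq 9^r$ and $|\calN_2|\leq 9^s$. The standard variational characterization of the spectral norm then yields
\[
\|\bU_1\transpose\bE\bU_2\|_2\leq 2\sup_{\bx\in\calN_1,\,\by\in\calN_2}|\bx\transpose\bU_1\transpose\bE\bU_2\by|.
\]

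Second, for each fixed pair $(\bx,\by)$ with $\bx\in\calN_1$ and $\by\in\calN_2$, let $\bu = \bU_1\bx$ and $\bv = \bU_2\by$, which are unit vectors in $\mathbb{R}^n$. Using the symmetry of $\bE$, I would rewrite the bilinear form as $\bu\transpose\bE\bv = \sum_{i\leq j}a_{ij}[\bE]_{ij}$, where $a_{ii} = u_iv_i$ and $a_{ij} = u_iv_j + u_jv_i$ for $i<j$, and verify $\sum_{i\leq j}a_{ij}^2\leq 3$ by a direct expansion using $\|\bu\|_2 = \|\bv\|_2 = 1$. Since $([\bE]_{ij})_{i\leq j}$ are independent, centered, and sub-Gaussian with norms uniformly bounded by $C'$, the linear combination $\bu\transpose\bE\bv$ is sub-Gaussian with norm squared bounded by a constant multiple of $C'^2\sum a_{ij}^2$. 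Consequently, for every $\eta>0$,
\[
\prob\bigl(|\bu\transpose\bE\bv|>\eta\bigr)\leq 2\exp(-c_1\eta^2)
\]
for some positive constant $c_1$ depending on $B$ and $\sigma$.

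Finally, I would invoke the union bound over the net: choosing $\eta = C_0(\max(r,s)^{1/2} + t^{1/2})$ with $C_0$ sufficiently large so that $c_1\eta^2 \geq t + (r+s)\log 9 + \log(2+e)$, the $|\calN_1||\calN_2|\leq 9^{r+s}$ union bound together with the variational reduction yields the conclusion after using $r+s\leq 2\max(r,s)$ and absorbing the factor of $2$ from the net into $C$. The argument is largely routine; the main bookkeeping task is tracking the probability constant $(2+e)$ and ensuring $C_0$ can be chosen uniformly in $t,r,s$. The only substantive step requiring care is to treat the bounded component of $\bE$ as sub-Gaussian via Hoeffding's lemma rather than applying Bernstein directly to exploit $\var([\bE_1]_{ij})\leq\sigma^2\rho_n$, since the Bernstein route would introduce a linear-in-$t$ tail incompatible with the Gaussian-type rate $Ct^{1/2}$ asserted in the lemma.
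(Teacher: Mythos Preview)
Your proposal is correct and follows essentially the same $\epsilon$-net plus scalar-concentration route as the paper. The only difference is cosmetic: the paper keeps $\bE_1$ and $\bE_2$ separate, applying Hoeffding's inequality to the bounded piece and a sub-Gaussian tail (Vershynin, Proposition~5.10) to the other, then adds the two failure probabilities to obtain the $(2+e)e^{-t}$ constant; you instead merge both pieces into a single sub-Gaussian bound up front via Hoeffding's lemma, which is a mild simplification and in fact yields a slightly better constant (so the stated $(2+e)e^{-t}$ follows a fortiori rather than being something you need to ``track'').
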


\begin{proof}[\bf Proof of Lemma \ref{lemma:UtransposeEU_bound}]
By Assumption \ref{assumption:distribution}, we can write $\bE = \bE_1 + \bE_2$, where $[\bE_1]_{ij}$'s are independent bounded mean-zero random variables with $\var([\bE_1]_{ij})\leq\sigma^2\rho_n$ for some constant $\sigma > 0$ and $[\bE_2]_{ij}$'s are sub-Gaussian random variables whose sub-Gaussian norms are bounded by $\sigma\rho_n^{1/2}$. 
We apply a classical discretization trick to the spectral norm of $\bU_1\transpose{}\bE_k\bU_2$, $k = 1,2$. 
By definition, $\|\bU_1\transpose\bE_k\bU_2\|_2\leq\max_{\|\bv_1\|_2,\|\bv_2\|_2 \leq 1}|\bv_1\transpose{}\bU_1\transpose{}\bE_k\bU_2\bv_2|$. Now let $\calS_{\eps}^{r - 1}$ be an $\eps$-net of the $(r-1)$-dimensional unit sphere $\calS^{r - 1} := \{\bv:\|\bv\|_2 = 1\}$, and similarly define $\calS_\eps^{s - 1}$. Clearly, for any $\bv_1\in \calS^{r - 1}$, $\bv_2\in\calS^{s - 1}$, there exists some $\bw_1(\bv_1)\in\calS_{\eps}^{r - 1}$ and $\bw_2(\bv_2)\in\calS_{\eps}^{s - 1}$, such that $\|\bv_1 - \bw_1(\bv_1)\|_2 < \eps$, $\|\bv_2 - \bw_2(\bv_2)\|_2 < \eps$, and
\begin{align*}
\|\bU_1\transpose\bE_k\bU_2\|_2
& = \max_{\|\bv_1\|_2,\|\bv_2\|_2 \leq 1} |\bv_1\transpose{}\bU_2\transpose{}\bE_k\bU_2\bv_2|\\
& = \max_{\|\bv_1\|_2,\|\bv_2\|_2 \leq 1}|\{\bv_1 - \bw_1(\bv_1) + \bw_1(\bv_1)\}\transpose{}\bU_1\transpose{}\bE_k\bU_2\{\bv_2 - \bw_2(\bv_2) + \bw_2(\bv_2)\}|_2\\
&\leq (\eps^2 + 2\eps)\|\bU_1\transpose\bE_k\bU_2\|_2 + \max_{\bw_1\in\calS_{\eps}^{r - 1},\bw_2\in\calS_{\eps}^{s - 1}}|\bw_1\transpose
\bU_1\transpose\bE_k\bU_2\bw_2|.
\end{align*}
With $\eps = 1/3$, we have
\[
\|\bU_1\transpose\bE_k\bU_2\|_2\leq \frac{9}{2}\max_{\bw_1\in\calS_{1/3}^{r - 1},\bw_2\in\calS_{1/3}^{s - 1}}|\bw_1\transpose
\bU_1\transpose\bE_k\bU_2\bw_2|.
\]
Furthermore, we know that $\calS_{1/3}^{r - 1}$ and $\calS_{1/3}^{s - 1}$ can be selected such that their cardinalities can be upper bounded by
$|\calS_{1/3}^{r - 1}|\leq 18^{r}$ and $|\calS_{1/3}^{s - 1}|\leq 18^{s}$, respectively (see, for example, \cite{pollard1990empirical}).
Now for fixed $\bw_1\in\calS_{1/3}^{r - 1}$ and $\bw_2\in\calS_{1/3}^{s - 1}$, let $\bz_1 = \bU_1\bw_1 = [z_{11},\ldots,z_{1n}]\transpose$ and $\bz_2 = \bU_2\bw_2 = [z_{21},\ldots,z_{2n}]\transpose$. Clearly, $\|\bz_1\|_2,\|\bz_2\|_2\leq 1$, and
\begin{align*}
|\bw_1\transpose\bU_1\transpose\bE_k\bU_2\bw_2|
& = \left|\sum_{i = 1}^n\sum_{j = 1}^n[\bE_k]_{ij}z_{1i}z_{2j}\right|
\leq \left|\sum_{i < j}[\bE_k]_{ij}(z_{1i}z_{2j} + z_{2i}z_{1j}) + \sum_{i = 1}^n[\bE_k]_{ii}z_{1i}z_{2i}\right|
.
\end{align*}
Denote $c_{ij} = z_{1i}z_{2j} + z_{1j}z_{2i}$ if $i\neq j$ and $c_{ii} = z_{1i}z_{2i}$. Note that
\[
\sum_{i\leq j}c_{ij}^2 \leq \sum_{i < j}(2z_{1i}^2z_{2j}^2 + 2z_{2i}^2z_{1j}^2) + \sum_{i = 1}^nz_{1i}^2z_{2i}^2\leq 4\left(\sum_{i = 1}^nz_{1i}^2\right)\left(\sum_{i = 1}^nz_{2i}^2\right) + \sum_{i = 1}^nz_{1i}^2\leq 5.
\]
For $\bE_1$, by Hoeffding's inequality and a union bound over $\bw\in\calS_{1/3}^{d - 1}$, we can pick an absolute constant $C > 0$, such that
\begin{align*}
\prob\left\{
\|\bU_1\transpose\bE_1\bU_2\|_2 > C\max(r, s)^{1/2} + Ct^{1/2}
\right\}\leq 2e^{-t}.
\end{align*}
Appying Proposition 5.10 in \cite{vershynin2010introduction} to $\|\bU_1\transpose\bE_2\bU_2\|_2$ leads to a similar concentration inequality
\begin{align*}
\prob\left\{
\|\bU_1\transpose\bE_2\bU_2\|_2 > C\max(r, s)^{1/2} + Ct^{1/2}
\right\}\leq ee^{-t}.
\end{align*}
 with a potentially different multiplicative constant $C > 0$ depending on $\sigma$. 
The proof is completed by the inequality $\|\bU_1\transpose\bE\bU_2\|_2\leq \|\bU_1\transpose\bE_1\bU_2\|_2 + \|\bU_1\transpose\bE_2\bU_2\|_2$. 
\end{proof}

\begin{proof}[\bf Proof of Lemma \ref{lemma:WS_interchange_bound}]
The proof is based on a modification of Lemma 49 in \cite{JMLR:v18:17-448}. Following the decomposition there with the fact that $\bA\bU_{\bA_\pm} = \bU_{\bA_\pm}\bS_{\bA_\pm}$ and $\bP\bU_{\bP_\pm} = \bU_{\bP_\pm}\bS_{\bP_\pm}$, we have
\begin{align*}
\bW^*_{\pm}\bS_{\bA_\pm} - \bS_{\bP_\pm}\bW^*_{\pm}
& = (\bW^*_{\pm} - \bU_{\bP_{\pm}}\transpose\bU_{\bA_{\pm}})\bS_{\bA_{\pm}} + \bU_{\bP_{\pm}}\transpose(\bA - \bP)(\bU_{\bA_{\pm}} - \bU_{\bP_{\pm}}\bU_{\bP_{\pm}}\transpose \bU_{\bA_{\pm}})\\
&\quad + \bU_{\bP_{\pm}}\transpose(\bA - \bP)\bU_{\bP_{\pm}}\bU_{\bP_{\pm}}\transpose{}\bU_{\bA_{\pm}} + \bS_{\bP_{\pm}}(\bU_{\bP_{\pm}}\transpose\bU_{\bA_{\pm}} - \bW^*_{\pm}).
\end{align*}
By Assumption \ref{assumption:spectral_norm_concentration}, $\|\bA - \bP\|_2\leq K(n\rho_n)^{1/2}$ with probability at least $1 - c_0n^{-\zeta}$ for all sufficiently large $n$ for some absolute constants $K, c_0 > 0$, $\zeta\geq 1$. By Result \ref{result:S_A_concentration}, $\|\bS_\bA\|_2\leq 2n\rho_n\lambda_1(\bDelta_n)$ with probability at least $1 - c_0n^{-\zeta}$ for sufficiently large $n$, where $\zeta\geq 1$.  
By Lemma 6.7 in \cite{cape2017two},
$\|\bW^*_{\pm} - \bU_{\bP_{\pm}}\transpose\bU_{\bA_{\pm}}\|_2
\leq \|\sin\Theta(\bU_{\bA_{\pm}},\bU_{\bP_{\pm}})\|_2^2$.
Then by Davis-Kahan theorem in the form of \cite{cape2017two}, we have
\begin{align*}
\|\bW^*_{\pm} - \bU_{\bP_{\pm}}\transpose\bU_{\bA_{\pm}}\|_2
&\leq 
\|\sin\Theta(\bU_{\bA_{\pm}},\bU_{\bP_{\pm}})\|_2^2\leq \frac{4\|\bA - \bP\|_2^2}{\{n\rho_n\lambda_d(\bDelta_n)\}^2}
\leq \frac{4K^2}{n\rho_n\lambda_d(\bDelta_n)^2}
\end{align*}
when $\|\bA - \bP\|_2\leq K(n\rho_n)^{1/2}$, which occurs with probability at least $1 - c_0n^{-\zeta}$, $\zeta\geq 1$. 
Also, observe that by Lemma 6.7 in \cite{cape2017two} and Davis-Kahan theorem again, we have
\[
\|\bU_{\bA_{\pm}} - \bU_{\bP_{\pm}}\bU_{\bP_{\pm}}\transpose \bU_{\bA_{\pm}}\|_{2} = \|\sin\Theta(\bU_{\bA_{\pm}},\bU_{\bP_{\pm}})\|_2\leq \frac{2\|\bA - \bP\|_2}{n\rho_n\lambda_d(\bDelta_n)}
\leq \frac{2K}{(n\rho_n)^{1/2}\lambda_d(\bDelta_n)}
\] 
provided that $\|\bA - \bP\|_2\leq K(n\rho_n)^{1/2}$, which occurs with probability at least $1 - c_0n^{-\zeta}$, $\zeta\geq 1$. 
Hence, for all sufficiently large $n$, we apply Lemma \ref{lemma:UtransposeEU_bound}
to obtain
\begin{align*}
\|\bW^*_{\pm}\bS_{\bA_{\pm}} - \bS_{\bP_{\pm}}\bW^*_{\pm}\|_2
&\leq \|\bW^*_{\pm} - \bU_{\bP_{\pm}}\transpose{}\bU_{\bA_{\pm}}\|_{\mathrm{2}}(\|\bS_{\bA_{\pm}}\|_2 + \|\bS_{\bP_{\pm}}\|_2)
\\&\quad
 + \|\bE\|_2\|\bU_{\bA_{\pm}} - \bU_{\bP_{\pm}}\bU_{\bP_{\pm}}\transpose\bU_{\bA_{\pm}}\|_{\mathrm{2}}
 + \|\bU_{\bP_{\pm}}\transpose\bE\bU_{\bP_{\pm}}\|_{\mathrm{2}}\\
&\lesssim_\sigma \frac{n\rho_n\lambda_1(\bDelta_n)}{n\rho_n\lambda_d(\bDelta_n)^2} + \frac{(n\rho_n)^{1/2}}{(n\rho_n)^{1/2}\lambda_d(\bDelta_n)} + \max(d^{1/2}, t^{1/2})\\
&\lesssim \max\left\{\frac{\kappa(\bDelta_n)}{\lambda_d(\bDelta_n)}, 
d^{1/2}
, t^{1/2}\right\}
\end{align*}
with probability at least $1 - c_0n^{-\zeta} - c_0e^{-t}$, $\zeta\geq 1$
This completes the proof of the first assertion.

\vspace*{2ex}\noindent
We now turn to the second assertion. For any $k,l\in [p]$, write
\begin{align*}
[\bW^*_+|\bS_{\bA_+}|^{1/2} - |\bS_{\bP_+}|^{1/2}\bW^*_+]_{kl}
& = [\bW^*_+]_{kl}{\lambda}_l(\bA)^{1/2} - \lambda_{k}^{1/2}(\bP)[\bW^*_+]_{kl}\\
& = [\bW^*_+]_{kl}\left\{\frac{{\lambda}_l(\bA) - \lambda_{k}(\bP)}{{\lambda}_l(\bA)^{1/2} + \lambda_{k}(\bP)^{1/2}}\right\}.
\end{align*}
Similarly, for any $k,l\in [q]$, we have
\begin{align*}
[\bW^*_-|\bS_{\bA_-}|^{1/2} - |\bS_{\bP_-}|^{1/2}\bW^*_-]_{kl}
& = [\bW^*_-]_{kl}|{\lambda}_{n - q + l}(\bA)|^{1/2} - |\lambda_{n - q + k}(\bP)|^{1/2}[\bW^*_-]_{kl}\\
& = [\bW^*_-]_{kl}\left\{\frac{{\lambda}_{n - q + k}(\bP) - \lambda_{n - q + l}(\bA)}{|{\lambda}_{n - q + l}(\bA)|^{1/2} + |\lambda_{n - q + k}(\bP)|^{1/2}}\right\}.
\end{align*}
This immediately implies that
\begin{align*}
\|\bW^*_{\pm}|\bS_{\bA_{\pm}}|^{1/2} - |\bS_{\bP_{\pm}}|^{1/2}\bW^*_{\pm}\|_{\mathrm{F}}
&\leq \frac{\sqrt{d}\|\bW^*_{\pm}\bS_{\bA_{\pm}} - \bS_{\bP_{\pm}}\bW^*_{\pm}\|_2}{\{n\rho_n\lambda_d(\bDelta_n)\}^{1/2}}\\
&\leq \frac{\|\bX\|_{2\to\infty}\|\bW^*_{\pm}\bS_{\bA_{\pm}} - \bS_{\bP_{\pm}}\bW^*_{\pm}\|_2}{(n\rho_n)^{1/2}\lambda_d(\bDelta_n)},
\end{align*}
where we have used Result \ref{result:UP_delocalization} that $d^{1/2}\leq \|\bX\|_{2\to\infty}/\lambda_d(\bDelta_n)^{1/2}$. 
Therefore, by the first assertion, for all sufficiently large $n$, 
\[
\|\bW^*_{\pm}|\bS_{\bA_{\pm}}|^{1/2} - |\bS_{\bP_{\pm}}|^{1/2}\bW^*_{\pm}\|_{\mathrm{F}}\lesssim \frac{\|\bX\|_{2\to\infty}}{(n\rho_n)^{1/2}\lambda_d(\bDelta_n)}\max\left\{\frac{\kappa(\bDelta_n)}{\lambda_d(\bDelta_n)}, d^{1/2}, t^{1/2}\right\}
\]
with probability at least $1 - c_0n^{-\zeta} - c_0e^{-t}$ for all $t > 0$, where $\zeta\geq 1$ is given by Assumption \ref{assumption:spectral_norm_concentration}.

\vspace*{2ex}\noindent
The third assertion can be obtained in a similar fashion. By Result \ref{result:S_A_concentration}, for sufficiently large $n$, with probability at least $1 - c_0n^{-\zeta}$,
$\||\bS_{\bA_\pm}|^{-1}\|_2\leq \{2n\rho_n\lambda_d(\bDelta_n)\}^{-1}$. 
For any $k,l\in [p]$, we have
\begin{align*}
[\bW^*_+|\bS_{\bA_{+}}|^{-1/2} - |\bS_{\bP_+}|^{-1/2}\bW^*_+]_{kl}& = [\bW^*_+]_{kl}\{\lambda_l(\bA)^{-1/2} - \lambda_k(\bP)^{-1/2}\}\\
& = \frac{[\bW^*_+]_{kl}\{\lambda_k(\bP) - \lambda_l(\bA)\}}{\lambda_l(\bA)^{1/2}\lambda_k(\bP)^{1/2}\{\lambda_l(\bA)^{1/2} + \lambda_k(\bP)^{1/2}\}}.
\end{align*}
For any $k,l\in [q]$, we have, similarly,
\begin{align*}
&[\bW^*_-|\bS_{\bA_{-}}|^{-1/2} - |\bS_{\bP_-}|^{-1/2}\bW^*_-]_{kl}\\
&\quad = [\bW^*_-]_{kl}\{|\lambda_{n - q + l}(\bA)|^{-1/2} - |\lambda_{n - q + k}(\bP)|^{-1/2}\}\\
&\quad = \frac{[\bW^*_-]_{kl}\{\lambda_{n - q + l}(\bA) - \lambda_{n - q + k}(\bP)\}}{|\lambda_{n - q + l}(\bA)|^{1/2}|\lambda_{n - q + k}(\bP)|^{1/2}\{|\lambda_{n - q + l}(\bA)|^{1/2} + |\lambda_{n - q + k}(\bP)|^{1/2}\}}.
\end{align*}
Therefore, when $\|\bS_\bA^{-1}\|_2\leq 2\{(n\rho_n)\lambda_d(\bDelta_n)\}^{-1}$, by Result \ref{result:UP_delocalization} that $d\leq \|\bX\|_{2\to\infty}^2/\lambda_d(\bDelta_n)$,
\begin{align*}
\|\bW^*_{\pm}|\bS_{\bA_{\pm}}|^{-1/2} - |\bS_{\bP_{\pm}}|^{-1/2}\bW^*_{\pm}\|_{\mathrm{F}}^2
&
 \leq {\|\bS_\bA^{-1}\|_2\|\bS_\bP^{-1}\|_2^2}\|\bW^*_{\pm}\bS_{\bA_{\pm}} - \bS_{\bP_{\pm}}\bW^*_{\pm}\|_{\mathrm{F}}^2
\\&
 \leq {d}{\|\bS_\bA^{-1}\|_2\|\bS_\bP^{-1}\|_2^2}\|\bW^*_{\pm}\bS_{\bA_{\pm}} - \bS_{\bP_{\pm}}\bW^*_{\pm}\|_{2}^2
 \\
&\leq \frac{2\|\bX\|_{2\to\infty}^2\|\bW_{\pm}^*\bS_{\bA_{\pm}}- \bS_{\bP_{\pm}}\bW^*_{\pm}\|_{2}^2}{(n\rho_n)^3\lambda_d(\bDelta_n)^4}.
\end{align*}
The proof of the third assertion is then completed by applying the first assertion.  
\end{proof}


\subsection{Proof of Lemma \ref{lemma:higher_order_remainder}} 
\label{sub:proof_of_lemma_lemma:higher_order_remainder}

  We first analyze the concentration bound for $\|\bS_{\bP_{\pm}}\bU_{\bP_{\pm}}\transpose{}\bU_{\bA_{\pm}} - \bU_{\bP_{\pm}}\transpose{}\bU_{\bA_{\pm}}\bS_{\bA_{\pm}}\|_2$. Note that by definition of eigenvector matrices, $\bP\bU_{\bP_{\pm}} = \bU_{\bP_{\pm}}\bS_{\bP_{\pm}}$ and $\bA\bU_{\bA_{\pm}} = \bU_{\bA_{\pm}}\bS_{\bA_{\pm}}$. 
  Clearly, by Davis-Kahan theorem in the form of \cite{cape2017two},
  \begin{align*}
  \|\bS_{\bP_\pm}\bU_{\bP_\pm}\transpose{}\bU_{\bA_\pm} - \bU_{\bP_\pm}\transpose{}\bU_{\bA_\pm}\bS_{\bA_\pm}\|_2
  &= \|\bU_{\bP_\pm}\transpose{}\bA\bU_{\bA_\pm} - \bU_{\bP_\pm}\transpose\bP\bU_{\bA_\pm}\|_2\\
  &= \|\bU_{\bP_\pm}\transpose{}\bE\bU_{\bP_\pm}\bU_{\bP_\pm}\transpose{}\bU_{\bA_\pm} + \bU_{\bP_\pm}\transpose\bE(\eye - \bU_{\bP_\pm}\bU_{\bP_\pm}\transpose)\bU_{\bA_\pm}\|_2\\
  &\leq \|\bU_{\bP_\pm}\transpose{}\bE\bU_{\bP_\pm}\|_{\mathrm{2}} + \|\bE\|_2\|\bU_{\bA_\pm} - \bU_{\bP_\pm}\bU_{\bP_\pm}\transpose\bU_{\bA_\pm}\|_2\\
  &\leq \|\bU_{\bP_\pm}\transpose{}\bE\bU_{\bP_\pm}\|_{\mathrm{2}} + \|\bE\|_2\|\sin\Theta(\bU_{\bA_\pm},\bU_{\bP_\pm})\|_2\\
  &\leq  \|\bU_{\bP_\pm}\transpose{}\bE\bU_{\bP_\pm}\|_{\mathrm{2}} + \frac{2\|\bE\|_2^2}{n\rho_n\lambda_d(\bDelta_n)}.
  \end{align*}
  For a realization of $\bA$ with $\|\bE\|_2 \lesssim (n\rho_n)^{1/2}$ and $\|\bU_{\bP_\pm}\transpose\bE\bU_{\bP_\pm}\|_{\mathrm{2}}\lesssim d^{1/2} + t^{1/2}$, which occurs with probability at least $1 - c_0n^{-\zeta} - c_0e^{-t}$ by Assumption \ref{assumption:spectral_norm_concentration} and Lemma \ref{lemma:UtransposeEU_bound}, we have,
  \begin{align*}
  \|\bS_{\bP_\pm}\bU_{\bP_\pm}\transpose{}\bU_{\bA_\pm} - \bU_{\bP_\pm}\transpose{}\bU_{\bA_\pm}\bS_{\bA_\pm}\|_2
  \lesssim \max\left\{t^{1/2} , d^{1/2}, \frac{1}{\lambda_d(\bDelta_n)}\right\}.
  \end{align*}
  This event holds with probability at least $1 - c_0n^{-\zeta} - c_0e^{-t}$ for all $t > 0$. 
  Then the first assertion is immediate by Result \ref{result:S_A_concentration}, Result \ref{result:UP_delocalization}, and the observation that
  \begin{align*}
  &\|\bU_{\bP_\pm}\bS_{\bP_\pm}(\bU_{\bP_\pm}\transpose{}\bU_{\bA_\pm}\bS_{\bA_\pm}^{-1} - \bS_{\bP_\pm}^{-1}\bU_{\bP_\pm}\transpose{}\bU_{\bA_\pm})\|_{2\to\infty}\\
  &\quad\leq \|\bU_\bP\|_{2\to\infty}\|\bS_{\bA_\pm}^{-1}\|_2\|\bS_{\bP_\pm}\bU_{\bP_\pm}\transpose{}\bU_{\bA_\pm} - \bU_{\bP_\pm}\transpose{}\bU_{\bA_\pm}\bS_{\bA_\pm}\|_2.
  \end{align*}

  \noindent
  For $\|\bU_{\bP_\pm}(\bU_{\bP_\pm}\transpose{}\bU_{\bA_\pm} - \bW_\pm^*)\|_{2\to\infty}$, note that by Lemma 6.7 in \cite{cape2017two} and Davis-Kahan theorem, we have
  \begin{align*}
  \|\bU_{\bP_\pm}(\bU_{\bP_\pm}\transpose{}\bU_{\bA_\pm} - \bW^*_\pm)\|_{2\to\infty}
  &\leq \|\bU_{\bP}\|_{2\to\infty}\|\bU_{\bP_\pm}\transpose\bU_{\bA_\pm} - \bW^*_\pm\|_2\\
  &\leq \|\bU_{\bP}\|_{2\to\infty}\|\sin\Theta(\bU_{\bA_\pm},\bU_{\bP_\pm})\|_2^2\\
  &\leq \frac{4\|\bE\|_2^2}{(n\rho_n)^2\lambda_d(\bDelta_n)^2}\|\bU_{\bP}\|_{2\to\infty}.
  \end{align*}
  Then the fourth assertion follows from Assumption \ref{assumption:spectral_norm_concentration}. 

  \noindent
  We now focus on $\|\be_m\transpose{}\bE\bU_{\bP_\pm}(\bW^*_\pm\bS_{\bA_\pm}^{-1} - \bS_{\bP_\pm}^{-1}\bW^*_\pm)\|_2$. By Lemma \ref{lemma:Bernstein_concentration_EW}, Lemma \ref{lemma:Bernstein_concentration_EW_subGaussian},
  for all $t\geq1$ and $t\lesssim n\rho_n$, we have
  \begin{align*}
  \|\be_m\transpose{}\bE\bU_{\bP_\pm}\|_{2}
  & = \left\|\sum_{j = 1}^n(A_{mj} - \rho_n\bx_{m}\transpose{}\bx_{j})[\bU_{\bP_\pm}]_{j*}\right\|_2
  \lesssim  t\|\bU_{\bP_\pm}\|_{2\to\infty} + \sigma(\rho_nt)^{1/2}\|\bU_{\bP_\pm}\|_{\mathrm{F}}\\
  &\lesssim_\sigma (n\rho_nt)^{1/2}\|\bU_\bP\|_{2\to\infty}
  \end{align*}
  with probability at least $1 - c_0de^{-t}$. 
  Then by Lemma \ref{lemma:WS_interchange_bound}, 
  \begin{align*}
  \|\be_m\transpose\bE\bU_{\bP_\pm}(\bW^*_\pm\bS_{\bA_\pm}^{-1} - \bS_{\bP_\pm}^{-1}\bW^*_\pm)\|_{2}
  &\leq \|\be_m\transpose{}\bE\bU_{\bP_\pm}\|_{2}\|\bW^*_\pm\bS_{\bA_\pm}^{-1} - \bS_{\bP_\pm}^{-1}\bW^*_\pm\|_2\\
  &\leq \|\be_m\transpose{}\bE\bU_{\bP_\pm}\|_{2}\|\bS_{\bP_\pm}^{-1}\|_2\|\bS_{\bP_\pm}\bW^*_\pm - \bW^*_\pm\bS_{\bA_\pm}\|_2\|\bS_{\bA_\pm}^{-1}\|_2\\
  &\lesssim_\sigma \frac{t^{1/2}\|\bU_{\bP}\|_{2\to\infty}}{(n\rho_n)^{3/2}\lambda_d(\bDelta_n)^{2}}\max\left\{
  \frac{\kappa(\bDelta_n)}{\lambda_d(\bDelta_n)},d^{1/2},t^{1/2}\right\}
  \end{align*}
  with probability at least $1 - c_0n^{-\zeta} - c_0de^{-t}$ for sufficiently large $n$. 

  \noindent
  We finally deal with term $\|\bU_{\bP_\mp}\bS_{\bP_\mp}\bU_{\bP_\mp}\transpose\bU_{\bA_\pm}\bS_{\bA_\pm}^{-1}\|_{2\to\infty}$ by adopting the analysis in Appendix B.1 in \cite{rubin2017statistical}. By construction, 
  $\bU_{\bP_\mp}\transpose\bU_{\bA_\pm}
     = \bU_{\bP_\mp}\transpose\bA\bU_{\bA_\pm}\bS_{\bA_\pm}^{-1} = \bS_{\bP_\mp}^{-1}\bU_{\bP_\mp}\transpose\bP\bU_{\bA_{\pm}}$,
  implying that
  \begin{align*}
  \bU_{\bP_\mp}\transpose\bU_{\bA_\pm}\bS_{\bA_\pm} - \bS_{\bP_\mp}\bU_{\bP_\mp}\transpose\bU_{\bA_\pm}
  & = \bU_{\bP_\mp}\transpose\bE\bU_{\bA_\pm}\\
  & = \bU_{\bP_\mp}\transpose\bE(\eye_n - \bU_{\bP_\pm}\bU_{\bP_\pm}\transpose)\bU_{\bA_\pm} + \bU_{\bP_\mp}\transpose\bE \bU_{\bP_\pm}\bU_{\bP_\pm}\transpose\bU_{\bA_\pm}.
  \end{align*}
  Namely,
  \[
  \bS_{\bP_\mp}\bU_{\bP_\mp}\transpose\bU_{\bA_\pm} = \bU_{\bP_\mp}\transpose\bU_{\bA_\pm}\bS_{\bA_\pm} - \bU_{\bP_\mp}\transpose\bE\bU_{\bA_\pm}
  \]
  and, by Lemma \ref{lemma:UtransposeEU_bound} and Davis-Kahan theorem, with probability at least $1 - c_0n^{-\zeta} - c_0e^{-t}$,
  \begin{align*}
  \|\bU_{\bP_\mp}\transpose\bE\bU_{\bA_\pm}\|_2
  &\leq \|\bE\|_2\|\sin\Theta(\bU_{\bA_\pm}, \bU_{\bP_\pm})\|_2 + \|\bU_{\bP_\mp}\transpose\bE\bU_{\bP_\pm}\|_2\lesssim_\sigma \max\left\{\frac{1}{\lambda_d(\bDelta_n)}, t^{1/2}, d^{1/2}\right\}
  \end{align*}
  for sufficiently large $n$. 
  For any $k\in [q]$ and $l\in [p]$, we have
  \begin{align*}
  [\bU_{\bP_-}\transpose\bU_{\bA_+}\bS_{\bA_+} - \bS_{\bP_-}\bU_{\bP_-}\transpose\bU_{\bA_+}]_{kl}
  & = [\bU_{\bP_-}\transpose\bU_{\bA_+}]_{kl}\{\lambda_l(\bA) - \lambda_{n - q + k}(\bP)\},\\
  [\bU_{\bP_+}\transpose\bU_{\bA_-}\bS_{\bA_-} - \bS_{\bP_+}\bU_{\bP_+}\transpose\bU_{\bA_-}]_{lk}
  & = [\bU_{\bP_+}\transpose\bU_{\bA_-}]_{lk}\{\lambda_{n - q + k}(\bA) - \lambda_{l}(\bP)\}
  \end{align*}
  Note that by the concentration of eigenvalues Result \ref{result:eigenvalue_concentration}
  \begin{align*}
  \min_{k\in[q],l\in[p]}\{\lambda_l(\bA) - \lambda_{n - q + k}(\bP)\}\geq n\rho_n\lambda_d(\bDelta_n),\\
  \min_{k\in[q],l\in[p]}\{\lambda_l(\bP) - \lambda_{n - q + k}(\bA)\}\geq n\rho_n\lambda_d(\bDelta_n)
  \end{align*}
  with probability at least $1 - c_0n^{-\zeta}$ for large $n$. Therefore, by Lemma \ref{lemma:UtransposeEU_bound},
  \begin{align*}
  \|\bU_{\bP_{\mp}}\transpose\bU_{\bA_\pm}\|_{\mathrm{F}}
  &\leq \frac{1}{n\rho_n\lambda_d(\bDelta_n)}\|\bU_{\bP_\mp}\transpose\bU_{\bA_\pm}\bS_{\bA_\pm} - \bS_{\bP_\mp}\bU_{\bP_\mp}\transpose\bU_{\bA_\pm}\|_{\mathrm{F}}\\
  &\leq \frac{\sqrt{d}\|\bE\|_2\|\sin\Theta(\bU_{\bA_\pm}, \bU_{\bP_\pm})\|_2 + \sqrt{d}\|\bU_{\bP_\mp}\transpose\bE \bU_{\bP_\pm}\|_2}{n\rho_n\lambda_d(\bDelta_n)}\\
  &\lesssim_\sigma \frac{d^{1/2}}{n\rho_n\lambda_d(\bDelta_n)}\max\left\{\frac{1}{\lambda_d(\bDelta_n)},t^{1/2},d^{1/2}\right\}
  \end{align*}
  with probability at least $1 - c_0n^{-\zeta} - c_0de^{-t}$ for sufficiently large $n$.
  Therefore,
  \begin{align*}
  \|\bU_{\bP_\mp}\bS_{\bP_\mp}\bU_{\bP_\mp}\transpose\bU_{\bA_\pm}\bS_{\bA_\pm}^{-1}\|_{2\to\infty}
  & = \|\bU_{\bP_\mp}(\bU_{\bP_\mp}\transpose\bU_{\bA_\pm} - \bU_{\bP_\mp}\transpose\bE\bU_{\bA_\pm}\bS_{\bA_\pm}^{-1})\|_{2\to\infty}\\
  &\leq \|\bU_\bP\|_{2\to\infty}\|\bU_{\bP_\mp}\transpose\bU_{\bA_\pm}\|_2 + \|\bU_\bP\|_{2\to\infty}\|\bU_{\bP_\mp}\transpose\bE\bU_{\bA_\pm}\|_2\|\bS_{\bA_\pm}^{-1}\|_2\\
  &\lesssim_\sigma \frac{d^{1/2}\|\bU_\bP\|_{2\to\infty}}{n\rho_n\lambda_d(\bDelta_n)}\max\left\{\frac{1}{\lambda_d(\bDelta_n)},t^{1/2},d^{1/2}\right\}
  \end{align*}
  with probability at least $1 - c_0n^{-\zeta} - c_0de^{-t}$ for sufficiently large $n$.
  The proof is thus completed.




\section{Proofs for Section \ref{sec:entrywise_limit_theorem_for_the_eigenvectors}} 
\label{sec:proofs_main_results}

\subsection{Proof sketch for Theorem \ref{thm:eigenvector_deviation}} 
\label{sub:proof_sketch_for_theorem_thm:eigenvector_deviation}

In this section, we discuss the basic idea of the proof of Theorem \ref{thm:eigenvector_deviation}. We begin with a warm-up matrix decomposition motivated by \cite{10.1214/19-AOS1854} and \cite{cape2019signal}. Denote $\bE = \bA - \bP$. Recall that $\bU_{\bP_\pm} = \bP\bU_{\bP_\pm}\bS_{\bP_\pm}^{-1}$ the definition of $(\bU_{\bP_\pm},\bS_{\bP_\pm})$. This leads to the following observation
\begin{align}\label{eqn:U_A_decomposition_step_I}
\bU_{\bA_\pm} - \bA\bU_{\bP_\pm}\bS_{\bP_\pm}^{-1}\bW^*_{\pm}
& = - \bE\bU_{\bP_\pm}\bS_{\bP_\pm}^{-1}\bW^*_{\pm} + (\bU_{\bA_\pm} - \bU_{\bP_\pm}\bW^*_{\pm})
\end{align}
because $\bU_{\bP_\pm} - \bA\bU_{\bP_\pm}\bS_{\bP_\pm}^{-1} = (\bP  - \bA)\bU_{\bP_\pm}\bS_{\bP_\pm}^{-1}$. 
For the second term on the right-hand side of \eqref{eqn:U_A_decomposition_step_I}, we recall that $\bU_{\bA_+}$ ($\bU_{\bA_-}$, resp.) is the eigenvector matrix of $\bA$ corresponding to the eigenvalues $\lambda_1(\bA),\ldots,\lambda_p(\bA)$ ($\lambda_{n - q + 1}(\bA),\ldots,\lambda_n(\bA)$, resp.). Therefore, 
\begin{align}\label{eqn:U_A_decomposition_step_II}
\bU_{\bA_\pm} = \bA\bU_{\bA_\pm}\bS_{\bA_\pm}^{-1} = \bE\bU_{\bA_\pm}\bS_{\bA_\pm}^{-1} + \bP\bU_{\bA_\pm}\bS_{\bA_\pm}^{-1}.
\end{align}
We first focus on the second term $\bP\bU_{\bA_\pm}\bS_{\bA_\pm}^{-1}$ on the right-hand side of \eqref{eqn:U_A_decomposition_step_II} above. By the spectral decomposition $\bP = \bU_{\bP_+}\bS_{\bP_+}\bU_{\bP_+}\transpose + \bU_{\bP_-}\bS_{\bP_-}\bU_{\bP_-}\transpose$, we can write 
\[
\bP\bU_{\bA_\pm}\bS_{\bA_\pm}^{-1} = \bU_{\bP_\pm}\bS_{\bP_\pm}\bU_{\bP_\pm}\transpose\bU_{\bA_\pm}\bS_{\bA_\pm}^{-1} + \bU_{\bP_\mp}\bS_{\bP_\mp}\bU_{\bP_\mp}\transpose\bU_{\bA_\pm}\bS_{\bA_\pm}^{-1}.
\]
Recall that $\bW^*_\pm$ is the matrix sign of $\bU_{\bP_\pm}\transpose\bU_{\bA_\pm}$, suggesting that $\bW^*_\pm\approx \bU_{\bP_\pm}\transpose\bU_{\bA_\pm}$. 
It is then conceivable that $\bU_{\bP_\pm}\bW^*_\pm \approx \bU_{\bP_\pm}\bU_{\bP_\pm}\transpose\bU_{\bA_\pm} = \bU_{\bP_\pm}\bS_{\bP_\pm}\bS_{\bP_\pm}^{-1}\bU_{\bP_\pm}\transpose\bU_{\bA_\pm}$. This motivates us to write $\bP\bU_{\bA_\pm}\bS_{\bA_\pm}^{-1}$ as
\begin{align}\label{eqn:U_A_decomposition_step_III}
\begin{aligned}
\bP\bU_{\bA_\pm}\bS_{\bA_\pm}^{-1}& = \bU_{\bP_\pm}\bS_{\bP_\pm}\bU_{\bP_\pm}\transpose\bU_{\bA_\pm}\bS_{\bA_\pm}^{-1} + \bU_{\bP_\mp}\bS_{\bP_\mp}\bU_{\bP_\mp}\transpose\bU_{\bA_\pm}\bS_{\bA_\pm}^{-1}\\
& = \bU_{\bP_\pm}\bS_{\bP_\pm}(\bU_{\bP_\pm}\transpose\bU_{\bA_\pm}\bS_{\bA_\pm}^{-1} - \bS_{\bP_\pm}^{-1}\bU_{\bP_\pm}\transpose\bU_{\bA_\pm}) + \bU_{\bP_\pm}\bU_{\bP_\pm}\transpose\bU_{\bA_\pm}\\
&\quad + \bU_{\bP_\mp}\bS_{\bP_\mp}\bU_{\bP_\mp}\transpose\bU_{\bA_\pm}\bS_{\bA_\pm}^{-1}.
\end{aligned}
\end{align}
We next turn our attention to the first term $\bE\bU_{\bA_\pm}\bS_{\bA_\pm}^{-1}$ on the right-hand side of \eqref{eqn:U_A_decomposition_step_II}. Intuitively, this term should be closed to $\bE\bU_{\bP_\pm}\bS_{\bP_\pm}^{-1}\bW^*_\pm$, which leads to the following decomposition
\begin{align}
\label{eqn:U_A_decomposition_step_IV}
\bE\bU_{\bA_\pm}\bS_{\bA_\pm}^{-1}
& = \bE\bU_{\bP_\pm}\bS_{\bP_\pm}^{-1}\bW^*_\pm + (\bE\bU_{\bA_\pm}\bS_{\bA_\pm}^{-1} - \bE\bU_{\bP_\pm} \bS_{\bP_\pm}^{-1}\bW^*_\pm).
\end{align}
Because $\bA$ concentrates around $\bP$ in spectral norm and $\bU_{\bA_\pm},\bU_{\bP_\pm}$ are their eigenvector matrices, the matrix perturbation theory suggests that $\bU_{\bA_\pm}\approx \bU_{\bP_\pm}\bW^*_\pm$. Hence, we can write the second term in \eqref{eqn:U_A_decomposition_step_IV} above as
\begin{align}
\label{eqn:U_A_decomposition_step_V}
\begin{aligned}
\bE\bU_{\bA_\pm}\bS_{\bA_\pm}^{-1} - \bE\bU_{\bP_\pm} \bS_{\bP_\pm}^{-1}\bW^*_\pm
& = \bE(\bU_{\bA_\pm} - \bU_{\bP_\pm}\bW^*_\pm)\bS_{\bA_\pm}^{-1}\\
&\quad + \bE\bU_{\bP_\pm}(\bW^*_\pm\bS_{\bA_\pm}^{-1} - \bS_{\bP_\pm}^{-1}\bW^*_\pm).
\end{aligned}
\end{align}
We now combine equations \eqref{eqn:U_A_decomposition_step_I}, \eqref{eqn:U_A_decomposition_step_II}, \eqref{eqn:U_A_decomposition_step_III}, \eqref{eqn:U_A_decomposition_step_IV}, and \eqref{eqn:U_A_decomposition_step_V} to obtain the following decomposition of $\bU_\bA - \bA\bU_\bP\bS_\bP^{-1}\bW^*$:
\begin{align}
\label{eqn:ASE_eigenvector_decomposition_I}
\bU_{\bA_\pm} - \bA\bU_{\bP_\pm}\bS_{\bP_\pm}^{-1}\bW^*_\pm& = \bE(\bU_{\bA_\pm} - \bU_{\bP_\pm}\bW^*_\pm)\bS_{\bA_\pm}^{-1}\\
\label{eqn:ASE_eigenvector_decomposition_II}
&\quad + \bU_{\bP_\pm}\bS_{\bP_\pm}(\bU_{\bP_\pm}\transpose\bU_{\bA_\pm}\bS_{\bA_\pm}^{-1} - \bS_{\bP_\pm}^{-1}\bU_{\bP_\pm}\transpose\bU_{\bA_\pm})\\
\label{eqn:ASE_eigenvector_decomposition_III}
&\quad + \bU_{\bP_\pm}(\bU_{\bP_\pm}\transpose\bU_{\bA_\pm} - \bW^*_\pm)\\
\label{eqn:ASE_eigenvector_decomposition_IV}
&\quad + \bE\bU_{\bP_\pm}(\bW^*_\pm\bS_{\bA_\pm}^{-1} - \bS_{\bP_\pm}^{-1}\bW^*_\pm)\\
\label{eqn:ASE_eigenvector_decomposition_V}
&\quad + \bU_{\bP_\mp}\bS_{\bP_\mp}\bU_{\bP_\mp}\transpose\bU_{\bA_\pm}\bS_{\bA_\pm}^{-1}.
\end{align}
Among the five terms above, lines \eqref{eqn:ASE_eigenvector_decomposition_II}, \eqref{eqn:ASE_eigenvector_decomposition_III}, \eqref{eqn:ASE_eigenvector_decomposition_IV}, and \eqref{eqn:ASE_eigenvector_decomposition_V} are relatively easy to control using classical matrix perturbation tools and the concentration of $\|\bE\|_2$ due to \cite{lei2015}. The formal concentration bounds of these remainders are given in Lemma \ref{lemma:higher_order_remainder}. 
The challenging part is a delicate analysis of the row-wise behavior of $\bE(\bU_{\bA_\pm} - \bU_{\bP_\pm}\bW^*_\pm)$, which we sketch below. We borrow the decoupling strategy and a ``leave-one-out'' analysis that appeared in \cite{10.1214/19-AOS1854,Bean14563,javanmard2015biasing,lei2019unified,doi:10.1137/17M1122025}. Consider the following collection of auxiliary matrices $\bA^{(1)},\ldots,\bA^{(n)}$. For each row index $m\in [n]$, the matrix $\bA^{(m)} = [A_{ij}]_{n\times n}$ is a function of $\bA$ defined by
\begin{align}\label{eqn:auxiliary_matrix}
{\bA}^{(m)} = \left\{
\begin{array}{ll}
A_{ij},&\quad\text{if }i\neq m\text{ and }j\neq m,\\
\expect A_{ij},&\quad\text{if }i = m\text{ or }j = m.
\end{array}
\right.
\end{align}
Namely, the matrix $\bA^{(m)}$ is constructed by replacing the $m$th row and $m$th column of $\bA$ by their expected values. Now let $\bU_{\bA_\pm}^{(m)}$ be the leading eigenvector matrix of $\bA^{(m)}$ ($\bU_{\bA_+}\in\mathbb{O}(n, p)$ and $\bU_{\bA_-}\in\mathbb{O}(n, q)$) such that $\bU_{\bA_\pm}^{(m)}\bS_{\bA_\pm}^{(m)} = \bA^{(m)}\bU_{\bA_\pm}^{(m)}$, where 
\[
\bS_{\bA_+}^{(m)} = \mathrm{diag}\{\lambda_1(\bA^{(m)}), \ldots,\lambda_d(\bA^{(m)})\}\quad\mbox{and}\quad
\bS_{\bA_-}^{(m)} = \mathrm{diag}\{\lambda_{n - q + 1}(\bA^{(m)}), \ldots,\lambda_n(\bA^{(m)})\}.
\]
Denote $\bH_\pm = \bU_{\bA_\pm}\transpose{}\bU_{\bP_\pm}$ and $\bH^{(m)}_\pm = (\bU_{\bA_\pm}^{(m)})\transpose\bU_{\bP_\pm}$. The smartness of introducing $\bA^{(m)}$ lies in the striking fact that $\be_m\transpose{}\bE$ and $\bA^{(m)}$ are independent. With this in mind, we can focus on the $m$th row of $\bE(\bU_{\bA_\pm} - \bU_{\bP_\pm}\bW^*_\pm)$ by inserting $\bU_\bA^{(m)}$ as follows:
\begin{align}
\label{eqn:crucial_inequality}
\begin{aligned}
\|\be_m\transpose \bE(\bU_{\bA_\pm} - \bU_{\bP_\pm}\bW^*_\pm)\|_2
&\leq \|\be_m\transpose \bE\bU_{\bA_\pm}\{\mathrm{sgn}(\bH_\pm) - \bH_\pm\}\}\|_2\\
&\quad + \|\be_m\transpose{}\bE(\bU_{\bA_\pm}\bH_\pm - \bU_{\bA_\pm}^{(m)}\bH_\pm^{(m)})\|_2
\\
&\quad + \|\be_m\transpose{}\bE(\bU_{\bA_\pm}^{(m)}\bH_\pm^{(m)} - \bU_{\bP_\pm})\|_2.
\end{aligned}
\end{align}
Here, we have used the fact that $\bW^*_\pm = \mathrm{sgn}(\bH_\pm)\transpose\in\mathbb{O}(d)$. Since $\be_m\transpose{}\bE$ and $\bU_{\bA_\pm}^{(m)}\bH_\pm^{(m)} - \bU_{\bP_\pm}$ are independent, we can apply Bernstein's or Hoeffding's inequality to the third term above. The success of this decoupling strategy critically depends on the following sharp concentration bounds on $\|\bU_{\bA_\pm}\bH_\pm - \bU_{\bA_\pm}^{(m)}\bH_\pm^{(m)}\|_2$, $\|\bU_{\bA_\pm}^{(m)}\bH_\pm - \bU_{\bP_\pm}\|_{2\to\infty}$, and $\|\bU_{\bA_\pm}^{(m)}\bH_\pm^{(m)} - \bU_{\bP_\pm}\|_{\mathrm{F}}$.

\begin{lemma}\label{lemma:auxiliary_matrix}
Suppose Assumptions \ref{assumption:incoherence}-\ref{assumption:spectral_norm_concentration} hold. Denote $\bDelta_n = (1/n)\bX\transpose{}\bX$. Let $m\in [n]$ be any fixed row index and $\bA^{(m)}$, $\bU_{\bA_\pm}^{(m)}$, and $\bH_\pm^{(m)}$ be defined as above. Then there exists an absolute constant $c_0 > 0$, such that for sufficiently large $n$, with probability at least $1 - c_0n^{-\zeta\wedge \xi}$,
\begin{align*}
\|{\bU}_{\bA_\pm}^{(m)}{\bH}_\pm^{(m)} - \bU_{\bP_\pm}\|_2 &\lesssim \frac{1}{(n\rho_n)^{1/2}\lambda_d(\bDelta_n)},\\
\|{\bU}_{\bA_\pm}^{(m)}\|_{2\to\infty} &\lesssim \{\kappa(\bDelta_n) + \varphi(1)\}\|\bU_{\bP_\pm}\|_{2\to\infty},\\
\|{\bU}_{\bA_\pm}^{(m)}\mathrm{sgn}({\bH}_\pm^{(m)}) - \bU_{\bP_\pm}\|_{2\to\infty} &\lesssim\left[\kappa(\bDelta_n)\left\{\kappa(\bDelta_n) +  \varphi(1)\right\}\{\gamma + \varphi(\gamma)\} + \kappa(\bDelta_n) + \varphi(1)\right]\|\bU_\bP\|_{2\to\infty},
\end{align*}
and for all $t \geq 1$ and $t\lesssim n\rho_n$, with probability at least $1 - c_0n^{-\zeta\wedge\xi} - c_0de^{-t}$, 
\[
\|{\bU}_{\bA_\pm}^{(m)}{\bH}_\pm^{(m)} - \bU_{\bA_\pm}\bH_\pm\|_2 \lesssim_\sigma \frac{\{\kappa(\bDelta_n) + \varphi(1)\}t^{1/2}}{(n\rho_n)^{1/2}\lambda_d(\bDelta_n) }\|\bU_\bP\|_{2\to\infty}.
\]
\end{lemma}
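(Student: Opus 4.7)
The plan is to handle the four bounds sequentially, exploiting that $\bA^{(m)} = \bP + \bE^{(m)}$, where $\bE^{(m)}$ denotes $\bE$ with its $m$-th row and column zeroed out. Consequently $\|\bE^{(m)}\|_2 \leq \|\bE\|_2$, and $\bE^{(m)}$ inherits Assumptions \ref{assumption:rowwise_concentration} and \ref{assumption:spectral_norm_concentration}, so every estimate previously established for $\bA$ applies to $\bA^{(m)}$ at the same order (including Results \ref{result:eigenvalue_concentration} and \ref{result:S_A_concentration}). Crucially, $\be_m\transpose\bE$ is independent of $\bA^{(m)}$, and hence of $\bU_{\bA_\pm}^{(m)}$ and $\bH_\pm^{(m)}$, by construction; this is the decoupling mechanism that drives the whole analysis.

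For bound (1), since $\bU_{\bA_\pm}^{(m)}\bH_\pm^{(m)} = \bU_{\bA_\pm}^{(m)}(\bU_{\bA_\pm}^{(m)})\transpose\bU_{\bP_\pm}$ is the orthogonal projection of $\bU_{\bP_\pm}$ onto the column space of $\bU_{\bA_\pm}^{(m)}$, Lemma 6.7 of \cite{cape2017two} combined with the Davis--Kahan theorem gives
\[
\|\bU_{\bA_\pm}^{(m)}\bH_\pm^{(m)} - \bU_{\bP_\pm}\|_2 = \|\sin\Theta(\bU_{\bA_\pm}^{(m)},\bU_{\bP_\pm})\|_2 \lesssim \frac{\|\bE^{(m)}\|_2}{n\rho_n\lambda_d(\bDelta_n)} \lesssim \frac{1}{(n\rho_n)^{1/2}\lambda_d(\bDelta_n)}.
\]
For bounds (2) and (3), I would invoke Theorem \ref{thm:AFWZ2020AoS_corr2.1} with $\bM = \bA^{(m)}$: condition (A1) follows from Assumption \ref{assumption:incoherence} and Result \ref{result:UP_delocalization}; (A2) is automatic because the $m$-th row/column of $\bA^{(m)}$ is deterministic while the remaining entries are independent; (A3) is inherited from Assumption \ref{assumption:spectral_norm_concentration} via $\|\bE^{(m)}\|_2 \leq \|\bE\|_2$; and (A4), with the function $\varphi(\cdot)$, is exactly Assumption \ref{assumption:rowwise_concentration} applied to $\bE^{(m)}$ (a union bound over row indices accommodates the simultaneous requirement). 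Substituting the eigengap $\Omega(n\rho_n\lambda_d(\bDelta_n))$ into the two two-to-infinity conclusions of Theorem \ref{thm:AFWZ2020AoS_corr2.1} yields the stated bounds directly.

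The main obstacle is bound (4). The key structural observation is that $\bA - \bA^{(m)} = \be_m\bg_m\transpose + \bg_m\be_m\transpose$, where $\bg_m := \bE\be_m - \tfrac{1}{2}[\bE]_{mm}\be_m$, so $\bA - \bA^{(m)}$ has rank at most two and is supported on row/column $m$. Writing $\bP_{\bA_\pm} := \bU_{\bA_\pm}\bU_{\bA_\pm}\transpose$ and $\bP_{\bA_\pm}^{(m)} := \bU_{\bA_\pm}^{(m)}(\bU_{\bA_\pm}^{(m)})\transpose$, the target equals $(\bP_{\bA_\pm}^{(m)} - \bP_{\bA_\pm})\bU_{\bP_\pm}$. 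I plan to expand this projector difference via the resolvent identity (equivalently, a first-order Sylvester/Neumann series as in \cite{cape2019signal,pmlr-v83-eldridge18a}), whose leading-order contribution takes the form
\[
-(\eye - \bP_{\bA_\pm}^{(m)})(\bA - \bA^{(m)})\bU_{\bA_\pm}^{(m)}(\bS_{\bA_\pm}^{(m)})^{-1}(\bU_{\bA_\pm}^{(m)})\transpose\bU_{\bP_\pm} \;+\; (\text{symmetric term}).
\]
Plugging in the rank-two form of $\bA - \bA^{(m)}$ collapses every factor $(\bA - \bA^{(m)})\bM$ to a sum of outer products involving the scalars $\be_m\transpose\bM$ and $\bg_m\transpose\bM$. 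The $\be_m\transpose$-terms are controlled by $\|\bU_{\bA_\pm}^{(m)}\|_{2\to\infty}$ from part (2), while the $\bg_m\transpose$-terms are controlled by $\|\be_m\transpose\bE\bU_{\bA_\pm}^{(m)}\|_2$; for the latter, the independence of $\be_m\transpose\bE$ and $\bU_{\bA_\pm}^{(m)}$ allows me to condition on $\bA^{(m)}$ and invoke Lemmas \ref{lemma:Bernstein_concentration_EW} and \ref{lemma:Bernstein_concentration_EW_subGaussian} with $\bV = \bU_{\bA_\pm}^{(m)}$, giving a bound of order $(n\rho_n t)^{1/2}\|\bU_{\bA_\pm}^{(m)}\|_{2\to\infty}$ on an event of probability at least $1 - c_0 d e^{-t}$ (using $\|\bU_{\bA_\pm}^{(m)}\|_{\mathrm{F}}^2 = d \leq n\|\bU_{\bA_\pm}^{(m)}\|_{2\to\infty}^2$ and $t \lesssim n\rho_n$). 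Combining these ingredients with $\|(\bS_{\bA_\pm}^{(m)})^{-1}\|_2 \lesssim \{n\rho_n\lambda_d(\bDelta_n)\}^{-1}$ delivers the claimed order. Higher-order terms in the expansion acquire an extra factor $\|\bE^{(m)}\|_2/\{n\rho_n\lambda_d(\bDelta_n)\} = o(1)$ and are absorbed. The most delicate step is organizing these contributions (especially the symmetric term, which involves $(\eye - \bP_{\bA_\pm}^{(m)})\bU_{\bP_\pm}$ and requires re-using parts (1) and (3)) so that every contribution conforms to the target scaling with only a single factor of $\lambda_d(\bDelta_n)^{-1}$ in the denominator.
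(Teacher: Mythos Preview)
Your treatment of bounds (1)--(3) coincides with the paper's: Davis--Kahan for (1), and Theorem~\ref{thm:AFWZ2020AoS_corr2.1} applied to $\bM=\bA^{(m)}$ for (2) and (3), with Assumptions~\ref{assumption:rowwise_concentration}--\ref{assumption:spectral_norm_concentration} supplying conditions (A3)--(A4). Your observation that $\bE^{(m)}=(\eye-\be_m\be_m\transpose)\bE(\eye-\be_m\be_m\transpose)$ gives $\|\bE^{(m)}\|_2\le\|\bE\|_2$ is in fact slightly cleaner than the paper's route through $\|\bA-\bA^{(m)}\|_{\mathrm F}$.

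For bound (4), however, the paper's argument is considerably simpler than your resolvent/Neumann expansion. The paper first bounds the target by $\|\sin\Theta(\bU_{\bA_\pm}^{(m)},\bU_{\bA_\pm})\|_2$ (the projector difference), then applies the one-sided Davis--Kahan theorem (Theorem~VII.3.4 in \cite{bhatia2013matrix}) directly to the pair $(\bA^{(m)},\bA)$:
\[
\|\sin\Theta(\bU_{\bA_\pm}^{(m)},\bU_{\bA_\pm})\|_2
\;\le\;\frac{\|(\bA-\bA^{(m)})\bU_{\bA_\pm}^{(m)}\|_{\mathrm F}}{\text{gap}},
\]
where the gap (e.g.\ $\lambda_p(\bA^{(m)})-\lambda_{p+1}(\bA)$) is shown to be $\gtrsim n\rho_n\lambda_d(\bDelta_n)$ via Weyl's inequality. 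It then computes $\|(\bA-\bA^{(m)})\bU_{\bA_\pm}^{(m)}\|_{\mathrm F}$ explicitly: since $\bA-\bA^{(m)}$ is nonzero only on row/column $m$, this Frobenius norm decomposes into a piece bounded by $\|\bE\|_{2\to\infty}\|\bU_{\bA_\pm}^{(m)}\|_{2\to\infty}$ and the piece $\|\be_m\transpose\bE\,\bU_{\bA_\pm}^{(m)}\|_2$, which is handled conditionally by Lemmas~\ref{lemma:Bernstein_concentration_EW}--\ref{lemma:Bernstein_concentration_EW_subGaussian} exactly as you propose. Thus the same two ingredients you identified appear, but no series expansion, no ``symmetric term,'' and no higher-order remainders need to be organized; the single factor of $\lambda_d(\bDelta_n)^{-1}$ comes straight from the Davis--Kahan denominator. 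Your approach should still go through, but the extra bookkeeping you flag is an artifact of the expansion and can be avoided entirely.
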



\subsection{Proof of Lemma \ref{lemma:auxiliary_matrix}} 
\label{sub:proof_of_lemma_lemma:auxiliary_matrix}

The proof of Lemma \ref{lemma:auxiliary_matrix} is slightly involved and is more difficult than Lemma 1 and Lemma 3 in \cite{10.1214/19-AOS1854}. The underlying reason is that we replace the $m$th row and $m$th column in $\bA$ by their expected values in $\bA^{(m)}$, but the construction of $\bA^{(m)}$ in \cite{10.1214/19-AOS1854} is to zero out the $m$th row and column of $\bA$. 
As $\expect\bA^{(m)}$ is the same as $\expect\bA = \bP$, we can borrow the entrywise eigenvector analysis there to $\bU_\bA^{(m)}$. By the construction of ${\bA}^{(m)}$, 
\[
[\bA - {\bA}^{(m)}]_{ij} = \left\{\begin{array}{ll}
0,&\quad\text{if }i\neq m\text{ and }j\neq m,\\
A_{ij} - \expect A_{ij},&\quad\text{if }i = m\text{ or }j = m.
\end{array}\right.
\]
It follows that
\begin{align*}
\|\bA - {\bA}^{(m)}\|_2\leq \|\bA - {\bA}^{(m)}\|_{\mathrm{F}}\leq \left\{2\sum_{j = 1}^n(A_{mj} - \expect A_{mj})^2\right\}^{1/2}\leq \sqrt{2}\|\bE\|_{2\to\infty}\lesssim (n\rho_n)^{1/2}
\end{align*}
with probability at least $1 - c_0n^{-\zeta}$ by Assumption \ref{assumption:spectral_norm_concentration}. 
Denote $\eye_+ = \eye_p$ and $\eye_- = \eye_q$ for convenience. 
Now viewing ${\bA}^{(m)}$ as a perturbed version of $\bP$, we apply Davis-Kahan theorem to obtain
\begin{align*}
\|{\bU}_{\bA_\pm}^{(m)}{\bH}_\pm^{(m)} - \bU_{\bP_\pm}\|_2
& = \|\{{\bU}_{\bA_\pm}^{(m)}({\bU}_{\bA_\pm}^{(m)})\transpose - \eye_\pm\}\bU_{\bP_\pm}\|_2
  = \|\{{\bU}_{\bA_\pm}^{(m)}({\bU}_{\bA_\pm}^{(m)})\transpose - \eye_\pm\}\bU_{\bP_\pm}\bU_{\bP_\pm}\transpose\|_2\\
& = \|\sin\Theta({\bU}_{\bA_\pm}^{(m)}, \bU_{\bP_\pm})\|_2\leq \frac{2\|{\bA}^{(m)} - \bP\|_2}{n\rho_n\lambda_d(\bDelta_n)}\leq \frac{2\|\bA - \bP\|_2 + 2\|{\bA}^{(m)} - \bA\|_2}{n\rho_n\lambda_d(\bDelta_n)}\\
&\lesssim \frac{1}{(n\rho_n)^{1/2}\lambda_d(\bDelta_n)}
\end{align*}
with probability at least $1 - c_0n^{-\zeta}$. This completes the proof of the first assertion.

\vspace*{2ex}\noindent
We now turn the focus to $\|{\bU}_{\bA_\pm}^{(m)}\|_{2\to\infty}$ as well as $\|\bU_{\bA_\pm}^{(m)}\mathrm{sgn}(\bH^{(m)}_\pm) - \bU_{\bP_\pm}\|_{2\to\infty}$. This is the place where we apply Theorem \ref{thm:AFWZ2020AoS_corr2.1} with $\bM = {\bA}^{(m)}$. We set 
\[
\bar{\gamma}_+ = \frac{\max\{3K,\|\bX\|_{2\to\infty}^2\}}{(n\rho_n)^{1/2}\lambda_{\min}(\bDelta_{n\pm})},
\]
where $K > 0$ is the constant selected such that $\|\bE\|_2\leq K(n\rho_n)^{1/2}$ with probability at least $1 - c_0n^{-\zeta}$ according to Assumption \ref{assumption:spectral_norm_concentration} and $c_0 > 0,\zeta\geq 1$ are absolute constants. 
Denote $\Delta_+ = \lambda_p(\bP) = n\rho_n\lambda_p(\bDelta_{n+})$, $\Delta_- = -\lambda_{n - q + 1}(\bP) = n\rho_n\lambda_q(\bDelta_{n-})$, $\kappa_+ = \lambda_1(\bP)/\Delta_+ = \kappa(\bDelta_{n+})$, and $\kappa_- = |\lambda_n(\bP)|/\Delta_- = \kappa(\bDelta_{n-})$. Note that $\kappa_\pm\leq \kappa(\bDelta_n)$. 
We take the function $\omega(\cdot)$ in Theorem \ref{thm:AFWZ2020AoS_corr2.1} to be the same the $\varphi(\cdot)$ given in Assumption \ref{assumption:rowwise_concentration}. 
For condition A1, we see that 
\begin{align*}
\|\bP\|_{2\to\infty} 
&\leq \rho_n\|\bX\|_{2\to\infty}\|\bX\|_2\leq \sqrt{n}\rho_n\|\bX\|_{2\to\infty}^2\\
&\leq\frac{\max\{3K, \|\bX\|_{2\to\infty}^2\}n\rho_n\lambda_{\min}(\bDelta_{n\pm})}{(n\rho_n)^{1/2}\lambda_{\min}(\bDelta_{n\pm})} = \bar{\gamma}_{\pm}\Delta_{\pm}.
\end{align*}
Condition A2 automatically holds because for each fixed $i$, the $i$th row and column of ${\bA}^{(m)}$ are either the $i$th row and column of $\bA$, or their expected values. By the construction of $\bA$, we see that the $i$th row and column of ${\bA}^{(m)}$ are independent of the rest of the random variables in ${\bA}^{(m)}$. For condition A3, since $\gamma_\pm\leq \gamma$ and $\varphi(\cdot)$ is non-decreasing, we see that 
\begin{align*}
32\kappa_{\pm}\max\{\bar{\gamma}_\pm,\omega(\bar{\gamma}_\pm)\} 
&\leq 32\kappa(\bDelta_n)\max\{\gamma,\varphi(\gamma)\} \leq 1
\end{align*}
by Assumption \ref{assumption:spectral_norm_concentration}. Again, by Assumption \ref{assumption:spectral_norm_concentration}, 
\[
\|{\bA}^{(m)} - \bP\|_2\leq \|\bA - {\bA}^{(m)}\|_2 + \|\bE\|_2\leq 3K(n\rho_n)^{1/2}\leq \bar{\gamma}_\pm\Delta_\pm
\]
with probability at least $1 - \delta_0$, where $\delta_0 = c_0n^{-\zeta}$ for constants $c_0 > 0$ and $\zeta\geq 1$. For condition A4, it automatically holds by Assumption \ref{assumption:rowwise_concentration} with probability $1 - \delta_1n^{-1}$ because $\bA^{(m)} - \bP = \bE^{(m)}$, where $\delta_1 = c_0n^{-\xi}$ and $\xi \geq 1$. 
Note that $\varphi(\cdot)$ is non-decreasing, implying that
$\bar{\gamma}_\pm + \varphi(\bar{\gamma}_\pm) \leq \gamma\ + \varphi(\gamma)$.
Also, note that
\[
\frac{\bar{\gamma}_\pm\|\bP\|_{2\to\infty}}{\Delta_\pm}
\lesssim 
\frac{\|\bX\|_{2\to\infty}^2n\rho_n\lambda_1(\bDelta_n)}{(n\rho_n)^{3/2}\lambda_d(\bDelta_n)^2}\|\bU_\bP\|_{2\to\infty}\leq \kappa(\bDelta_n)\|\bU_\bP\|_{2\to\infty}.
\]
Hence, we obtain from Theorem \ref{thm:AFWZ2020AoS_corr2.1} that, with probability at least $1 - c_0n^{-\zeta\wedge\xi}$,
\begin{align*}
&\|{\bU}_{\bA_\pm}^{(m)}\|_{2\to\infty}\lesssim \left\{\kappa(\bDelta_n) +  \varphi(1)\right\}\|\bU_\bP\|_{2\to\infty},\\
&\|{\bU}_{\bA_\pm}^{(m)}\mathrm{sgn}(\bH_\pm^{(m)}) - \bU_{\bP_\pm}\|_{2\to\infty}\lesssim \left[\kappa(\bDelta_n)\left\{\kappa(\bDelta_n) +  \varphi(1)\right\}\{\gamma + \varphi(\gamma)\} + \kappa(\bDelta_n) + \varphi(1)\right]\|\bU_\bP\|_{2\to\infty},
\end{align*}
which are the second and the third assertion.

\vspace*{2ex}\noindent
We then focus on the last assertion regarding $\|\bU_{\bA_\pm}^{(m)}\bH_\pm^{(m)} - \bU_{\bA_\pm}\bH_\pm\|_2$. 
By the concentration of eigenvalues in Result \ref{result:eigenvalue_concentration}, we know that $\lambda_p(\bA)\geq (1/2)n\rho_n\lambda_d(\bDelta_n)$ and $\lambda_{n - q + 1}(\bA)\leq -(1/2)n\rho_n\lambda_d(\bDelta_n)$ with probability at least $1 - c_0n^{-\zeta}$ for sufficiently large $n$. 
By Weyl's inequality, for sufficiently large $n$,
\begin{align*}
\lambda_p({\bA}^{(m)})&\geq \lambda_p(\bA) - \|\bA - {\bA}^{(m)}\|_2\geq \frac{1}{4}n\rho_n\lambda_d(\bDelta_n),\\
\lambda_{n - q + 1}({\bA}^{(m)})&\leq \lambda_{n - q + 1}(\bA) + \|\bA - {\bA}^{(m)}\|_2\leq -\frac{1}{4}n\rho_n\lambda_d(\bDelta_n)
\end{align*}
with probability at least $1 - c_0n^{-\zeta}$,
where we have used the assumption that $n\rho_n\lambda_d(\bDelta_n)^2\to\infty$ and the fact that $\|\bA - \bA^{(m)}\|_2\leq \sqrt{2}K(n\rho_n)^{1/2}\leq (1/4)n\rho_n\lambda_d(\bDelta_n)$ with probability at least $1 - c_0n^{-\zeta}$ for large $n$ by Assumption \ref{assumption:spectral_norm_concentration}. 
On the other hand, applying Weyl's inequality to $\lambda_{p + 1}(\bA)$ and $\lambda_{n - q}(\bA)$ yields
\begin{align*}
\lambda_{p + 1}(\bA)&\leq \lambda_{p + 1}(\bP) + \|\bE\|_2\leq K (n\rho_n)^{1/2}\leq \frac{1}{8}n\rho_n\lambda_d(\bDelta_n),\\
\lambda_{n - q}(\bA)&\geq \lambda_{n - q}(\bP) - \|\bE\|_2\geq -K (n\rho_n)^{1/2}\geq -\frac{1}{8}n\rho_n\lambda_d(\bDelta_n)
\end{align*}
with probability at least $1 - c_0n^{-\zeta}$ for sufficiently large $n$. 
We thus obtain that 
\begin{align*}
\lambda_p({\bA}^{(m)}) - \lambda_{p + 1}(\bA) \geq \frac{1}{8}(n\rho_n)\lambda_d(\bDelta_n)
\quad\mbox{and}\quad\lambda_{n - q}({\bA}) - \lambda_{n - q + 1}(\bA^{(m)}) \geq \frac{1}{8}(n\rho_n)\lambda_d(\bDelta_n)
\end{align*}
with probability at least $1 - c_0n^{-\zeta}$ for large $n$. Hence, by a version of the Davis-Kahan theorem (See Theorem VII.3.4 in \cite{bhatia2013matrix}), 
\begin{align*}
\|\bU_{\bA_+}^{(m)}\bH_+^{(m)} - \bU_{\bA_+}\bH_+\|_2 &= \|\sin\Theta({\bU}_{\bA_+}^{(m)}, \bU_{\bA_+})\|_2
\leq \frac{\|(\bA - \bA^{(m)}){\bU}^{(m)}_{\bA_+}\|_2}{\lambda_{p}({\bA}^{(m)}) - \lambda_{p + 1}(\bA)}\\
&\leq \frac{8\|(\bA - \bA^{(m)}){\bU}^{(m)}_{\bA_+}\|_{\mathrm{F}}}{(n\rho_n)\lambda_d(\bDelta_n)},\\
\|\bU_{\bA_-}^{(m)}\bH_-^{(m)} - \bU_{\bA_-}\bH_-\|_2 &= \|\sin\Theta({\bU}_{\bA_-}^{(m)}, \bU_{\bA_-})\|_2
\leq \frac{\|(\bA - \bA^{(m)}){\bU}^{(m)}_{\bA_-}\|_2}{\lambda_{n - q}({\bA}) - \lambda_{n - q + 1}(\bA^{(m)})}\\
&\leq \frac{8\|(\bA - \bA^{(m)}){\bU}^{(m)}_{\bA_-}\|_{\mathrm{F}}}{(n\rho_n)\lambda_d(\bDelta_n)}
\end{align*}
with probability at least $1 - c_0n^{-\zeta}$ for large $n$. 
We now focus on $\|(\bA - \bA^{(m)}){\bU}^{(m)}_\bA\|_{\mathrm{F}}$. The key idea is that the non-zero entries of $\bA - \bA^{(m)}$ are the centered version of the $m$th row and $m$th column of $\bA$, namely, $\{A_{ij} - \expect A_{ij}:i = m\text{ or }j = m\}$. This is a collection of random variables that are independent of ${\bA}^{(m)}$. Since ${\bU}_{\bA_\pm}^{(m)}$ is the eigenvector matrix of ${\bA}^{(m)}$ corresponding to the eigenvalues in $\bS_{\bA_\pm}$, it follows that $(\bA - \bA^{(m)})$ and ${\bU}^{(m)}_{\bA_\pm}$ are independent. Write
\begin{align*}
\|(\bA - {\bA}^{(m)}){\bU}^{(m)}_{\bA_\pm}\|_{\mathrm{F}}^2
& = \sum_{i \neq m}(A_{im} - \expect A_{im})^2\|[{\bU}^{(m)}_{\bA_\pm}]_{m*}\|_{2}^2
 + \left\|\sum_{j = 1}^n(A_{mj} - \expect A_{mj})[{\bU}^{(m)}_{\bA_\pm}]_{j*}\right\|_2^2\\
&\leq \|\bE\|_{2\to\infty}^2\|{\bU}^{(m)}_{\bA_\pm}\|_{2\to\infty}^2 + \left\|\sum_{j = 1}^n(A_{mj} - \expect A_{mj})[{\bU}^{(m)}_{\bA_\pm}]_{j*}\right\|_2^2.
\end{align*}
By Assumption \ref{assumption:spectral_norm_concentration} and the second assertion, for large $n$, we know that with probability at least $1 - c_0n^{-\zeta\wedge\xi}$, 
\[
\|\bE\|_{2\to\infty}\|{\bU}^{(m)}_{\bA_\pm}\|_{2\to\infty}\leq \|\bE\|_2\|{\bU}^{(m)}_{\bA_\pm}\|_{2\to\infty}\lesssim \{\kappa(\bDelta_n) + \varphi(1)\}(n\rho_n)^{1/2} \|\bU_\bP\|_{2\to\infty}.
\]
For the second part, for any $t \geq 1$ and $t\lesssim n\rho_n$, we consider the following two events:
\begin{align*}
&\calE_1 = \left\{\bA:\mathrel{\Big\|}\sum_{j = 1}^n(A_{mj} - \expect A_{mj})[{\bU}^{(m)}_{\bA_\pm}]_{j*}\mathrel{\Big\|}_2
\leq C_0t\|\bU_{\bA_\pm}^{(m)}\|_{2\to\infty} + C_0\sigma(\rho_n t)^{1/2}\|\bU_{\bA_\pm}^{(m)}\|_{\mathrm{F}}\right\},\\
&\calE_2 = \left\{\bA:\|{\bU}_{\bA_\pm}^{(m)}\|_{2\to\infty}\leq C_0\{\kappa(\bDelta_n) + \varphi(1)\}\|\bU_\bP\|_{2\to\infty}\right\}.
\end{align*}
Here, $C_0 > 0$ is a constant that will be determined later. 
By the independence between $\bA - \bA^{(m)}$ and $\bU_{\bA_\pm}^{(m)}$, Lemma \ref{lemma:Bernstein_concentration_EW}, and Lemma \ref{lemma:Bernstein_concentration_EW_subGaussian}, we can select $C_0$ depending on $\sigma$ such that
\[
\prob(\calE_1) = \sum_{\bA^{(m)}}\prob(\calE_1\mid\bA^{(m)})p(\bA^{(m)})\geq \sum_{\bA^{(m)}}\{1 - c_0de^{-t}\}p(\bA^{(m)}) = 1 - c_0de^{-t}.
\]
Also, by the second assertion, $\prob(\calE_2)\geq 1 - c_0n^{-\zeta\wedge\xi}$ for sufficiently large $n$. Now we consider a realization $\bA\in\calE_1\cap \calE_2$. Then
\begin{align*}
\mathrel{\Big\|}\sum_{j = 1}^n(A_{mj} - \expect A_{mj})[{\bU}^{(m)}_{\bA_\pm}]_{j*}\mathrel{\Big\|_2}
&\leq C_0t\|\bU_{\bA_\pm}^{(m)}\|_{2\to\infty} + C_0\sigma(\rho_n t)^{1/2}\|\bU_{\bA_\pm}^{(m)}\|_{\mathrm{F}}\\
&\leq C_0t\|\bU_{\bA_\pm}^{(m)}\|_{2\to\infty} + C_0\sigma(n\rho_n t)^{1/2}\|\bU_{\bA_\pm}^{(m)}\|_{2\to\infty}\\
&\lesssim_\sigma (n\rho_n t)^{1/2}\|\bU_{\bA_\pm}^{(m)}\|_{2\to\infty}\\
&\lesssim_\sigma (n\rho_n t)^{1/2}\{\kappa(\bDelta_n) + \varphi(1)\}\|\bU_\bP\|_{2\to\infty}.
\end{align*}
Such a realization occurs with probability at least $1 - c_0n^{-\zeta\wedge\xi} - c_0de^{-t}$ for large $n$. 
Hence, we conclude that
\begin{align*}
\|\bU_{\bA_\pm}^{(m)}\bH_\pm^{(m)} - \bU_{\bA_\pm}\bH_\pm\|_2& = \|\sin\Theta({\bU}_{\bA_\pm}^{(m)},\bU_{\bA_\pm})\|_2\\
&\leq \frac{8\|\bE\|_{2\to\infty}\|\bU_{\bA_\pm}^{(m)}\|_{2\to\infty}}{n\rho_n\lambda_d(\bDelta_n)}
 + \frac{8\left\|\sum_{j \neq m}(A_{mj} - \expect A_{mj})[{\bU}^{(m)}_{\bA_\pm}]_{j*}\right\|_2}{n\rho_n\lambda_d(\bDelta_n)}\\
&\lesssim_\sigma \frac{\{\kappa(\bDelta_n) + \varphi(1)\}\{(n\rho_n)^{1/2} + (n\rho_n t)^{1/2}\}}{(n\rho_n)\lambda_d(\bDelta_n)}\|\bU_\bP\|_{2\to\infty}
\\
&\lesssim \frac{\{\kappa(\bDelta_n) + \varphi(1)\}t^{1/2}}{(n\rho_n)^{1/2}\lambda_d(\bDelta_n)}\|\bU_\bP\|_{2\to\infty}
\end{align*}
with probability at least $1 - c_0n^{-\zeta\wedge\xi} - c_0de^{-t}$ for sufficiently large $n$. The proof is thus completed. 


\subsection{Proof of Theorem \ref{thm:eigenvector_deviation}}\label{app:proof_of_eigenvector_deviation}

As discussed in Section \ref{sub:proof_sketch_for_theorem_thm:eigenvector_deviation}, a crucial step in controlling the row-wise perturbation bound of the term $\bU_\bA - \bA\bU_\bP\bS_\bP^{-1}\bW^*$ lies in a sharp control of $\be_m\transpose\bE(\bU_\bA - \bU_\bP\bW^*)$. This result is established in Lemma \ref{lemma:higher_order_eigenvector_deviation} below with the help of the decoupling technique in Section \ref{sub:proof_sketch_for_theorem_thm:eigenvector_deviation} and Lemma \ref{lemma:auxiliary_matrix}. 

\begin{lemma}\label{lemma:higher_order_eigenvector_deviation}
Suppose Assumptions \ref{assumption:incoherence}-\ref{assumption:spectral_norm_concentration} hold. 
Let $\bA^{(m)}$, $\bU_{\bA_\pm}^{(m)}$, and $\bH_\pm^{(m)}$ be defined as in Section \ref{sub:proof_sketch_for_theorem_thm:eigenvector_deviation}. Let $m\in [n]$ be any fixed row index. 
Then there exists an absolute constant $c_0 > 0$, such that for all $t \geq 1$, $t\lesssim n\rho_n$,
\begin{align*}
\|\be_m\transpose{}\bE(\bU_{\bA_\pm} - \bU_{\bP_\pm}\bW^*_\pm)\bS_{\bA_\pm}^{-1}\|_2&\lesssim_\sigma \frac{\|\bU_\bP\|_{2\to\infty}}{n\rho_n \lambda_d(\bDelta_n)}\\
&\quad\times\max\left\{\frac{\{\kappa(\bDelta_n) + \varphi(1)\} t^{1/2}}{\lambda_d(\bDelta_n)} , \frac{\kappa(\bDelta_n) + \varphi(1)}{ \lambda_d(\bDelta_n)^2},
\chi t
\right\}
\end{align*}
holds with probability at least $1 - c_0n^{-\zeta\wedge\xi} - c_0de^{-t}$ for sufficiently large $n$, where
\[
\chi:= 
\varphi(1) + \frac{\|\bX\|_{2\to\infty}^2\vee 1}{\lambda_d(\bDelta_n)}.
\]
\end{lemma}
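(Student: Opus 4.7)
Since $\bW^*_\pm = \mathrm{sgn}(\bH_\pm)\transpose$ with $\mathrm{sgn}(\bH_\pm)\in\mathbb{O}(d)$, the plan starts by rewriting
\[
\|\be_m\transpose\bE(\bU_{\bA_\pm} - \bU_{\bP_\pm}\bW^*_\pm)\bS_{\bA_\pm}^{-1}\|_2 \leq \|\bS_{\bA_\pm}^{-1}\|_2 \cdot \|\be_m\transpose\bE(\bU_{\bA_\pm}\mathrm{sgn}(\bH_\pm) - \bU_{\bP_\pm})\|_2,
\]
pulling out the factor $\|\bS_{\bA_\pm}^{-1}\|_2 \lesssim 1/(n\rho_n\lambda_d(\bDelta_n))$ via Result \ref{result:S_A_concentration}, inserting the leave-one-out matrices $\bU_{\bA_\pm}^{(m)}, \bH_\pm^{(m)}$ from Section \ref{sub:proof_sketch_for_theorem_thm:eigenvector_deviation}, and bounding the three summands in \eqref{eqn:crucial_inequality} separately.

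For the first summand, I would combine Lemma 6.7 of \cite{cape2017two} with Davis-Kahan to obtain $\|\mathrm{sgn}(\bH_\pm) - \bH_\pm\|_2 \leq \|\sin\Theta(\bU_{\bA_\pm},\bU_{\bP_\pm})\|_2^2 \lesssim 1/(n\rho_n\lambda_d(\bDelta_n)^2)$, then split $\bU_{\bA_\pm} = \bU_{\bP_\pm}\bW^*_\pm + (\bU_{\bA_\pm} - \bU_{\bP_\pm}\bW^*_\pm)$ and apply the Bernstein-type Lemmas \ref{lemma:Bernstein_concentration_EW}--\ref{lemma:Bernstein_concentration_EW_subGaussian} to the \emph{deterministic} matrix $\bU_{\bP_\pm}$, yielding $\|\be_m\transpose\bE\bU_{\bP_\pm}\|_2 \lesssim_\sigma (n\rho_n t)^{1/2}\|\bU_\bP\|_{2\to\infty}$; the residual is handled by $\|\bE\|_2 \lesssim (n\rho_n)^{1/2}$ together with Davis-Kahan. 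The second summand is more direct: $\|\be_m\transpose\bE\|_2 \leq \|\bE\|_2 \lesssim (n\rho_n)^{1/2}$ paired with the row-swap stability from the last assertion of Lemma \ref{lemma:auxiliary_matrix} delivers exactly the $\{\kappa(\bDelta_n) + \varphi(1)\}t^{1/2}/\lambda_d(\bDelta_n)$ entry of the claimed max.

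The crux is the third summand, and it is where the leave-one-out decoupling is essential: since $\bV := \bU_{\bA_\pm}^{(m)}\bH_\pm^{(m)} - \bU_{\bP_\pm}$ is a measurable function of $\bA^{(m)}$, it is independent of $\be_m\transpose\bE$. Conditioning on $\bA^{(m)}$, Lemmas \ref{lemma:Bernstein_concentration_EW}--\ref{lemma:Bernstein_concentration_EW_subGaussian} yield
\[
\|\be_m\transpose\bE\bV\|_2 \lesssim_\sigma t\|\bV\|_{2\to\infty} + (\rho_n t)^{1/2}\|\bV\|_{\mathrm{F}}
\]
with conditional probability at least $1 - c_0 d e^{-t}$, and integrating removes the conditioning. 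The Frobenius norm is harmless: $\|\bV\|_{\mathrm{F}} = \|\sin\Theta(\bU_{\bA_\pm}^{(m)},\bU_{\bP_\pm})\|_{\mathrm{F}} \leq d^{1/2}\|\sin\Theta\|_2 \lesssim d^{1/2}/((n\rho_n)^{1/2}\lambda_d(\bDelta_n))$ by the first assertion of Lemma \ref{lemma:auxiliary_matrix}, absorbed via $\sqrt{d/n}\leq\|\bU_\bP\|_{2\to\infty}$. The real work is the two-to-infinity norm, for which I plan to use the second-order decomposition
\[
\bV = \bU_{\bA_\pm}^{(m)}\{\bH_\pm^{(m)} - \mathrm{sgn}(\bH_\pm^{(m)})\} + \{\bU_{\bA_\pm}^{(m)}\mathrm{sgn}(\bH_\pm^{(m)}) - \bU_{\bP_\pm}\},
\]
controlling the second summand by the third assertion of Lemma \ref{lemma:auxiliary_matrix} and the first by pairing $\|\bU_{\bA_\pm}^{(m)}\|_{2\to\infty} \lesssim \{\kappa(\bDelta_n)+\varphi(1)\}\|\bU_\bP\|_{2\to\infty}$ (second assertion of Lemma \ref{lemma:auxiliary_matrix}) with $\|\bH_\pm^{(m)} - \mathrm{sgn}(\bH_\pm^{(m)})\|_2 \lesssim 1/(n\rho_n\lambda_d(\bDelta_n)^2)$. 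Collapsing $\kappa(\bDelta_n)\{\gamma + \varphi(\gamma)\} \lesssim 1$ from Assumption \ref{assumption:spectral_norm_concentration} gives $\|\bV\|_{2\to\infty} \lesssim \chi\|\bU_\bP\|_{2\to\infty}$, and feeding this back produces the $\chi t$ entry of the max.

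Assembling the three pieces, multiplying by $\|\bS_{\bA_\pm}^{-1}\|_2$, and absorbing residual terms into the $(\kappa(\bDelta_n) + \varphi(1))/\lambda_d(\bDelta_n)^2$ floor using $\|\bU_\bP\|_{2\to\infty}\geq\sqrt{d/n}$ together with the growth $n\rho_n\lambda_d(\bDelta_n)^2\to\infty$ will yield the stated bound. The hardest step will be the $2\to\infty$ control of $\bV$: simply quoting the aligned bound on $\bU_{\bA_\pm}^{(m)}\mathrm{sgn}(\bH_\pm^{(m)}) - \bU_{\bP_\pm}$ is insufficient because $\bH_\pm^{(m)}$ is only approximately orthogonal, and the key structural gain is that $\|\bH_\pm^{(m)} - \mathrm{sgn}(\bH_\pm^{(m)})\|_2$ is of order $\|\sin\Theta\|_2^2$ rather than $\|\sin\Theta\|_2$---precisely the second-order improvement that lets the entrywise analysis beat the two-to-infinity bounds of \cite{10.1214/19-AOS1854,cape2019signal,lei2019unified} in the sparse regime $n\rho_n\asymp \log n$.
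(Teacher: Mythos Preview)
Your plan for the second and third summands of \eqref{eqn:crucial_inequality} matches the paper's proof and is correct. The gap is in the first summand. You propose to control $\|\be_m\transpose\bE\bU_{\bA_\pm}\|_2$ by splitting $\bU_{\bA_\pm}=\bU_{\bP_\pm}\bW^*_\pm+(\bU_{\bA_\pm}-\bU_{\bP_\pm}\bW^*_\pm)$ and handling the residual via $\|\bE\|_2\cdot\|\sin\Theta(\bU_{\bA_\pm},\bU_{\bP_\pm})\|_2\lesssim 1/\lambda_d(\bDelta_n)$. After multiplying by $\|\mathrm{sgn}(\bH_\pm)-\bH_\pm\|_2\,\|\bS_{\bA_\pm}^{-1}\|_2\lesssim 1/\{(n\rho_n)^2\lambda_d(\bDelta_n)^3\}$ this leaves a contribution of order $1/\{(n\rho_n)^2\lambda_d(\bDelta_n)^4\}$. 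But every entry of the claimed $\max$ carries the prefactor $\|\bU_\bP\|_{2\to\infty}\asymp n^{-1/2}$; absorbing your term into, say, the floor $(\kappa(\bDelta_n)+\varphi(1))\|\bU_\bP\|_{2\to\infty}/\{n\rho_n\lambda_d(\bDelta_n)^3\}$ would require $n\rho_n\lambda_d(\bDelta_n)\gtrsim n^{1/2}$, which fails precisely in the sparse regime $n\rho_n\asymp\log n$ that this lemma targets. Neither $\|\bU_\bP\|_{2\to\infty}\geq\sqrt{d/n}$ nor $n\rho_n\lambda_d(\bDelta_n)^2\to\infty$ from Assumption~\ref{assumption:spectral_norm_concentration} closes this.

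The paper's remedy is to reuse the leave-one-out construction here as well: write $\bU_{\bA_\pm}=(\bU_{\bA_\pm}\bH_\pm)\bH_\pm^{-1}$ with $\|\bH_\pm^{-1}\|_2\leq 2$, insert $\bU_{\bA_\pm}^{(m)}\bH_\pm^{(m)}$, and bound $\|\be_m\transpose\bE\bU_{\bA_\pm}^{(m)}\|_2$ by conditional Bernstein (using independence of $\be_m\transpose\bE$ and $\bU_{\bA_\pm}^{(m)}$) paired with $\|\bU_{\bA_\pm}^{(m)}\|_{2\to\infty}\lesssim\{\kappa(\bDelta_n)+\varphi(1)\}\|\bU_\bP\|_{2\to\infty}$ from Lemma~\ref{lemma:auxiliary_matrix}. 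This yields $\|\be_m\transpose\bE\bU_{\bA_\pm}\|_2\lesssim\{\kappa(\bDelta_n)+\varphi(1)\}(n\rho_n t)^{1/2}\|\bU_\bP\|_{2\to\infty}$, crucially proportional to $\|\bU_\bP\|_{2\to\infty}$; the paper then freezes $t\asymp\log n$ for this piece so its failure probability folds into the $n^{-\zeta\wedge\xi}$ budget. A cheaper repair of your own route is also available: your residual in the first summand is literally $\|\be_m\transpose\bE(\bU_{\bA_\pm}-\bU_{\bP_\pm}\bW^*_\pm)\|_2\cdot\|\mathrm{sgn}(\bH_\pm)-\bH_\pm\|_2$, i.e., the target quantity times an $o(1)$ factor, so for large $n$ it can be absorbed into the left-hand side rather than bounded crudely.
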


\begin{proof}[\bf Proof of Lemma \ref{lemma:higher_order_eigenvector_deviation}]
By inequality \eqref{eqn:crucial_inequality} in Section \ref{sub:proof_sketch_for_theorem_thm:eigenvector_deviation}, we immediately obtain
\begin{align*}
\|\be_m\transpose{}\bE(\bU_{\bA_\pm} - \bU_{\bP_\pm}\bW^*_\pm)\bS_{\bA_\pm}^{-1}\|_2
&\leq \|\be_m\transpose{}\bE\bU_{\bA_\pm}\{\mathrm{sgn}(\bH_\pm) - \bH_\pm\}\|_2\|\bS_{\bA_\pm}^{-1}\|_2\\
&\quad + \|\be_m\transpose{}\bE(\bU_{\bA_\pm}\bH_\pm - {\bU}_{\bA_\pm}^{(m)}{\bH}_\pm^{(m)})\|_2\|\bS_{\bA_\pm}^{-1}\|_2\\
&\quad + \|\be_m\transpose{}\bE({\bU}_{\bA_\pm}^{(m)}{\bH}_\pm^{(m)} - \bU_{\bP_\pm})\|_2\|\bS_{\bA_\pm}^{-1}\|_2.
\end{align*}
We first focus on $\|\be_m\transpose\bE\bU_{\bA_\pm}\|_2$. By Assumption \ref{assumption:spectral_norm_concentration} and Lemma 2 in \cite{10.1214/19-AOS1854}, we know that $\|\bH_\pm^{-1}\|_2\leq 2$ with probability at least $1 - c_0n^{-\zeta}$ for sufficiently large $n$. Then by Lemma \ref{lemma:auxiliary_matrix}, Lemma \ref{lemma:Bernstein_concentration_EW}, Lemma \ref{lemma:Bernstein_concentration_EW_subGaussian}, and the fact that $\be_m\transpose\bE$ and $\bU_{\bA_\pm}^{(m)}$ are independent,  we have, for sufficiently large $n$, 
\begin{align*}
\|\be_m\transpose\bE\bU_{\bA_\pm}\|_2
&\leq \|\bH^{-1}_\pm\|_2\{\|\be_m\transpose\bE(\bU_{\bA_\pm}\bH_\pm - \bU_{\bA_\pm}^{(m)}\bH_\pm^{(m)})\|_2 + \|\be_m\transpose\bE\bU_{\bA_\pm}^{(m)}\|_2\}\\
&\lesssim_\sigma \|\bE\|_2\|\bU_{\bA_\pm}\bH_\pm - \bU_{\bA_\pm}^{(m)}\bH_\pm^{(m)}\|_2 + (n\rho_nt)^{1/2}\|\bU_{\bA_\pm}^{(m)}\|_{2\to\infty}\\
&\lesssim_\sigma \frac{\{\kappa(\bDelta_n) + \varphi(1)\}t^{1/2}}{\lambda_d(\bDelta_n)}\|\bU_\bP\|_{2\to\infty} + \{\kappa(\bDelta_n) + \varphi(1)\}(n\rho_nt)^{1/2}\|\bU_\bP\|_{2\to\infty}\\
&\lesssim_\sigma \{\kappa(\bDelta_n) + \varphi(1)\}(n\rho_nt)^{1/2}\|\bU_\bP\|_{2\to\infty}
\end{align*}
with probability at least $1 - c_0n^{-\zeta\wedge\xi} - c_0de^{-t}$, where the last inequality is due to the fact that $\lambda_d(\bDelta_n)^{-1}\lesssim (n\rho_n)^{1/2}$. Letting $t = \{1 + (\xi\wedge\zeta)\}\log n$, we see that 
\[
\|\be_m\transpose\bE\bU_{\bA_\pm}\|_2\lesssim_\sigma \{\kappa(\bDelta_n) + \varphi(1)\}(n\rho_n\log n)^{1/2}\|\bU_{\bP}\|_{2\to\infty}
\]
with probability at least $1 - c_0n^{-\zeta\wedge\xi}$ for large $n$. We now work on the first term, By Assumption \ref{assumption:spectral_norm_concentration}, Result \ref{result:S_A_concentration}, Lemma 6.7 in \cite{cape2017two}, and Davis-Kahan theorem, for large $n$,
\begin{align*}
\|\be_m\transpose{}\bE\bU_{\bA_\pm}\{\mathrm{sgn}(\bH_\pm) - \bH_\pm\}\|_2\|\bS_{\bA_\pm}^{-1}\|_2
&\leq \|\be_m\transpose\bE\bU_{\bA_\pm}\|_2\|\sin\Theta(\bU_{\bA_\pm},\bU_{\bP_\pm})\|_2^2\|\bS_{\bA_\pm}^{-1}\|_2\\
&\leq \|\be_m\transpose\bE\bU_{\bA_\pm}\|_2\frac{4\|\bE\|_2^2}{(n\rho_n)^2\lambda_d(\bDelta_n)^2}\|\bS_{\bA_\pm}^{-1}\|_2\\
&\lesssim_\sigma (n\rho_n)\{\kappa(\bDelta_n) + \varphi(1)\}\|\bU_\bP\|_{2\to\infty}\\
&\quad\times\frac{n\rho_n}{(n\rho_n)^2\lambda_d(\bDelta_n)^2}\times\frac{1}{n\rho_n\lambda_d(\bDelta_n)}\\
& = \frac{\{\kappa(\bDelta_n) + \varphi(1)\}}{n\rho_n\lambda_d(\bDelta_n)^3}\|\bU_\bP\|_{2\to\infty}
\end{align*}
with probability at least $1 - c_0n^{-\zeta\wedge\xi}$. For the second term, for all $t\geq 1$ and $t\lesssim n\rho_n$, by Assumption \ref{assumption:spectral_norm_concentration}, Result \ref{result:S_A_concentration}, and Lemma \ref{lemma:auxiliary_matrix}, for large $n$,
\begin{align*}
\|\be_m\transpose{}\bE(\bU_{\bA_\pm}\bH_\pm - {\bU}_{\bA_\pm}^{(m)}{\bH}_\pm^{(m)})\bS_{\bA_\pm}^{-1}\|_2
&\leq \|\bE\|_{2\to\infty}\|\bU_{\bA_\pm}\bH_\pm - {\bU}_{\bA_\pm}^{(m)}{\bH}_\pm^{(m)}\|_2\|\bS_{\bA_\pm}^{-1}\|_2\\
&\leq \|\bE\|_2\|\bU_{\bA_\pm}\bH_\pm - {\bU}_{\bA_\pm}^{(m)}{\bH}_\pm^{(m)}\|_2\|\bS_\bA^{-1}\|_2\\
&\lesssim (n\rho_n)^{1/2}\times\frac{\{\kappa(\bDelta_n) + \varphi(1)\}t^{1/2}\|\bU_\bP\|_{2\to\infty}}{(n\rho_n)^{1/2}\lambda_d(\bDelta_n)}\times\frac{1}{n\rho_n\lambda_d(\bDelta_n)}\\
& = \frac{\{\kappa(\bDelta_n) + \varphi(1)\}t^{1/2}}{n\rho_n\lambda_d(\bDelta_n)^2}\|\bU_\bP\|_{2\to\infty}
\end{align*}
with probability at least $1 - c_0n^{-\zeta\wedge\xi} - c_0de^{-t}$.

\vspace*{2ex}\noindent
We now focus on the third term. Denote $\bV_\pm^{(m)} = \bU_{\bA_\pm}^{(m)}\bH_\pm^{(m)} - \bU_{\bP_\pm}$. Let $t \geq 1$ and $t\lesssim n\rho_n$. Consider the following events:
\begin{align*}
&\calE_1 = \left\{\bA:\|\be_m\transpose{}\bE\bV^{(m)}_\pm\|_2\leq C_0t\|\bV^{(m)}_\pm\|_{2\to\infty} + C_0\sigma(\rho_nt)^{1/2}\|\bV^{(m)}_\pm\|_2\right\},\\
&\calE_2 = \left\{\bA:\|\bV_\pm^{(m)}\|_2\leq \frac{C_0}{(n\rho_n)^{1/2}\lambda_d(\bDelta_n)},\|\bU_{\bA_\pm}^{(m)}\|_{2\to\infty}\leq C_0\{\kappa(\bDelta_n) + \varphi(1)\}\|\bU_\bP\|_{2\to\infty}\right\},\\
&\calE_3 = \left\{\bA:\|\bU_{\bA_\pm}^{(m)}\mathrm{sgn}(\bH_\pm^{(m)}) - \bU_{\bP_\pm}\|_{2\to\infty}\leq C_0\chi\|\bU_\bP\|_{2\to\infty}\right\},
\end{align*}
where 
\[
\chi:= 
\varphi(1) + \frac{\|\bX\|_{2\to\infty}^2\vee 1}{\lambda_d(\bDelta_n)}
 .
\] 
and $C_0 > 0$ is a constant that will be determined later. 
Note that 
\begin{align*}
\|\bV_\pm^{(m)}\|_{2\to\infty}&\leq \|\bU_{\bA_\pm}^{(m)}\|_{2\to\infty}\|\mathrm{sgn}(\bH_\pm^{(m)}) - \bH_\pm^{(m)}\|_2 + \|\bU_{\bA_\pm}^{(m)}\mathrm{sgn}(\bH_\pm^{(m)}) - \bU_{\bP_\pm}\|_{2\to\infty}\\
&\leq 2\|\bU_{\bA_\pm}^{(m)}\|_{2\to\infty}  + \|\bU_{\bA_\pm}^{(m)}\mathrm{sgn}(\bH_\pm^{(m)}) - \bU_{\bP_\pm}\|_{2\to\infty}.
\end{align*}
By Lemma \ref{lemma:auxiliary_matrix}, we can select the constant $C_0 > 0$, such that $\prob(\calE_2)\geq 1 - c_0n^{-\zeta\wedge\xi}$, $\prob(\calE_3)\geq 1 - c_0n^{-\zeta\wedge\xi}$ for sufficiently large $n$. For event $\calE_1$, we use the conditional distribution and the fact that $\be_m\transpose{}\bE$ is independent of $\bV_\pm^{(m)}$, together with Lemma \ref{lemma:Bernstein_concentration_EW} and Lemma \ref{lemma:Bernstein_concentration_EW_subGaussian}, to obtain
\begin{align*}
\prob(\calE_1)
& = \sum_{\bA^{(m)}}\prob(\calE_1\mid\bA^{(m)})p(\bA^{(m)})\geq \sum_{\bA^{(m)}}(1 - c_0de^{-t})p(\bA^{(m)})
 = 1 - c_0de^{-t}. 
\end{align*}
Then over the event $\calE_1\cap \calE_2\cap\calE_3$, we apply the fact that $\|\bU_\bP\|_{2\to\infty}\geq\sqrt{d/n}$ to obtain
\begin{align*}
\|\be_m\transpose{}\bE\bV^{(m)}_\pm\|_2
& \leq C_0t\|\bV^{(m)}_\pm\|_{2\to\infty} + C_0\sigma(\rho_nt)^{1/2}\|\bV^{(m)}_\pm\|_{\mathrm{F}}\\
& \leq C_0t\|\bU_{\bA_\pm}^{(m)}\|_{2\to\infty} + C_0t\|\bU_{\bA_\pm}^{(m)}\mathrm{sgn}(\bH^{(m)}_\pm) - \bU_{\bP_\pm}\|_{2\to\infty} + C_0\sigma(d\rho_nt)^{1/2}\|\bV^{(m)}_\pm\|_2\\
&\lesssim_\sigma \chi t\|\bU_\bP\|_{2\to\infty} + \frac{t^{1/2}}{\lambda_d(\bDelta_n)}\sqrt{\frac{d}{n}}\\
&\lesssim \chi t\|\bU_\bP\|_{2\to\infty} .
\end{align*}
It follows from Result \ref{result:S_A_concentration} that 
  \begin{align*}
  \|\be_m\transpose{}\bE({\bU}_{\bA_\pm}^{(m)}{\bH}_\pm^{(m)} - \bU_{\bP_\pm})\|_2\|\bS_{\bA_\pm}^{-1}\|_2
  &\lesssim_c 
  \frac{  \chi t\|\bU_\bP\|_{2\to\infty}}{ n\rho_n\lambda_d(\bDelta_n)}
   \end{align*}
with probability at least $1 - c_0n^{-\zeta\wedge\xi} - c_0de^{-t}$ for sufficiently large $n$. 
The proof is completed by combining the above concentration bounds.
\end{proof}

We are now in a position to prove Theorems \ref{thm:eigenvector_deviation}. 
\begin{proof}[\bf Proof of Theorem \ref{thm:eigenvector_deviation}]
The proof follows from Lemmas \ref{lemma:higher_order_eigenvector_deviation} and \ref{lemma:higher_order_remainder}. Following the decomposition of $\bU_\bA - \bA\bU_\bP\bS_\bP^{-1}\bW^*$ in Section \ref{sub:proof_sketch_for_theorem_thm:eigenvector_deviation}, we have
\begin{align*}
\|\be_m\transpose(\bU_{\bA_\pm} - \bA\bU_{\bP_\pm}\bS_{\bP_\pm}^{-1}\bW^*_\pm)\|
& \leq \|\be_m\transpose\bE(\bU_{\bA_\pm} - \bU_{\bP_\pm}\bW^*_\pm)\bS_{\bA_\pm}^{-1}\|_{2\to\infty}\\
&\quad + \|\bU_{\bP_\pm}\bS_{\bP_\pm}(\bU_{\bP_\pm}\transpose\bU_{\bA_\pm}\bS_{\bA_\pm}^{-1} - \bS_{\bP_\pm}^{-1}\bU_{\bP_\pm}\transpose\bU_{\bA_\pm})\|_{2\to\infty}\\
&\quad + \|\bU_{\bP_\pm}(\bU_{\bP_\pm}\transpose\bU_{\bA_\pm} - \bW^*_\pm)\|_{2\to\infty}\\
&\quad + \|\be_m\transpose\bE\bU_{\bP_\pm}(\bW^*_\pm\bS_{\bA_\pm}^{-1} - \bS_{\bP_\pm}^{-1}\bW^*_\pm)\|_2\\
&\quad + \|\bU_{\bP_\mp}\bS_{\bP_\mp}\bU_{\bP_\mp}\transpose\bU_{\bA_\pm}\bS_{\bA_\pm}^{-1}\|_{2\to\infty}.
\end{align*}
By Lemma \ref{lemma:higher_order_eigenvector_deviation}, the first term on the right-hand side above satisfies
\begin{align*}
\|\be_m\transpose{}\bE(\bU_{\bA_\pm} - \bU_{\bP_\pm}\bW^*_\pm)\bS_{\bA_\pm}^{-1}\|_2&\lesssim_\sigma \frac{\|\bU_\bP\|_{2\to\infty}}{n\rho_n \lambda_d(\bDelta_n)}\\
&\quad\times\max\left\{\frac{\{\kappa(\bDelta_n) + \varphi(1)\}t^{1/2}}{\lambda_d(\bDelta_n)}, \frac{\{\kappa(\bDelta_n) + \varphi(1)\}}{\lambda_d(\bDelta_n)^2},
\chi t
\right\}
\end{align*}
with probability at least $1 - c_0n^{-\zeta\wedge\xi} - c_0de^{-t}$ for sufficiently large $n$. We also know from Lemma \ref{lemma:higher_order_remainder} that the following events hold with probability at least $1 - c_0n^{-\zeta\wedge\xi} - c_0de^{-t}$ for $t\geq 1$, $t\lesssim n\rho_n$:
\begin{align*}
&\|\bU_{\bP_\pm}\bS_{\bP_\pm}(\bU_{\bP_\pm}\transpose{}\bU_{\bA_\pm}\bS_{\bA_\pm}^{-1} - \bS_{\bP_\pm}^{-1}\bU_{\bP_\pm}\transpose\bU_{\bA_\pm})\|_{2\to\infty} 
\lesssim \frac{\|\bU_\bP\|_{2\to\infty}}{n\rho_n\lambda_d(\bDelta_n)}
\max\left\{
t^{1/2}, d^{1/2}, \frac{1}{\lambda_d(\bDelta_n)}
\right\},\\
&\|\be_m\transpose\bE\bU_{\bP_\pm}(\bW_\pm^*\bS_{\bA_\pm}^{-1} - \bS_{\bP_\pm}^{-1}\bW^*_\pm)\|_{2} 
\lesssim_\sigma \frac{\|\bU_\bP\|_{2\to\infty}}{n\rho_n\lambda_d^2(\bDelta_n)}\max\left\{
  \frac{\kappa(\bDelta_n)}{\lambda_d(\bDelta_n)}
  ,t^{1/2},d^{1/2}
  \right\},\\
&\|\bU_{\bP_\pm}(\bU_{\bP_\pm}\transpose{}\bU_{\bA_\pm} - \bW^*_\pm)\|_{2\to\infty} 
\lesssim \frac{\|\bU_\bP\|_{2\to\infty}}{n \rho_n\lambda_d(\bDelta_n)^{2}},\\
&\|\bU_{\bP_\mp}\bS_{\bP_\mp}\bU_{\bP_\mp}\transpose\bU_{\bA_\pm}\bS_{\bA_\pm}^{-1}\|_{2\to\infty}
\lesssim 
\frac{d^{1/2}\|\bU_\bP\|_{2\to\infty}}{n\rho_n\lambda_d(\bDelta_n)}\max\left\{\frac{1}{\lambda_d(\bDelta_n)},t^{1/2},d^{1/2}\right\}
\end{align*}
where we have used the fact that $t^{1/2}/(n\rho_n)^{1/2}\lesssim 1$ and $t/(n\rho_n)^{1/2}\lesssim t^{1/2}$. 
Then Lemmas \ref{lemma:higher_order_eigenvector_deviation} and \ref{lemma:higher_order_remainder} immediately imply that
\begin{align*}
&\|\be_m\transpose{}(\bU_{\bA_\pm} - \bA\bU_{\bP_\pm}\bS_{\bP_\pm}^{-1}\bW^*_\pm)\|_2\\
&\quad\lesssim_\sigma
\frac{\|\bU_\bP\|_{2\to\infty}}{n\rho_n \lambda_d(\bDelta_n)}\max\left\{\frac{\{\kappa(\bDelta_n) + \varphi(1)\}t^{1/2}}{\lambda_d(\bDelta_n)}, \frac{\{\kappa(\bDelta_n) + \varphi(1)\}}{\lambda_d(\bDelta_n)^2},
\chi t
\right\}
\\
&\quad\quad + \frac{\|\bU_\bP\|_{2\to\infty}}{n\rho_n\lambda_d(\bDelta_n)}\max\left\{t^{1/2},d^{1/2},\frac{1}{\lambda_d(\bDelta_n)}\right\}
 + \frac{\|\bU_\bP\|_{2\to\infty}}{n\rho_n\lambda_d(\bDelta_n)^{2}}\\
&\quad\quad + \frac{\|\bU_\bP\|_{2\to\infty}}{n \rho_n\lambda_d(\bDelta_n)^{2}}\max\left\{
  \frac{\kappa(\bDelta_n)}{\lambda_d(\bDelta_n)}, t^{1/2}, d^{1/2}\right\}\\
&\quad\quad +  
\frac{d^{1/2}\|\bU_\bP\|_{2\to\infty}}{n\rho_n\lambda_d(\bDelta_n)}\max\left\{\frac{1}{\lambda_d(\bDelta_n)},t^{1/2},d^{1/2}\right\}
\\
&\quad\leq
\frac{\chi\|\bU_\bP\|_{2\to\infty}}{n\rho_n \lambda_d(\bDelta_n)}\max\left\{
  \frac{t^{1/2}}{\lambda_d(\bDelta_n)}, \frac{1}{\lambda_d(\bDelta_n)^2}, t\right\}
\end{align*}
with probability at least $1 - c_0n^{-\zeta\wedge\xi} -c_0de^{-t}$ for sufficiently large $n$. Here, we have used the fact that $d^{1/2}\leq d\leq\|\bX\|_{2\to\infty}^2/\lambda_d(\bDelta_n)$ from Result \ref{result:UP_delocalization} and
\[
\kappa(\bDelta_n) = \frac{\lambda_1(\bDelta_n)}{\lambda_d(\bDelta_n)}\leq \frac{\|\bX\|_{\mathrm{F}}^2}{n\lambda_d(\bDelta_n)}\leq \frac{\|\bX\|_{2\to\infty}^2}{\lambda_d(\bDelta_n)}.
\]
This completes the first assertion. For the entrywise perturbation bound for the scaled eigenvectors, we recall the decomposition \eqref{eqn:keystone_remainder_term}
\begin{align*}
\widetilde\bX_\pm\bW_\pm - \frac{\pm\bA\bX_\pm(\bX_\pm\transpose{}\bX_\pm)^{-1}}{\rho_n^{1/2}}
& = \bU_{\bA_\pm}(\bW_\pm^*|\bS_{\bA_\pm}|^{1/2} - |\bS_{\bP_\pm}|^{1/2}\bW_\pm^*)\transpose\bW_{\bX_\pm}\\
&\quad + (\bU_{\bA_\pm} - \bA\bU_{\bP_\pm}\bS_{\bP_\pm}^{-1}\bW_\pm^*)(\bW_\pm^*)\transpose{}|\bS_{\bP_\pm}|^{1/2}\bW_{\bX_\pm}.
\end{align*}
Then for each fixed row index $m\in [n]$, for all $t\geq 1$ and $t\lesssim n\rho_n$, we apply the first assertion above, Lemma \ref{lemma:U_A_two_to_infinity_norm}, and Lemma \ref{lemma:WS_interchange_bound} to conclude that
\begin{align*}
&\|\be_m\transpose(\widetilde\bX_\pm\bW_\pm - (\pm)\rho_n^{-1/2}\bA\bX_\pm(\bX_\pm\transpose{}\bX_\pm)^{-1})\|_2\\
&\quad\leq \|\bU_{\bA_\pm}\|_{2\to\infty}\|\bW_\pm^*|\bS_{\bA_\pm}|^{1/2} - |\bS_{\bP_\pm}|^{1/2}\bW_\pm^*\|_2
 + \|\be_m\transpose(\bU_{\bA_\pm} - \bA\bU_{\bP_\pm}\bS_{\bP_\pm}^{-1}\bW^*_\pm)\|_2\|\bS_{\bP_\pm}\|_2^{1/2}\\
&\quad\lesssim \frac{\{\kappa(\bDelta_n) + \varphi(1)\}\|\bX\|_{2\to\infty}\|\bU_\bP\|_{2\to\infty}}{(n\rho_n)^{1/2}\lambda_d(\bDelta_n)}
\max\left\{\frac{\kappa(\bDelta_n)}{\lambda_d(\bDelta_n)}, t^{1/2}, d^{1/2}\right\}\\
&\quad\quad + 
\frac{\chi\|\bX\|_{2\to\infty}\|\bU_\bP\|_{2\to\infty}}{(n\rho_n)^{1/2}\lambda_d(\bDelta_n)}\max\left\{\frac{t^{1/2}}{\lambda_d(\bDelta_n)}, \frac{1}{\lambda_d(\bDelta_n)^2}, t\right\}
\\&\quad\lesssim
\frac{\chi\|\bX\|_{2\to\infty}\|\bU_\bP\|_{2\to\infty}}{(n\rho_n)^{1/2}\lambda_d(\bDelta_n)}\max\left\{\frac{(\|\bX\|_{2\to\infty}^2\vee 1)t^{1/2}}{\lambda_d(\bDelta_n)^2}, \frac{\kappa(\bDelta_n)}{\lambda_d(\bDelta_n)^2}, t\right\}
\end{align*}
with probability at least $1 - c_0n^{-\zeta\wedge\xi} - c_0de^{-t}$ for sufficiently large $n$. This completes the proof of the second assertion. The third and fourth assertions regarding the concentrations of 
\[
\|\bU_{\bA_\pm} - \bA\bU_{\bP_\pm}\bS_{\bP_\pm}^{-1}\bW^*_\pm\|_{2\to\infty}
\quad\mbox{and}\quad
\|\widetilde\bX_\pm\bW_\pm - (\pm)\rho_n^{-1/2}\bA\bX_\pm(\bX_\pm\transpose{}\bX_\pm)^{-1}\|_{2\to\infty}
\]
are immediate from the first two assertions and a union bound over $m\in [n]$ because $\zeta\wedge\xi$ is strictly greater than $1$. 
\end{proof}

\subsection{Proof of Theorem \ref{thm:ASE_Berry_Esseen}} 
\label{sub:proof_of_theorem_thm:ase_berry_esseen}

By Theorem \ref{thm:eigenvector_deviation} and decompositions \eqref{eqn:keystone_decomposition}, \eqref{eqn:keystone_decomposition_unscaled_eigenvector} in the manuscript, for each fixed $i\in [n]$, we have
\begin{align*}
\sqrt{n}(\bW\transpose(\widetilde\bx_i)_\pm  - \rho_n^{1/2}(\bx_{i})_\pm)& = \frac{\pm1}{\sqrt{n\rho_n}}\sum_{j = 1}^n(A_{ij} - \expect A_{ij})\bDelta_{n_\pm}^{-1}(\bx_{j})_\pm + \sqrt{n}\bR_{\bX_\pm}\transpose{}\be_i,\\
n\rho_n^{1/2}\bW_{\bX_\pm}\transpose(\bW_\pm^*[\bU_{\bA_\pm}]_{i*}  - [\bU_{\bP_\pm}]_{i*})& = \frac{\pm1}{\sqrt{n\rho_n}}\sum_{j = 1}^n(A_{ij} - \expect A_{ij})\bDelta_{n\pm}^{-3/2}(\bx_{j})_\pm\\
&\quad + n\rho_n^{1/2}\{\bR_{\bU_\pm}(\bW_\pm^*)\transpose\bW_{\bX_\pm}\}\transpose{}\be_i,
\end{align*}
where $\bR_{\bX_\pm} = \widetilde\bX_\pm\bW_\pm - (\pm)\rho_n^{-1/2}\bA\bX_\pm(\bX_\pm\transpose{}\bX_\pm)^{-1}$ and $\bR_{\bU_\pm} = \bU_{\bA_\pm} - \bA\bU_{\bP_\pm}\bS_{\bP_\pm}^{-1}\bW_\pm^*$. Equivalently, we have
\begin{align*}
\sqrt{n}\bSigma_{ni\pm}^{-1/2}(\bW\transpose(\widetilde\bx_i)_\pm - \rho_n^{1/2}(\bx_i)_\pm)
& = \frac{\pm1}{\sqrt{n\rho_n}}\sum_{j = 1}^n({A}_{ij} - \expect A_{ij})\bSigma_{ni\pm}^{-1/2}\bDelta_{n\pm}^{-1}(\bx_{j})_\pm\\
&\quad + \sqrt{n}\bSigma_{ni\pm}^{-1/2}\bR_{\bX_\pm}\transpose{}\be_i,\\
n\rho_n^{1/2}\bGamma_{ni\pm}^{-1/2}\bW_{\bX_\pm}\transpose(\bW_\pm^*[\bU_{\bA_\pm}]_{i*}  - [\bU_{\bP_\pm}]_{i*})& = \frac{1}{\sqrt{n\rho_n}}\sum_{j = 1}^n(A_{ij} - \expect A_{ij})\bGamma_{ni\pm}^{-1/2}\bDelta_{n\pm}^{-3/2}(\bx_{j})_\pm\\
&\quad + n\rho_n^{1/2}\bGamma_{ni\pm}^{-1/2}\{\bR_{\bU_\pm}(\bW_\pm^*)\transpose\bW_{\bX_\pm}\}\transpose{}\be_i.
\end{align*}
To apply Theorem \ref{thm:Berry_Esseen_Multivariate}, we take
\begin{align*}
&\bxi_j = \frac{\pm1}{\sqrt{n\rho_n}}({A}_{ij} - \expect A_{ij})\bSigma_{ni\pm}^{-1/2}\bDelta_{n\pm}^{-1}(\bx_{j})_\pm,\quad
\bD = \sqrt{n}\bSigma_{ni\pm}^{-1/2}\bR_{\bX_\pm}\transpose{}\be_i,\\
&\Delta^{(j)} = \Delta = C
\frac{\chi\|\bSigma_{ni\pm}^{-1/2}\|_2\|\bX\|_{2\to\infty}^2}{(n\rho_n)^{1/2}\lambda_d(\bDelta_{n})^{3/2}}
\max\left\{\frac{(\|\bX\|_{2\to\infty}^2\vee1)(\log n\rho_n)^{1/2}}{\lambda_d(\bDelta_n)^2}, \frac{\kappa(\bDelta_n)}{\lambda_d(\bDelta_n)^2}, {\log n\rho_n} \right\},\\
& \calO = \{\bA: \Delta > \|\bD\|_2\},
\end{align*}
and
\begin{align*}
&\bxi_j' = \frac{\pm}{\sqrt{n\rho_n}}({A}_{ij} - \expect A_{ij})\bGamma_{ni\pm}^{-1/2}\bDelta_{n\pm}^{-3/2}(\bx_{j})_\pm,
\quad
\bD' = \sqrt{n}\bGamma_{ni}^{-1/2}\{\bR_{\bU_\pm}(\bW_\pm^*)\transpose\bW_{\bX_\pm}\}\transpose{}\be_i,\\
&\Delta^{(j)'} = \Delta' = C
\frac{\chi\|\bGamma_{ni\pm}^{-1/2}\|_2\|\bX\|_{2\to\infty}}{(n\rho_n)^{1/2}\lambda_d(\bDelta_n)^{3/2}}\max\left\{\frac{(\log n\rho_n)^{1/2}}{\lambda_d(\bDelta_n)}, \frac{1}{\lambda_d(\bDelta_n)^2}, {\log n\rho_n} \right\},\\
& \calO' = \{\bA: \Delta' > \|\bD'\|_2\}.
\end{align*}
Here $C > 0$ is an absolute constant. In particular, we can select $C > 0$, which may depend on $\sigma$, such that $\prob(\calO^c)\lesssim d/(n\rho_n)$ and  $\prob\{(\calO')^c\}\lesssim d/(n\rho_n)$
for sufficiently large $n$ according to Theorem \ref{thm:eigenvector_deviation}. Note that Assumption \ref{assumption:distribution} implies that $\expect[\bE_{ij}]^2\leq \sigma^2\rho_n$ for all $i,j\in [n]$, so that
\begin{align*}
\bSigma_{ni\pm} &= \bDelta_{n\pm}^{-1}\left\{\frac{1}{n\rho_n}\sum_{j = 1}^n\expect[\bE_{ij}]^2(\bx_j)_\pm(\bx_j)_\pm\transpose\right\}\bDelta_{n\pm}^{-1}\preceq \sigma^2\bDelta_{n\pm}^{-1}\Longrightarrow\|\bSigma_{ni\pm}^{-1/2}\|_2\geq \frac{1}{\sigma}\lambda_1(\bDelta_{n\pm})^{1/2},\\
\bGamma_{ni\pm} &= \bDelta_{n\pm}^{-3/2}\left\{\frac{1}{n\rho_n}\sum_{j = 1}^n\expect[\bE_{ij}]^2(\bx_j)_\pm(\bx_j)_\pm\transpose\right\}\bDelta_{n\pm}^{-3/2}\preceq \sigma^2\bDelta_{n\pm}^{-2}\Longrightarrow\|\bGamma_{ni\pm}^{-1/2}\|_2\geq \frac{1}{\sigma}\lambda_1(\bDelta_{n\pm}).
\end{align*}
Therefore, by Result \ref{result:UP_delocalization}, we have
\begin{align*}
\prob(\calO)&\lesssim \frac{d}{n\rho_n}\leq \frac{\|\bX\|_{2\to\infty}^2}{n\rho_n\lambda_d(\bDelta_n)} \leq \frac{\|\bX\|_{2\to\infty}^2\lambda_1(\bDelta_{n\pm})^{1/2}}{n\rho_n\lambda_d(\bDelta_n)^{3/2}}\lesssim_\sigma \frac{\chi d^{1/2}\|\bSigma_{ni}^{-1/2}\|_2\|\bX\|_{2\to\infty}^2}{n\rho_n\lambda_d(\bDelta_n)^{3/2}}\lesssim_\sigma d^{1/2}\Delta,\\
\prob(\calO')&\lesssim \frac{d}{n\rho_n}\leq \frac{d^{1/2}\|\bX\|_{2\to\infty}}{n\rho_n\lambda_d(\bDelta_n)^{1/2}} \leq \frac{d^{1/2}\|\bX\|_{2\to\infty}\lambda_1(\bDelta_{n\pm})}{n\rho_n\lambda_d(\bDelta_n)^{3/2}}\lesssim_\sigma \frac{\chi d^{1/2}\|\bGamma_{ni}^{-1/2}\|_2\|\bX\|_{2\to\infty}}{n\rho_n\lambda_d(\bDelta_n)^{3/2}}\lesssim_\sigma d^{1/2}\Delta'.
\end{align*}
Note that $|\Delta - \Delta^{(j)}| = 0$, $|\Delta' - \Delta^{(j)'}| = 0$, and $\Delta^{(j)}$'s,  $\Delta^{(j)'}$'s are constant random variables so that $\Delta^{(j)}$ and $\bxi_j$ are independent, and $\Delta^{(j)'}$ and $\bxi_j'$ are independent as well. 
Furthermore, $\expect(\bxi_j) = \expect(\bxi_j') = 0$ and by the definition of $\bSigma_n(\bx_{i})$, $\bGamma_n(\bx_{i})$, 
\begin{align*}
\sum_{j = 1}^n\expect(\bxi_j\bxi_j\transpose{})
& = \frac{1}{n\rho_n}\sum_{j = 1}^n\expect[\bE_{ij}]^2\bSigma_{ni\pm}^{-1/2}\bDelta_{n\pm}^{-1}(\bx_{j})_\pm(\bx_{j})_\pm\transpose{}\bDelta_{n\pm}^{-1}\bSigma_{ni\pm}^{-1/2}\\
& = \bSigma_{ni\pm}^{-1/2}\bDelta_{n\pm}^{-1}
\left\{\frac{1}{n\rho_n}\sum_{j = 1}^n\expect[\bE_{ij}]^2(\bx_{j})_\pm(\bx_{j})_\pm\transpose{}\right\}
\bDelta_{n\pm}^{-1}\bSigma_{ni\pm}^{-1/2}
  = \eye_d,\\
\sum_{j = 1}^n\expect\{(\bxi_j')(\bxi_j')\transpose{}\}
& = \frac{1}{n\rho_n}\sum_{j = 1}^n\expect[\bE]_{ij}^2\bGamma_{ni\pm}^{-1/2}\bDelta_{n\pm}^{-3/2}(\bx_{j})_\pm(\bx_{j})_\pm\transpose{}\bDelta_{n\pm}^{-3/2}\bGamma_{n\pm}^{-1/2}\\
& = \bGamma_{ni\pm}^{-1/2}\bDelta_{n\pm}^{-3/2}
\left\{\frac{1}{n\rho_n}\sum_{j = 1}^n\expect[\bE]_{ij}^2(\bx_{j})_\pm(\bx_{j})_\pm\transpose{}\right\}
\bDelta_{n\pm}^{-3/2}\bGamma_{ni\pm}^{-1/2} = \eye_d. 
\end{align*}
We now proceed to $\sum_{j = 1}^n\expect(\|\bxi_j\|_2^3)$, $\sum_{j = 1}^n\expect(\|\bxi_j'\|_2^3)$, and $\expect(\|\sum_{j = 1}^n\bxi_j\|_2)$, $\expect(\|\sum_{j = 1}^n\bxi_j'\|_2)$. For the first two terms, under Assumption \ref{assumption:distribution} (i), we have
\begin{align*}
\sum_{j = 1}^n\expect(\|\bxi_j\|_2^3)
&= \frac{1}{(n\rho_n)^{3/2}}\sum_{j = 1}^n\expect|[\bE]_{ij}|^3\|\bSigma_{ni\pm}^{-1/2} \bDelta_{n\pm}^{-1}(\bx_{j})_\pm\|_2 (\bx_{j})_\pm\transpose{}\bDelta_{n\pm}^{-1}\bSigma_{ni\pm}^{-1}\bDelta_{n\pm}^{-1}(\bx_{j})_\pm \\
&\leq \frac{\|\bSigma_{n\pm}^{-1/2}\|_2\|\bX\|_{2\to\infty}}{(n\rho_n)^{3/2}\lambda_d(\bDelta_n)}\sum_{j = 1}^n\expect|[\bE]_{ij}|^3(\bx_{j})_\pm\transpose{}\bDelta_{n\pm}^{-1}\bSigma_{ni\pm}^{-1}\bDelta_{n\pm}^{-1}(\bx_{j})_\pm,\\
\sum_{j = 1}^n\expect(\|\bxi_j'\|_2^3)
&= \frac{1}{(n\rho_n)^{3/2}}\sum_{j = 1}^n\expect|[\bE]_{ij}|^3\|\bGamma_{ni\pm}^{-1/2} \bDelta_{n\pm}^{-3/2}(\bx_{j})_\pm\|_2 (\bx_{j})_\pm\transpose{}\bDelta_{n\pm}^{-3/2}\bGamma_{ni\pm}^{-1}\bDelta_{n\pm}^{-3/2}(\bx_{j})_\pm .
\\
&\leq\frac{\|\bGamma_{ni\pm}^{-1/2}\|_2\|\bX\|_{2\to\infty}}{(n\rho_n)^{3/2}\lambda_d(\bDelta_{n})^{3/2}}\sum_{j = 1}^n\expect|[\bE]_{ij}|^3(\bx_{j})_\pm\transpose{}\bDelta_{n\pm}^{-3/2}\bGamma_{ni\pm}^{-1}\bDelta_{n\pm}^{-3/2}(\bx_{j})_\pm.
\end{align*}
For $\expect(\|\sum_{j = 1}^n\bxi_j\|_2)$, we use Jensen's inequality to write
\begin{align*}
\expect\left(\left\|\sum_{j = 1}^n\bxi_j\right\|_2\right)
&\leq \left\{\expect\left(\left\|\sum_{j = 1}^n\bxi_j\right\|_2^2\right)\right\}^{1/2}
  = \left(\sum_{j = 1}^n\expect\|\bxi_j\|_2^2\right)^{1/2}
  = \left[\mathrm{tr}\left\{\sum_{j = 1}^n\expect(\bxi_j\bxi_j\transpose)\right\}\right]^{1/2} = d^{1/2}.
\end{align*}
Similarly, we also have $\expect(\|\sum_{j = 1}^n\bxi_j'\|_2)\leq d^{1/2}$. This immediately implies that
\begin{align*}
\expect\left(\left\|\sum_{j = 1}^n\bxi_j\right\|_2\Delta\right)\leq d^{1/2}\Delta\quad\mbox{and}\quad
\expect\left(\left\|\sum_{j = 1}^n\bxi_j'\right\|_2\Delta'\right)\leq d^{1/2}\Delta'.
\end{align*}
We now apply Theorem \ref{thm:Berry_Esseen_Multivariate} and the aforementioned results to conclude that
\begin{align*}
&\sup_{A\in\calA}\left|\prob\left\{\sqrt{n}\bSigma_{ni\pm}^{-1/2}(\bW\transpose(\widetilde\bx_i)_\pm - \rho_n^{1/2}(\bx_{i})_\pm)\in A\right\} - \prob\left(\bz\in A\right)\right|\\
&\quad\lesssim_\sigma 
\frac{\chi d^{1/2}\|\bSigma_{ni\pm}^{-1/2}\|_2\|\bX\|_{2\to\infty}^2}{(n\rho_n)^{1/2}\lambda_d(\bDelta_{n})^{3/2}}\max\left\{\frac{(\|\bX\|_{2\to\infty}^2\vee 1)(\log n\rho_n)^{1/2}}{\lambda_d(\bDelta_n)^2}, \frac{1}{\lambda_d(\bDelta_n)^2}, {\log n\rho_n} \right\}\\
&\quad\quad + \frac{d^{1/2}\|\bSigma_{n\pm}^{-1/2}\|_2\|\bX\|_{2\to\infty}}{(n\rho_n)^{3/2}\lambda_d(\bDelta_n)}\sum_{j = 1}^n\expect|[\bE]_{ij}|^3\mathrm{tr}\left\{\bSigma_{ni\pm}^{-1/2}\bDelta_{n\pm}^{-1}(\bx_{j})_\pm(\bx_{j})_\pm\transpose{}\bDelta_{n\pm}^{-1}\bSigma_{ni\pm}^{-1/2}\right\}
\end{align*}
and
\begin{align*}
&\sup_{A\in\calA}\left|\prob\left\{n\rho_n^{1/2}\bGamma_{ni\pm}^{-1/2}\bW_{\bX_\pm}\transpose(\bW_\pm^*[\bU_{\bA_\pm}]_{i*} - [\bU_{\bP_\pm}]_{i*})\in A\right\} - \prob\left(\bz\in A\right)\right|\\
&\quad\lesssim_\sigma 
\frac{d^{1/2}\chi\|\bGamma_{ni\pm}^{-1/2}\|_2\|\bX\|_{2\to\infty}}{(n\rho_n)^{1/2}\lambda_d(\bDelta_n)^{3/2}}\max\left\{\frac{(\log n\rho_n)^{1/2}}{\lambda_d(\bDelta_n)}, \frac{1}{\lambda_d(\bDelta_n)^2}, {\log n\rho_n} \right\}\\
&\quad\quad + \frac{d^{1/2}\|\bGamma_{ni\pm}^{-1/2}\|_2\|\bX\|_{2\to\infty}}{(n\rho_n)^{3/2}\lambda_d(\bDelta_{n})^{3/2}}\sum_{j = 1}^n\expect|[\bE]_{ij}|^3\mathrm{tr}\left\{\bGamma_{ni\pm}^{-1/2}\bDelta_{n\pm}^{-3/2}(\bx_{j})_\pm(\bx_{j})_\pm\transpose{}\bDelta_{n\pm}^{-3/2}\bGamma_{ni\pm}^{-1/2}\right\}
\end{align*}
for sufficiently large $n$. This completes the proof. 



\section{Proofs for Section \ref{sub:SNMC}}
\label{sec:proofs_for_section_SNMC}

To prove Theorem \ref{thm:SNMC}, we need to verify Assumptions \ref{assumption:incoherence}-\ref{assumption:spectral_norm_concentration}. The technical tools we applied here are based on Section 3.3 of the Supplementary Material of \cite{10.1214/19-AOS1854}. By the conditions of Theorem \ref{thm:SNMC}, Assumptions \ref{assumption:incoherence} and \ref{assumption:sparsity} hold automatically. For Assumption \ref{assumption:distribution}, we let $[\bE_1]_{ij} = \bx_i\transpose\bx_j(I_{ij} - \rho_n)$ and $[\bE_2]_{ij} = \eps_{ij}I_{ij}/\rho_n$. Clearly, $[\bE]_{ij} = [\bE_1]_{ij} + [\bE_2]_{ij}$ and $[\bE_1]_{ij}$ satisfies Assumption \ref{assumption:distribution} (i). Since 
\[
\|[\bE_2]_{ij}\|_{\psi_2}\leq \frac{1}{\rho_n}\sup_{p\geq 1}\frac{1}{\sqrt{p}}\left(\expect\left|\eps_{ij}I_{ij}\right|^p\right)^{1/p}\leq \frac{1}{\rho_n}\sup_{p\geq 1}\frac{1}{\sqrt{p}}(\expect|\eps_{ij}|^p)^{1/p}\times \sup_{p\geq 1}(\expect|I_{ij}|^p)^{1/p}\lesssim \tau\rho_n,
\]
we see that $[\bE_2]_{ij}$ satisfies Assumption \ref{assumption:distribution} (ii). We now work with Assumptions \ref{assumption:rowwise_concentration} and \ref{assumption:spectral_norm_concentration}. Define
\begin{align*}
\bar{\varphi}(x) &= \left\{
\begin{aligned}
&\frac{4\|\bX\|_{2\to\infty}^2}{\lambda_d(\bDelta_n)}\sqrt{\frac{\log n}{n\rho_n}}\max\left(x, \sqrt{\frac{\log n}{n\rho_n}}\right),&\quad&\mbox{if }x > 0,\\
&0,&\quad&\mbox{if }x = 0,
\end{aligned}
\right.\\
\widetilde{\varphi}(x) &= 
\left\{
\begin{aligned}
&\frac{4\|\bX\|_{2\to\infty}^2}{\lambda_d(\bDelta_n)}\sqrt{\frac{\log n}{n\rho_n}}\frac{\tau\rho_n}{\|\bX\|_{2\to\infty}^2},&\quad&\mbox{if }x > 0,\\
&0,&\quad&\mbox{if }x = 0,
\end{aligned}
\right.
\end{align*}
Let $\varphi(x) = \bar{\varphi}(x) + \widetilde{\varphi}(x)$. Clearly, $\varphi(0) = 0$ and $\varphi(x)/x$ is non-increasing in $(0,+\infty)$. Without loss of generality, we may assume that $\bV\neq \zero_{n\times d}$. 
By Lemma 16 in \cite{10.1214/19-AOS1854}, 
\[
\|\be_i\transpose\bE\bV\|_2\leq n\rho_n\lambda_d(\bDelta_n)\varphi\left(\frac{\|\bV\|_{\mathrm{F}}}{\sqrt{n}\|\bV\|_{2\to\infty}}\right)
\]
with probability at least $1 - c_0n^{-(1 + \xi)}$, where $\xi = 1$ and $c_0 = 5$. To show that the same concentration bound holds for $\|\be_i\transpose\bE^{(m)}\bV\|_2$, we consider $[\bE_1^{(m)}]_{ij}$ and $[\bE_2^{(m)}]_{ij}$ separately. We may assume that $i\neq m$ without loss of generality. Exploiting the proof of the first assertion of Lemma 16 in \cite{10.1214/19-AOS1854}, we see that 
\[
\|\be_i\transpose\bE_1^{(m)}\bV\|_2\leq n\rho_n\lambda_d(\bDelta_n)\bar{\varphi}\left(\frac{\|\bV\|_{\mathrm{F}}}{\sqrt{n}\|\bV\|_{2\to\infty}}\right)
\]
with probability at least $1 - 2n^{-(1 + \xi)}$, where $\xi = 1$. By the proof of the second assertion of Lemma 16 in \cite{10.1214/19-AOS1854}, we have
\begin{align*}
\|\be_i\transpose\bE_2^{(m)}\bV\|_2
&\leq \tau\rho_n^2\|\bV\|_{2\to\infty}\sqrt{\frac{12(n - 1)\log (n - 1)}{\rho_n}}
\leq \tau\rho_n^2\|\bV\|_{2\to\infty}\sqrt{\frac{12n\log n}{\rho_n}}\\
&\leq n\rho_n\lambda_d(\bDelta_n)
\left\{
\frac{4\|\bX\|_{2\to\infty}^2}{\lambda_d(\bDelta_n)}\sqrt{\frac{\log n}{n\rho_n}}\frac{\tau\rho_n}{\|\bX\|_{2\to\infty}^2}
\right\} = n\rho_n\lambda_d(\bDelta_n)\widetilde{\varphi}\left(\frac{\|\bV\|_{\mathrm{F}}}{\sqrt{n}\|\bV\|_{2\to\infty}}\right)
\end{align*}
with probability at least $1 - 4n^{-2}$ for sufficiently large $n$. Therefore, 
\begin{align*}
\|\be_i\transpose\bE^{(m)}\bV\|_2
& \leq \|\be_i\transpose\bE^{(m)}_1\bV\|_2 + \|\be_i\transpose\bE^{(m)}_2\bV\|_2\\
&\leq n\rho_n\lambda_d(\bDelta_n)\left\{\bar{\varphi}\left(\frac{\|\bV\|_{\mathrm{F}}}{\sqrt{n}\|\bV\|_{2\to\infty}}\right) + \widetilde{\varphi}\left(\frac{\|\bV\|_{\mathrm{F}}}{\sqrt{n}\|\bV\|_{2\to\infty}}\right)\right\}\\
& = n\rho_n\lambda_d(\bDelta_n)\varphi\left(\frac{\|\bV\|_{\mathrm{F}}}{\sqrt{n}\|\bV\|_{2\to\infty}}\right)
\end{align*}
with probability at least $1 - 6n^{-(\xi + 1)}$ with $\xi = 1$. 
Hence, Assumption \ref{assumption:rowwise_concentration} holds. 
For Assumption \ref{assumption:spectral_norm_concentration}
, we let 
\[
\gamma = \frac{c_1(\tau\rho_n + \|\bX\|_{2\to\infty}^2)}{(n\rho_n)^{1/2}\lambda_d(\bDelta_n)},
\]
where $c_1 > 0$ is a constant to be determined later. 
By Lemma 14 in \cite{10.1214/19-AOS1854}, we have 
\[
\|\bE\|_2\leq c_2(\|\bX\|_{2\to\infty}^2 + \tau\rho_n)(n\rho_n)^{1/2}
\]
with probability at least $1 - 4n^{-\zeta}$ with $\zeta = 1$, where $c_2 > 1$ is a constant. Then $\|\bA - \expect\bA\|_2\leq K(n\rho_n)^{1/2}$ with with probability at least $1 - 4n^{-\zeta}$ ($\zeta = 1$) if we select $K =  c_2(\|\bX\|_{2\to\infty}^2 + \tau)$. Now set $c_1 = \max\{3K, \|\bX\|_{2\to\infty}^2\}/(\tau\rho_n + \|\bX\|_{2\to\infty}^2)$. 
By the conditions of Theorem \ref{thm:SNMC}, we have
\begin{align*}
&\gamma = \frac{\max(3K, \|\bX\|_{2\to\infty}^2)}{(n\rho_n)^{1/2}\lambda_d(\bDelta_n)}\lesssim \sqrt{\frac{\log n}{n\rho_n\lambda_d(\bDelta_n)^2}}\to 0.
\end{align*}
Note that
\[
c_1 \geq \frac{3K}{\tau\rho_n + \|\bX\|_{2\to\infty}^2} = \frac{3c_2(\|\bX\|_{2\to\infty}^2 + \tau)}{\tau\rho_n + \|\bX\|_{2\to\infty}^2}\geq 3c_2\geq 1.
\]
Then by Lemma 12 in \cite{10.1214/19-AOS1854}, we know that
\begin{align*}
\varphi(\gamma)
&\leq 4\gamma\sqrt{\log n}(1 + \gamma\sqrt{\log n})\\
&\leq 
\frac{4c_1(\tau\rho_n + \|\bX\|_{2\to\infty}^2)}{\lambda_d(\bDelta_n)}\sqrt{\frac{\log n}{n\rho_n}}\left\{1 + 
\frac{c_1(\tau\rho_n + \|\bX\|_{2\to\infty}^2)}{\lambda_d(\bDelta_n)}\sqrt{\frac{\log n}{n\rho_n}}
\right\}\\
&\leq 
\frac{4c_1(\tau\rho_n + \|\bX\|_{2\to\infty}^2)}{\lambda_d(\bDelta_n)}\sqrt{\frac{\log n}{n\rho_n}}\left\{1 + 
\frac{c_1(\tau\rho_n + \|\bX\|_{2\to\infty}^2)\kappa(\bDelta_n)}{\lambda_d(\bDelta_n)}\sqrt{\frac{\log n}{n\rho_n}}
\right\}\\
&\leq \frac{8c_1(\tau\rho_n + \|\bX\|_{2\to\infty}^2)}{\lambda_d(\bDelta_n)}\sqrt{\frac{\log n}{n\rho_n}}
\end{align*}
for sufficiently large $n$ by the condition of Theorem \ref{thm:SNMC}. Note that $\gamma\leq \varphi(\gamma)$. It follows that 
\[
32\kappa(\bDelta_n)\max\{\gamma,\varphi(\gamma)\}\leq \frac{256c_1(\tau\rho_n + \|\bX\|_{2\to\infty}^2)\kappa(\bDelta_n)}{\lambda_d(\bDelta_n)}\sqrt{\frac{\log n}{n\rho_n}}\to 0.
\]
Thus, Assumptions \ref{assumption:incoherence}-\ref{assumption:spectral_norm_concentration} hold, allowing us to apply Theorem \ref{thm:ASE_Berry_Esseen}. Again, by Lemma 12 in \cite{10.1214/19-AOS1854}, we have,
\[
\varphi(1)\lesssim \gamma\sqrt{\log n}\lesssim \frac{\tau\rho_n + \|\bX\|_{2\to\infty}^2}{\lambda_d(\bDelta_n)}\sqrt{\frac{\log n}{n\rho_n}}\lesssim_\tau \frac{\|\bX\|_{2\to\infty}^2}{\lambda_d(\bDelta_n)}. 
\]
Observe that $\kappa(\bDelta_n) = \lambda_1(\bDelta_n)/\lambda_d(\bDelta_n)\leq \|\bDelta_n\|_{\mathrm{F}}/\lambda_d(\bDelta_n)\leq \|\bX\|_{2\to\infty}^2/\lambda_d(\bDelta_n)$. Therefore,
\begin{align*}
\chi 
&= 
\varphi(1) + \frac{\|\bX\|_{2\to\infty}^2\vee 1}{\lambda_d(\bDelta_n)}
\lesssim \frac{\|\bX\|_{2\to\infty}^2\vee 1}{\lambda_d(\bDelta_n)}\leq \frac{\|\bX\|_{2\to\infty}^4\vee 1}{\lambda_d(\bDelta_n)}.
\end{align*}
In addition, we have
\begin{align*}
\expect |[\bE]_{ij}|^3&\lesssim \expect|[\bE_1]_{ij}|^3 + \expect|[\bE_2]_{ij}|^3
\lesssim\|\bX\|_{2\to\infty}^2(\bx_i\transpose\bx_j)^2\rho_n(1 - \rho_n) + \tau^3\rho_n^4\\
&\lesssim_\tau (\|\bX\|_{2\to\infty}^2\vee1)\{\rho_n(1 - \rho_n)(\bx_i\transpose\bx_j)^2 + \tau^2\rho_n^3\},
\end{align*}
which implies that
\begin{align*}
&\frac{1}{n\rho_n}\sum_{j = 1}^n\expect|[\bE]_{ij}|^3\bx_j\transpose\bDelta_n^{-1}\bSigma_{ni}^{-1}\bDelta_{n}^{-1}\bx_j\\
&\quad = \mathrm{tr}\left\{\frac{1}{n\rho_n}\sum_{j = 1}^n\expect|[\bE]_{ij}|^3\bSigma_{ni}^{-1/2}\bDelta_{n}^{-1}\bx_j\bx_j\transpose\bDelta_n^{-1}\bSigma_{ni}^{-1/2}
\right\}\\
&\quad\lesssim_\tau(\|\bX\|_{2\to\infty}^2\vee1) \mathrm{tr}\left\{\frac{1}{n\rho_n}\sum_{j = 1}^n\{\rho_n(1 - \rho_n)(\bx_i\transpose\bx_j)^2 + \tau^2\rho_n^3\}\bSigma_{ni}^{-1/2}\bDelta_{n}^{-1}\bx_j\bx_j\transpose\bDelta_n^{-1}\bSigma_{ni}^{-1/2}
\right\}\\
&\quad = d(\|\bX\|_{2\to\infty}^2\vee1)\leq \frac{\|\bX\|_{2\to\infty}^4\vee1}{\lambda_d(\bDelta_n)}\leq \frac{\|\bX\|_{2\to\infty}^5\vee1}{\lambda_d(\bDelta_n)^{3/2}},
\end{align*}
and similarly,
\[
\frac{1}{n\rho_n}\sum_{j = 1}^n\expect|[\bE]_{ij}|^3\bx_j\transpose\bDelta_n^{-3/2}\bGamma_{ni}^{-1}\bDelta_{n}^{-3/2}\bx_j\lesssim_\tau\frac{\|\bX\|_{2\to\infty}^4\vee1}{\lambda_d(\bDelta_n)}.
\]
We thus conclude from Theorem \ref{thm:ASE_Berry_Esseen} that
\begin{align*}
&\sup_{A\in\calA}\left|\prob\left\{\sqrt{n}\bSigma_{ni}^{-1/2}(\bW\transpose\widetilde\bx_i - \rho_n^{1/2}\bx_{i})\in A\right\} - \prob\left(\bz\in A\right)\right|\\
&\quad \lesssim_\tau \left\{\frac{\|\bX\|_{2\to\infty}^5\vee 1}{\lambda_d(\bDelta_n)}\right\}\frac{d^{1/2}\|\bSigma_{ni}^{-1/2}\|_2\|\bX\|_{2\to\infty}^2}{(n\rho_n)^{1/2}\lambda_d(\bDelta_{n})^{3/2}}
\\
&\quad\quad\times
\max\left\{\frac{(\|\bX\|_{2\to\infty}^2\vee1)(\log n\rho_n)^{1/2}}{\lambda_d(\bDelta_n)^2}, \frac{\kappa(\bDelta_n)}{\lambda_d(\bDelta_n)^2}, {\log n\rho_n}\right\},
\end{align*}
and
\begin{align*}
&\sup_{A\in\calA}\left|\prob\left\{n\rho_n^{1/2}\bGamma_{ni}^{-1/2}\bW_{\bX}\transpose(\bW^*[\bU_{\bA }]_{i*} - [\bU_{\bP}]_{i*})\in A\right\} - \prob\left(\bz\in A\right)\right|\\
&\quad\lesssim_\tau \left\{\frac{\|\bX\|_{2\to\infty}^4\vee 1}{\lambda_d(\bDelta_n)}\right\}\frac{d^{1/2}\|\bGamma_{ni}^{-1/2}\|_2\|\bX\|_{2\to\infty}}{(n\rho_n)^{1/2}\lambda_d(\bDelta_n)^{3/2}}
\max\left\{\frac{(\log n\rho_n)^{1/2}}{\lambda_d(\bDelta_n)}, \frac{1}{\lambda_d(\bDelta_n)^2}, {\log n\rho_n} \right\}.
\end{align*}
The proof is thus completed.

\section{Proofs for Section \ref{sub:RDPG}}
\label{sec:proofs_for_section_RDPG}

\subsection{A sharp concentration inequality}
\label{sub:technical_preparations_RDPG}

The key technical challenge for the application of Theorem \ref{thm:AFWZ2020AoS_corr2.1} lies in finding the function $\varphi(\cdot)$ satisfying condition A4. In the context of a two-block stochastic block model, the authors of \cite{10.1214/19-AOS1854} showed in Lemma 7 there that $\varphi(x)\propto [\max\{1, \log (1/x)\}]^{-1}$. 
Lemma \ref{lemma:Sharpened_concentration_EW} below is a generalization of Lemma 7 in \cite{10.1214/19-AOS1854} to general dimension $d$. Note that it does not follow from the vector Bernstein's inequality (Lemma \ref{lemma:Bernstein_inequality_vector}) but provides a sharper control of the sum of vector-scaled independent centered Bernoulli random variables. 

\begin{lemma}\label{lemma:Sharpened_concentration_EW}
Let $y_i\sim\mathrm{Bernoulli}(p_i)$ independently for all $i = 1,\ldots,n$, and suppose $\bV$ is a deterministic matrix. Let $\bv_i = \bV\transpose{}\be_i$, $i\in [n]$ and $\rho = \max_{i\in [n]}p_i$. Then for any $\alpha > 0$, 
\begin{align*}
\prob\left\{\left\|\sum_{i = 1}^n(y_i - p_i)\bv_i\right\|_2 > \frac{(2 + \alpha)n\rho\|\bV\|_{2\to\infty}}{\Log (\sqrt{n}\|\bV\|_{2\to\infty}/\|\bV\|_{\mathrm{F}})}\right\}\leq 2(d + 1)e^{-\alpha n\rho},
\end{align*}
where $\Log(x):=\max\{1,\log x\}$. 
\end{lemma}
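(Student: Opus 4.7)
The plan is to apply the matrix Chernoff bound (Lemma \ref{lemma:Matrix_concentration}) via a symmetric dilation, exploit the sharp moment generating function of centered Bernoulli variables, and then optimize the tuning parameter $\theta$ against the logarithmic factor appearing in the threshold.

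I would first lift the vector-valued random sum to a sum of symmetric $(d+1)\times(d+1)$ matrices via the dilation $\bT(\bv_i)$ employed in the proof of Lemma \ref{lemma:Bernstein_concentration_EW_subGaussian}, whose off-diagonal blocks are $\bv_i$ and $\bv_i\transpose$ and whose diagonal blocks vanish. Setting $\bZ_i := (y_i - p_i)\bT(\bv_i)$, linearity of $\bT$ gives $\|\sum_i \bZ_i\|_2 = \|\sum_i(y_i - p_i)\bv_i\|_2$, so it suffices to bound the spectral norm of the matrix sum. The spectral decomposition $\bT(\bv_i) = \bQ_i\,\mathrm{diag}(\|\bv_i\|_2, -\|\bv_i\|_2)\,\bQ_i\transpose$ on a two-dimensional range (with the orthogonal complement in the kernel) together with the standard Bernoulli bound $\expect[e^{\theta(y_i - p_i)s}]\le \exp\{p_i(e^{\theta s} - 1 - \theta s)\}$ would give for $\theta > 0$
\[
\log\expect[e^{\theta\bZ_i}]\preceq p_i(e^{\theta\|\bv_i\|_2} - 1 - \theta\|\bv_i\|_2)\bQ_i\bQ_i\transpose.
\]

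To cast this into the form required by Lemma \ref{lemma:Matrix_concentration}, I would invoke the easily verified monotonicity of $x\mapsto (e^x - 1 - x)/x^2$ on $(0,\infty)$ to replace $\|\bv_i\|_2$ by $b := \|\bV\|_{2\to\infty}$, producing $\expect[e^{\theta\bZ_i}]\preceq \exp\{g(\theta)\bM_i\}$ with $g(\theta) := e^{\theta b} - 1 - \theta b$ and $\bM_i := (p_i\|\bv_i\|_2^2/b^2)\bQ_i\bQ_i\transpose$. The scale parameter then satisfies $\lambda_{\max}(\sum_i \bM_i)\le \rho\|\bV\|_{\mathrm{F}}^2/b^2 = n\rho r$, where $r := \|\bV\|_{\mathrm{F}}^2/(nb^2)\in (0,1]$. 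Applying Lemma \ref{lemma:Matrix_concentration} to both $\sum_i \bZ_i$ and $-\sum_i\bZ_i$ yields, for every $\theta > 0$,
\[
\prob\left(\left\|\sum_{i=1}^n(y_i - p_i)\bv_i\right\|_2 > t\right)\le 2(d+1)\exp\{-\theta t + g(\theta)n\rho r\}.
\]

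The final and most delicate step is the sharp choice $\theta := L/b$, where $L := \Log(\sqrt{n}b/\|\bV\|_{\mathrm{F}}) = \Log(1/\sqrt{r})$. At the stated threshold $t = (2+\alpha)n\rho b/L$ this gives $\theta t = (2+\alpha)n\rho$, and the exponent reduces to $-(2+\alpha)n\rho + (e^L - 1 - L)n\rho r$, so it suffices to verify $(e^L - 1 - L)r\le 2$. When $r \ge e^{-2}$ so that $L = 1$, the left side equals $(e-2)r \le e - 2 < 2$. When $r < e^{-2}$ so that $L = \tfrac{1}{2}\log(1/r)$ and $e^L = r^{-1/2}$, the left side becomes $\sqrt{r} - r - \tfrac{r}{2}\log(1/r) \le \sqrt{r}\le e^{-1} < 2$. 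The hard part is precisely identifying this choice of $\theta$: a Bernstein-type selection would yield only a constant (rather than logarithmic) denominator in the threshold, losing the sharpness that distinguishes this lemma from the standard Bernstein bound in Lemma \ref{lemma:Bernstein_concentration_EW}.
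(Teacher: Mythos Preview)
Your proposal is correct and follows essentially the same route as the paper: symmetric dilation to $(d+1)\times(d+1)$ matrices, the sharp Bernoulli moment generating function bound $\expect e^{\theta(y_i-p_i)s}\le\exp\{p_i(e^{\theta s}-1-\theta s)\}$, application of the matrix Chernoff bound (Lemma~\ref{lemma:Matrix_concentration}), and the key choice $\theta=\Log(\sqrt{n}\|\bV\|_{2\to\infty}/\|\bV\|_{\mathrm F})/\|\bV\|_{2\to\infty}$. The only cosmetic differences are that the paper normalizes $\|\bV\|_{2\to\infty}=1$ and further bounds $e^x-1-x\le\tfrac12x^2e^x$ (yielding $g(\theta)=\tfrac12\theta^2e^\theta$), then verifies the constant via $\Log x\le\sqrt{x}$, whereas you keep $g(\theta)=e^{\theta b}-1-\theta b$ and do a direct case split; both paths lead to the same inequality $(e^L-1-L)r\le 2$.
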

\begin{proof}[\bf Proof of Lemma \ref{lemma:Sharpened_concentration_EW}]
The proof is a non-trivial generalization of Lemma 7 in \cite{10.1214/19-AOS1854}. 
We follow the ``symmetric dilation'' trick \cite{10.1214/19-AOS1854,paulsen2002completely} applied in the proof of Lemma \ref{lemma:Bernstein_concentration_EW_subGaussian} together with a sharp control of the moment generating function of $y_i$, which is motivated by \cite{10.1214/19-AOS1854,lei2019unified}. Without loss of generality, we may assume that $\|\bV\|_{2\to\infty} = 1$, since the event of interest is invariant to rescaling of $\bV$. Let
\[
\bT(\bv_i) = \begin{bmatrix*}
\zero_{d\times d} & \bv_i\\
\bv_i\transpose{} & 0
\end{bmatrix*},\quad
\bZ_i = (y_i - p_i)\bT(\bv_i),\quad i = 1,2,\ldots,n,
\]
and let $\bS_n = \sum_{i = 1}^n\bZ_i$. Clearly, $\|\bS_n\|_2 = \max\{\lambda_{\max}(\bS_n), \lambda_{\max}(-\bS_n)\}$ and $-\bS_n = \sum_{i = 1}^n(-\bZ_i)$. 
Observe that the spectral decomposition of $\bT(\bv_i)$ is given by
\[
\bT(\bv_i)
 = \bQ_i\begin{bmatrix*}
 \|\bv_i\|_2 & \\ & -\|\bv_i\|_2
 \end{bmatrix*}\bQ_i\transpose + 0\times\bQ_{i\perp}\bQ_{i\perp}\transpose,
\]
where 
\[
\bQ_i = \frac{1}{\sqrt{2}}\begin{bmatrix*}
 \frac{\bv_i}{\|\bv_i\|_2} & \frac{\bv_i}{\|\bv_i\|_2}\\
 1 & -1
 \end{bmatrix*}
\]
and $\bQ_{i\perp}\in\mathbb{O}(d + 1, d - 1)$ is the orthogonal complement matrix of $\bQ_i$. 
Then we use the above spectral decomposition to compute the matrix exponentials
\begin{align*}
\expect e^{\theta \bZ_i}& = p_i\exp\{(1 - p_i)\theta \bT(\bv_i)\} + (1 - p_i)\exp\{-p_i\theta\bT(\bv_i)\}\\
& = 
 \bQ_i\begin{bmatrix*}
 p_ie^{(1 - p_i)\theta\|\bv_i\|_2} + (1 - p_i)e^{-p_i\theta\|\bv_i\|_2} & \\
  & p_ie^{-(1 - p_i)\theta\|\bv_i\|_2} + (1 - p_i)e^{p_i\theta\|\bv_i\|_2}
 \end{bmatrix*}
 \bQ_i\transpose{}\\
 &\quad + \bQ_{i\perp}\bQ_{i\perp}\transpose
\end{align*}
and
\begin{align*}
\expect e^{\theta(-\bZ_i)}& = p_i\exp\{-(1 - p_i)\theta \bT(\bv_i)\} + (1 - p_i)\exp\{p_i\theta\bT(\bv_i)\}\\
& = 
\bQ_i
\begin{bmatrix*}
 p_ie^{-(1 - p_i)\theta\|\bv_i\|_2} + (1 - p_i)e^{p_i\theta\|\bv_i\|_2} & \\
  & p_ie^{(1 - p_i)\theta\|\bv_i\|_2} + (1 - p_i)e^{-p_i\theta\|\bv_i\|_2}
 \end{bmatrix*}
 \bQ_i\transpose{}\\
 &\quad + \bQ_{i\perp}\bQ_{i\perp}\transpose.
\end{align*}
Observe the following two basic inequalities: $1 + x\leq e^x$ for $x > -1$ and $e^x\leq 1 + x + e^rx^2/2$ for $|x|\leq r$. We then obtain
\begin{align*}
 &p_ie^{(1 - p_i)\theta\|\bv_i\|_2} + (1 - p_i)e^{-p_i\theta\|\bv_i\|_2}\\
 &\quad = \{(1 - p_i) + p_ie^{\theta\|\bv_i\|_2}\}e^{-p_i\theta\|\bv_i\|_2}
 \leq \exp\{p_i(e^{\theta\|\bv_i\|_2} - 1) -p_i\theta\|\bv_i\|_2\}\\
 &\quad\leq \exp\left\{p_i\theta\|\bv_i\|_2 + p_i\frac{e^{\theta\|\bV\|_{2\to\infty}}}{2}\theta^2\|\bv_i\|_2^2 -p_i\theta\|\bv_i\|_2\right\}
  = \exp\left\{\frac{e^{\theta\|\bV\|_{2\to\infty}}}{2}\theta^2\|\bv_i\|_2^2p_i\right\},\\
 & p_ie^{-(1 - p_i)\theta\|\bv_i\|_2} + (1 - p_i)e^{p_i\theta\|\bv_i\|_2}\\
 &\quad = \{(1 - p_i) + p_ie^{-\theta\|\bv_i\|_2}\}e^{p_i\theta\|\bv_i\|_2}
 \leq \exp\{p_i(e^{-\theta\|\bv_i\|_2} - 1) + p_i\theta\|\bv_i\|_2\}\\
 &\quad\leq \exp\left\{-p_i\theta\|\bv_i\|_2 + p_i\frac{e^{\theta\|\bV\|_{2\to\infty}}}{2}\theta^2\|\bv_i\|_2^2 + p_i\theta\|\bv_i\|_2\right\}
  = \exp\left\{\frac{e^{\theta\|\bV\|_{2\to\infty}}}{2}\theta^2\|\bv_i\|_2^2p_i\right\}
\end{align*}
for any $\theta > 0$. Namely, 
\begin{align*}
\expect e^{\theta \bZ_i}
&\preceq \exp\left\{\frac{e^{\theta\|\bV\|_{2\to\infty}}}{2}\theta^2\|\bv_i\|_2^2p_i\right\}
\bQ_i\bQ_i\transpose
  + \bQ_{i\perp}\bQ_{i\perp}\transpose
 \\&
 = 
 \exp\left(\frac{1}{2}\theta^2e^{\theta}\|\bv_i\|_2^2p_i\right)
 \bQ_i\bQ_i\transpose
  + \bQ_{i\perp}\bQ_{i\perp}\transpose\\
 &= \exp\{g(\theta)\bM_i\},
\end{align*}
where
\[
g(\theta) = \frac{1}{2}\theta^2e^{\theta },\quad \bM_i = (p_i\|\bv_i\|_2^2)\bQ_i\bQ_i\transpose
.
\]
Similarly, we also have $\expect e^{\theta (-\bZ_i)}\preceq \exp\{g(\theta)\bM_i\}$. 
Now we compute the scale parameter
\begin{align*}
\rho = \lambda_{\max}\left(\sum_{i = 1}^n\bM_i\right)\leq \sum_{i = 1}^n\|\bM_i\|_2\leq \rho\|\bV\|_{\mathrm{F}}^2.
\end{align*}
Since $\bM_i\succeq \zero_{d\times d}$, we also see that $\rho > 0$. Now applying Lemma \ref{lemma:Matrix_concentration} yields
\begin{align*}
\prob\left(\|\bS_n\|_2 > t\right)
&\leq \prob\left\{\lambda_{\max}(\bS_n) > t\right\} + \prob\left\{\lambda_{\max}(-\bS_n) > t\right\}\\
&\leq 2(d + 1)\exp\left(-\theta t + \frac{1}{2}\rho\theta^2e^{\theta}\|\bV\|_{\mathrm{F}}^2\right)
\end{align*}
for any $\theta > 0$ and $t \in\mathbb{R}$. 
Set $\theta = \Log(\sqrt{n}/\|\bV\|_{\mathrm{F}})$. Since $\|\bV\|_{\mathrm{F}}\leq \sqrt{n}\|\bV\|_{2\to\infty} = \sqrt{n}$, we see that $\log(\sqrt{n}/\|\bV\|_{\mathrm{F}}) > 0$, and hence, $\theta \leq 1 + \log(\sqrt{n}/\|\bV\|_{\mathrm{F}})$. It follows that
\[
\frac{\rho\theta^2}{2}e^{\theta}\|\bV\|_{\mathrm{F}}^2\leq \frac{\rho\theta^2}{2}e\sqrt{n}\|\bV\|_{\mathrm{F}} = \frac{e\rho n}{2}\frac{\|\bV\|_{\mathrm{F}}}{\sqrt{n}}\left\{\Log\left(\frac{\sqrt{n}}{\|\bV\|_{\mathrm{F}}}\right)\right\}^2\leq \frac{en\rho}{2},
\]
where we have applied the basic inequality $\Log x\leq \sqrt{x}$ for $x\geq 1$. With $t = \{\Log (\sqrt{n}/\|\bV\|_{\mathrm{F}})\}^{-1}(2 + \alpha)n\rho$, we then obtain
\begin{align*}
\prob\left\{\|\bS_n\|_2 > \frac{(2 + \alpha)n\rho}{\Log (\sqrt{n}/\|\bV\|_{\mathrm{F}})}\right\}\leq d\exp\left\{
-(2 + \alpha)n\rho + \frac{en\rho}{2}
\right\}\leq 2(d + 1)e^{-\alpha n\rho}.
\end{align*}
The proof is thus completed. 
\end{proof}

\subsection{Proof of Theorem \ref{thm:ASE_Berry_Esseen_RDPG}}
\label{sub:proof_of_ASE_Berry_Esseen_RDPG}

We first present two useful results for random graph models. 
\begin{result}[Spectral norm concentration for random graphs]
\label{result:spectral_norm_concentration}
If $\bA\sim\mathrm{RDPG}(\rho_n^{1/2}\bX)$ and the conditions of Theorem \ref{thm:ASE_Berry_Esseen_RDPG} hold, then for any $c > 0$, there exists some constant $K_c > 0$ only depending on $c$, such that $\|\bA - \bP\|_2 \leq K_c(n\rho_n)^{1/2}$ with probability at least $1 - n^{-c}$. This follows exactly from Theorem 5.2 in \cite{lei2015}.
\end{result}

\begin{result}[Concentration bound for $\|\bE\|_\infty$ for random graphs]
\label{result:Infinity_norm_concentration}
Suppose $\bA\sim\mathrm{RDPG}(\rho_n^{1/2}\bX)$ and the conditions of Theorem \ref{thm:ASE_Berry_Esseen_RDPG} hold. For any $c > 0$, there exists some constant $K_c > 0$, such that with probability at least $1 - 2n^{-c}$, $\|\bE\|_\infty\leq K_c n\rho_n$. This is a consequence of Bernstein's inequality. To see this, we first observe that $||[\bE]_{ij}| - \expect|[\bE]_{ij}|\leq 1$, $\expect|[\bE]_{ij}| = 2\rho_n\bx_{i}\transpose{}\bx_{j}(1 - \rho_n\bx_{i}\transpose{}\bx_{j})\leq 2\rho_n$, and 
\begin{align*}
&\sum_{j = 1}^n\expect(|[\bE]_{ij}| - \expect|[\bE]_{ij}|)^2
\leq \sum_{j = 1}^n\expect(A_{ij} - \rho_n\bx_{i}\transpose{}\bx_{j})^2\leq n\rho_n.
\end{align*}
Then for any $C > 0$, an application of Bernstein's inequality yields
\begin{align*}
\prob\left\{\sum_{j = 1}^n|[\bE]_{ij}| > (C + 2)n\rho_n\right\}
& \leq \prob\left\{\sum_{j = 1}^n(|[\bE]_{ij}| - \expect|[\bE]_{ij}|) > Cn\rho_n\right\}\leq  2\exp\left(-\frac{3C^2n\rho_n}{6 + 4C}\right).
\end{align*}
The constant $C$ can be selected such that $3C^2n\rho_n/(6 + 4C)\geq (c + 1)\log n$. Now taking $K_c = C + 2$ and applying a union bound over $i\in [n]$ yields that $\|\bE\|_\infty \leq K_cn\rho_n$ with probability at least $1 - 2n^{-c}$. 
\end{result}

To prove Theorem \ref{thm:ASE_Berry_Esseen_RDPG}, we apply Theorem \ref{thm:ASE_Berry_Esseen} by first verifying Assumptions \ref{assumption:incoherence}-\ref{assumption:spectral_norm_concentration}. By the definition of random dot product graphs, Assumption \ref{assumption:incoherence} automatically holds because $\|\bX\|_{2\to\infty}\leq 1$. Assumption \ref{assumption:sparsity} also holds automatically by the conditions of Theorem \ref{thm:ASE_Berry_Esseen_RDPG}. Assumption \ref{assumption:distribution} also holds because one can set $[\bE_2]_{ij} = 0$ and $[\bE_1]_{ij} = A_{ij} - \rho_n\bx_i\transpose\bx_j$. It remains to verify Assumptions \ref{assumption:rowwise_concentration} and \ref{assumption:spectral_norm_concentration}. Let $c \geq 1$ be any fixed constant. By Result \ref{result:spectral_norm_concentration}, there exists a constant $K_c\geq 1$ that depends on $c > 0$, such that $\prob\{\|\bE\|_2\leq K_c(n\rho_n)^{1/2}\}\geq 1 - n^{-c}$. Set $\varphi(x) = (2 + \beta_c)\{\Log(1 / x)\}^{-1}\lambda_d(\bDelta_n)^{-1}$ for a constant $\beta_c > 0$ such that $\beta_cn\rho_n\geq (c + 2)\log n$. Then with
\[
\gamma = \frac{\max\{3K_c, \|\bX\|_{2\to\infty}^2\}}{(n\rho_n)^{1/2}\lambda_d(\bDelta_n)} = \frac{3K_c}{(n\rho_n)^{1/2}\lambda_d(\bDelta_n)},
\]
we immediately see that
\[
32\kappa(\bDelta_n)\{\gamma, \varphi(\gamma)\}\lesssim_c\frac{\kappa(\bDelta_n)}{\lambda_d(\bDelta_n)}\max\left\{
\frac{1}{(n\rho_n)^{1/2}}, \frac{1}{\log(n\rho_n\lambda_d(\bDelta_n)^2)}
\right\}\to 0
\]
by the condition of Theorem \ref{thm:ASE_Berry_Esseen_RDPG}. This shows that Assumption \ref{assumption:spectral_norm_concentration} holds with $\zeta = c\geq 1$ and $c_0 = 1$. It remains to show that Assumption \ref{assumption:rowwise_concentration} holds with the previously selected $\varphi(\cdot)$ function. By Lemma \ref{lemma:Sharpened_concentration_EW}, for any deterministic $\bV\in\mathbb{R}^{n\times d}$, we have
\begin{align*}
&\prob\left\{\|\be_i\transpose\bE\bV\|_2\leq n\rho_n\lambda_d(\bDelta_n)\|\bV\|_{2\to\infty}\varphi\left(\frac{\|\bV\|_{\mathrm{F}}}{\sqrt{n}\|\bV\|_{2\to\infty}}\right)\right\}\\
&\quad = \prob\left\{\|\be_i\transpose\bE\bV\|_2\leq \frac{n\rho_n\lambda_d(\bDelta_n)(2 + \beta_c)\|\bV\|_{2\to\infty}}{\lambda_d(\bDelta_n)\Log({\sqrt{n}\|\bV\|_{2\to\infty}}/{\|\bV\|_{\mathrm{F}}})}\right\}\\
&\quad\geq 1 - 2d\exp(-\beta_cn\rho_n)\geq 1 - c_0n^{-(1 + \xi)},
\end{align*}
where $\xi = c$ and $c_0 = 2$. To show that the same concentration bound also holds for $\|\be_i\transpose\bE^{(m)}\bV\|_2$, we simply observe that $[\bE^{(m)}]_{im}$ can be viewed as a centered Bernoulli random variable whose success probability is zero. Then applying Lemma \ref{lemma:Sharpened_concentration_EW} leads to that
\[
\prob\left\{\|\be_i\transpose\bE^{(m)}\bV\|_2\leq n\rho_n\lambda_d(\bDelta_n)\|\bV\|_{2\to\infty}\varphi\left(\frac{\|\bV\|_{\mathrm{F}}}{\sqrt{n}\|\bV\|_{2\to\infty}}\right)\right\}\geq 1 - c_0n^{-(1 + \xi)},
\]
where $c_0 = 2$ and $\xi = c$. To finish the proof, we observe that $\expect|[\bE]_{ij}|^3\leq \expect[\bE]_{ij}^2 = \rho_n\bx_i\transpose\bx_j(1 - \rho_n\bx_i\transpose\bx_j)$, implying that
\begin{align*}
&\frac{1}{n\rho_n}\sum_{j = 1}^n\expect|[\bE]_{ij}|^3\bx_j\transpose\bDelta_n^{-1}\bSigma_{ni}^{-1}\bDelta_{n}^{-1}\bx_j\leq \mathrm{tr}(\eye_d) = d\leq \frac{\|\bX\|_{2\to\infty}^2}{\lambda_d(\bDelta_n)}\leq \frac{1}{\lambda_d(\bDelta_n)},\\
&\frac{1}{n\rho_n}\sum_{j = 1}^n\expect|[\bE]_{ij}|^3\bx_j\transpose\bDelta_n^{-3/2}\bGamma_{ni}^{-1}\bDelta_{n}^{-3/2}\bx_j\leq \mathrm{tr}(\eye_d) = d\leq \frac{\|\bX\|_{2\to\infty}^2}{\lambda_d(\bDelta_n)}\leq \frac{1}{\lambda_d(\bDelta_n)}.
\end{align*}
Also, observe that $\varphi(1)\lesssim \lambda_d(\bDelta_n)^{-1}$ and $\chi = \varphi(1) + (\|\bX\|_{2\to\infty}^2\vee1)/\lambda_d(\bDelta_n)\lesssim \lambda_d(\bDelta_n)^{-1}$. Then by Theorem \ref{thm:ASE_Berry_Esseen}, we have, for each fixed index $i\in[n]$ and for any sufficiently large $n$,
\begin{align*}
&\begin{aligned}
&\sup_{A\in\calA}\left|\prob\left\{\sqrt{n}\bSigma_n(\bx_{i})^{-1/2}(\bW\transpose\widetilde\bx_i  - \rho_n^{1/2}\bx_{i})\in A\right\} - \prob\left(\bz\in A\right)\right|\\
&\quad\lesssim 
\frac{d^{1/2}\|\bSigma_{n}(\bx_i)^{-1/2}\|_2}{(n\rho_n)^{1/2}\lambda_d(\bDelta_n)^{5/2}}\max\left\{\frac{(\log n\rho_n)^{1/2}}{\lambda_d(\bDelta_n)}, \frac{\kappa(\bDelta_n)}{\lambda_d(\bDelta_n)^2}, {\log n\rho_n} \right\},
\end{aligned}\\
&\begin{aligned}
&\sup_{A\in\calA}\left|\prob\left\{n\rho_n^{1/2}\bGamma_n(\bx_{i})^{-1/2}\bW_\bX\transpose(\bW^*[\bU_\bA]_{i*} - [\bU_\bP]_{i*})\in A\right\} - \prob\left(\bz\in A\right)\right|\\
&\quad\lesssim \frac{d^{1/2}\|\bGamma_n(\bx_i)^{-1/2}\|_2}{(n\rho_n)^{1/2}\lambda_d(\bDelta_n)^{5/2}}\max\left\{\frac{(\log n\rho_n)^{1/2}}{\lambda_d(\bDelta_n)}, \frac{1}{\lambda_d(\bDelta_n)^2}, {\log n\rho_n}\right\},
\end{aligned}
\end{align*}
where $\calA$ is the collection of all convex measurable sets in $\mathbb{R}^d$, and $\bz\sim\mathrm{N}_d(\zero_d, \eye_d)$. The proof is thus completed.

\subsection{Proof of Corollary \ref{corr:Two_to_infinity_norm_eigenvector_bound}} 
\label{sub:proof_of_corollary_corr:two_to_infinity_norm_eigenvector_bound}


From the proof in Section \ref{sub:proof_of_ASE_Berry_Esseen_RDPG}, we see that Assumptions \ref{assumption:incoherence}-\ref{assumption:spectral_norm_concentration} hold with $\xi = \zeta = c$ and $\varphi(x)\propto \{\Log(1/x)\}^{-1}\lambda_d(\bDelta_n)^{-1}$. 
By Theorem \ref{thm:eigenvector_deviation} with $t = (c + 1)\log n$ and a union bound over $m\in [n]$ for sufficiently large $n$, we have
\begin{align*}
\|\bU_\bA - \bA\bU_\bP\bS_\bP^{-1}\bW^*\|_{2\to\infty}
&\lesssim_c \frac{\|\bU_\bP\|_{2\to\infty}}{n \rho_n \lambda_d(\bDelta_n)^2}\max\left\{\frac{(\log n)^{1/2}}{\lambda_d(\bDelta_n)}, \frac{1}{ \lambda_d(\bDelta_n)^2},{\log n} \right\},\\
\left\|\widetilde{\bX}\bW - \frac{\bA\bX(\bX\transpose\bX)^{-1}}{\rho_n^{1/2}}\right\|_{2\to\infty}
&\lesssim_c \frac{\|\bU_\bP\|_{2\to\infty}}{(n\rho_n)^{1/2}\lambda_d(\bDelta_n)^2}
\max\left\{\frac{(\log n)^{1/2}}{\lambda_d(\bDelta_n)}, \frac{\kappa(\bDelta_n)}{ \lambda_d(\bDelta_n)^2},{\log n} \right\}
\end{align*}
with probability at least $1 - c_0n^{-c}$. 
Also, we observe that
\begin{align*}
\|\bU_\bA - \bU_\bP\bW^*\|_{2\to\infty}
&\leq \|\bU_\bA - \bA\bU_\bP\bS_\bP^{-1}\bW^*\|_{2\to\infty} + \|(\bA\bU_\bP\bS_\bP^{-1} - \bU_\bP)\bW^*\|_{2\to\infty}\\
&= \|\bU_\bA - \bA\bU_\bP\bS_\bP^{-1}\bW^*\|_{2\to\infty} + \|\bE\bU_\bP\bS_\bP^{-1}\|_{2\to\infty}\\
&\leq \|\bU_\bA - \bA\bU_\bP\bS_\bP^{-1}\bW^*\|_{2\to\infty} + \frac{\|\bE\bU_\bP\|_{2\to\infty}}{n\rho_n\lambda_d(\bDelta_n)}. 
\end{align*}
By Lemma \ref{lemma:Bernstein_concentration_EW} and a union bound, for any $a > 0$, we have
\begin{align*}
&\prob\left\{\|\bE\bU_\bP\|_{2\to\infty} > 3a\log n\|\bU_\bP\|_{2\to\infty} + (6a\rho_n\log n)^{1/2}\|\bU_\bP\|_{\mathrm{F}}\right\}\\
&\quad\leq \sum_{m = 1}^n\prob\left\{\mathrel{\Big\|} 
\sum_{j = 1}^n(A_{mj} - \rho_n\bx_m\transpose{}\bx_j)(\bU_\bP\transpose{}\be_j)
\mathrel{\Big\|_2} > 3a\log n\|\bU_\bP\|_{2\to\infty} + (6a\rho_n\log n)^{1/2}\|\bU_\bP\|_{\mathrm{F}}\right\}\\
&\quad\leq 28ne^{-3a\log n} = 28n^{-(3a - 1)}.
\end{align*}
Now we can set $a = (c + 1)/3$ to obtain that
\begin{align*}
\frac{\|\bE\bU_\bP\|_{2\to\infty}}{n\rho_n\lambda_d(\bDelta_n)}
&\lesssim_c \frac{(\log n)\|\bU_\bP\|_{2\to\infty}}{n\rho_n\lambda_d(\bDelta_n)} + \frac{(6\rho_n\log n)^{1/2}\|\bU_\bP\|_{\mathrm{F}}}{n\rho_n\lambda_d(\bDelta_n)}\\
&\lesssim \frac{(n\rho_n\log n)^{1/2}\|\bU_\bP\|_{2\to\infty}}{n\rho_n\lambda_d(\bDelta_n)} + \frac{(n\rho_n\log n)^{1/2}\|\bU_\bP\|_{2\to\infty}}{n\rho_n\lambda_d(\bDelta_n)}\\
& = \frac{2(\log n)^{1/2}\|\bU_\bP\|_{2\to\infty}}{(n\rho_n)^{1/2}\lambda_d(\bDelta_n)} 
\end{align*}
with probability at least $1 - c_0n^{-3c}$. Then by the concentration bound for $\|\bU_\bA - \bA\bU_\bP\bS_\bP^{-1}\bW^*\|_{2\to\infty}$, we have
\begin{align*}
\|\bU_\bA - \bU_\bP\bW^*\|_{2\to\infty}
&\leq \|\bU_\bA - \bA\bU_\bP\bS_\bP^{-1}\bW^*\|_{2\to\infty} + \frac{\|\bE\bU_\bP\|_{2\to\infty}}{n\rho_n\lambda_d(\bDelta_n)}\\
&\lesssim_c \|\bU_\bA - \bA\bU_\bP\bS_\bP^{-1}\bW^*\|_{2\to\infty} + \frac{\|\bE\bU_\bP\|_{2\to\infty}}{n\rho_n\lambda_d(\bDelta_n)}
 + \frac{(\log n)^{1/2}\|\bU_\bP\|_{2\to\infty}}{(n\rho_n)^{1/2}\lambda_d(\bDelta_n)}
\end{align*}
with probability at least $1 - c_0n^{-c}$ for sufficiently large $n$. This completes the proof of the concentration bound for the unscaled eigenvectors $\|\bU_\bA - \bU_\bP\bW^*\|_{2\to\infty}$. 

\vspace*{1ex}\noindent
For the fourth assertion, we recall the decomposition \eqref{eqn:keystone_decomposition}
\begin{align*}
\widetilde\bX\bW - \rho_n^{1/2}\bX = \rho_n^{-1/2}(\bA - \bP)\bX(\bX\transpose{}\bX)^{-1} + \{\widetilde\bX\bW - \rho_n^{-1/2}\bA\bX(\bX\transpose{}\bX)^{-1}\}.
\end{align*}
For the first term, we apply Lemma \ref{lemma:Bernstein_concentration_EW} with $t = (c\log n)^{1/2}$ and a union bound over $m\in [n]$ to obtain
\begin{align*}
\|\rho_n^{-1/2}(\bA - \bP)\bX(\bX\transpose{}\bX)^{-1}\|_{2\to\infty}
& = \|\bE\bU_\bP\bS_\bP^{-1/2}\|_{2\to\infty} \leq \max_{m\in [n]}\|\be_m\transpose{}\bE\bU_\bP\|_2\|\bS_\bP^{-1/2}\|_2\\
&
\lesssim_c \frac{(n\rho_n\log n)^{1/2}\|\bU_\bP\|_{2\to\infty}}{(n\rho_n)^{1/2}\lambda_d(\bDelta_n)^{1/2}} = \frac{(\log n)^{1/2}}{\lambda_d(\bDelta_n)^{1/2}}\|\bU_\bP\|_{2\to\infty}
\end{align*}
with probability at least $1 - c_0n^{-2c}$. The proof is thus completed. 


\section{Proofs for Section \ref{sub:entrywise_limit_theorem_for_the_one_step_estimator}} 
\label{sec:proofs_for_section_sec:entrywise_limit_theorem_for_the_one_step_estimator}

\subsection{Outline of the proof of Theorem \ref{thm:asymptotic_normality_OS}} 
\label{sub:outline_of_the_proof_of_theorem_thm:asymptotic_normality_os}

We first present the outline the proof of Theorem \ref{thm:asymptotic_normality_OS}, which is a non-trivial extension of \cite{xie2019efficient} to sparse graphs. Recall that the $i$th row of the one-step refinement
$\widehat\bx_i = \widetilde\bx_i + \rho_n^{1/2}\calI_i(\rho_n^{-1/2}\widetilde\bX)^{-1}\nabla_{\bx_i}\ell_\bA(\rho_n^{-1/2}\widetilde\bX)$. 
Then a simple computation leads to the following decomposition of $\bW\transpose{}\widehat{\bx}_i - \rho_n^{1/2}\bx_i$:
\begin{align}
\bW\transpose{}\widehat{\bx}_i - \rho_n^{1/2}\bx_i
& = \bW\transpose{}\widetilde\bx_i - \rho_n^{1/2}\bx_i + \rho_n^{1/2}\calI_i(\rho_n^{-1/2}\widetilde\bX\bW)^{-1}\nabla_{\bx_i}\ell_\bA(\rho_n^{-1/2}\widetilde\bX\bW)\nonumber\\
\label{eqn:OSE_decomposition_term_I}
& = \bW\transpose{}\widetilde\bx_i - \rho_n^{1/2}\bx_i + \rho_n^{1/2}\calI_i(\bX)^{-1}\left\{\nabla_{\bx_i}\ell_\bA(\rho_n^{-1/2}\widetilde\bX\bW) - \nabla_{\bx_i}\ell_\bA(\bX)\right\}\\
\label{eqn:OSE_decomposition_term_II}
&\quad + \rho_n^{1/2}\left\{\calI_i(\rho_n^{-1/2}\widetilde\bX\bW)^{-1} - \calI_i(\bX)^{-1}\right\}\nabla_{\bx_i}\ell_\bA(\rho_n^{-1/2}\widetilde\bX\bW)\\
\label{eqn:OSE_decomposition_term_III}
&\quad + \rho_n^{1/2}\calI_i(\bX)^{-1}\nabla_{\bx_i}\ell_\bA(\bX).
\end{align}
Since $\|\rho_n^{-1/2}\widetilde\bX\bW - \bX\|_{2\to\infty} = o_{\prob}(1)$ by Corollary \ref{corr:Two_to_infinity_norm_eigenvector_bound}, it is expected that $\calI_i(\rho_n^{-1/2}\widetilde\bX\bW)^{-1}\approx \calI_i(\bX)^{-1}$ by the continuous mapping theorem, and hence, term \eqref{eqn:OSE_decomposition_term_II} should be comparatively small. Term \eqref{eqn:OSE_decomposition_term_III} corresponds to the first term on the right-hand side of \eqref{eqn:OSE_main_decomposition} and is a sum of independent mean-zero random variables. The non-trivial part is the analysis of the term in line \eqref{eqn:OSE_decomposition_term_I}. The intuition is that
\[
\nabla_{\bx_i}\ell_\bA(\rho_n^{-1/2}\widetilde\bX\bW) - \nabla_{\bx_i}\ell_\bA(\bX)\approx - \calI_i(\bX)(\rho_n^{-1/2}\bW\transpose{}\widetilde{\bx}_i - \bx_i)
\]
by a first-order Taylor approximation of $\nabla_{\bx_i}\ell_\bA$. However, making the above approximation precise is technically involved because $\rho_n^{-1/2}\bW\transpose{}\widetilde{\bx}_j\neq \bx_j$ for all $j\neq i$. In \cite{xie2019efficient}, the authors assumed $n\rho_n^5 = \omega((\log n)^2)$ and their proof technique is no longer applicable when $n\rho_n = \Omega(\log n)$. In the present work, we overcome this difficulty by taking advantage of the decoupling strategy developed in Section \ref{sec:entrywise_limit_theorem_for_the_eigenvectors}, together with a delicate second-order Taylor approximation analysis. 

Now for any $\eps > 0$, denote
\[
\calX_n(\eps) := \left\{\bV = [\bv_1,\ldots,\bv_n]\transpose{}\in\mathbb{R}^{n\times d}:\bv_i\transpose{}\bv_j\in [\eps, 1 - \eps]\quad\text{for all }i,j\in [n]\right\}
\]
and for each fixed index $i\in [n]$, define the matrix-valued function $\bH_i:\calX_n(\delta/2)\to \mathbb{R}^{d\times d}$ by
\begin{align}
\label{eqn:Hi_function}
\bH_i(\bV) = \frac{1}{n}\sum_{j = 1}^n\frac{\bv_j\bv_j\transpose{}}{\bv_i\transpose{}\bv_j(1 - \rho_n\bv_i\transpose{}\bv_j)}. 
\end{align}
We continue the decomposition of $\bW\transpose\widehat{\bx}_i - \rho_n^{1/2}\bx_{0i}$ mentioned earlier:
\begin{align*}
\bW\transpose{}\widehat\bx_i - \rho_n^{1/2}\bx_i
& = \bW\transpose{}\widetilde\bx_i - \rho_n^{1/2}\bx_i + \rho_n^{1/2}\calI_i(\bX)^{-1}\left\{\nabla_{\bx_i}\ell_\bA(\rho_n^{-1/2}\widetilde\bX\bW) - \nabla_{\bx_i}\ell_\bA(\bX)\right\}\\
&\quad + \rho_n^{1/2}\left\{\calI_i(\rho_n^{-1/2}\widetilde\bX\bW)^{-1} - \calI_i(\bX)^{-1}\right\}\nabla_{\bx_i}\ell_\bA(\rho_n^{-1/2}\widetilde\bX\bW)\\
&\quad + \rho_n^{1/2}\calI_i(\bX)^{-1}\nabla_{\bx_i}\ell_\bA(\bX)\\
& = \frac{1}{n\rho_n^{1/2}}\sum_{j = 1}^n\frac{[\bE]_{ij}\bG_n(\bx_i)^{-1}\bx_j}{\bx_i\transpose{}\bx_j(1 - \rho_n\bx_i\transpose{}\bx_j)}\\
&\quad + \bG_n(\bx_i)^{-1}\{\bG_n(\bx_i)(\bW\transpose{}\widetilde\bx_i - \rho_n^{1/2}\bx_i) + \br_{i1}\} + \bR_{i2}\br_{i1} + \br_{i3},
\end{align*}
where
\begin{align}
\label{eqn:ri1}
&\br_{i1} = \frac{1}{n\sqrt{\rho_n}}\sum_{j = 1}^n\left\{\frac{(A_{ij} - \widetilde\bx_i\transpose\widetilde\bx_j)(\rho_n^{-1/2}\bW\transpose\widetilde\bx_j)}{\rho_n^{-1}\widetilde\bx_{i}\transpose\widetilde\bx_{j}(1 - \widetilde\bx_i\transpose\widetilde\bx_j)} - \frac{(A_{ij} - \rho_n\bx_{i}\transpose\bx_{j})\bx_{j}}{\bx_{i}\transpose\bx_{j}(1 - \rho_n\bx_{i}\transpose\bx_{j})}\right\},\\
\label{eqn:Ri2}
&\bR_{i2} = \bH_i(\rho_n^{-1/2}\widetilde\bX\bW)^{-1} - \bG_n(\bx_{i})^{-1},\\
&\br_{i3}  = \frac{1}{n\sqrt{\rho_n}}\sum_{j = 1}^n\frac{(A_{ij} - \rho_n\bx_{i}\transpose\bx_{j})}{\bx_{i}\transpose\bx_{j}(1 - \rho_n\bx_{i}\transpose\bx_{j})}\bR_{i2}\bx_{j}.\nonumber
\end{align}
The most challenging part is a sharp concentration bound for $\br_{i1}$. We now sketch the argument for bounding $\br_{i1}$. 
For any constant $\eps\in (0, 1/2)$, define $\calX_2(\eps) = \{(\bu,\bv)\in\mathbb{R}^{d}\times\mathbb{R}^d:\|\bu\|_2,\|\bv\|_2\leq1, \eps\leq \bu\transpose{}\bv\leq 1 - \eps\}$. For each $(\bu,\bv)\in\calX_2(\delta/2)$, define the following functions:
\begin{align*}
\bg(\bu, \bv) & = \frac{\bv}{\bu\transpose{}\bv(1 - \rho_n\bu\transpose{}\bv)},\\ 
\bh_{ij}(\bu, \bv) & = \frac{(\bx_{i}\transpose{}\bx_{j} - \bu\transpose{}\bv)\bv}{\bu\transpose{}\bv(1 - \rho_n\bu\transpose{}\bv)},\\
\bphi_{ij}(\bu, \bv) & = (A_{ij} - \rho_n\bx_i\transpose{}\bx_j)\bg(\bu, \bv) + \rho_n\bh_{ij}(\bu, \bv). 
\end{align*}
Applying a first-order Taylor expansion to $\bg$ and $\bh$ yields
\begin{align*}
\bg(\bu, \bv) - \bg(\bx_{i},\bx_{j}) & = -\frac{(1 - 2\rho_n\bx_{i}\transpose{}\bx_{j})\bx_{j}\bx_{j}\transpose{}}{\{\bx_{i}\transpose{}\bx_{j}(1 - \rho_n\bx_{i}\transpose{}\bx_{j})\}^2}(\bu - \bx_{i})\\
&\quad + \left\{
\frac{\eye_d}{\bx_{i}\transpose{}\bx_{j}(1 - \rho_n\bx_{i}\transpose{}\bx_{j})} - 
\frac{(1 - 2\rho_n\bx_{i}\transpose{}\bx_{j})\bx_{j}\bx_{i}\transpose{}}{\{\bx_{i}\transpose{}\bx_{j}(1 - \rho_n\bx_{i}\transpose{}\bx_{j})\}^2}
\right\}(\bv - \bx_{j})
\\&\quad
 + \br_\bg(\bu, \bv, \bx_i, \bx_j),\\
\bh_{ij}(\bu, \bv) - \bh_{ij}(\bx_{i},\bx_{j}) & = -
\frac{\bx_{j}\bx_{j}\transpose{}(\bu - \bx_{i})}{ \bx_{i}\transpose{}\bx_{j}(1 - \rho_n\bx_{i}\transpose{}\bx_{j}) }
 - \frac{\bx_{j}\bx_{i}\transpose{}(\bv - \bx_{j})}{\bx_{i}\transpose{}\bx_{j}(1 - \rho_n\bx_{i}\transpose{}\bx_{j})}
 + \br_{\bh_{ij}}(\bu, \bv, \bx_i, \bx_j),
\end{align*}
where $\br_\bg, \br_{\bh_{ij}}$ are higher-order remainders of $\bg$ and $\bh_{ij}$. 
Then we can write $\br_{i1}$ as
\begin{align}
\br_{i1}& = \frac{1}{n\rho_n^{1/2}}\sum_{j = 1}^n\left\{ \bphi_{ij}(\rho_n^{-1/2}\bW\transpose{}\widetilde\bx_i, \rho_n^{-1/2}\bW\transpose{}\widetilde\bx_j) - \bphi_{ij}(\bx_i, \bx_j)\right\}\nonumber\\
\label{eqn:ri1_analysis_term_I}
& = - \frac{1}{n\rho_n^{1/2}}\sum_{j = 1}^n\frac{\rho_n\bx_j\bx_j\transpose{}}{\bx_i\transpose{}\bx_j(1 - \rho_n\bx_i\transpose{}\bx_j)}(\rho_n^{-1/2}\bW\transpose{}\widetilde{\bx}_i - \bx_i)\\
\label{eqn:ri1_analysis_term_II}
&\quad - \frac{1}{n\rho_n^{1/2}}\sum_{j = 1}^n\frac{\rho_n \bx_{j}\bx_{i}\transpose}{\bx_{i}\transpose\bx_{j}(1 - \rho_n\bx_{i}\transpose\bx_{j})}
(\rho_n^{-1/2}\bW\transpose{}\widetilde{\bx}_j - \bx_{j})\\
\label{eqn:ri1_analysis_term_III}
& \quad - \frac{1}{n\rho_n^{1/2}}\sum_{j = 1}^n\left[\frac{[\bE]_{ij}(1 - 2\rho_n\bx_{i}\transpose\bx_{j})\bx_{j}\bx_{j}\transpose}{\{\bx_{i}\transpose\bx_{j}(1 - \rho_n\bx_{i}\transpose\bx_{j})\}^2}\right](\rho_n^{-1/2}\bW\transpose\widetilde{\bx}_i - \bx_{i})\\
\label{eqn:ri1_analysis_term_IV}
&\quad + \frac{1}{n\rho_n^{1/2}}\sum_{j = 1}^n
\frac{[\bE]_{ij}\{\bx_{i}\transpose\bx_{j}(1 - \rho_n\bx_{i}\transpose\bx_{j})\eye_d - (1 - 2\rho_n\bx_{i}\transpose\bx_{j})\bx_{j}\bx_{i}\transpose\}}
{\{\bx_{i}\transpose\bx_{j}(1 - \rho_n\bx_{i}\transpose\bx_{j})\}^2}
(\rho_n^{-1/2}\bW\transpose\widetilde{\bx}_j - \bx_{j})\\
\label{eqn:ri1_analysis_term_V}
&\quad + \frac{1}{n\rho_n^{1/2}}\sum_{j = 1}^n[\bE]_{ij}\br_{\bg}
 + \frac{1}{n\rho_n^{1/2}}\sum_{j = 1}^n\rho_n\br_{\bh_{ij}}
,
\end{align}
where we have compressed the notation $\br_{\bg} = \br_\bg
(\rho_n^{-1/2}\bW\transpose{}\widetilde\bx_i, \rho_n^{-1/2}\bW\transpose{}\widetilde\bx_j, \bx_i, \bx_j)$
 and 
 $\br_{\bh_{ij}} = \br_{\bh_{ij}}
 (\rho_n^{-1/2}\bW\transpose{}\widetilde\bx_i, \rho_n^{-1/2}\bW\transpose{}\widetilde\bx_j, \bx_i, \bx_j)$. 
Term \eqref{eqn:ri1_analysis_term_I} is the same as $\bG_n(\bx_i)(\bW\transpose{}\widetilde{\bx}_i - \rho_n^{1/2}\bx_i)$. In what follows, we are going to work on terms \eqref{eqn:ri1_analysis_term_II}, \eqref{eqn:ri1_analysis_term_III}, \eqref{eqn:ri1_analysis_term_IV}, and \eqref{eqn:ri1_analysis_term_V}, respectively, and provide sharp concentration bounds for them. 


\subsection{Some technical preparations} 
\label{sub:some_technical_preparations}

In this section, we make some technical preparations for the proof of Theorem \ref{thm:asymptotic_normality_OS}. The following lemma provides a concentration bound for $\|\widetilde\bX\bW - \rho_n^{-1/2}\bA\bX(\bX\transpose{}\bX)^{-1}\|_{\mathrm{F}}$. 
\begin{lemma}\label{lemma:RX_Frobenius_norm_bound}
Let $\bA\sim\mathrm{RDPG}(\rho_n^{1/2}\bX)$ and assume the conditions of Theorem \ref{thm:ASE_Berry_Esseen_RDPG} hold. Then there exists an absolute constant $c_0 > 0$, such that given any fixed $c > 0$,
for all 
sufficiently large $n$
and 
for all $t > 0$, 
the following event holds with probability at least $1 - c_0n^{-c} - c_0e^{-t}$:
\begin{align*}
\|\widetilde\bX\bW - \rho_n^{1/2}\bA\bX(\bX\transpose{}\bX)\|_{\mathrm{F}}&\lesssim_c 
\frac{1}{(n\rho_n)^{1/2}\lambda_d(\bDelta_n)}\max\left\{t^{1/2}, \frac{\kappa(\bDelta_n)}{\lambda_d(\bDelta_n)}\right\},\\
\|\widetilde\bX\bW - \rho_n^{1/2}\bX\|_{\mathrm{F}}&\lesssim_c \frac{1}{\lambda_d(\bDelta_n)}.
\end{align*}
\end{lemma}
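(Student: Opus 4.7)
The plan is to prove the two assertions separately via the keystone decompositions already developed in the manuscript. For the first assertion, start from \eqref{eqn:keystone_remainder_term}, which expresses
$$\bR_\bX := \widetilde\bX\bW - \rho_n^{-1/2}\bA\bX(\bX\transpose\bX)^{-1} = \bU_\bA(\bW^*\bS_\bA^{1/2} - \bS_\bP^{1/2}\bW^*)\transpose\bW_\bX + (\bU_\bA - \bA\bU_\bP\bS_\bP^{-1}\bW^*)(\bW^*)\transpose\bS_\bP^{1/2}\bW_\bX$$
in the all-positive RDPG setting ($p = d$, $q = 0$). The first summand has Frobenius norm bounded by $\|\bW^*\bS_\bA^{1/2} - \bS_\bP^{1/2}\bW^*\|_{\mathrm{F}}$ since $\bU_\bA$ and $\bW_\bX$ are orthonormal; applying the second conclusion of Lemma \ref{lemma:WS_interchange_bound} and simplifying via the RDPG bounds $\|\bX\|_{2\to\infty}\leq 1$ and $d^{1/2}\leq 1/\lambda_d(\bDelta_n)^{1/2}$ (Result \ref{result:UP_delocalization}) delivers a bound of the claimed form $\lesssim_c (n\rho_n)^{-1/2}\lambda_d(\bDelta_n)^{-1}\max\{t^{1/2},\kappa(\bDelta_n)/\lambda_d(\bDelta_n)\}$.

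For the second summand, the naive split $\bU_\bA - \bA\bU_\bP\bS_\bP^{-1}\bW^* = (\bU_\bA - \bU_\bP\bW^*) - \bE\bU_\bP\bS_\bP^{-1}\bW^*$ combined with $\|\bS_\bP^{1/2}\|_2\leq (n\rho_n\lambda_1(\bDelta_n))^{1/2}$ loses a factor of $(n\rho_n)^{1/2}$, so I plan to instead re-use the four-term decomposition in Section \ref{sub:proof_sketch_for_theorem_thm:eigenvector_deviation}, namely $\bE(\bU_\bA - \bU_\bP\bW^*)\bS_\bA^{-1}$, $\bU_\bP\bS_\bP(\bU_\bP\transpose\bU_\bA\bS_\bA^{-1} - \bS_\bP^{-1}\bU_\bP\transpose\bU_\bA)$, $\bU_\bP(\bU_\bP\transpose\bU_\bA - \bW^*)$, and $\bE\bU_\bP(\bW^*\bS_\bA^{-1} - \bS_\bP^{-1}\bW^*)$. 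Multiplying by $(\bW^*)\transpose\bS_\bP^{1/2}\bW_\bX$ on the right, terms (1) and (4) carry an additional $(n\rho_n)^{-1/2}$ factor from $\|\bE\|_2\lesssim_c (n\rho_n)^{1/2}$ (Result \ref{result:spectral_norm_concentration}), while terms (2) and (3) contain $\bU_\bP\bS_\bP$ that pairs with $\bS_\bP^{-1}$ so that multiplying by $\bS_\bP^{1/2}$ effectively cancels the blowup. Together with the relevant conclusions of Lemmas \ref{lemma:WS_interchange_bound} and \ref{lemma:higher_order_remainder} (for $\|\bS_\bP\bU_\bP\transpose\bU_\bA - \bU_\bP\transpose\bU_\bA\bS_\bA\|_2$ and $\|\bU_\bP\transpose\bU_\bA - \bW^*\|_2$), and Davis--Kahan plus a matrix Bernstein bound for $\|\bE\bU_\bP\|_{\mathrm{F}}\lesssim_c \sqrt{dn\rho_n}$, each piece slots into the target bound $\lesssim_c (n\rho_n)^{-1/2}\lambda_d(\bDelta_n)^{-1}\max\{t^{1/2},\kappa(\bDelta_n)/\lambda_d(\bDelta_n)\}$.

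For the second assertion, start from \eqref{eqn:keystone_decomposition}: $\widetilde\bX\bW - \rho_n^{1/2}\bX = \rho_n^{-1/2}\bE\bX(\bX\transpose\bX)^{-1} + \bR_\bX$. Since $\bX = \rho_n^{-1/2}\bU_\bP\bS_\bP^{1/2}\bW_\bX$ in the all-positive case, the linear part simplifies to $\bE\bU_\bP\bS_\bP^{-1/2}\bW_\bX$ whose Frobenius norm is at most $\|\bE\|_2\|\bU_\bP\|_{\mathrm{F}}\|\bS_\bP^{-1/2}\|_2\lesssim_c (n\rho_n)^{1/2}\sqrt{d}/(n\rho_n\lambda_d(\bDelta_n))^{1/2}=\sqrt{d/\lambda_d(\bDelta_n)}\leq 1/\lambda_d(\bDelta_n)$, again using $d\leq 1/\lambda_d(\bDelta_n)$. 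Under the conditions of Theorem \ref{thm:ASE_Berry_Esseen_RDPG}, the first-assertion bound is also $O_c(1/\lambda_d(\bDelta_n))$, giving the desired $\|\widetilde\bX\bW - \rho_n^{1/2}\bX\|_{\mathrm{F}}\lesssim_c 1/\lambda_d(\bDelta_n)$.

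The main obstacle will be executing the four-term argument for the second summand cleanly, i.e., showing that each term, after multiplication by $\bS_\bP^{1/2}$, stays within the target Frobenius rate despite $\|\bS_\bP^{1/2}\|_2$ being as large as $(n\rho_n)^{1/2}$. The expected mechanism is that every term either contains an $\bE$-factor (contributing $(n\rho_n)^{1/2}$ that cancels) or a $\bS_\bP^{-1}$-factor that cancels $\bS_\bP^{1/2}$ down to $\bS_\bP^{-1/2}$, and the rest is routine Frobenius bookkeeping using Lemmas \ref{lemma:WS_interchange_bound}, \ref{lemma:higher_order_remainder}, Davis--Kahan in Frobenius norm, and matrix Bernstein's inequality applied to $\bE\bU_\bP$.
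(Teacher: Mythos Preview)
Your overall scaffold is sound, but the paper explicitly remarks at the start of its proof that ``this lemma does \emph{not} follow from Theorem~\ref{thm:eigenvector_deviation},'' and your route is essentially attempting to do exactly that. The breakdown occurs at your term~(2), namely $\bU_\bP\bS_\bP(\bU_\bP\transpose\bU_\bA\bS_\bA^{-1} - \bS_\bP^{-1}\bU_\bP\transpose\bU_\bA)(\bW^*)\transpose\bS_\bP^{1/2}$. Your claimed mechanism --- that the internal $\bS_\bP^{-1}$ pairs with the appended $\bS_\bP^{1/2}$ --- does not hold: the $\bS_\bP^{-1}$ sits to the \emph{left} of $\bU_\bP\transpose\bU_\bA$, while $\bS_\bP^{1/2}$ lands on the \emph{right} of $(\bW^*)\transpose$, and the two do not commute. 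The naive bound
\[
\sqrt{d}\,\bigl\|\bS_\bP\bU_\bP\transpose\bU_\bA - \bU_\bP\transpose\bU_\bA\bS_\bA\bigr\|_2\,\|\bS_\bA^{-1}\|_2\,\|\bS_\bP\|_2^{1/2}
\;\lesssim\;
\frac{\sqrt{d}\,\lambda_1(\bDelta_n)^{1/2}}{(n\rho_n)^{1/2}\lambda_d(\bDelta_n)}\,
\max\Bigl\{t^{1/2},\,\tfrac{1}{\lambda_d(\bDelta_n)}\Bigr\}
\]
picks up an extra multiplicative $\kappa(\bDelta_n)^{1/2}$ in front of the $t^{1/2}$ branch, which does \emph{not} collapse into $\max\{t^{1/2},\kappa(\bDelta_n)/\lambda_d(\bDelta_n)\}$ when $t^{1/2}\gg\kappa(\bDelta_n)/\lambda_d(\bDelta_n)$ and $\kappa(\bDelta_n)$ is unbounded --- a regime the hypotheses of Theorem~\ref{thm:ASE_Berry_Esseen_RDPG} permit.

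The paper sidesteps this by invoking a different six-term decomposition (from \cite{JMLR:v18:17-448} and \cite{doi:10.1080/10618600.2016.1193505}) in which the half-power factors $\bS_\bA^{\pm 1/2}$ appear directly, so that no product of the form $\|\bS_\bA^{-1}\|_2\|\bS_\bP\|_2^{1/2}$ ever arises. The piece analogous to your~(2) there is $\bU_\bP\bU_\bP\transpose\bE\bU_\bP\bW^*\bS_\bA^{-1/2}$, bounded in Frobenius norm by $\sqrt{d}\,\|\bU_\bP\transpose\bE\bU_\bP\|_2\,\|\bS_\bA^{-1/2}\|_2\lesssim (d+(dt)^{1/2})\,(n\rho_n)^{-1/2}\lambda_d(\bDelta_n)^{-1/2}$, which fits the target without the $\kappa^{1/2}$ leak. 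Your plan is repairable by inserting the commutator $\bS_\bA^{-1}(\bW^*)\transpose\bS_\bP^{1/2}=\bS_\bA^{-1/2}(\bW^*)\transpose+\bS_\bA^{-1}\bigl((\bW^*)\transpose\bS_\bP^{1/2}-\bS_\bA^{1/2}(\bW^*)\transpose\bigr)$ and controlling the correction via Lemma~\ref{lemma:WS_interchange_bound}, but at that point you have effectively reconstructed the paper's decomposition by hand.
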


\begin{proof}[\bf Proof of Lemma \ref{lemma:RX_Frobenius_norm_bound}]
We first remark that this lemma does not follow from Theorem \ref{thm:eigenvector_deviation}. Instead, we rely on the following matrix decomposition due to \cite{JMLR:v18:17-448} and \cite{doi:10.1080/10618600.2016.1193505}:
\begin{align*}
\widetilde{\bX} - \bU_\bP\bS_\bP^{1/2}\bW^* 
& = \bE\bU_\bP\bS_\bP^{-1/2}\bW^* - \bU_\bP\bU_\bP\transpose\bE\bU_\bP\bW^*\bS_\bA^{-1/2}\\
&\quad + (\eye - \bU_\bP\bU_\bP\transpose{})\bE(\bU_\bA - \bU_\bP\bW^*)\bS_\bA^{-1/2} + \bU_\bP(\bU_\bP\transpose\bU_\bA - \bW^*)\bS_\bA^{-1/2}\\
&\quad + \bU_\bP(\bW^*\bS_\bA^{1/2} - \bS_\bP^{1/2}\bW^*) - \bE\bU_\bP(\bS_\bP^{-1/2}\bW^* - \bW^* \bS_\bA^{-1/2}).
\end{align*}
Denote $\bE = \bA - \bP$. 
Since $\bU_\bP\bS_\bP^{1/2}\bW^* =  \rho_n^{1/2}\bX\bW\transpose$ and $\bU_\bP\bS_\bP^{-1/2}\bW^* =  \rho_n^{-1/2}\bX(\bX\transpose{}\bX)^{-1}\bW\transpose$, it follows that
\begin{align*}
\widetilde{\bX}\bW - \rho_n^{1/2}\bX
& = \rho_n^{-1/2}\bE\bX(\bX\transpose{}\bX)^{-1} - \bU_\bP\bU_\bP\transpose\bE\bU_\bP\bW^*\bS_\bA^{-1/2}\bW\\
&\quad + (\eye - \bU_\bP\bU_\bP\transpose{})\bE(\bU_\bA - \bU_\bP\bW^*)\bS_\bA^{-1/2}\bW + \bU_\bP(\bU_\bP\transpose\bU_\bA - \bW^*)\bS_\bA^{-1/2}\bW\\
&\quad + \bU_\bP(\bW^*\bS_\bA^{1/2} - \bS_\bP^{1/2}\bW^*)\bW - \bE\bU_\bP(\bS_\bP^{-1/2}\bW^* - \bW^* \bS_\bA^{-1/2})\bW.
\end{align*}
Denote $\bR_\bX = \widetilde\bX\bW - \rho_n^{-1/2}\bA\bX(\bX\transpose{}\bX)^{-1}$. Using Davis-Kahan theorem and the fact that $\bR_\bX = \widetilde\bX\bW - \rho_n^{1/2}\bX - \rho_n^{-1/2}\bE\bX(\bX\transpose{}\bX)^{-1}$, we obtain
\begin{align*}
\|\bR_\bX\|_{\mathrm{F}}
&\leq \sqrt{d}\|\bU_\bP\transpose{}\bE\bU_\bP\|_2\|\bS_\bA^{-1/2}\|_2 + \|\bE\|_2\|\bU_\bA - \bU_\bP\bW^*\|_{\mathrm{F}}\|\bS_\bA^{-1/2}\|_2
\\
&\quad 
 + \sqrt{d}\|\bU_\bP\transpose\bU_\bA - \bW^*\|_2\|\bS_\bA^{-1/2}\|_2+ \|\bW^*\bS_\bA^{1/2} - \bS_\bP^{1/2}\bW^*\|_{\mathrm{F}}\\
&\quad + \|\bE\|_2\|\bW^* \bS_\bA^{-1/2} - \bS_\bP^{-1/2}\bW^*\|_{\mathrm{F}}\\
&\lesssim  \sqrt{d}\|\bU_\bP\transpose\bE\bU_\bP\|_2\|\bS_\bA^{-1/2}\|_2 + \frac{ \sqrt{d}\|\bE\|_2^2}{n\rho_n\lambda_d(\bDelta_n)}\|\bS_\bA^{-1/2}\|_2
 + \frac{\sqrt{d}\|\bE\|_2^2}{(n\rho_n)^2\lambda_d(\bDelta_n)^2}\|\bS_\bA^{-1/2}\|_2
 \\
&\quad 
+ \|\bW^*\bS_\bA^{1/2} - \bS_\bP^{1/2}\bW^*\|_{\mathrm{F}} + \|\bE\|_2\|\bW^* \bS_\bA^{-1/2} - \bS_\bP^{-1/2}\bW^*\|_{\mathrm{F}} .
\end{align*}
By Lemma \ref{lemma:UtransposeEU_bound}, Lemma \ref{lemma:WS_interchange_bound}, Result \ref{result:spectral_norm_concentration}, and Result \ref{result:S_A_concentration}, 
for sufficiently large $n$,
\begin{align*}
&\|\bU_\bP\transpose \bE \bU_\bP\|_{\mathrm{2}}\lesssim_c d^{1/2} + t^{1/2},
\quad\|\bE\|_2\lesssim_c (n\rho_n)^{1/2},\quad\|\bS_\bA^{-1/2}\|_2\lesssim_c \frac{1}{(n\rho_n)^{1/2}\lambda_d(\bDelta_n)^{1/2}}\\
&\|\bW^*\bS_\bA^{1/2} - \bS_\bP^{1/2}\bW^*\|_{\mathrm{F}}\lesssim_c \frac{1}{(n\rho_n)^{1/2}\lambda_d(\bDelta_n)}\max\left\{\frac{\kappa(\bDelta_n)}{\lambda_d(\bDelta_n)}, t^{1/2}\right\},\\
&\|\bW^*\bS_\bA^{-1/2} - \bS_\bP^{-1/2}\bW^*\|_{\mathrm{F}}\lesssim_c \frac{1}{(n\rho_n)^{3/2}\lambda_d(\bDelta_n)^{2}}\max\left\{\frac{\kappa(\bDelta_n)}{\lambda_d(\bDelta_n)}, t^{1/2}\right\}
\end{align*}
with probability at least $1 - c_0n^{-c} - c_0e^{-t}$. Here, we have used the fact that 
\[
d^{1/2}\leq d\leq \lambda_d(\bDelta_n)^{-1}\|\bX\|_{2\to\infty}^2\leq \kappa(\bDelta_n)/\lambda_d(\bDelta_n)
\] 
from Result \ref{result:UP_delocalization}. 
Hence, we conclude that
\begin{align*}
\|\bR_\bX\|_{\mathrm{F}}
&\lesssim_c \frac{d + (dt)^{1/2}}{(n\rho_n)^{1/2}\lambda_d(\bDelta_n)^{1/2}} + \frac{d^{1/2}(n\rho_n)}{(n\rho_n)^{3/2}\lambda_d(\bDelta_n)^{3/2}}
\\
&\quad 
 + \frac{d^{1/2}(n\rho_n)}{(n\rho_n)^{5/2}\lambda_d(\bDelta_n)^{5/2}} + \frac{1}{(n\rho_n)^{1/2}\lambda_d(\bDelta_n)}\max\left\{t^{1/2}, \frac{\kappa(\bDelta_n)}{\lambda_d(\bDelta_n)}\right\}\\
&\quad + \frac{1}{(n\rho_n)\lambda_d(\bDelta_n)^{2}}\max\left\{t^{1/2}, \frac{\kappa(\bDelta_n)}{\lambda_d(\bDelta_n)}\right\}\\
&\lesssim \frac{1}{(n\rho_n)^{1/2}\lambda_d(\bDelta_n)}\max\left\{t^{1/2}, \frac{\kappa(\bDelta_n)}{\lambda_d(\bDelta_n)}\right\}
\end{align*}
with probability at least $1 - c_0n^{-c} - c_0e^{-t}$ for sufficiently large $n$. This completes the proof of the first assertion. The second assertion follows from the fact that
\begin{align*}
\|\widetilde\bX\bW - \rho_n^{1/2}\bX\|_{\mathrm{F}}
&\leq \frac{1}{n\rho_n^{1/2}}\|\bE\|_2\|\bX\|_{\mathrm{F}}\|\bDelta_n^{-1}\|_2 + \|\bR_\bX\|_{\mathrm{F}},
\end{align*}
Result \ref{result:spectral_norm_concentration}, and the assumption that
${\kappa(\bDelta_n)}/\{(n\rho_n)^{1/2}\lambda_d(\bDelta_n)\}\to 0$.
\end{proof}

\begin{lemma}\label{lemma:single_x_i_perturbation_bound}
Suppose $\bA\sim\mathrm{RDPG}(\rho_n^{1/2}\bX)$ and let the conditions of Theorem \ref{thm:ASE_Berry_Esseen_RDPG} hold. Then there exists an absolute constant $c_0 > 0$, such that given any fixed $c > 0$,
for each fixed $i\in [n]$, 
for sufficiently large $n$
and for all $t\geq 1$, $t\lesssim n\rho_n$, 
\begin{align*}
\|\bW\transpose{}\widetilde\bx_i - \rho_n^{1/2}\bx_i\|_2\lesssim_c \frac{\|\bU_\bP\|_{2\to\infty}}{\lambda_d(\bDelta_n)}\max\left\{\frac{t^{1/2}}{\lambda_d(\bDelta_n)^2}, \frac{\kappa(\bDelta_n)}{\lambda_d(\bDelta_n)^2}, t\right\}
\end{align*}
with probability at least $1 - c_0n^{-c} - c_0e^{-t}$. 
\end{lemma}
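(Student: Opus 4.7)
\medskip

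The plan is to start from the keystone decomposition \eqref{eqn:keystone_decomposition} restricted to the $i$th row, which for the RDPG (where only the ``$+$'' block is present) reads
\begin{align*}
\bW\transpose\widetilde\bx_i - \rho_n^{1/2}\bx_i
\;=\; \frac{1}{n\rho_n^{1/2}}\bDelta_n^{-1}\sum_{j=1}^n [\bE]_{ij}\bx_j \;+\; \bR_{\bX}\transpose\be_i,
\end{align*}
where $\bR_{\bX} := \widetilde{\bX}\bW - \rho_n^{-1/2}\bA\bX(\bX\transpose\bX)^{-1}$. The first term is a sum of independent mean-zero vectors and will be controlled by a direct Bernstein-type inequality; the second term is precisely the higher-order remainder already analyzed in Theorem \ref{thm:eigenvector_deviation}. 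Combining the two bounds will deliver the claim, provided the prefactors align with the $\|\bU_\bP\|_{2\to\infty}/\lambda_d(\bDelta_n)$ scale of the target.

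For the linear term I will invoke Lemma \ref{lemma:Bernstein_concentration_EW} with $\bV = \bX$, $y_j = A_{ij}$, and $\rho = \rho_n$. Since $\|\bX\|_{2\to\infty}\le 1$ and $\|\bX\|_{\mathrm{F}}\le \sqrt{n}\|\bX\|_{2\to\infty}$, after replacing $t$ by $t^{1/2}$ to convert the sub-Gaussian tail to exponential, I obtain, with probability $1 - c_0 e^{-t}$,
\begin{align*}
\mathrel{\Big\|}\sum_{j=1}^n [\bE]_{ij}\bx_j\mathrel{\Big\|}_2 \;\lesssim\; t + (n\rho_n t)^{1/2}.
\end{align*}
After multiplication by $1/(n\rho_n^{1/2}\lambda_d(\bDelta_n))$ and using $t/(n\rho_n)^{1/2}\lesssim t^{1/2}$ (valid since $t\lesssim n\rho_n$), the linear term is bounded by $t^{1/2}/(\sqrt{n}\,\lambda_d(\bDelta_n))$. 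Applying the elementary delocalization bound $1/\sqrt{n}\le \|\bU_\bP\|_{2\to\infty}$ (Result \ref{result:UP_delocalization}) followed by $\lambda_d(\bDelta_n)\le 1$ converts this into $\|\bU_\bP\|_{2\to\infty}t^{1/2}/\lambda_d(\bDelta_n)^3$, matching the first entry of the max in the target.

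For the remainder $\|\bR_{\bX}\transpose\be_i\|_2$, I will directly apply the second displayed bound of Theorem \ref{thm:eigenvector_deviation}. In the RDPG setting, the proof of Theorem \ref{thm:ASE_Berry_Esseen_RDPG} (Section \ref{sub:proof_of_ASE_Berry_Esseen_RDPG}) has already verified Assumptions \ref{assumption:incoherence}--\ref{assumption:spectral_norm_concentration} with $\varphi(x)\propto \{\Log(1/x)\}^{-1}\lambda_d(\bDelta_n)^{-1}$, yielding $\varphi(1)\lesssim \lambda_d(\bDelta_n)^{-1}$ and hence $\chi \lesssim \lambda_d(\bDelta_n)^{-1}$. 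Substituting $\chi\lesssim \lambda_d(\bDelta_n)^{-1}$ and $\|\bX\|_{2\to\infty}\le 1$ into Theorem \ref{thm:eigenvector_deviation} gives, with probability $1 - c_0 n^{-c} - c_0 d e^{-t}$,
\begin{align*}
\|\bR_\bX\transpose\be_i\|_2 \;\lesssim_c\; \frac{\|\bU_\bP\|_{2\to\infty}}{(n\rho_n)^{1/2}\lambda_d(\bDelta_n)^2}\max\!\left\{\frac{t^{1/2}}{\lambda_d(\bDelta_n)^2},\;\frac{\kappa(\bDelta_n)}{\lambda_d(\bDelta_n)^2},\;t\right\}.
\end{align*}
The prefactor $1/[(n\rho_n)^{1/2}\lambda_d(\bDelta_n)]$ is bounded by an absolute constant under the standing assumption $\kappa(\bDelta_n)/[\lambda_d(\bDelta_n)(n\rho_n)^{1/2}]\to 0$ of Theorem \ref{thm:ASE_Berry_Esseen_RDPG}, so this bound is also dominated by $\|\bU_\bP\|_{2\to\infty}/\lambda_d(\bDelta_n)$ times the stated max. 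Summing the two contributions yields the lemma.

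The proof is almost entirely bookkeeping once Theorem \ref{thm:eigenvector_deviation} is in hand; the only mildly delicate step is the bookkeeping of the $\lambda_d(\bDelta_n)$ powers to confirm that the cruder $t^{1/2}/(\sqrt{n}\lambda_d(\bDelta_n))$ bound for the linear fluctuation is indeed absorbed by the first branch of the max (which carries a $\lambda_d(\bDelta_n)^{-3}$ factor), and that the $(n\rho_n)^{-1/2}\lambda_d(\bDelta_n)^{-1}$ prefactor in the remainder bound is $O(1)$ under the hypothesized spectral assumption. I expect no novel obstacle beyond correctly collecting these scalings.
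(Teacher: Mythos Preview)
Your proposal is correct and follows essentially the same route as the paper: apply the keystone decomposition, bound the linear fluctuation via Lemma~\ref{lemma:Bernstein_concentration_EW}, and invoke Theorem~\ref{thm:eigenvector_deviation} for the remainder, then absorb the linear piece into the first branch of the $\max$ using $1/\sqrt{n}\le\|\bU_\bP\|_{2\to\infty}$ and $(n\rho_n)^{-1/2}\lambda_d(\bDelta_n)^{-1}=O(1)$. One small point you should address: the probability bound you quote from Theorem~\ref{thm:eigenvector_deviation} carries a factor $de^{-t}$, not $e^{-t}$; the paper explicitly notes that in the RDPG setting ($\bE_2\equiv 0$) the dimension-free vector Bernstein inequality (Lemma~\ref{lemma:Bernstein_concentration_EW}) suffices throughout, so the $d$ can be dropped.
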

\begin{proof}[\bf Proof of Lemma \ref{lemma:single_x_i_perturbation_bound}]
By Theorem \ref{thm:eigenvector_deviation} and Lemma \ref{lemma:Bernstein_concentration_EW}, 
for sufficiently large $n$ and
for all $t \geq 1$, $t\lesssim n\rho_n$, 
\begin{align*}
\|\bW\transpose{}\widetilde{\bx}_i - \rho_n^{1/2}\bx_{i}\|_2
&\leq \frac{1}{n\rho_n^{1/2}}\|\be_i\transpose{}\bE\bX\bDelta_n^{-1}\|_2 + \|\be_i\transpose{}(\widetilde\bX\bW - \rho_n^{-1/2}\bA\bX(\bX\transpose{}\bX)^{-1})\|_2
\\
&
\lesssim_c \frac{(n\rho_nt)^{1/2}\|\bX\|_{2\to\infty}\|\bDelta_n^{-1}\|_2}{n\rho_n^{1/2}}\\
&\quad + 
  \frac{\|\bU_\bP\|_{2\to\infty}}{(n\rho_n)^{1/2}\lambda_d(\bDelta_n)^2}\max\left\{\frac{t^{1/2}}{\lambda_d(\bDelta_n)^2}, \frac{\kappa(\bDelta_n)}{\lambda_d(\bDelta_n)^2}, t\right\}
  \\
&\leq \frac{t^{1/2}}{\sqrt{n}\lambda_d(\bDelta_n)} + \frac{\|\bU_\bP\|_{2\to\infty}}{ \lambda_d(\bDelta_n) }\max\left\{\frac{t^{1/2}}{ \lambda_d(\bDelta_n)^2}, \frac{\kappa(\bDelta_n)}{ \lambda_d(\bDelta_n)^2}, t\right\}\\
&\lesssim \frac{\|\bU_\bP\|_{2\to\infty}}{\lambda_d(\bDelta_n)}\max\left\{\frac{t^{1/2}}{ \lambda_d(\bDelta_n)^2}, \frac{\kappa(\bDelta_n)}{ \lambda_d(\bDelta_n)^2}, t\right\}
\end{align*}
with probability at least $1 - c_0n^{-c} - c_0e^{-t}$. Note that the tail probability $c_0de^{-t}$ can be replaced by $c_0e^{-t}$ when $A_{ij}$'s are Bernoulli random variables because the vector Bernstein's inequality (Lemma \ref{lemma:Bernstein_concentration_EW}) is dimension free. The proof is thus completed.
\end{proof}

We finally present the following lemma that characterizes the Taylor expansion behavior of the functions $\bg$ and $\bh_{ij}$ defined in Section \ref{sub:outline_of_the_proof_of_theorem_thm:asymptotic_normality_os}. 
\begin{lemma}\label{lemma:gh_Taylor_expansion}
Let $\calX_2(\eps)$, $\bg(\bu,\bv)$, $\bh_{ij}(\bu,\bv)$, $\br_\bg(\bu,\bv, \bx_i, \bx_j)$, and $\br_{\bh_{ij}}(\bu,\bv, \bx_i, \bx_j)$ be defined as in Section \ref{sub:outline_of_the_proof_of_theorem_thm:asymptotic_normality_os}. Suppose $(\bx_i, \bx_j)\in\calX(\delta)$. Then:
\begin{itemize}
  \item[(a)] For all $(\bu, \bv)\in\calX_2(\delta/2)$, 
  \begin{align*}
  \left\|\bg(\bu, \bv) - \bg(\bx_{i},\bx_{j}) \right\|_2&\lesssim \frac{1}{\delta^4}(\|\bu - \bx_{i}\|_2 + \|\bv - \bx_{j}\|_2),\\
  \left\|\bh_{ij}(\bu, \bv) - \bh_{ij}(\bx_{i},\bx_{j}) \right\|_2&\lesssim \frac{1}{\delta^4}(\|\bu - \bx_{i}\|_2 + \|\bv - \bx_{j}\|_2).
  \end{align*}

  \item[(b)]  For all $(\bu, \bv)\in\calX_2(\delta/2)$,
\begin{align*}
 \|\br_\bg(\bu,\bv, \bx_i, \bx_j)\|_2&\lesssim \frac{d^{1/2}}{\delta^6}(\|\bu - \bx_{i}\|_2^2 + \|\bv - \bx_{j}\|_2^2),\\
 \|\br_{\bh_{ij}}(\bu,\bv, \bx_i, \bx_j)\|_2&\lesssim \frac{d^{1/2}}{\delta^6}(\|\bu - \bx_{i}\|_2^2 + \|\bv - \bx_{j}\|_2^2).
\end{align*}
\end{itemize}
\end{lemma}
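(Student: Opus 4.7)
The plan is to treat Lemma \ref{lemma:gh_Taylor_expansion} as a deterministic analytic statement and reduce it to Taylor bookkeeping for the scalar function $\phi(s):=1/[s(1-\rho_n s)]$. The first step is to record that, for $(\bu,\bv)\in\calX_2(\delta/2)$, we have $s:=\bu\transpose\bv\in[\delta/2,1-\delta/2]$, hence $s(1-\rho_n s)\geq \delta^2/4$, and direct computation of $\phi'$ and $\phi''$ (using $\phi'=-(1-2\rho_n s)/\psi^2$ and $\phi''=2\rho_n/\psi^2+2(1-2\rho_n s)^2/\psi^3$ with $\psi=s(1-\rho_n s)$) yields
\[
|\phi(s)|\lesssim \delta^{-2},\qquad |\phi'(s)|\lesssim \delta^{-4},\qquad |\phi''(s)|\lesssim \delta^{-6}
\]
uniformly on this interval. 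Since $(\bx_i,\bx_j)\in\calX(\delta)$, the scalar $s_0:=\bx_i\transpose\bx_j$ lies in $[\delta,1-\delta]$, so the one-dimensional segment joining $s_0$ and $s_1:=\bu\transpose\bv$ stays within $[\delta/2,1-\delta/2]$ and scalar mean-value estimates on $\phi$ are always available.

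For part (a), I would write $\bg(\bu,\bv)-\bg(\bx_i,\bx_j) = (\bv-\bx_j)\phi(s_1) + \bx_j[\phi(s_1)-\phi(s_0)]$. The first summand is at most $\delta^{-2}\|\bv-\bx_j\|_2$, while the second is bounded by $|\phi(s_1)-\phi(s_0)|\lesssim \delta^{-4}|s_1-s_0|$ combined with the bilinear identity $|s_1-s_0|\leq \|\bu-\bx_i\|_2+\|\bv-\bx_j\|_2$ (using $\|\bx_i\|_2,\|\bv\|_2\leq 1$), which delivers the $\delta^{-4}$ Lipschitz bound. For $\bh_{ij}(\bu,\bv)=(s_0-s_1)\bv\phi(s_1)$, the identity $\bh_{ij}(\bx_i,\bx_j)=0$ together with the same inner-product bound give an even stronger estimate, so part (a) follows.

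For part (b), I would treat each scalar component separately: $\bg_k(\bu,\bv)=v_k\phi(\bu\transpose\bv)$ is $C^\infty$, and a routine chain-rule computation expresses every entry of its $2d\times 2d$ Hessian as a sum of terms proportional to $\phi$, $\phi'$, or $\phi''$ times a product of at most two components of $\bu$ and $\bv$. Together with $\|\bu\|_2,\|\bv\|_2\leq 1$ and the $\phi^{(k)}$ bounds, this gives $\|\nabla^2\bg_k\|_{\mathrm{op}}\lesssim \delta^{-6}$ on $\calX_2(\delta/2)$. Applying the second-order Taylor formula with Lagrange remainder to each $\bg_k$ at $(\bx_i,\bx_j)$ yields $|[\br_\bg]_k|\lesssim \delta^{-6}(\|\bu-\bx_i\|_2^2+\|\bv-\bx_j\|_2^2)$ for every $k$, and the coarse inequality $\|\cdot\|_2\leq \sqrt{d}\,\|\cdot\|_\infty$ then produces the advertised $d^{1/2}$ factor. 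The same Hessian analysis, combined with the observation that the prefactor $(s_0-s_1)$ in $\bh_{ij}$ is linear to leading order and vanishes at $(\bx_i,\bx_j)$, handles $\br_{\bh_{ij}}$.

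The main obstacle is that $\calX_2(\delta/2)$ is not convex, so the straight-line segment from $(\bx_i,\bx_j)$ to $(\bu,\bv)$ may leave the domain on which the derivative estimates hold, invalidating a direct application of Taylor with integral remainder. I would handle this by a dichotomy. When $\|\bu-\bx_i\|_2+\|\bv-\bx_j\|_2\leq \delta/4$, the bilinear expansion $s_t = \bx_i\transpose\bx_j + t(\bx_i\transpose(\bv-\bx_j)+(\bu-\bx_i)\transpose\bx_j)+t^2(\bu-\bx_i)\transpose(\bv-\bx_j)$ shows that $s_t\in [2\delta/3, 1-2\delta/3]$ for all $t\in[0,1]$, so the segment lies in a slightly relaxed version of $\calX_2$ and the Taylor formulas apply verbatim. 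In the complementary regime $\|\bu-\bx_i\|_2+\|\bv-\bx_j\|_2>\delta/4$, both $\bg$ and $\bh_{ij}$ and their purported linear parts are uniformly $O(\delta^{-4})$ on $\calX_2(\delta/2)$, whereas the right-hand sides of (a) and (b) are at least of order $\delta^{-3}$ and $\sqrt{d}\,\delta^{-4}$ respectively, so the stated inequalities hold trivially once absolute constants are adjusted. The remainder of the proof is routine bookkeeping.
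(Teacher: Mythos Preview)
Your proof is correct and follows essentially the same route as the paper: both arguments bound the first and second derivatives of the coordinate functions $g_k(\bu,\bv)=v_k/[\bu\transpose\bv(1-\rho_n\bu\transpose\bv)]$ and $h_{ijk}$ by $\delta^{-4}$ and $\delta^{-6}$ respectively, apply a mean-value inequality for (a) and a second-order Taylor bound for (b), and pick up the $\sqrt{d}$ by aggregating the coordinatewise remainders through $\|\cdot\|_2\le\sqrt{d}\,\|\cdot\|_\infty$. Your reduction to the scalar function $\phi(s)=1/[s(1-\rho_n s)]$ is equivalent to, and slightly lighter than, the paper's explicit gradient and Hessian computations; your part-(a) decomposition $(\bv-\bx_j)\phi(s_1)+\bx_j[\phi(s_1)-\phi(s_0)]$ is a bit slicker than the full Jacobian bound the paper writes out.

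The one substantive difference is that you flag and repair the non-convexity of $\calX_2(\delta/2)$ with a small/large-perturbation dichotomy, whereas the paper simply invokes the mean-value inequality and Taylor's theorem on $\calX_2(\delta/2)$ without comment. Your concern is legitimate---one can construct $(\bx_i,\bx_j)\in\calX_2(\delta)$ and $(\bu,\bv)\in\calX_2(\delta/2)$ whose connecting segment has inner product dipping to $\delta/3$---so as stated the lemma needs exactly the patch you supply. In the paper's actual applications this never bites, because the perturbations $\rho_n^{-1/2}\bW\transpose\widetilde\bx_i-\bx_i$ are always shown to satisfy $\|\cdot\|_2\le\delta/6$ with high probability before the lemma is invoked, which lands squarely in your small-perturbation branch.
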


\begin{proof}[\bf Proof of Lemma \ref{lemma:gh_Taylor_expansion}]
For each $k\in [d]$, denote
\[
g_k(\bu, \bv) = \frac{\be_k\transpose{}\bv}{\bu\transpose{}\bv(1 - \rho_n\bu\transpose{}\bv)},\quad 
h_{ijk}(\bu, \bv) = \frac{(\bx_{i}\transpose{}\bx_{j} - \bu\transpose{}\bv)\be_k\transpose{}\bv}{\bu\transpose{}\bv(1 - \rho_n\bu\transpose{}\bv)}.
\]
A simple algebra shows that the gradients of $g_k$ and $h_{ijk}$ are
\begin{align*}
&\frac{\partial g_k}{\partial\bu\transpose{}}(\bu, \bv)  = \frac{(1 - 2\rho_n\bu\transpose{}\bv)\be_k\transpose\bv\bv\transpose{}}{\{\bu\transpose{}\bv(1 - \rho_n\bu\transpose{}\bv)\}^2},\quad
\frac{\partial g_k}{\partial\bv\transpose{}}(\bu, \bv)  = \frac{\eye_d}{\bu\transpose{}\bv(1 - \rho_n\bu\transpose{}\bv)} + 
\frac{(1 - 2\rho_n\bu\transpose{}\bv)\be_k\transpose{}\bv\bu\transpose{}}{\{\bu\transpose{}\bv(1 - \rho_n\bu\transpose{}\bv)\}^2},\\
&\frac{\partial h_{ijk}}{\partial\bu\transpose{}}(\bu, \bv)  = 
- \frac{\be_k\transpose{}\bv\bv\transpose{}}{\bu\transpose{}\bv(1 - \rho_n\bu\transpose{}\bv)}
-
\frac{(1 - 2\rho_n\bu\transpose{}\bv)(\bx_{i}\transpose{}\bx_{j} - \bu\transpose{}\bv)(\be_k\transpose{}\bv)\bv\transpose{}}{\{\bu\transpose{}\bv(1 - \rho_n\bu\transpose{}\bv)\}^2},
\\
&
\frac{\partial h_{ijk}}{\partial\bv\transpose{}}(\bu, \bv)  = \frac{
(\bx_{i}\transpose{}\bx_{j} - \bu\transpose{}\bv)\be_k\transpose{}
}{\bu\transpose{}\bv(1 - \rho_n\bu\transpose{}\bv)}
 - \frac{\be_k\transpose{}\bv\bu\transpose{}}{\bu\transpose{}\bv(1 - \rho_n\bu\transpose{}\bv)}
 -  
\frac{(1 - 2\rho_n\bu\transpose{}\bv)(\bx_{i}\transpose{}\bx_{j} - \bu\transpose{}\bv)(\be_k\transpose{}\bv)\bu\transpose{}}{\{\bu\transpose{}\bv(1 - \rho_n\bu\transpose{}\bv)\}^2}.
\end{align*}
Clearly,
\begin{align*}
&\sup_{(\bu, \bv)\in\calX_2(\delta/2)}\max\left\{\left\|\frac{\partial \bg}{\partial\bu\transpose{}}(\bu, \bv)\right\|_2 + \left\|\frac{\partial \bg}{\partial\bv\transpose{}}(\bu, \bv)\right\|_2, \left\|\frac{\partial \bh_{ij}}{\partial\bu\transpose{}}(\bu, \bv)\right\|_2 + \left\|\frac{\partial \bh_{ij}}{\partial\bv\transpose{}}(\bu, \bv)\right\|_2\right\}\lesssim \frac{1}{\delta^4}.
\end{align*}
Then assertion (a) then follows directly from the mean-value inequality for vector-valued functions. To prove assertion (b), we need to first compute the Hessian of $g_k$:
\begin{align*}
\frac{\partial^2g_k}{\partial\bu\partial\bu\transpose{}}(\bu, \bv)
& = \left[\frac{2\rho_n(\be_k\transpose{}\bv)}{\{\bu\transpose{}\bv(1 - \rho_n\bu\transpose{}\bv)\}^2}
   + \frac{2(1 - 2\rho_n\bu\transpose{}\bv)^2(\be_k\transpose{}\bv)}{\{\bu\transpose{}\bv(1 - \rho_n\bu\transpose{}\bv)\}^3}\right]\bv\bv\transpose,\\
\frac{\partial^2g_k}{\partial\bv\partial\bu\transpose{}}(\bu, \bv)
& = \left[\frac{2\rho_n(\be_k\transpose{}\bv)}{\{\bu\transpose{}\bv(1 - \rho_n\bu\transpose{}\bv)\}^2}
   + \frac{2(1 - 2\rho_n\bu\transpose{}\bv)^2(\be_k\transpose{}\bv)}{\{\bu\transpose{}\bv(1 - \rho_n\bu\transpose{}\bv)\}^3}\right]\bu\bv\transpose
   \\
&\quad - \frac{(1 - 2\rho_n\bu\transpose{}\bv)}{\{\bu\transpose{}\bv(1 - \rho_n\bu\transpose{}\bv\}^2}(\be_k\bv\transpose{} + \be_k\transpose{}\bv\eye_d),\\
\frac{\partial^2g_k}{\partial\bv\partial\bv\transpose{}}(\bu, \bv)
& = \left[\frac{2\rho_n(\be_k\transpose{}\bv)}{\{\bu\transpose{}\bv(1 - \rho_n\bu\transpose{}\bv)\}^2}
   + \frac{2(1 - 2\rho_n\bu\transpose{}\bv)^2(\be_k\transpose{}\bv)}{\{\bu\transpose{}\bv(1 - \rho_n\bu\transpose{}\bv)\}^3}\right]\bu\bu\transpose
   \\
&\quad - \frac{(1 - 2\rho_n\bu\transpose{}\bv)}{\{\bu\transpose{}\bv(1 - \rho_n\bu\transpose{}\bv)\}^2}(\be_k\bu\transpose{} + \bu\be_k\transpose{}).
\end{align*}
Since $(\bu,\bv)\in\calX_2(\delta/2)$ and $\rho_n\in (0, 1]$, we see that
\begin{align*}
\sup_{(\bu,\bv)\in\calX_2(\delta/2)}\max\left\{\left\|\frac{\partial^2g_k}{\partial\bu\partial\bu\transpose{}}(\bu, \bv)\right\|_2, \left\|\frac{\partial^2g_k}{\partial\bv\partial\bu\transpose{}}(\bu, \bv)\right\|_2, \left\|\frac{\partial^2g_k}{\partial\bv\partial\bv\transpose{}}(\bu, \bv)\right\|_2\right\}
&\lesssim \frac{1}{\delta^6}.
\end{align*}
By the mean-value inequality, for any $(\bu_1,\bv_1),(\bu_2,\bv_2)\in\calX_2(\delta/2)$, 
\begin{align*}
\left\|
\begin{bmatrix*}
\frac{\partial g_k}{\partial\bu}(\bu_1, \bv_1)\\
\frac{\partial g_k}{\partial\bv}(\bu_1, \bv_1)
\end{bmatrix*}
- 
\begin{bmatrix*}
\frac{\partial g_k}{\partial\bu}(\bu_2, \bv_2)\\
\frac{\partial g_k}{\partial\bv}(\bu_2, \bv_2)
\end{bmatrix*}
\right\|_2\lesssim \frac{1}{\delta^6}\left\|
\begin{bmatrix*}
\bu_1 - \bu_2\\
\bv_1 - \bv_2
\end{bmatrix*}
\right\|_2.
\end{align*}
Namely, the gradient of $g_k$ is Lipschitz continuous over $\calX_2(\delta/2)$ with a Lipschitz constant upper bounded by an absolute constant factor of $1/\delta^6$. By Taylor's theorem, for any $(\bu, \bv)\in\calX_2(\delta/2)$, 
\begin{align*}
|\be_k\transpose{}\br_\bg(\bu, \bv, \bx_i, \bx_j)|
\lesssim \frac{\|\bu - \bx_{i}\|_2^2 + \|\bv - \bx_{j}\|_2^2}{\delta^6}, 
\end{align*}
and hence, 
\begin{align*}
 \|\br_\bg(\bu, \bv, \bx_i, \bx_j)\|_2 = \left(\sum_{k = 1}^d|\be_k\transpose{}\br_\bg(\bu, \bv, \bx_i, \bx_j)|
^2\right)^{1/2}\lesssim \frac{ \sqrt{d}}{\delta^6}(\|\bu - \bx_{i}\|_2^2 + \|\bv - \bx_{j}\|_2^2). 
\end{align*}
The Hessian of $h_{ijk}$ can be computed similarly:
\begin{align*}
\frac{\partial^2h_{ijk}}{\partial\bu\partial\bu\transpose{}}(\bu, \bv)
& = \left[\frac{2(1 - 2\rho_n\bu\transpose{}\bv) + 2\rho_n(\bx_{i}\transpose{}\bx_{j} - \bu\transpose\bv)}{\{\bu\transpose{}\bv(1 - \rho_n\bu\transpose{}\bv)\}^2}
   + \frac{2(1 - 2\rho_n\bu\transpose{}\bv)^2(\bx_{i}\transpose{}\bx_{j} - \bu\transpose{}\bv)}{\{\bu\transpose{}\bv(1 - \rho_n\bu\transpose{}\bv)\}^3}\right]\\
   &\quad\times(\be_k\transpose{}\bv)\bv\bv\transpose,\\
\frac{\partial^2h_{ijk}}{\partial\bv\partial\bu\transpose{}}(\bu, \bv)
& = \left[\frac{2(1 - 2\rho_n\bu\transpose{}\bv) + 2\rho_n(\bx_{i}\transpose{}\bx_{j} - \bu\transpose\bv)}{\{\bu\transpose{}\bv(1 - \rho_n\bu\transpose{}\bv)\}^2}
   + \frac{2(1 - 2\rho_n\bu\transpose{}\bv)^2(\bx_{i}\transpose{}\bx_{j} - \bu\transpose{}\bv)}{\{\bu\transpose{}\bv(1 - \rho_n\bu\transpose{}\bv)\}^3}\right]\\
   &\quad\times(\be_k\transpose{}\bv)\bu\bv\transpose
 - \left[
\frac{1}{\bu\transpose{}\bv(1 - \rho_n\bu\transpose{}\bv)} + 
\frac{(1 - 2\rho_n\bu\transpose{}\bv)(\bx_{i}\transpose{}\bx_{j} - \bu\transpose{}\bv)}{\{\bu\transpose{}\bv(1 - \rho_n\bu\transpose{}\bv\}^2}\right]\\
&\quad\times (\be_k\bv\transpose{} + \be_k\transpose{}\bv\eye_d),\\
\frac{\partial^2h_{ijk}}{\partial\bv\partial\bv\transpose{}}(\bu, \bv)
& = -\left[
\frac{1}{\bu\transpose{}\bv(1 - \rho_n\bu\transpose{}\bv)} + 
\frac{(1 - 2\rho_n\bu\transpose{}\bv)(\bx_{i}\transpose{}\bx_{j} - \bu\transpose{}\bv)}{\{\bu\transpose{}\bv(1 - \rho_n\bu\transpose{}\bv\}^2}\right]
   (\bu\be_k\transpose{}+ \be_k\bu\transpose{})
   \\
  &\quad
 + \left[\frac{2(1 - 2\rho_n\bu\transpose{}\bv) + 2\rho_n(\bx_{i}\transpose{}\bx_{j} - \bu\transpose\bv)}{\{\bu\transpose{}\bv(1 - \rho_n\bu\transpose{}\bv)\}^2}
   + \frac{2(1 - 2\rho_n\bu\transpose{}\bv)^2(\bx_{i}\transpose{}\bx_{j} - \bu\transpose{}\bv)}{\{\bu\transpose{}\bv(1 - \rho_n\bu\transpose{}\bv)\}^3}\right]\\
   &\quad\times (\be_k\transpose{}\bv)\bu\bu\transpose{}.
\end{align*}
This implies that
\begin{align*}
\sup_{(\bu,\bv)\in\calX_2(\delta/2)}\max\left\{\left\|\frac{\partial^2h_{ijk}}{\partial\bu\partial\bu\transpose{}}(\bu, \bv)\right\|_2,
\left\|\frac{\partial^2h_{ijk}}{\partial\bv\partial\bu\transpose{}}(\bu, \bv)\right\|_2,
\left\|\frac{\partial^2h_{ijk}}{\partial\bv\partial\bv\transpose{}}(\bu, \bv)\right\|_2
\right\}
&\lesssim \frac{1}{\delta^6}.
\end{align*}
An identical argument shows that
\begin{align*}
\|\br_{\bh_{ij}}(\bu, \bv, \bx_i, \bx_j)\|_2
\lesssim \frac{ \sqrt{d}}{\delta^6}(\|\bu - \bx_{i}\|_2^2 + \|\bv - \bx_{j}\|_2^2).
\end{align*}
\end{proof}


\subsection{Concentration bound for \eqref{eqn:ri1_analysis_term_II}} 
\label{sub:concentration_bound_for_eqn:ri1_analysis_term_ii}

\begin{lemma}\label{lemma:R12k_analysis}
Let $\bA\sim\mathrm{RDPG}(\rho_n^{1/2}\bX)$ with and assume the conditions of Theorem \ref{thm:asymptotic_normality_OS} hold. Then there exists an absolute constant $c_0 > 0$, such that given any fixed $c > 0$, for each fixed row index $i\in [n]$, for all
$t\geq 1$, $t\lesssim n\rho_n$, and for sufficiently large $n$, with probability at least $1 - c_0n^{-c} - c_0e^{-t}$, 
\begin{align*}
&\left\|\frac{1}{n\sqrt{\rho_n}}\sum_{j = 1}^n\frac{\rho_n\bx_{j}\bx_{i}\transpose}{\bx_{i}\transpose\bx_{j}(1 - \rho_n\bx_{i}\transpose\bx_{j})}(\rho_n^{-1/2}\bW\transpose\widetilde\bx_j - \bx_{j})\right\|
\lesssim_c \frac{1}{ n\rho_n^{1/2}\delta^2\lambda_d(\bDelta_n) }\max\left\{\frac{\kappa(\bDelta_n)}{\lambda_d(\bDelta_n)}, t^{1/2}\right\}.
\end{align*}
\begin{proof}
[\bf Proof of Lemma \ref{lemma:R12k_analysis}]
Denote $\bR_\bX = \widetilde\bX\bW - \rho_n^{-1/2}\bA\bX(\bX\transpose{}\bX)^{-1}$. By the decomposition \eqref{eqn:keystone_decomposition}, for any $j\in[n]$, we have
\[
\bW\transpose\widetilde\bx_j - \rho_n^{1/2}\bx_{j} = \frac{1}{n\sqrt{\rho_n}}\sum_{a = 1}^n(A_{ja} - \rho_n\bx_{j}\transpose\bx_{a}) \bDelta_n^{-1}\bx_{a}  + \bR_\bX\transpose{}\be_j. 
\]
It follows that
\begin{align*}
&\frac{1}{n\sqrt{\rho_n}}\sum_{j = 1}^n\frac{\rho_n\bx_{j}\bx_{i}\transpose{}}{\bx_{i}\transpose\bx_{j}(1 - \rho_n\bx_{i}\transpose\bx_{j})}(\rho_n^{-1/2}\bW\transpose\widetilde\bx_j - \bx_{j})\\
&\quad = \frac{1}{n^2\sqrt{\rho_n}} \sum_{j = 1}^n\sum_{a = 1}^n\frac{\bx_{j}\bx_{i}\transpose{}\bDelta_n^{-1}\bx_{a}}{\bx_{i}\transpose\bx_{j}(1 - \rho_n\bx_{i}\transpose\bx_{j})}(A_{ja} - \rho_n\bx_{j}\transpose\bx_{a})
 + \frac{1}{n}\sum_{j = 1}^n \frac{\bx_{j}\bx_{i}\transpose\bR_\bX\transpose{}\be_j}{\bx_{i}\transpose\bx_{j}(1 - \rho_n\bx_{i}\transpose\bx_{j})}\\
&\quad = \frac{1}{n\sqrt{\rho_n}} \left\{\sum_{j \leq a}\bz_{ija} + \sum_{j > a}\bz_{ija}\right\} + \frac{1}{n}\sum_{j = 1}^n \frac{\bx_{j}\bx_{i}\transpose\bR_\bX\transpose{}\be_j}{\bx_{i}\transpose\bx_{j}(1 - \rho_n\bx_{i}\transpose\bx_{j})},
\end{align*}
where 
\[
\bz_{ija} = \frac{(A_{ja} - \rho_n\bx_{j}\transpose\bx_{a})\bx_{j}\bx_{i}\transpose{}\bDelta_n^{-1}\bx_{a}}{n\bx_{i}\transpose\bx_{j}(1 - \rho_n\bx_{i}\transpose\bx_{j})}.
\]
By Lemma \ref{lemma:Bernstein_concentration_EW}, with $t \geq 1$ and $t\lesssim n\rho_n$, we see that
\begin{align*}
\frac{1}{n\sqrt{\rho_n}}\left\|\sum_{j \leq a}\bz_{ija}\right\|_2 + \frac{1}{n\sqrt{\rho_n}}\left\|\sum_{j > a}\bz_{ija}\right\|_2
&\lesssim_c \left\{\frac{t + (n^2\rho_nt)^{1/2}}{n\sqrt{\rho_n}}\right\}\max_{j,a\in [n]}\left\|\frac{\bx_{j}\bx_{i}\transpose{}\bDelta_n^{-1}\bx_{a}}{n\bx_{i}\transpose\bx_{j}(1 - \rho_n\bx_{i}\transpose\bx_{j})}\right\|_2\\
&\lesssim \frac{t^{1/2}}{n\delta^2\lambda_d(\bDelta_n)}
\end{align*}
with probability at least $1 - c_0e^{-t}$. 
In addition, by Lemma \ref{lemma:RX_Frobenius_norm_bound}, 
for sufficiently large $n$,
\[
\| \bR_\bX\|_{\mathrm{F}} \lesssim 
 \frac{1}{(n\rho_n)^{1/2}\lambda_d(\bDelta_n) }\max\left\{\frac{\kappa(\bDelta_n)}{\lambda_d(\bDelta_n)}, t^{1/2}\right\}
\]
with probability at least $1 - c_0n^{-c} - c_0e^{-t}$ for all $t > 0$. By Cauchy-Schwarz inequality, we have
\begin{align*}
 \left\|\frac{1}{n}\sum_{j = 1}^n 
\frac{\bx_{j}\bx_{i}\transpose{}\bR_\bX\transpose{}\be_j}{\bx_{i}\transpose\bx_{j}(1 - \rho_n\bx_{i}\transpose\bx_{j})}
\right\|_2
&\leq  \frac{1}{n}\sum_{j = 1}^n\left\|\frac{\bx_{j}\bx_{i}\transpose{}}{\bx_{i}\transpose\bx_{j}(1 - \rho_n\bx_{i}\transpose\bx_{j})}\right\|_2\|\bR_\bX\transpose{}\be_j\|_2\\
&\leq \frac{1}{n} \left[\sum_{j = 1}^n\left\|\frac{\bx_{j}\bx_{i}\transpose{}}{\bx_{i}\transpose\bx_{j}(1 - \rho_n\bx_{i}\transpose\bx_{j})}\right\|_2^2\right]^{1/2}\|\bR_\bX\|_{\mathrm{F}}
\leq \frac{\|\bR_\bX\|_{\mathrm{F}}}{\sqrt{n}\delta^2}\\
&\lesssim_c \frac{1}{ n\rho_n^{1/2}\delta^2\lambda_d(\bDelta_n)}\max\left\{\frac{\kappa(\bDelta_n)}{\lambda_d(\bDelta_n)}, t^{1/2}\right\},
\end{align*}
with probability at least $1 - c_0n^{-c} - c_0e^{-t}$ for all $t > 0$ whenever $n$ is sufficiently large. 
Therefore, we conclude that
\begin{align*}
& \left\|\frac{1}{n\sqrt{\rho_n}}\sum_{j = 1}^n\frac{\rho_n\bx_{j}\bx_{i}\transpose}{\bx_{i}\transpose\bx_{j}(1 - \rho_n\bx_{i}\transpose\bx_{j})}(\rho_n^{-1/2}\bW\transpose\widetilde\bx_j - \bx_{j})\right\|
\\&\quad
\lesssim_c  \frac{t^{1/2}}{n\delta^2\lambda_d(\bDelta_n)} + \frac{1}{ n\rho_n^{1/2}\delta^2\lambda_d(\bDelta_n) }\max\left\{\frac{\kappa(\bDelta_n)}{\lambda_d(\bDelta_n)}, t^{1/2}\right\}\\
&\quad\lesssim 
\frac{1}{ n\rho_n^{1/2}\delta^2\lambda_d(\bDelta_n) }\max\left\{\frac{\kappa(\bDelta_n)}{\lambda_d(\bDelta_n)}, t^{1/2}\right\}
\end{align*}
with probability at least $1 - c_0n^{-c} - c_0e^{-t}$ for all $t \geq1$, $t\lesssim n\rho_n$, provided that $n$ is sufficiently large. 
The proof is thus completed. 
\end{proof}
\end{lemma}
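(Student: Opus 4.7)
The plan is to substitute the entrywise eigenvector expansion \eqref{eqn:keystone_decomposition} for each row $j$ into the target sum and split the result into a linear-in-noise double sum plus a higher-order remainder controlled by $\bR_{\bX}:=\widetilde{\bX}\bW - \rho_n^{-1/2}\bA\bX(\bX\transpose\bX)^{-1}$. Concretely, for each $j\in[n]$ I will write
\[
\rho_n^{-1/2}\bW\transpose\widetilde{\bx}_j - \bx_j = \frac{1}{n\rho_n}\sum_{a=1}^n (A_{ja} - \rho_n\bx_j\transpose\bx_a)\,\bDelta_n^{-1}\bx_a \;+\; \rho_n^{-1/2}\bR_{\bX}\transpose\be_j,
\]
substitute into the target vector, and interchange the order of summation to obtain a double sum of the vectors
\[
\bz_{ija} := \frac{(A_{ja} - \rho_n\bx_j\transpose\bx_a)\,\bx_j\,\bx_i\transpose\bDelta_n^{-1}\bx_a}{n\,\bx_i\transpose\bx_j(1-\rho_n\bx_i\transpose\bx_j)},
\]
normalized by $1/(n\sqrt{\rho_n})$, plus a Frobenius-controlled remainder involving $\bR_{\bX}$.

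For the linear piece I will use the vector Bernstein inequality (Lemma \ref{lemma:Bernstein_concentration_EW}). Because $A_{ja}=A_{aj}$, naive summation over all ordered pairs would destroy independence, so the sum $\sum_{j,a}\bz_{ija}$ must first be partitioned into $\{j\le a\}$ and $\{j>a\}$; within each block the $\bz_{ija}$ are mean-zero and jointly independent. The uniform bounds $\bx_i\transpose\bx_j\in[\delta,1-\delta]$, $\|\bDelta_n^{-1}\|_2\le 1/\lambda_d(\bDelta_n)$, and $\|\bx_k\|_2\le 1$ give $\|\bz_{ija}\|_2 \lesssim 1/(n\delta^2\lambda_d(\bDelta_n))$ and a comparable Frobenius bound, so Lemma \ref{lemma:Bernstein_concentration_EW} yields each half of magnitude at most $t^{1/2}\cdot\{\text{variance proxy}\}^{1/2}$, which, after the $1/(n\sqrt{\rho_n})$ normalization, amounts to an order $t^{1/2}/(n\delta^2\lambda_d(\bDelta_n))$ contribution with probability at least $1-c_0 e^{-t}$.

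For the $\bR_{\bX}$ remainder I will apply Cauchy--Schwarz to decouple the $\be_j\transpose\bR_{\bX}$'s from the coefficients:
\[
\left\|\frac{1}{n}\sum_{j=1}^n \frac{\bx_j\bx_i\transpose\bR_{\bX}\transpose\be_j}{\bx_i\transpose\bx_j(1-\rho_n\bx_i\transpose\bx_j)}\right\|_2 \le \frac{1}{n}\left\{\sum_{j=1}^n\left\|\frac{\bx_j\bx_i\transpose}{\bx_i\transpose\bx_j(1-\rho_n\bx_i\transpose\bx_j)}\right\|_2^{\!2}\right\}^{\!1/2}\!\|\bR_{\bX}\|_{\mathrm{F}} \le \frac{\|\bR_{\bX}\|_{\mathrm{F}}}{\sqrt{n}\,\delta^2}.
\]
I then plug in the bound $\|\bR_{\bX}\|_{\mathrm{F}}\lesssim_c (n\rho_n)^{-1/2}\lambda_d(\bDelta_n)^{-1}\max\{\kappa(\bDelta_n)/\lambda_d(\bDelta_n),t^{1/2}\}$ of Lemma \ref{lemma:RX_Frobenius_norm_bound}. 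Combining the two pieces, the remainder-term bound dominates because it alone carries the extra $\kappa(\bDelta_n)/\lambda_d(\bDelta_n)$ factor and matches the $t^{1/2}$ behavior of the linear piece, which yields the asserted estimate.

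The only delicate point is the independence bookkeeping for the double-sum linear piece; once the $\{j\le a\}/\{j>a\}$ splitting is in place, the remainder analysis is a direct Cauchy--Schwarz plus one quotation of Lemma \ref{lemma:RX_Frobenius_norm_bound}. Everything uses only previously established ingredients and the standing assumptions $\bx_i\transpose\bx_j\in[\delta,1-\delta]$ and $\|\bX\|_{2\to\infty}\le 1$.
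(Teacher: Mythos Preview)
Your proposal is correct and follows essentially the same route as the paper's proof: the same keystone expansion of $\rho_n^{-1/2}\bW\transpose\widetilde\bx_j-\bx_j$, the same definition of $\bz_{ija}$ with the $\{j\le a\}/\{j>a\}$ split to restore independence, Lemma~\ref{lemma:Bernstein_concentration_EW} for the linear double sum, and Cauchy--Schwarz combined with Lemma~\ref{lemma:RX_Frobenius_norm_bound} for the $\bR_\bX$ remainder. The final combination and the observation that the $\bR_\bX$ term dominates are also identical.
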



\subsection{Concentration bound for \eqref{eqn:ri1_analysis_term_III}} 
\label{sub:concentration_bound_for_eqn:ri1_analysis_term_iii}

\begin{lemma}\label{lemma:R13k_analysis}
Let $\bA\sim\mathrm{RDPG}(\rho_n^{1/2}\bX)$ with and assume the conditions of Theorem \ref{thm:asymptotic_normality_OS} hold. 
Suppose $\{\bB_{nij}:i,j\in[n]\}$ is a collection of deterministic $d\times d$ matrices with $\sup_{i,j\in[n]}\|\bB_{nij}\|_{\mathrm{F}} \leq \delta^{-4}$.
Then given any fixed $c > 0$, for each fixed row index $i\in [n]$, for all
$t > 0$, $t\lesssim n\rho_n$, and sufficiently large $n$,
\begin{align*}
& \mathrel{\Big\|}\frac{1}{n\sqrt{\rho_n}}\sum_{j = 1}^n[\bE]_{ij}\bB_{nij}(\rho_n^{-1/2}\bW\transpose\widetilde\bx_i - \bx_{i})\mathrel{\Big\|_2}
\\
&\quad
\lesssim_c \frac{\|\bU_\bP\|_{2\to\infty}}{(n\rho_n)^{1/2}\delta^4\lambda_d(\bDelta_n)}\max\left\{\frac{t}{ \lambda_d(\bDelta_n)^2}, \frac{\kappa(\bDelta_n)t^{1/2}}{ \lambda_d(\bDelta_n)^2}, {t^{3/2}} \right\}
\end{align*}
with probability at least $1 - c_0n^{-c} - c_0e^{-t}$, where $c_0 > 0$ is an absolute constant.  
\begin{proof}[\bf Proof]
First observe that by definition of the matrix norm, 
\begin{align*}
&
\left\|\frac{1}{n\sqrt{\rho_n}}\sum_{j = 1}^n[\bE]_{ij}\bB_{nij}(\rho_n^{-1/2}\bW\transpose\widetilde\bx_i - \bx_{i})\right\|_2
\\&\quad
\leq \frac{1}{\sqrt{\rho_n}}\|\bW\transpose\widetilde\bx_i - \rho_n^{1/2}\bx_{i}\|_2\left\|\frac{1}{n\sqrt{\rho_n}}\sum_{j = 1}^n\bB_{nij}(A_{ij} - \rho_n\bx_{i}\transpose\bx_{j})\right\|_2
\\&\quad
\leq \frac{1}{\sqrt{\rho_n}}\|\bW\transpose\widetilde\bx_i - \rho_n^{1/2}\bx_{i}\|_2\left\|\frac{1}{n\sqrt{\rho_n}}\sum_{j = 1}^n\vect(\bB_{nij})(A_{ij} - \rho_n\bx_{i}\transpose\bx_{j})\right\|_2.
\end{align*}
By Lemma \ref{lemma:Bernstein_concentration_EW}, with for all $t\lesssim n\rho_n$ and $t \geq 1$, we have
\begin{align*}
\left\|\frac{1}{n\sqrt{\rho_n}}\sum_{j = 1}^n\vect(\bB_{nij})(A_{ij} - \rho_n\bx_{i}\transpose\bx_{j})\right\|_2
&\lesssim \frac{(n\rho_nt)^{1/2}}{n\rho_n^{1/2}}\max_{j\in [n]}\|\bB_{nij}\|_{\mathrm{F}}\leq\frac{t^{1/2}}{\sqrt{n}\delta^4}
\end{align*}
with probability at least $1 - c_0e^{-t}$. 
Hence, by Lemma \ref{lemma:single_x_i_perturbation_bound} for sufficiently large $n$,
\begin{align*}
& \mathrel{\Big\|}\frac{1}{n\sqrt{\rho_n}}\sum_{j = 1}^n[\bE]_{ij}\bB_{nij}(\rho_n^{-1/2}\bW\transpose\widetilde\bx_i - \bx_{i})\mathrel{\Big\|}\\
&\quad\leq \frac{\|\bW\transpose\widetilde\bx_i - \rho_n^{1/2}\bx_{i}\|_{2}}{\sqrt{\rho_n}} \mathrel{\Big\|}\frac{1}{n\sqrt{\rho_n}}\sum_{j = 1}^n\vect(\bB_{nij})(A_{ij} - \rho_n\bx_{i}\transpose\bx_{j})\mathrel{\Big\|}_2\\
&\quad\lesssim_c \frac{\|\bU_\bP\|_{2\to\infty}}{(n\rho_n)^{1/2}\delta^4\lambda_d(\bDelta_n)}\max\left\{\frac{t}{ \lambda_d(\bDelta_n)^2}, \frac{\kappa(\bDelta_n)t^{1/2}}{ \lambda_d(\bDelta_n)^2}, {t^{3/2}} \right\}
\end{align*}
with probability at least $1 - c_0n^{-c} - c_0e^{-t}$. The proof is thus completed. 
\end{proof}
\end{lemma}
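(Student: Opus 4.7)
The key structural observation is that the vector $\bv := \rho_n^{-1/2}\bW\transpose\widetilde\bx_i - \bx_i$ does not depend on the summation index $j$. Pulling it out of the sum and applying the operator-norm inequality (together with $\|\cdot\|_2 \leq \|\cdot\|_{\mathrm{F}}$ for the matrix factor) yields the deterministic estimate
\begin{align*}
\left\|\frac{1}{n\sqrt{\rho_n}}\sum_{j=1}^n [\bE]_{ij}\bB_{nij}\bv\right\|_2 \leq \left\|\frac{1}{n\sqrt{\rho_n}}\sum_{j=1}^n [\bE]_{ij}\bB_{nij}\right\|_{\mathrm{F}}\|\bv\|_2.
\end{align*}
My plan is to control the two factors on the right by separate high-probability bounds and then take a union bound. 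Crucially, this first step is purely deterministic, so the lack of independence between $\bv$ (which depends on the whole adjacency matrix) and the matrix sum is never an issue.

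For the matrix factor, I would use the identification $\|\bM\|_{\mathrm{F}} = \|\vect(\bM)\|_2$ to rewrite the quantity as the norm of a sum of independent mean-zero random vectors in $\mathbb{R}^{d^2}$. Letting $\bV$ be the $n \times d^2$ matrix whose $j$th row is $\vect(\bB_{nij})\transpose$, the hypothesis $\sup_{j}\|\bB_{nij}\|_{\mathrm{F}} \leq \delta^{-4}$ gives $\|\bV\|_{2\to\infty} \leq \delta^{-4}$ and $\|\bV\|_{\mathrm{F}} \leq \sqrt{n}\,\delta^{-4}$. Applying Lemma \ref{lemma:Bernstein_concentration_EW} with parameter $t^{1/2}$ (noting $\var([\bE]_{ij})\leq \rho_n$ and $|[\bE]_{ij}|\leq 1$) then produces, for $t\geq 1$ and $t\lesssim n\rho_n$, a bound of order $t^{1/2}/(\sqrt{n}\,\delta^4)$ on the matrix factor with probability at least $1 - c_0 e^{-t}$.

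For the vector factor I would directly invoke Lemma \ref{lemma:single_x_i_perturbation_bound}, which provides exactly the required high-probability estimate $\|\bv\|_2 \lesssim_c \rho_n^{-1/2}\|\bU_\bP\|_{2\to\infty}\lambda_d(\bDelta_n)^{-1}\max\{t^{1/2}/\lambda_d(\bDelta_n)^2,\,\kappa(\bDelta_n)/\lambda_d(\bDelta_n)^2,\,t\}$ on an event of probability at least $1 - c_0 n^{-c} - c_0 e^{-t}$. Multiplying the two bounds and intersecting the two favorable events then produces precisely the stated rate, and the failure probability remains of order $c_0 n^{-c} + c_0 e^{-t}$.

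I do not expect any real obstacle: the hard analytical work has already been absorbed into Theorem \ref{thm:eigenvector_deviation} (which powers Lemma \ref{lemma:single_x_i_perturbation_bound}), and the vectorization trick reduces the matrix-valued Bernstein step to a single, routine application of the dimension-free Lemma \ref{lemma:Bernstein_concentration_EW}. The only subtlety worth double-checking is that the three terms inside the $\max$ in the claim arise from multiplying the single-factor $t^{1/2}$ from the matrix bound against each of the three branches of the $\max$ appearing in the perturbation bound for $\|\bv\|_2$, which is a mechanical verification.
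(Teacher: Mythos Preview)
Your proposal is correct and follows essentially the same approach as the paper's proof: factor out the $j$-independent vector $\rho_n^{-1/2}\bW\transpose\widetilde\bx_i - \bx_i$, vectorize the matrix sum and apply Lemma~\ref{lemma:Bernstein_concentration_EW} to get the $t^{1/2}/(\sqrt{n}\,\delta^4)$ bound, then invoke Lemma~\ref{lemma:single_x_i_perturbation_bound} for the remaining factor and combine. The only cosmetic difference is that the paper writes the intermediate bound with the spectral norm of the matrix sum before passing to the $\ell_2$ norm of its vectorization, whereas you go directly to the Frobenius norm; the two are equivalent.
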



\subsection{Concentration bound for \eqref{eqn:ri1_analysis_term_IV}} 
\label{sub:concentration_bound_for_eqn:ri1_analysis_term_iv}

\begin{lemma}\label{Lemma:R14k_analysis}
Let $\bA\sim\mathrm{RDPG}(\rho_n^{1/2}\bX)$ with and assume the conditions of Theorem \ref{thm:asymptotic_normality_OS} hold. Suppose $\{\bB_{nij}:i,j\in[n]\}$ is a collection of deterministic $d\times d$ matrices such that $\sup_{i,j\in[n]}\|\bB_{nij}\|_{2} \leq \delta^{-4}$. Then given any fixed $c > 0$, for each fixed index $i\in [n]$, 
for all $t\geq 1$, $t\lesssim n\rho_n$ and sufficiently large $n$,
\begin{align*}
&\mathrel{\Big\|}\frac{1}{n\sqrt{\rho_n}}\sum_{j = 1}^n[\bE]_{ij}\bB_{nij}(\rho_n^{-1/2}\bW\transpose\widetilde\bx_j - \bx_{j})\mathrel{\Big\|_2}
\\&\quad
\lesssim_c \frac{\|\bU_\bP\|_{2\to\infty}}{(n\rho_n)^{1/2}\delta^4\lambda_d(\bDelta_n) }
\max\left\{\frac{\kappa(\bDelta_n)^{1/2}t^{1/2}}{\lambda_d(\bDelta_n)}, \frac{\kappa(\bDelta_n)}{\lambda_d(\bDelta_n)^2}, t\right\}
\end{align*}
with probability at least $1 - c_0n^{-c} - c_0e^{-t}$ for sufficiently large $n$, where $c_0 > 0$ is an absolute constant 
\end{lemma}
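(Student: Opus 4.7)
}
Following the strategy of Lemma \ref{lemma:R12k_analysis}, I would first substitute the decomposition \eqref{eqn:keystone_decomposition}, namely
\[
\bW\transpose\widetilde{\bx}_j - \rho_n^{1/2}\bx_j \;=\; \frac{1}{n\sqrt{\rho_n}}\sum_{a=1}^n [\bE]_{ja}\,\bDelta_n^{-1}\bx_a \;+\; \bR_\bX\transpose\be_j,
\]
where $\bR_\bX = \widetilde{\bX}\bW - \rho_n^{-1/2}\bA\bX(\bX\transpose\bX)^{-1}$. This turns the single sum in the statement into a double sum plus a residual. Writing $\bC_j := \bB_{nij}\bDelta_n^{-1}$ (so $\|\bC_j\|_2 \le \delta^{-4}/\lambda_d(\bDelta_n)$), the residual piece $\frac{1}{n\sqrt{\rho_n}}\sum_j [\bE]_{ij}\bB_{nij}\bR_\bX\transpose\be_j$ is controlled by Cauchy--Schwarz in the form
\[
\Bigl\|\,\tfrac{1}{n\sqrt{\rho_n}}\sum_j [\bE]_{ij}\bB_{nij}\bR_\bX\transpose\be_j\Bigr\|_2 \;\le\; \tfrac{\delta^{-4}}{n\sqrt{\rho_n}}\,\|\be_i\transpose\bE\|_2\,\|\bR_\bX\|_{\mathrm{F}},
\]
and the high-probability bounds $\|\be_i\transpose\bE\|_2\lesssim_c (n\rho_n)^{1/2}$ (from Bernstein applied to the $i$-th row) together with Lemma \ref{lemma:RX_Frobenius_norm_bound} give a contribution of order $\delta^{-4}(n\rho_n^{1/2}\lambda_d(\bDelta_n))^{-1}\max\{t^{1/2},\kappa(\bDelta_n)/\lambda_d(\bDelta_n)\}$, which embeds in the target bound via $1/\sqrt{n}\le \|\bU_\bP\|_{2\to\infty}$ (Result \ref{result:UP_delocalization}).

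The main work is the linear double sum
\[
\bS \;=\; \tfrac{1}{n^2\rho_n}\sum_{j=1}^n\sum_{a=1}^n [\bE]_{ij}\,[\bE]_{ja}\,\bC_j\bx_a.
\]
The key independence obstruction is that $[\bE]_{ij}$ and $[\bE]_{ja}$ are \emph{not} independent when $\{i,j\}=\{j,a\}$, i.e.\ $a = i$. I would therefore split $\bS$ into three pieces: (a) the $a=i$ diagonal, equal to $\frac{1}{n^2\rho_n}\sum_j[\bE]_{ij}^2 \bC_j\bx_i$, whose norm is crudely bounded by $\delta^{-4}(n\lambda_d(\bDelta_n))^{-1}$ using $\sum_j[\bE]_{ij}^2 \le \|\bE\|_{2\to\infty}^2 \lesssim_c n\rho_n$; (b) the $j=i$ strip, which by Bernstein on $\sum_a[\bE]_{ia}\bx_a$ (Lemma \ref{lemma:Bernstein_concentration_EW}) is of lower order $t^{1/2}(n^{3/2}\rho_n^{1/2}\delta^4\lambda_d(\bDelta_n))^{-1}$; and (c) the bulk $\{j\neq i,\,a\neq i\}$.

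For the bulk, I would condition on the $i$-th row $u_j := [\bE]_{ij}$, which is independent of the principal submatrix $\bM^{(i)}:=([\bE]_{ja})_{j,a\neq i}$. Symmetrizing using $[\bM^{(i)}]_{ja}=[\bM^{(i)}]_{aj}$, the bulk sum becomes a linear combination of the independent entries $\{[\bM^{(i)}]_{ja}: j\le a,\;j,a\neq i\}$ with vector coefficients $v_{ja} = u_j\bC_j\bx_a + u_a\bC_a\bx_j$ (for $j<a$) and $v_{jj}=u_j\bC_j\bx_j$. Applying Lemma \ref{lemma:Bernstein_concentration_EW} conditionally on $\{u_j\}$ gives a tail bound in terms of $V_\infty = \max_{j,a}\|v_{ja}\|_2 \lesssim \delta^{-4}/\lambda_d(\bDelta_n)$ and $V_{\mathrm{F}}^2 \lesssim \delta^{-8}\lambda_d(\bDelta_n)^{-2}\|\bX\|_{\mathrm{F}}^2\sum_j u_j^2 \lesssim \delta^{-8}n^2\rho_n\kappa(\bDelta_n)/\lambda_d(\bDelta_n)$ (using $\|\bX\|_{\mathrm{F}}^2\le n\lambda_1(\bDelta_n)$ and $\sum_j u_j^2 \lesssim n\rho_n$); dividing by $n^2\rho_n$ and collecting the three terms in the Bernstein bound, then using $\|\bU_\bP\|_{2\to\infty}\ge \sqrt{d/n}$ to match prefactors, produces the target rate $\delta^{-4}\|\bU_\bP\|_{2\to\infty}(n\rho_n)^{-1/2}\lambda_d(\bDelta_n)^{-1}\max\{\kappa(\bDelta_n)^{1/2}t^{1/2}/\lambda_d(\bDelta_n),\kappa(\bDelta_n)/\lambda_d(\bDelta_n)^2,t\}$.

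The hard part will be the bulk piece: one has to both exploit the leave-one-row-out independence \emph{and} track $V_{\mathrm{F}}$ through the random coefficients $u_j$ to recover the $\kappa(\bDelta_n)^{1/2}$ (rather than $\kappa(\bDelta_n)$) factor in the first branch of the maximum. A naive sup-norm union bound over $n$ terms would introduce extraneous $\log n$ factors and blow up the $\lambda_d(\bDelta_n)$ dependence, so the Frobenius-type control of $V_{\mathrm{F}}$ via the concentration of $\|\be_i\transpose\bE\|_2$ is essential. Combining the three contributions and a final union bound over the events in Lemma \ref{lemma:RX_Frobenius_norm_bound}, Result \ref{result:spectral_norm_concentration}, and the conditional Bernstein step yields the claimed bound with failure probability $c_0 n^{-c} + c_0 e^{-t}$ for sufficiently large $n$.
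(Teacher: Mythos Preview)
Your decomposition via \eqref{eqn:keystone_decomposition} and the conditional-Bernstein treatment of the bilinear sum $\bS$ are sound in spirit, but the residual piece has a genuine gap that breaks the argument in the sparse regime.

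You dropped a factor of $\rho_n^{-1/2}$ when substituting: since the lemma concerns $\rho_n^{-1/2}\bW\transpose\widetilde{\bx}_j-\bx_j$, the residual contribution is $\tfrac{1}{n\rho_n}\sum_j[\bE]_{ij}\bB_{nij}\bR_\bX\transpose\be_j$, not $\tfrac{1}{n\sqrt{\rho_n}}\sum_j[\bE]_{ij}\bB_{nij}\bR_\bX\transpose\be_j$. The Cauchy--Schwarz step then yields
\[
\frac{\delta^{-4}}{n\rho_n}\,\|\be_i\transpose\bE\|_2\,\|\bR_\bX\|_{\mathrm F}
\;\lesssim_c\;\frac{\delta^{-4}}{n\rho_n\,\lambda_d(\bDelta_n)}\max\Bigl\{t^{1/2},\tfrac{\kappa(\bDelta_n)}{\lambda_d(\bDelta_n)}\Bigr\},
\]
which is a factor $\rho_n^{-1/2}$ larger than your claimed $\delta^{-4}(n\rho_n^{1/2}\lambda_d(\bDelta_n))^{-1}\max\{\cdot\}$. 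Comparing with the target $\|\bU_\bP\|_{2\to\infty}(n\rho_n)^{-1/2}\delta^{-4}\lambda_d(\bDelta_n)^{-1}\max\{\cdot\}\gtrsim (n\rho_n^{1/2}\delta^{4}\lambda_d(\bDelta_n))^{-1}\max\{\cdot\}$, the residual bound is too large by exactly $\rho_n^{-1/2}$ when $\rho_n\to 0$, for any $t\ge 1$. Replacing Cauchy--Schwarz by $\|\bE\|_\infty\|\bR_\bX\|_{2\to\infty}$ does not help either: the available two-to-infinity bound on $\bR_\bX$ (Corollary \ref{corr:Two_to_infinity_norm_eigenvector_bound}) carries a union-bound $\log n$ in place of $t$ and an extra $\lambda_d(\bDelta_n)^{-1}$, so it cannot deliver the stated $t$-dependent rate. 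The obstruction is structural: $\bR_\bX$ depends on the full eigenstructure of $\bA$, hence on the $i$-th row of $\bE$, and no crude decoupling recovers the loss.

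The paper's proof takes a different route precisely to avoid this. It never invokes \eqref{eqn:keystone_decomposition}; instead it writes
\[
\widetilde{\bX}\bW-\rho_n^{1/2}\bX
=\bU_\bA\bigl(\bS_\bA^{1/2}\mathrm{sgn}(\bH)-\mathrm{sgn}(\bH)\bS_\bP^{1/2}\bigr)\bW_\bX
+\bU_\bA\bigl(\mathrm{sgn}(\bH)-\bH\bigr)\bS_\bP^{1/2}\bW_\bX
+(\bU_\bA\bH-\bU_\bA^{(i)}\bH^{(i)})\bS_\bP^{1/2}\bW_\bX
+(\bU_\bA^{(i)}\bH^{(i)}-\bU_\bP)\bS_\bP^{1/2}\bW_\bX,
\]
using the leave-one-out eigenvector $\bU_\bA^{(i)}$. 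The first three pieces are small enough in $\|\cdot\|_{2\to\infty}$ or in Frobenius norm that the trivial bounds $\|\bE\|_\infty\lesssim n\rho_n$ or $\|\bE\|_2\lesssim(n\rho_n)^{1/2}$ suffice. The fourth piece is \emph{independent} of $\be_i\transpose\bE$, so Bernstein (Lemma \ref{lemma:Bernstein_concentration_EW}) applies conditionally with sharp control of both the $2\to\infty$ norm (via Lemma \ref{lemma:auxiliary_matrix}) and the Frobenius norm (via $\|\bU_\bA^{(i)}\bH^{(i)}-\bU_\bP\|_2$), yielding the $\|\bU_\bP\|_{2\to\infty}t/\{(n\rho_n)^{1/2}\delta^4\lambda_d(\bDelta_n)\}$ contribution directly. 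To salvage your argument you would need to inject this same leave-one-out decoupling into the analysis of $\bR_\bX$ itself, at which point the two approaches essentially coincide.
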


\begin{proof}[\bf Proof of Lemma \ref{Lemma:R14k_analysis}]
Let $\bA^{(m)}$, $\bU_\bA^{(m)}$, and $\bH^{(m)}$, $m = 1,\ldots,n$ be the auxiliary matrices defined in Section \ref{sec:entrywise_limit_theorem_for_the_eigenvectors} of the manuscript. Now we fix the row index $i\in [n]$. Observe that
\begin{align*}
\begin{aligned}
\widetilde{\bX} \bW - \rho_n^{1/2}\bX
& = \bU_\bA\bS_\bA^{1/2}(\bW^*)\transpose\bW_\bX - \bU_\bP\bS_\bP^{1/2}\bW_\bX
  = (\bU_\bA\bS_\bA^{1/2}\mathrm{sgn}(\bH) - \bU_\bP\bS_\bP^{1/2})\bW_\bX
\\
& = \bU_\bA\{\bS_\bA^{1/2}\mathrm{sgn}(\bH) - \mathrm{sgn}(\bH)\bS_\bP^{1/2}\}\bW_\bX
 + \bU_\bA\{\mathrm{sgn}(\bH) - \bH\}\bS_\bP^{1/2}\bW_\bX\\
&\quad + (\bU_\bA\bH - \bU_\bA^{(i)}\bH^{(i)})\bS_\bP^{1/2}\bW_\bX + (\bU_\bA^{(i)}\bH^{(i)} - \bU_\bP)\bS_\bP^{1/2}\bW_\bX.
\end{aligned}
\end{align*}
This immediately leads to the following decomposition of the quantity of interest:
\begin{align}
&\left\{\frac{1}{n\rho_n}\sum_{j = 1}^n[\bE]_{ij}
\bB_{nij}(\bW\transpose{}\widetilde\bx_j - \rho_n^{1/2}\bx_{j})\right\}\transpose{}\nonumber\\
\label{eqn:R14k_analysis_term_I}
&\quad = \frac{1}{n\rho_n}\sum_{j = 1}^n(A_{ij} - \rho_n\bx_{i}\transpose{}\bx_{j})\be_j\transpose{}\bU_\bA\{\bS_\bA^{1/2}\mathrm{sgn}(\bH) - \mathrm{sgn}(\bH)\bS_\bP^{1/2}\}\bW_\bX\bB_{nij}\transpose{}\\
\label{eqn:R14k_analysis_term_II}
&\quad\quad + \frac{1}{n\rho_n}\sum_{j = 1}^n(A_{ij} - \rho_n\bx_{i}\transpose{}\bx_{j})\be_j\transpose{}\bU_\bA\{\mathrm{sgn}(\bH) - \bH\}\bS_\bP^{1/2}\bW_\bX\bB_{nij}\transpose{}
\\
\label{eqn:R14k_analysis_term_III}
&\quad\quad + \frac{1}{n\rho_n}\sum_{j = 1}^n(A_{ij} - \rho_n\bx_{i}\transpose{}\bx_{j})\be_j\transpose{}(\bU_\bA\bH - \bU_\bA^{(i)}\bH^{(i)})\bS_\bP^{1/2}\bW_\bX\bB_{nij}\transpose{}
\\
\label{eqn:R14k_analysis_term_IV}
&\quad\quad + \frac{1}{n\rho_n}\sum_{j = 1}^n(A_{ij} - \rho_n\bx_{i}\transpose{}\bx_{j})\be_j\transpose{}(\bU_\bA^{(i)}\bH^{(i)} - \bU_\bP)\bS_\bP^{1/2}\bW_\bX\bB_{nij}\transpose{}.
\end{align}
For term \eqref{eqn:R14k_analysis_term_I}, for all $t > 0$, we apply Result \ref{result:Infinity_norm_concentration}, Lemma \ref{lemma:WS_interchange_bound}, and Lemma \ref{lemma:U_A_two_to_infinity_norm} to 
obtain that for sufficiently large $n$,
\begin{align*}
&\left\|\frac{1}{n\rho_n}\sum_{j = 1}^n(A_{ij} - \rho_n\bx_{i}\transpose{}\bx_{j})\be_j\transpose{}\bU_\bA\{\bS_\bA^{1/2}\mathrm{sgn}(\bH) - \mathrm{sgn}(\bH)\bS_\bP^{1/2}\}\bW_\bX\bB_{nij}\transpose{}\right\|_2\\
&\quad\leq \frac{1}{n\rho_n}\sum_{j = 1}^n|[\bE]_{ij}|\|\bU_\bA\|_{2\to\infty}\|\bW^*\bS_\bA^{1/2} - \bS_\bP^{1/2}\bW^*\|_2\|\max_{j\in [n]}\|\bB_{nij}\|_2\\
&\quad = \frac{1}{n\rho_n}\|\bE\|_{\infty}\|\bU_\bA\|_{2\to\infty}\|\bW^*\bS_\bA^{1/2} - \bS_\bP^{1/2}\bW^*\|_2\|\max_{j\in [n]}\|\bB_{nij}\|_2\\
&\quad\lesssim_c \frac{1}{n\rho_n\delta^4}(n\rho_n)\frac{\|\bU_\bP\|_{2\to\infty}}{ \lambda_d(\bDelta_n) }\frac{1}{(n\rho_n)^{1/2}\lambda_d(\bDelta_n)}\max\left\{\frac{\kappa(\bDelta_n)}{\lambda_d(\bDelta_n)}, {t^{1/2}}\right\}\\
&\quad = \frac{\|\bU_\bP\|_{2\to\infty}}{(n\rho_n)^{1/2}\delta^4\lambda_d(\bDelta_n)^{2}}\max\left\{\frac{\kappa(\bDelta_n)}{\lambda_d(\bDelta_n)}, {t^{1/2}}\right\}
\end{align*}
with probability at least $1 - c_0n^{-c} - c_0e^{-t}$. 
For term \eqref{eqn:R14k_analysis_term_II}, we invoke Lemma \ref{lemma:WS_interchange_bound}, Lemma 6.7 in \cite{cape2017two}, and the Davis-Kahan theorem to obtain that for sufficiently large $n$,
\begin{align*}
&\left\|\frac{1}{n\rho_n}\sum_{j = 1}^n(A_{ij} - \rho_n\bx_{i}\transpose{}\bx_{j})\be_j\transpose{}\bU_\bA\{\mathrm{sgn}(\bH) - \bH\}\bS_\bP^{1/2}\bW_\bX\bB_{nij}\transpose{}\right\|_2\\
&\quad \leq \frac{1}{n\rho_n}\sum_{j = 1}^n|[\bE]_{ij}|\|\bU_\bA\|_{2\to\infty}\|\bW^* - \bU_\bP\transpose\bU_\bA\|_2\|\bS_\bP\|_2^{1/2}\max_{j\in [n]}\|\bB_{nij}\|_2\\
&\quad \leq \frac{1}{n\rho_n}\|\bE\|_\infty \|\bU_\bA\|_{2\to\infty}\frac{4\|\bE\|_2^2}{\lambda_d(\bP)^2}\|\bS_\bP\|_2^{1/2}\max_{j\in [n]}\|\bB_{nij}\|_2\\
&\quad \lesssim_c \frac{1}{n\rho_n}(n\rho_n)\frac{\|\bU_\bP\|_{2\to\infty}}{ \lambda_d(\bDelta_n) }\frac{(n\rho_n)}{(n\rho_n)^2\lambda_d(\bDelta_n)^2}\{(n\rho_n)\lambda_1(\bDelta_n)\}^{1/2}\frac{1}{\delta^4}\\
&\quad\lesssim \frac{\|\bU_\bP\|_{2\to\infty}}{(n\rho_n)^{1/2}\delta^4\lambda_d(\bDelta_n)^3}
\end{align*}
with probability at least $1 - 4n^{-c}$. 
We now turn the focus to the more complicated terms \eqref{eqn:R14k_analysis_term_III} and \eqref{eqn:R14k_analysis_term_IV}. Denote $\bOmega^{(i)} = [\bomega^{(i)}_1,\ldots,\bomega^{(i)}_n]\transpose{}$, where $[\bomega^{(i)}_j]\transpose{} = \be_j\transpose{}(\bU_\bA^{(i)}\bH^{(i)} - \bU_\bA\bH)\bS_\bP^{1/2}\bW_\bX\bB_{nij}\transpose{}$, $j = 1,\ldots,n$. Then for the term \eqref{eqn:R14k_analysis_term_III}, we invoke Result \ref{result:spectral_norm_concentration} and Lemma \ref{lemma:auxiliary_matrix} to obtain that for sufficiently large $n$,
for all $t\geq 1$, $t\lesssim n\rho_n$,
\begin{align*}
&\left\|\frac{1}{n\rho_n}\sum_{j = 1}^n(A_{ij} - \rho_n\bx_{i}\transpose{}\bx_{j})\be_j\transpose{}(\bU_\bA\bH - \bU_\bA^{(i)}\bH^{(i)})\bS_\bP^{1/2}\bW_\bX\bB_{nij}\transpose{}\right\|_2\\
&\quad = \frac{1}{n\rho_n}\left\|\sum_{j = 1}^n[\bE]_{ij}\bomega_j^{(i)}\right\|_2 = \frac{1}{n\rho_n}\|\be_i\transpose{}\bE\bOmega^{(i)}\|_2\\
&\quad\leq \frac{1}{n\rho_n}\|\bE\|_{2\to\infty}\|\bOmega^{(i)}\|_{\mathrm{F}}
 = \frac{1}{n\rho_n}\|\bE\|_{2\to\infty}\left(\sum_{j = 1}^n\|\bomega_j^{(n)}\|_2^2\right)^{1/2}\\
&\quad\leq \frac{1}{n\rho_n}\|\bE\|_{2\to\infty}\left(\sum_{j = 1}^n\|\be_j(\bU_\bA^{(i)}\bH^{(i)} - \bU_\bA\bH)\|_2^2\|\bS_\bP\|_2\max_{j\in [n]}\|\bB_{nij}\|_2^2\right)^{1/2}\\
&\quad\leq \frac{1}{n\rho_n}\|\bE\|_{2}\left(d\|\bU_\bA^{(i)}\bH^{(i)} - \bU_\bA\bH\|_2^2\|\bS_\bP\|_2\max_{j\in [n]}\|\bB_{nij}\|_2^2\right)^{1/2}\\
&\quad\lesssim_c \frac{d^{1/2}}{(n\rho_n)^{1/2}}\{n\rho_n\lambda_1(\bDelta_n)\}^{1/2}\frac{1}{\delta^4}\frac{t^{1/2}\|\bU_\bP\|_{2\to\infty}}{(n\rho_n)^{1/2}\lambda_d(\bDelta_n)^2}\\
&\quad \leq \frac{\kappa(\bDelta_n)^{1/2}\|\bU_\bP\|_{2\to\infty}t^{1/2}}{(n\rho_n)^{1/2}\delta^4\lambda_d(\bDelta_n)^2}
\end{align*}
with probability at least $1 - c_0n^{-c} - c_0e^{-t}$. We finally turn our attention to the term \eqref{eqn:R14k_analysis_term_IV}. Denote $\bTheta^{(i)} = [\btheta_1^{(i)},\ldots,\btheta_n^{(i)}]\transpose{}$, where $(\btheta_j^{(i)})\transpose{} = \be_j\transpose{}(\bU_\bA^{(i)}\bH^{(i)} - \bU_\bP)\bS_\bP^{1/2}\bW_\bX\bB_{nj}\transpose{}$, $j = 1,\ldots,n$. Let $t\geq 1$ and $t\lesssim n\rho_n$. We take advantage of the fact that $\bTheta^{(i)}$ and $(A_{ij} - \rho_{n}\bx_{i}\transpose{}\bx_{j})_{j = 1}^n$ are independent and consider the following events:
\begin{align*}
&\calE_1 = \left\{\bA:\left\|\sum_{j = 1}^n(A_{ij} - \rho_n\bx_{i}\transpose{}\bx_{j})\btheta_j^{(i)}\right\|_2\leq 
t\|\bTheta^{(i)}\|_{2\to\infty} + (2t\rho_n)^{1/2}\|\bTheta^{(i)}\|_{\mathrm{F}}
\right\},\\
&\calE_2 = \left\{\bA:\|\bU_\bA^{(i)}\mathrm{sgn}(\bH^{(i)}) - \bU_\bP\|_{2\to\infty}\leq \frac{C_c\|\bU_\bP\|_{2\to\infty}}{ \lambda_d(\bDelta_n) }, \|\bU_\bA^{(i)}\|_{2\to\infty}\leq \frac{C_c\|\bU_\bP\|_{2\to\infty}}{ \lambda_d(\bDelta_n) }\right\},\\
&\calE_3 = \left\{\bA:\|\bU_\bA^{(i)}\bH^{(i)} - \bU_\bP\|_{2}\leq \frac{C_c}{(n\rho_n)^{1/2}\lambda_d(\bDelta_n)}\right\}.
\end{align*}
Here, $C_c > 0$ is a constant only depending on $c$ that will be determined later. 
By Lemma \ref{lemma:Bernstein_concentration_EW}, 
\[
\prob(\calE_1) = \sum_{\bA^{(m)}}\prob(\calE_1\mid\bA^{(m)})p(\bA^{(m)})\geq (1 - 28e^{-t})\sum_{\bA^{(m)}}p(\bA^{(m)}) = 1 - 28e^{-t}.
\]
By Lemma \ref{lemma:auxiliary_matrix}, 
for sufficiently large $n$,
$\prob(\calE_2)\geq 1 - 6n^{-c}$ and $\prob(\calE_3)\geq 1 - 3n^{-c}$. Hence, over the event $\calE_1\cap\calE_2\cap\calE_3$, which has probability at least $1 - c_0n^{-c} - c_0e^{-t}$, we have
\begin{align*}
&\left\|\frac{1}{n\rho_n}\sum_{j = 1}^n(A_{ij} - \rho_n\bx_{i}\transpose{}\bx_{j})\be_j\transpose{}(\bU_\bA^{(i)}\bH^{(i)} - \bU_\bP)\bS_\bP^{1/2}\bW_\bX\bB_{nij}\transpose{}\right\|_2\\
&\quad\leq \frac{3t}{n\rho_n}\|\bTheta^{(i)}\|_{2\to\infty} + \frac{(6\rho_nt)^{1/2}}{n\rho_n}\left(\sum_{j = 1}^n\|\btheta_j^{(i)}\|_2^2\right)^{1/2}\\
&\quad = \frac{3t}{n\rho_n}\max_{j\in [n]}\|\be_j\transpose{}(\bU_\bA^{(i)}\bH^{(i)} - \bU_\bP)\bS_\bP^{1/2}\bW_\bX\bB_{nij}\transpose{}\|_2\\
&\quad\quad + \frac{(6\rho_nt)^{1/2}}{n\rho_n}\left(\sum_{j = 1}^n\|\be_j\transpose{}(\bU_\bA^{(i)}\bH^{(i)} - \bU_\bP)\bS_\bP^{1/2}\bW_\bX\bB_{nij}\transpose{}\|_2^2\right)^{1/2}\\
&\quad\leq \frac{3t}{n\rho_n}(2\|\bU_\bA^{(i)}\|_{2\to\infty} + \|\bU_\bA^{(i)}\mathrm{sgn}(\bH^{(i)}) - \bU_\bP\|_{2\to\infty})\|\bS_\bP\|_2^{1/2}\max_{j\in [n]}\|\bB_{nij}\|_2\\
&\quad\quad + \frac{(6\rho_nt)^{1/2}}{n\rho_n}\left(\|\bU_\bA^{(i)}\bH^{(i)} - \bU_\bP\|_{\mathrm{F}}^2\|\bS_\bP\|_2^{1/2}\max_{j\in [n]}\|\bB_{nij}\transpose{}\|_2^2\right)^{1/2}\\
&\quad\lesssim_c \frac{t}{n\rho_n}\frac{\|\bU_\bP\|_{2\to\infty}}{ \lambda_d(\bDelta_n) }\{n\rho_n\lambda_1(\bDelta_n)\}^{1/2}\frac{1}{\delta^4} + \frac{(d\rho_nt)^{1/2}}{n\rho_n}\frac{\{n\rho_n\lambda_1(\bDelta_n)\}^{1/2}}{(n\rho_n)^{1/2}\lambda_d(\bDelta_n)}\frac{1}{\delta^4}\\
&\quad = \frac{\|\bU_\bP\|_{2\to\infty} t }{(n\rho_n)^{1/2}\delta^4\lambda_d(\bDelta_n)} + \frac{t^{1/2}} {(n\rho_n)^{1/2}\delta^4\lambda_d(\bDelta_n)}\sqrt{\frac{d}{n}}\lesssim \frac{\|\bU_\bP\|_{2\to\infty} t }{(n\rho_n)^{1/2}\delta^4\lambda_d(\bDelta_n)}.
\end{align*}
Combining the aforementioned concentration bounds for \eqref{eqn:R14k_analysis_term_I}, \eqref{eqn:R14k_analysis_term_II}, \eqref{eqn:R14k_analysis_term_III}, and \eqref{eqn:R14k_analysis_term_IV} completes the proof.  
\end{proof}


\subsection{Concentration bound for \eqref{eqn:ri1_analysis_term_V}} 
\label{sub:concentration_bound_for_eqn:ri1_analysis_term_v}

We now focus on the concentration bound for term \eqref{eqn:ri1_analysis_term_V} by taking advantage of the auxiliary matrices $\bU_\bA^{(i)},\bH^{(i)}$ defined in Section \ref{sec:entrywise_limit_theorem_for_the_eigenvectors}. Observe that term \eqref{eqn:ri1_analysis_term_V} consists of two terms: 
\[
\frac{1}{n\rho_n^{1/2}}\sum_{j = 1}^n[\bE]_{ij}\br_{\bg}
\quad\text{and}\quad
\frac{1}{n\rho_n^{1/2}}\sum_{j = 1}^n\rho_n\br_{\bh_{ij}}.
\]
The second term is relatively easy to analyze, whereas the first term is more involved. Recall that we assume
\[
\frac{\kappa(\bDelta_n)}{\lambda_d(\bDelta_n)(n\rho_n)^{1/2}}\to 0,\quad
\frac{1}{n\rho_n\lambda_d(\bDelta_n)^{3/2}}\max\left\{\frac{(\log n)^{1/2}}{\lambda_d(\bDelta_n)^3}, \frac{\kappa(\bDelta_n)}{\lambda_d(\bDelta_n)^3}, \frac{\log n}{\lambda_d(\bDelta_n)}\right\}\to 0.
\]
By Corollary \ref{corr:Two_to_infinity_norm_eigenvector_bound}, given any fixed $c > 0$, 
for sufficiently large $n$,
\begin{align*}
\|\rho_n^{-1/2}\widetilde\bX\bW - \bX\|_{2\to\infty}
&\lesssim_c \frac{\|\bU_\bP\|_{2\to\infty}}{n^{1/2}\rho_n\lambda_d(\bDelta_n)}\max\left\{\frac{(\log n)^{1/2}}{\lambda_d(\bDelta_n)^3}, \frac{\kappa(\bDelta_n)}{\lambda_d(\bDelta_n)^3}, \frac{\log n}{\lambda_d(\bDelta_n)}\right\}\\
&\quad + \frac{(\log n)^{1/2}}{\rho_n^{1/2}\lambda_d(\bDelta_n)^{1/2}}\|\bU_\bP\|_{2\to\infty}\\
&\leq \frac{1}{n \rho_n\lambda_d(\bDelta_n)^{3/2}}\max\left\{\frac{(\log n)^{1/2}}{\lambda_d(\bDelta_n)^3}, \frac{\kappa(\bDelta_n)}{\lambda_d(\bDelta_n)^3}, \frac{\log n}{\lambda_d(\bDelta_n)}\right\}\\
&\quad + \frac{(\log n)^{1/2}}{(n\rho_n)^{1/2}\lambda_d(\bDelta_n) } 
\end{align*}
with probability at least $1 - c_0n^{-c}$. By assumption, 
\[
\frac{1}{n \rho_n\lambda_d(\bDelta_n)^{3/2}}\max\left\{\frac{(\log n)^{1/2}}{\lambda_d(\bDelta_n)^3}, \frac{\kappa(\bDelta_n)}{\lambda_d(\bDelta_n)^3}, \frac{\log n}{\lambda_d(\bDelta_n)}\right\}\quad\Longrightarrow
\quad
\frac{(\log n)^{1/2}}{(n\rho_n)^{1/2}\lambda_d(\bDelta_n) }\to 0.
\]
Therefore, 
for sufficiently large $n$,
\[
(\rho_n^{-1/2}\bW\transpose{}\widetilde\bx_i, \rho_n^{-1/2}\bW\transpose{}\widetilde\bx_j)\in\calX_2(\delta/2)\quad\text{for all }i,j\in [n]
\] 
with probability at least $1 - c_0n^{-c}$. Then
we can apply Lemma \ref{lemma:gh_Taylor_expansion} to further obtain
\begin{align*}
&\left\|\frac{1}{n\rho_n^{1/2}}\sum_{j = 1}^n[\bE]_{ij}\br_{\bg}\right\|_2 + \left\|\frac{1}{n\rho_n^{1/2}}\sum_{j = 1}^n\rho_n\br_{\bh_{ij}}\right\|_2\\
&\quad\lesssim \frac{d^{1/2}}{n\rho_n^{1/2}\delta^6}\|\bE\|_{\infty}\|\rho_n^{-1/2}\bW\transpose{}\widetilde\bx_i -  \bx_i\|_2^2
 + \frac{d^{1/2}}{n\rho_n^{3/2}\delta^6}\sum_{j = 1}^n|[\bE]_{ij}|\|\bW\transpose{}\widetilde\bx_j - \rho_n^{1/2}\bx_j\|_2^2\\
&\quad\quad + \frac{d^{1/2}}{n\rho_n^{1/2}\delta^6}\sum_{j = 1}^n\|\bW\transpose{}\widetilde\bx_i - \rho_n^{1/2}\bx_i\|_2^2
 + \frac{d^{1/2}}{n\rho_n^{1/2}\delta^6}\sum_{j = 1}^n\|\bW\transpose{}\widetilde\bx_j - \rho_n^{1/2}\bx_j\|_2^2\\
&\quad= \frac{d^{1/2}(\|\bE\|_{\infty} + n\rho_n)}{n\rho_n^{3/2}\delta^6}\| \bW\transpose{}\widetilde\bx_i - \rho_n^{1/2}\bx_i\|_2^2
 + \frac{d^{1/2}}{n\rho_n^{3/2}\delta^6}\sum_{j = 1}^n|[\bE]_{ij}|\|\bW\transpose{}\widetilde\bx_j - \rho_n^{1/2}\bx_j\|_2^2\\
&\quad\quad + 
 \frac{d^{1/2}}{n\rho_n^{1/2}\delta^6}\|\widetilde\bX\bW - \rho_n^{1/2}\bX\|_{\mathrm{F}}^2
\end{align*}
with probability at least $1 - c_0n^{-c}$ for large $n$. By Result \ref{result:Infinity_norm_concentration}, Lemma \ref{lemma:RX_Frobenius_norm_bound}, and Lemma \ref{lemma:single_x_i_perturbation_bound}, 
for sufficiently large $n$,
\begin{align*}
&\|\bE\|_\infty\lesssim_c n\rho_n\quad\text{with probability at least }1 - c_0n^{-c},\\
&\|\widetilde\bX\bW - \rho_n^{1/2}\bX\|_{\mathrm{F}}\lesssim_c \frac{1}{\lambda_d(\bDelta_n)}\quad\text{with probability at least }1 - c_0n^{-c},\\
&\|\bW\transpose\widetilde\bx_i - \rho_n^{1/2}\bx_i\|_2\lesssim_c \frac{\|\bU_\bP\|_{2\to\infty}}{\lambda_d(\bDelta_n)}\max\left\{ \frac{t^{1/2}}{\lambda_d(\bDelta_n)^2}, \frac{\kappa(\bDelta_n)}{\lambda_d(\bDelta_n)^2}, t \right\}\\
&\quad\text{with probability at least }1 - c_0n^{-c} - c_0e^{-t}.
\end{align*}
It suffices to provide a concentration bound for
\[
 \frac{1}{n\rho_n^{3/2}}\sum_{j = 1}^n|[\bE]_{ij}|\|\bW\transpose{}\widetilde\bx_j - \rho_n^{1/2}\bx_j\|_2^2.
\]
\begin{lemma}\label{Lemma:Taylor_remainder_second_order_analysis}
Let $\bA\sim\mathrm{RDPG}(\rho_n^{1/2}\bX)$ and assume the conditions of Theorem \ref{thm:asymptotic_normality_OS} hold. Then given any fixed $c > 0$, for each fixed index $i\in [n]$, for all
$t\geq 1$, $t\lesssim n\rho_n$, and sufficiently large $n$,
\begin{align*}
&\frac{1}{n\rho_n^{3/2}}\sum_{j = 1}^n|[\bE]_{ij}|\|\bW\transpose\widetilde\bx_j - \rho_n^{1/2}\bx_{j}\|_2^2
\lesssim_c \frac{\|\bU_\bP\|_{2\to\infty}^2}{\rho_n^{1/2} \lambda_d(\bDelta_n)^{2}}
\max\left\{t, \frac{1}{\lambda_d(\bDelta_n)^2}\right\},\\
&\frac{1}{n\rho_n}\sum_{j = 1}^n|[\bE]_{ij}|\|\bW\transpose\widetilde\bx_j - \rho_n^{1/2}\bx_{j}\|_2
\\&\quad
\lesssim_c 
\frac{1}{\sqrt{n}\lambda_d(\bDelta_n)} + \frac{\|\bU_\bP\|_{2\to\infty}}{(n\rho_n)^{1/2}\lambda_d(\bDelta_n)}\max\left\{t, \frac{t^{1/2}}{\lambda_d(\bDelta_n)}, \frac{\kappa(\bDelta_n)}{\lambda_d(\bDelta_n)^2}\right\}
\end{align*}
with probability at least $1 - c_0n^{-c} - c_0e^{-t}$ for sufficiently large $n$, where $c_0 > 0$ is an absolute constant.
\end{lemma}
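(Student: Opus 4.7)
The plan is to split each factor $|[\bE]_{ij}|$ into its conditional mean and a centered fluctuation, and then combine the resulting pieces with the leave-one-out decoupling from Section \ref{sub:proof_sketch_for_theorem_thm:eigenvector_deviation}. Writing $|[\bE]_{ij}| = \expect|[\bE]_{ij}| + Y_{ij}$, the random variable $Y_{ij}$ is centered, bounded by $1$, and has variance at most $\rho_n\bx_i\transpose\bx_j$, while $\expect|[\bE]_{ij}| = 2\rho_n\bx_i\transpose\bx_j(1-\rho_n\bx_i\transpose\bx_j) \leq 2\rho_n$. Consequently, the expectation contribution to the second bound collapses to at most $(2/n)\sum_j\|\bW\transpose\widetilde\bx_j - \rho_n^{1/2}\bx_j\|_2 \leq (2/\sqrt{n})\|\widetilde\bX\bW - \rho_n^{1/2}\bX\|_{\mathrm{F}}$, which is $\lesssim_c 1/(\sqrt{n}\lambda_d(\bDelta_n))$ by Lemma \ref{lemma:RX_Frobenius_norm_bound}, exactly matching the first term of the target. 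The mean contribution to the first bound becomes $(2/(n\rho_n^{1/2}))\|\widetilde\bX\bW - \rho_n^{1/2}\bX\|_{\mathrm{F}}^2 \lesssim_c 1/(n\rho_n^{1/2}\lambda_d(\bDelta_n)^2)$, which is absorbed into the declared $\|\bU_\bP\|_{2\to\infty}^2/(\rho_n^{1/2}\lambda_d(\bDelta_n)^2)$ using $\|\bU_\bP\|_{2\to\infty}^2 \geq d/n \geq 1/n$ from Result \ref{result:UP_delocalization}.

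The deviation sums $\sum_j Y_{ij} w_j$ and $\sum_j Y_{ij} w_j^2$ with $w_j := \|\bW\transpose\widetilde\bx_j - \rho_n^{1/2}\bx_j\|_2$ cannot be handled by Cauchy-Schwarz alone because $Y_{ij}$ and $w_j$ are correlated through the symmetric entries of $\bE$. To decouple, I bring in the leave-one-out adjacency $\bA^{(i)}$ from \eqref{eqn:auxiliary_matrix} together with its scaled embedding $\widetilde\bX^{(i)} = \bU_\bA^{(i)}|\bS_\bA^{(i)}|^{1/2}$ and orthogonal alignment $\bW^{(i)}$, so that $w_j^{(i)} := \|(\bW^{(i)})\transpose\widetilde\bx_j^{(i)} - \rho_n^{1/2}\bx_j\|_2$ is independent of $(Y_{ij})_{j\in [n]}$. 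Conditioning on $\bA^{(i)}$ and applying Lemma \ref{lemma:Bernstein_concentration_EW} to $\sum_j Y_{ij} w_j^{(i)}$ and to $\sum_j Y_{ij}(w_j^{(i)})^2$ yields, with probability at least $1 - c_0 e^{-t}$, tail bounds of the form $t\,\|\widetilde\bX^{(i)}\bW^{(i)} - \rho_n^{1/2}\bX\|_{2\to\infty}^k + \sqrt{\rho_n t}\,\|\widetilde\bX^{(i)}\bW^{(i)} - \rho_n^{1/2}\bX\|_{2\to\infty}^{k-1}\|\widetilde\bX^{(i)}\bW^{(i)} - \rho_n^{1/2}\bX\|_{\mathrm{F}}$ for $k\in\{1,2\}$. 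The two norms of $\widetilde\bX^{(i)}\bW^{(i)} - \rho_n^{1/2}\bX$ enjoy the same estimates as their non-decoupled counterparts by applying Corollary \ref{corr:Two_to_infinity_norm_eigenvector_bound} and Lemma \ref{lemma:RX_Frobenius_norm_bound} to $\bA^{(i)}$, using that $\expect\bA^{(i)} = \bP$ and $\|\bA - \bA^{(i)}\|_2 \lesssim_c (n\rho_n)^{1/2}$, exactly as in the proof of Lemma \ref{lemma:auxiliary_matrix}.

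Finally I correct for the residual discrepancy $|w_j - w_j^{(i)}| \leq \|\bW\transpose\widetilde\bx_j - (\bW^{(i)})\transpose\widetilde\bx_j^{(i)}\|_2$ by lifting the leave-one-out bound on $\|\bU_\bA\bH - \bU_\bA^{(i)}\bH^{(i)}\|_2$ from Lemma \ref{lemma:auxiliary_matrix} to a Frobenius-norm estimate on $\widetilde\bX\bW - \widetilde\bX^{(i)}\bW^{(i)}$, via the sign-commutator inequalities of Lemma \ref{lemma:WS_interchange_bound} applied to both $\bA$ and $\bA^{(i)}$. The residual sum $\sum_j Y_{ij}(w_j - w_j^{(i)})$ is then handled by Cauchy-Schwarz with $\|\be_i\transpose\bE\|_2 \lesssim_c (n\rho_n)^{1/2}$, while the expansion $w_j^2 = (w_j^{(i)})^2 + 2 w_j^{(i)}(w_j - w_j^{(i)}) + (w_j - w_j^{(i)})^2$ together with Cauchy-Schwarz takes care of the first bound. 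Combining the mean and decoupled-deviation pieces and consolidating the $\rho_n$, $\lambda_d(\bDelta_n)$, and $\|\bU_\bP\|_{2\to\infty}$ dependencies into the declared $\max$-expressions finishes the proof. The main obstacle is this last lifting: translating the spectral-norm bound of Lemma \ref{lemma:auxiliary_matrix} from the unscaled eigenvector matrices to a sharp Frobenius-norm estimate on the scaled embeddings, without paying a spurious $\log n$ factor from a union bound over rows, is precisely what replaces the coarse $\log n$ by the desired $t$ in the final inequality and is therefore the step that requires the most careful accounting of the sign alignments $\bW_\pm^*$, the square-root diagonals $|\bS_\bA|^{1/2}, |\bS_\bA^{(i)}|^{1/2}$, and their commutator inequalities.
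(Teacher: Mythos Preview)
Your proposal is correct and rests on the same leave-one-out decoupling idea as the paper, but the bookkeeping is organized differently. The paper does not introduce the leave-one-out \emph{scaled} embedding $\widetilde\bX^{(i)}$ and its alignment $\bW^{(i)}$ at all. Instead, it applies the four-term eigenvector-level decomposition
\[
\widetilde\bX\bW - \rho_n^{1/2}\bX
= \bU_\bA\{\bS_\bA^{1/2}\mathrm{sgn}(\bH) - \mathrm{sgn}(\bH)\bS_\bP^{1/2}\}\bW_\bX
+ \bU_\bA\{\mathrm{sgn}(\bH) - \bH\}\bS_\bP^{1/2}\bW_\bX
+ (\bU_\bA\bH - \bU_\bA^{(i)}\bH^{(i)})\bS_\bP^{1/2}\bW_\bX
+ (\bU_\bA^{(i)}\bH^{(i)} - \bU_\bP)\bS_\bP^{1/2}\bW_\bX
\]
directly to the vector whose norm is $w_j$. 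After centering $|[\bE]_{ij}|$ exactly as you do, only the fourth piece is independent of $(Y_{ij})_j$ and needs the conditional Bernstein argument (Lemma \ref{lemma:Bernstein_concentration_EW}); the first three pieces are disposed of crudely, bounding $|Y_{ij}|\le 1$ (or $\sum_j|[\bE]_{ij}|\le\|\bE\|_\infty\lesssim n\rho_n$) and reading off $\|\bW^*\bS_\bA^{1/2}-\bS_\bP^{1/2}\bW^*\|_{\mathrm F}$, $\|\mathrm{sgn}(\bH)-\bH\|_2$, and $\|\bU_\bA\bH-\bU_\bA^{(i)}\bH^{(i)}\|_{\mathrm F}\le d^{1/2}\|\bU_\bA\bH-\bU_\bA^{(i)}\bH^{(i)}\|_2$ from Lemmas \ref{lemma:WS_interchange_bound} and \ref{lemma:auxiliary_matrix}. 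Your two-way split $w_j = w_j^{(i)} + (w_j - w_j^{(i)})$ effectively bundles the first three pieces into the correction $w_j-w_j^{(i)}$ and handles them together via Cauchy--Schwarz; this is fine, but the ``main obstacle'' you flag --- controlling $\|\widetilde\bX\bW - \widetilde\bX^{(i)}\bW^{(i)}\|_{\mathrm F}$ via the sign/square-root commutators --- is exactly the extra layer of accounting the paper's decomposition avoids. Either route lands on the stated bound with the same probability.
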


\begin{proof}[\bf Proof of Lemma \ref{Lemma:Taylor_remainder_second_order_analysis}]
The proof is quite similar to that of Lemma \ref{Lemma:R14k_analysis} modulus some slight modifications. Following the decomposition
\begin{align*}
\widetilde{\bX} \bW - \rho_n^{1/2}\bX
& = \bU_\bA\{\bS_\bA^{1/2}\mathrm{sgn}(\bH) - \mathrm{sgn}(\bH)\bS_\bP^{1/2}\}\bW_\bX
 + \bU_\bA\{\mathrm{sgn}(\bH) - \bH\}\bS_\bP^{1/2}\bW_\bX\\
&\quad + (\bU_\bA\bH - \bU_\bA^{(i)}\bH^{(i)})\bS_\bP^{1/2}\bW_\bX + (\bU_\bA^{(i)}\bH^{(i)} - \bU_\bP)\bS_\bP^{1/2}\bW_\bX
,
\end{align*}
we obtain from the Cauchy-Schwarz inequality that $(a + b + c + d)^2\leq 4a^2 + 4b^2 + 4c^2 + 4d^2$ and the triangle inequality that
\begin{align}
&\frac{1}{n\rho_n^{3/2}}\sum_{j = 1}^n|[\bE]_{ij}|\|\bW\transpose\widetilde\bx_j - \rho_n^{1/2}\bx_{j}\|_2^2\nonumber\\
&\quad
\leq \frac{1}{n\rho_n^{3/2}}\sum_{j = 1}^n(|[\bE]_{ij}| - \expect |[\bE]_{ij}|)\|\bW\transpose\widetilde\bx_j - \rho_n^{1/2}\bx_{j}\|_2^2 + \frac{2}{n\rho_n^{1/2}}\|\widetilde{\bX}\bW - \rho_n^{1/2}\bX\|_{\mathrm{F}}^2\nonumber\\
\label{eqn:Taylor_remainder_second_order_term_I}
&\quad\leq \frac{4}{n\rho_n^{3/2}}\sum_{j = 1}^n(|[\bE]_{ij}| - \expect |[\bE]_{ij}|)\|\be_j\transpose{}\bU_\bA\{\bS_\bA^{1/2}\mathrm{sgn}(\bH) - \mathrm{sgn}(\bH)\bS_\bP^{1/2}\}\bW_\bX\|_2^2\\
\label{eqn:Taylor_remainder_second_order_term_II}
&\quad\quad + \frac{4}{n\rho_n^{3/2}}\sum_{j = 1}^n(|[\bE]_{ij}| - \expect |[\bE]_{ij}|)\|\be_j\transpose{}\bU_\bA\{\mathrm{sgn}(\bH) - \bH\}\bS_\bP^{1/2}\bW_\bX\|_2^2\\
\label{eqn:Taylor_remainder_second_order_term_III}
&\quad\quad + \frac{4}{n\rho_n^{3/2}}\sum_{j = 1}^n(|[\bE]_{ij}| - \expect |[\bE]_{ij}|)\|\be_j\transpose{}(\bU_\bA\bH - \bU_\bA^{(i)}\bH^{(i)})\bS_\bP^{1/2}\bW_\bX\|_2^2\\
\label{eqn:Taylor_remainder_second_order_term_IV}
&\quad\quad + \frac{4}{n\rho_n^{3/2}}\sum_{j = 1}^n(|[\bE]_{ij}| - \expect |[\bE]_{ij}|)\|\be_j\transpose{}(\bU_\bA^{(i)}\bH^{(i)} - \bU_\bP)\bS_\bP^{1/2}\bW_\bX\|_2^2\\
\label{eqn:Taylor_remainder_second_order_term_V}
&\quad\quad + \frac{2}{n\rho_n^{1/2}}\|\widetilde{\bX}\bW - \rho_n^{1/2}\bX\|_{\mathrm{F}}^2
\end{align}
and
\begin{align}
&\frac{1}{n\rho_n}\sum_{j = 1}^n|[\bE]_{ij}|\|\bW\transpose\widetilde\bx_j - \rho_n^{1/2}\bx_{j}\|_2\nonumber\\
&\quad
\leq \frac{1}{n\rho_n}\sum_{j = 1}^n(|[\bE]_{ij}| - \expect |[\bE]_{ij}|)\|\bW\transpose\widetilde\bx_j - \rho_n^{1/2}\bx_{j}\|_2 + \frac{2}{n}\sum_{j = 1}^n\|\be_j\transpose(\widetilde{\bX}\bW - \rho_n^{1/2}\bX)\|_2\nonumber\\
\label{eqn:Taylor_remainder_first_order_term_I}
&\quad\leq \frac{1}{n\rho_n}\sum_{j = 1}^n(|[\bE]_{ij}| - \expect |[\bE]_{ij}|)\|\be_j\transpose{}\bU_\bA\{\bS_\bA^{1/2}\mathrm{sgn}(\bH) - \mathrm{sgn}(\bH)\bS_\bP^{1/2}\}\bW_\bX\|_2\\
\label{eqn:Taylor_remainder_first_order_term_II}
&\quad\quad + \frac{1}{n\rho_n}\sum_{j = 1}^n(|[\bE]_{ij}| - \expect |[\bE]_{ij}|)\|\be_j\transpose{}\bU_\bA\{\mathrm{sgn}(\bH) - \bH\}\bS_\bP^{1/2}\bW_\bX\|_2\\
\label{eqn:Taylor_remainder_first_order_term_III}
&\quad\quad + \frac{1}{n\rho_n}\sum_{j = 1}^n(|[\bE]_{ij}| - \expect |[\bE]_{ij}|)\|\be_j\transpose{}(\bU_\bA\bH - \bU_\bA^{(i)}\bH^{(i)})\bS_\bP^{1/2}\bW_\bX\|_2\\
\label{eqn:Taylor_remainder_first_order_term_IV}
&\quad\quad + \frac{1}{n\rho_n}\sum_{j = 1}^n(|[\bE]_{ij}| - \expect |[\bE]_{ij}|)\|\be_j\transpose{}(\bU_\bA^{(i)}\bH^{(i)} - \bU_\bP)\bS_\bP^{1/2}\bW_\bX\|_2\\
\label{eqn:Taylor_remainder_first_order_term_V}
&\quad\quad + \frac{2}{\sqrt{n}}\|\widetilde{\bX}\bW - \rho_n^{1/2}\bX\|_{\mathrm{F}}.
\end{align}
For terms \eqref{eqn:Taylor_remainder_second_order_term_V} and \eqref{eqn:Taylor_remainder_first_order_term_V}, we know from Lemma \ref{lemma:RX_Frobenius_norm_bound} that 
\begin{align*}
\frac{1}{n\rho_n^{1/2}}\|\widetilde\bX\bW - \rho_n^{1/2}\bX\|_{\mathrm{F}}^2
\lesssim_c \frac{1}{n\rho_n^{1/2}\lambda_d(\bDelta_n)^2},\quad
\frac{1}{\sqrt{n}}\|\widetilde\bX\bW - \rho_n^{1/2}\bX\|_{\mathrm{F}}
\lesssim_c \frac{1}{\sqrt{n}\lambda_d(\bDelta_n)}
\end{align*}
with probability at least $1 - c_0n^{-c}$ for sufficiently large $n$. 
For terms \eqref{eqn:Taylor_remainder_second_order_term_I} and \eqref{eqn:Taylor_remainder_first_order_term_I}, by Result \ref{result:Infinity_norm_concentration}, Lemma \ref{lemma:WS_interchange_bound}, and Lemma \ref{lemma:U_A_two_to_infinity_norm}, 
for all $t\geq 1$, $t\lesssim n\rho_n$, 
\begin{align*}
&\frac{4}{n\rho_n^{3/2}}\sum_{j = 1}^n(|[\bE]_{ij}| - \expect |[\bE]_{ij}|)\|\be_j\transpose{}\bU_\bA\{\bS_\bA^{1/2}\mathrm{sgn}(\bH) - \mathrm{sgn}(\bH)\bS_\bP^{1/2}\}\bW_\bX\|_2^2\\
&\quad\leq \frac{4}{n\rho_n^{3/2}}\sum_{j = 1}^n|[\bE]_{ij}|\|\bU_\bA\|_{2\to\infty}^2\|\bW^*\bS_\bA^{1/2} - \bS_\bP^{1/2}\bW^*\|_2^2\\
&\quad = \frac{4}{n\rho_n^{3/2}}\|\bE\|_{\infty}\|\bU_\bA\|_{2\to\infty}^2\|\bW^*\bS_\bA^{1/2} - \bS_\bP^{1/2}\bW^*\|_2^2\\
&\quad\lesssim_c \frac{1}{n\rho_n^{3/2}}n\rho_n\left\{\frac{\|\bU_\bP\|_{2\to\infty}}{\lambda_d(\bDelta_n) }\right\}^2
\left[\frac{1}{(n\rho_n)^{1/2}\lambda_d(\bDelta_n)}\max\left\{\frac{\kappa(\bDelta_n)}{\lambda_d(\bDelta_n)}, t^{1/2}\right\}\right]^2\\
&\quad = \frac{\|\bU_\bP\|_{2\to\infty}^2}{n \rho_n^{3/2}\lambda_d(\bDelta_n)^4}\max\left\{\frac{\kappa(\bDelta_n)^2}{\lambda_d(\bDelta_n)^2}, t\right\}
= \frac{\|\bU_\bP\|_{2\to\infty}^2}{\rho_n^{1/2}\lambda_d(\bDelta_n)^2}\max\left\{\frac{\kappa(\bDelta_n)^2}{n\rho_n\lambda_d(\bDelta_n)^4}, \frac{t}{n\rho_n\lambda_d(\bDelta_n)^2}\right\}\\
&\quad\lesssim \frac{\|\bU_\bP\|_{2\to\infty}^2}{\rho_n^{1/2}\lambda_d(\bDelta_n)^2}\max\left\{\frac{1}{\lambda_d(\bDelta_n)^2}, t\right\}
\end{align*}
and
\begin{align*}
&\frac{1}{n\rho_n}\sum_{j = 1}^n(|[\bE]_{ij}| - \expect |[\bE]_{ij}|)\|\be_j\transpose{}\bU_\bA\{\bS_\bA^{1/2}\mathrm{sgn}(\bH) - \mathrm{sgn}(\bH)\bS_\bP^{1/2}\}\bW_\bX\|_2\\
&\quad\leq \frac{1}{n\rho_n}\sum_{j = 1}^n|[\bE]_{ij}|\|\bU_\bA\|_{2\to\infty}\|\bW^*\bS_\bA^{1/2} - \bS_\bP^{1/2}\bW^*\|_2\\
&\quad = \frac{1}{n\rho_n}\|\bE\|_{\infty}\|\bU_\bA\|_{2\to\infty}\|\bW^*\bS_\bA^{1/2} - \bS_\bP^{1/2}\bW^*\|_2\\
&\quad\lesssim_c \frac{1}{n\rho_n}n\rho_n\left\{\frac{\|\bU_\bP\|_{2\to\infty}}{\lambda_d(\bDelta_n) }\right\}
\frac{1}{(n\rho_n)^{1/2}\lambda_d(\bDelta_n)}\max\left\{\frac{\kappa(\bDelta_n)}{\lambda_d(\bDelta_n)}, t^{1/2}\right\}\\
&\quad = \frac{\|\bU_\bP\|_{2\to\infty}}{(n \rho_n)^{1/2}\lambda_d(\bDelta_n)}\max\left\{\frac{\kappa(\bDelta_n)}{\lambda_d(\bDelta_n)^2}, \frac{t^{1/2}}{\lambda_d(\bDelta_n)}\right\}
\end{align*}
with probability at least $1 - c_0n^{-c} - c_0e^{-t}$. 
For terms \eqref{eqn:Taylor_remainder_second_order_term_II} and \eqref{eqn:Taylor_remainder_first_order_term_II}, by Lemma \ref{lemma:U_A_two_to_infinity_norm}, Lemma 6.7 in \cite{cape2017two}, and the Davis-Kahan theorem, 
\begin{align*}
&\frac{4}{n\rho_n^{3/2}}\sum_{j = 1}^n(|[\bE]_{ij}| - \expect |[\bE]_{ij}|)\|\be_j\transpose{}\bU_\bA\{\mathrm{sgn}(\bH) - \bH\}\bS_\bP^{1/2}\bW_\bX\|_2^2\\
&\quad\leq \frac{4}{n\rho_n^{3/2}}\sum_{j = 1}^n |[\bE]_{ij}| \| \bU_\bA\|_{2\to\infty}^2\|\bW^* - \bU_\bP\transpose{}\bU_\bA\|_2^2\|\bS_\bP\|_2
\\
&\quad
\leq \frac{4}{n\rho_n^{3/2}}\|\bE\|_\infty \| \bU_\bA\|_{2\to\infty}^2\|\sin\Theta(\bU_\bA,\bU_\bP)\|_2^4\|\bS_\bP\|_2\\
&\quad\leq \frac{4}{n\rho_n^{3/2}}\|\bE\|_\infty \| \bU_\bA\|_{2\to\infty}^2\frac{\|\bE\|_2^4}{\lambda_d(\bP)^4}\|\bS_\bP\|_2\\
&\quad\lesssim_c \frac{n\rho_n}{n\rho_n^{3/2}}\left\{\frac{\|\bU_\bP\|_{2\to\infty}}{\lambda_d(\bDelta_n) }\right\}^2\frac{(n\rho_n)^2}{(n\rho_n)^4\lambda_d(\bDelta_n)^4}(n\rho_n)\lambda_1(\bDelta_n)
 = \frac{\kappa(\bDelta_n)\|\bU_\bP\|_{2\to\infty}^2}{n \rho_n^{3/2}\lambda_d(\bDelta_n)^5}\\
&\quad = \frac{\|\bU_\bP\|_{2\to\infty}^2}{\rho_n^{1/2}\lambda_d(\bDelta_n)^2}\left\{\frac{\kappa(\bDelta_n)}{n\rho_n\lambda_d(\bDelta_n)^3}\right\}\leq \frac{\|\bU_\bP\|_{2\to\infty}^2}{\rho_n^{1/2}\lambda_d(\bDelta_n)^2}\left\{\frac{1}{\lambda_d(\bDelta_n)}\right\}. 
\end{align*}
and
\begin{align*}
&\frac{1}{n\rho_n}\sum_{j = 1}^n(|[\bE]_{ij}| - \expect |[\bE]_{ij}|)\|\be_j\transpose{}\bU_\bA\{\mathrm{sgn}(\bH) - \bH\}\bS_\bP^{1/2}\bW_\bX\|_2\\
&\quad\leq \frac{1}{n\rho_n }\sum_{j = 1}^n |[\bE]_{ij}| \| \bU_\bA\|_{2\to\infty}\|\bW^* - \bU_\bP\transpose{}\bU_\bA\|_2\|\bS_\bP\|_2^{1/2}
\\
&\quad
\leq \frac{1}{n\rho_n }\|\bE\|_\infty \| \bU_\bA\|_{2\to\infty} \|\sin\Theta(\bU_\bA,\bU_\bP)\|_2^2\|\bS_\bP\|_2^{1/2}\\
&\quad\leq \frac{1}{n\rho_n }\|\bE\|_\infty \| \bU_\bA\|_{2\to\infty} \frac{\|\bE\|_2^2}{\lambda_d(\bP)^2}\|\bS_\bP\|_2^{1/2}\\
&\quad\lesssim_c \frac{n\rho_n}{n\rho_n}\left\{\frac{\|\bU_\bP\|_{2\to\infty}}{\lambda_d(\bDelta_n) }\right\}\frac{(n\rho_n)}{(n\rho_n)^2\lambda_d(\bDelta_n)^2}(n\rho_n)^{1/2}\lambda_1(\bDelta_n)^{1/2}
\\&\quad
 = \frac{ \|\bU_\bP\|_{2\to\infty}}{(n \rho_n)^{1/2}\lambda_d(\bDelta_n)}\left\{\frac{1}{\lambda_d(\bDelta_n)^2}\right\}. 
\end{align*}
with probability at least $1 - c_0n^{-c}$ for sufficiently large $n$. 
For terms \eqref{eqn:Taylor_remainder_second_order_term_III} and \eqref{eqn:Taylor_remainder_first_order_term_III}, we invoke Lemma \ref{lemma:auxiliary_matrix} to obtain that
\begin{align*}
&\frac{4}{n\rho_n^{3/2}}\sum_{j = 1}^n(|[\bE]_{ij}| - \expect |[\bE]_{ij}|)\|\be_j\transpose{}(\bU_\bA\bH - \bU_\bA^{(i)}\bH^{(i)})\bS_\bP^{1/2}\bW_\bX\|_2^2\\
&\quad\lesssim_c \frac{1}{n\rho_n^{3/2}}\sum_{j = 1}^n\|\be_j\transpose{}(\bU_\bA\bH - \bU_\bA^{(i)}\bH^{(i)})\|_2^2\|\bS_\bP\|_2
 = \frac{d}{n\rho_n^{3/2}}\|\bU_\bA\bH - \bU_\bA^{(i)}\bH^{(i)}\|_2^2\|\bS_\bP\|_2\\
&\quad\lesssim \frac{d}{n\rho_n^{3/2}}\frac{t\|\bU_\bP\|_{2\to\infty}^2}{n\rho_n\lambda_d(\bDelta_n)^4}n\rho_n\lambda_1(\bDelta_n)\leq \frac{\kappa(\bDelta_n)\|\bU_\bP\|_{2\to\infty}^2t}{n\rho_n^{3/2}\lambda_d(\bDelta_n)^4}\\
&\quad = \frac{\|\bU_\bP\|_{2\to\infty}^2}{\rho_n^{1/2}\lambda_d(\bDelta_n)^2}\left\{\frac{\kappa(\bDelta_n)t}{n\rho_n\lambda_d(\bDelta_n)^2}\right\}\lesssim \frac{\|\bU_\bP\|_{2\to\infty}^2t}{\rho_n^{1/2}\lambda_d(\bDelta_n)^2}
\end{align*}
and
\begin{align*}
&\frac{1}{n\rho_n}\sum_{j = 1}^n(|[\bE]_{ij}| - \expect |[\bE]_{ij}|)\|\be_j\transpose{}(\bU_\bA\bH - \bU_\bA^{(i)}\bH^{(i)})\bS_\bP^{1/2}\bW_\bX\|_2\\
&\quad\lesssim_c \frac{1}{n\rho_n}\sum_{j = 1}^n\|\be_j\transpose{}(\bU_\bA\bH - \bU_\bA^{(i)}\bH^{(i)})\|_2\|\bS_\bP\|_2^{1/2}
 = \frac{d^{1/2}}{n\rho_n }\|\bU_\bA\bH - \bU_\bA^{(i)}\bH^{(i)}\|_2\|\bS_\bP\|_2^{1/2}\\
&\quad\lesssim \frac{d^{1/2}}{n\rho_n}\frac{t^{1/2}\|\bU_\bP\|_{2\to\infty}}{(n\rho_n)^{1/2}\lambda_d(\bDelta_n)^2}(n\rho_n)^{1/2}\lambda_1(\bDelta_n)^{1/2}\leq \frac{\|\bU_\bP\|_{2\to\infty}\kappa(\bDelta_n)^{1/2}t^{1/2}}{n\rho_n\lambda_d(\bDelta_n)^2}\\
&\quad = \frac{\|\bU_\bP\|_{2\to\infty}}{(n\rho_n)^{1/2}\lambda_d(\bDelta_n)^2}\left\{\frac{\kappa(\bDelta_n)^{1/2}t^{1/2}}{(n\rho_n)^{1/2}}\right\} \lesssim \frac{\|\bU_\bP\|_{2\to\infty}}{(n\rho_n)^{1/2}\lambda_d(\bDelta_n)^2}\kappa(\bDelta_n)^{1/2}
\end{align*}
with probability at least $1 - c_0n^{-c} - c_0e^{-t}$ for sufficiently large $n$. Finally, for terms \eqref{eqn:Taylor_remainder_second_order_term_IV} and \eqref{eqn:Taylor_remainder_first_order_term_IV}, we denote $\bTheta^{(i)} = [\btheta_1^{(i)},\ldots,\btheta_n^{(i)}]\transpose{}$, where $(\btheta_j^{(i)})\transpose{} = \be_j\transpose (\bU_\bA^{(i)} \bH^{(i)} - \bU_\bP)\bS_\bP^{1/2}\bW_\bX$, $j = 1,\ldots,n$ and consider the following events that are similar to those in the proof of Lemma \ref{Lemma:R14k_analysis}:
\begin{align*}
&\calE_1  = \left\{\bA:\sum_{j = 1}^n(|[\bE]_{ij}| - \expect |[\bE]_{ij}|)\|\btheta_j^{(i)}\|_2^2\leq t\max_{j\in [n]}\|\btheta_{j}^{(i)}\|_2^2 + (2\rho_nt)^{1/2}\mathrel{\Big(}\sum_{j = 1}^n\|\btheta_j^{(i)}\|_2^4\mathrel{\Big)}^{1/2}\right\},\\
&\calE_1'  = \left\{\bA:\sum_{j = 1}^n(|[\bE]_{ij}| - \expect |[\bE]_{ij}|)\|\btheta_j^{(i)}\|_2\leq t\max_{j\in [n]}\|\btheta_{j}^{(i)}\|_2 + (2\rho_nt)^{1/2}\|\bTheta_j^{(i)}\|_{\mathrm{F}}\right\},\\
&\calE_2 = \left\{\bA:\|\bU_\bA^{(i)}\mathrm{sgn}(\bH^{(i)}) - \bU_\bP\|_{2\to\infty}\leq \frac{C_c\|\bU_\bP\|_{2\to\infty}}{\lambda_d(\bDelta_n)}, \|\bU_\bA^{(i)}\|_{2\to\infty}\leq \frac{C_c\|\bU_\bP\|_{2\to\infty}}{\lambda_d(\bDelta_n)}\right\},\\
&\calE_3 = \left\{\bA:\|\bU_\bA^{(i)}\bH^{(i)} - \bU_\bP\|_{2}\leq \frac{C_c}{(n\rho_n)^{1/2}\lambda_d(\bDelta_n)}\right\},
\end{align*}
where $C_c$ is a constant only depending on $c$ and will be selcted later. 
By Lemma \ref{lemma:Bernstein_concentration_EW} and the independence between $\bTheta^{(i)}$ and $([\bE]_{ij})_{j = 1}^n$, we have $\prob(\calE_1)\geq 1 - 28e^{-t}$ and $\prob(\calE_1')\geq 1 - 28e^{-t}$. By Lemma \ref{lemma:auxiliary_matrix}, for sufficiently large $n$,
$\prob(\calE_2) \geq 1 - 6n^{-c}$ and $\prob(\calE_3)\geq 1 - 3n^{-c}$. Hence, over the event $\calE_1\cap \calE_2\cap \calE_3$, we have
\begin{align*}
&\frac{4}{n\rho_n^{3/2}}\sum_{j = 1}^n(|[\bE]_{ij}| - \expect |[\bE]_{ij}|)\|\be_j\transpose{}(\bU_\bA^{(i)}\bH^{(i)} - \bU_\bP)\bS_\bP^{1/2}\bW_\bX\|_2^2\\
&\quad\leq \frac{4t}{n\rho_n^{3/2}}\max_{j\in [n]}\|\be_j\transpose(\bU_\bA^{(i)}\bH^{(i)} - \bU_\bP)\|_2^2\|\bS_\bP\|_2
\\&\quad\quad
 + \frac{4(2\rho_nt)^{1/2}}{n\rho_n^{3/2}}\left\{\sum_{j = 1}^n\|\be_j\transpose(\bU_\bA^{(i)}\bH^{(i)} - \bU_\bP)\|_2^4\|\bS_\bP\|_2^2\right\}^{1/2}\\
&\quad\leq \frac{4t}{n\rho_n^{3/2}}\|\bU_\bA^{(i)}\bH^{(i)} - \bU_\bP\|_{2\to\infty}^2\|\bS_\bP\|_2\\
&\quad\quad + \frac{4(2\rho_nt)^{1/2}}{n\rho_n^{3/2}}\|\bS_\bP\|_2\|\bU_\bA^{(i)}\bH^{(i)} - \bU_\bP\|_{2\to\infty}\|\bU_\bA^{(i)}\bH^{(i)} - \bU_\bP\|_{\mathrm{F}}\\
&\quad\leq \frac{4t}{n\rho_n^{3/2}}\|\bU_\bA^{(i)}\bH^{(i)} - \bU_\bP\|_{2\to\infty}^2\|\bS_\bP\|_2\\
&\quad\quad + \frac{4(2d\rho_nt)^{1/2}}{n\rho_n^{3/2}}\|\bS_\bP\|_2\|\bU_\bA^{(i)}\bH^{(i)} - \bU_\bP\|_{2\to\infty}\|\bU_\bA^{(i)}\bH^{(i)} - \bU_\bP\|_{2}\\
&\quad\leq \frac{4t}{n\rho_n^{3/2}}(2\|\bU_\bA^{(i)}\|_{2\to\infty} + \|\bU_\bA^{(i)}\mathrm{sgn}(\bH^{(i)}) - \bU_\bP\|_{2\to\infty})^2\|\bS_\bP\|_2\\
&\quad\quad + \frac{4(2d\rho_nt)^{1/2}}{n\rho_n^{3/2}}\|\bS_\bP\|_2\|\bU_\bA^{(i)}\bH^{(i)} - \bU_\bP\|_{2\to\infty}\|\bU_\bA^{(i)}\bH^{(i)} - \bU_\bP\|_{2}\\
&\quad\lesssim_c 
\frac{t}{n\rho_n^{3/2}}\frac{\|\bU_\bP\|_{2\to\infty}^2}{\lambda_d(\bDelta_n)^2}n\rho_n\lambda_1(\bDelta_n) + \frac{(dt)^{1/2}n\rho_n\lambda_1(\bDelta_n)}{n\rho_n}\frac{\|\bU_\bP\|_{2\to\infty}}{\lambda_d(\bDelta_n)}\frac{1}{(n\rho_n)^{1/2}\lambda_d(\bDelta_n)}\\
&\quad = \frac{t\|\bU_\bP\|_{2\to\infty}^2}{\rho_n^{1/2}\lambda_d(\bDelta_n)^2} + \frac{t^{1/2}\|\bU_\bP\|_{2\to\infty}}{\rho_n^{1/2}\lambda_d(\bDelta_n)^2}\sqrt{\frac{d}{n}}\lesssim \frac{t\|\bU_\bP\|_{2\to\infty}^2}{\rho_n^{1/2}\lambda_d(\bDelta_n)^2}.
\end{align*}
Similarly, over the event $\calE_1'\cap \calE_2\cap \calE_3$, we have
\begin{align*}
&\frac{1}{n\rho_n}\sum_{j = 1}^n(|[\bE]_{ij}| - \expect |[\bE]_{ij}|)\|\be_j\transpose{}(\bU_\bA^{(i)}\bH^{(i)} - \bU_\bP)\bS_\bP^{1/2}\bW_\bX\|_2\\
&\quad\leq \frac{t}{n\rho_n }\max_{j\in [n]}\|\be_j\transpose(\bU_\bA^{(i)}\bH^{(i)} - \bU_\bP)\|_2\|\bS_\bP\|_2^{1/2}
\\&\quad\quad
 + \frac{(2\rho_nt)^{1/2}}{n\rho_n}\left\{\sum_{j = 1}^n\|\be_j\transpose(\bU_\bA^{(i)}\bH^{(i)} - \bU_\bP)\|_2^2\|\bS_\bP\|_2\right\}^{1/2}\\
&\quad\leq \frac{t}{n\rho_n }\|\bU_\bA^{(i)}\bH^{(i)} - \bU_\bP\|_{2\to\infty}\|\bS_\bP\|_2^{1/2}
 + \frac{ (2\rho_nt)^{1/2}}{n\rho_n }\|\bS_\bP\|_2^{1/2} \|\bU_\bA^{(i)}\bH^{(i)} - \bU_\bP\|_{\mathrm{F}}\\
&\quad\leq \frac{t}{n\rho_n }\|\bU_\bA^{(i)}\bH^{(i)} - \bU_\bP\|_{2\to\infty}\|\bS_\bP\|_2^{1/2}
 + \frac{ (2d\rho_nt)^{1/2}}{n\rho_n }\|\bS_\bP\|_2^{1/2}\|\bU_\bA^{(i)}\bH^{(i)} - \bU_\bP\|_{2}\\
&\quad\leq \frac{t}{n\rho_n }(2\|\bU_\bA^{(i)}\|_{2\to\infty} + \|\bU_\bA^{(i)}\mathrm{sgn}(\bH^{(i)}) - \bU_\bP\|_{2\to\infty})\|\bS_\bP\|_2^{1/2}
\\
&\quad\quad
 + \frac{(2d\rho_nt)^{1/2}}{n\rho_n}\|\bS_\bP\|_2^{1/2}\|\bU_\bA^{(i)}\bH^{(i)} - \bU_\bP\|_{2}\\
&\quad\lesssim_c 
\frac{t}{n\rho_n}\frac{\|\bU_\bP\|_{2\to\infty}}{\lambda_d(\bDelta_n)}\{n\rho_n\lambda_1(\bDelta_n)\}^{1/2}
 + \frac{(d\rho_nt)^{1/2}(n\rho_n)^{1/2}\lambda_1(\bDelta_n)^{1/2}}{n\rho_n}\frac{1}{(n\rho_n)^{1/2}\lambda_d(\bDelta_n)}\\
&\quad = \frac{t\|\bU_\bP\|_{2\to\infty} }{(n\rho_n)^{1/2}\lambda_d(\bDelta_n)} + \frac{t^{1/2}}{(n\rho_n)^{1/2}\lambda_d(\bDelta_n)}\sqrt{\frac{d}{n}}\lesssim \frac{t\|\bU_\bP\|_{2\to\infty}}{(n\rho_n)^{1/2}\lambda_d(\bDelta_n)}.
\end{align*}
The events $\calE_1\cap\calE_2\cap\calE_3$ and  $\calE_1'\cap\calE_2\cap\calE_3$ both occur with probability at least $1 - c_0n^{-c} - c_0e^{-t}$. The proof is completed by combining the concentration bounds above. 
\end{proof}

We now combine the aforementioned analysis to obtain the concentration bound for \eqref{eqn:ri1_analysis_term_V}. 
\begin{lemma}\label{Lemma:Taylor_remainder_error_bound}
Let $\bA\sim\mathrm{RDPG}(\rho_n^{1/2}\bX)$ and assume the conditions of Theorem \ref{thm:asymptotic_normality_OS} hold. Then given any fixed $c > 0$, for each fixed index $i\in [n]$, 
 for all $t\geq 1$, $t\lesssim n\rho_n$, and sufficiently large $n$,
\begin{align*}
&\left\|\frac{1}{n\rho_n^{1/2}}\sum_{j = 1}^n[\bE]_{ij}\br_{\bg} + \frac{1}{n\rho_n^{1/2}}\rho_n\br_{\bh_{ij}}\right\|_2
\lesssim_c \frac{d^{1/2}\|\bU_\bP\|_{2\to\infty}^2}{\rho_n^{1/2}\delta^6\lambda_d(\bDelta_n)^2}
\max\left\{\frac{t}{\lambda_d(\bDelta_n)^4}, \frac{\kappa(\bDelta_n)^2}{\lambda_d(\bDelta_n)^4}, t^2\right\}
\end{align*}
with probability at least $1 - c_0n^{-c} - c_0e^{-t}$ for sufficiently large $n$, where $c_0 > 0$ is an absolute constant. 
\end{lemma}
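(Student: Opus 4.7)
The plan is essentially to cash in on the decomposition already carried out at the end of Section \ref{sub:concentration_bound_for_eqn:ri1_analysis_term_v}. The first step will be to confirm, via Corollary \ref{corr:Two_to_infinity_norm_eigenvector_bound} together with the sparsity assumption of Theorem \ref{thm:asymptotic_normality_OS}, that $(\rho_n^{-1/2}\bW\transpose\widetilde\bx_i,\rho_n^{-1/2}\bW\transpose\widetilde\bx_j)$ lies in $\calX_2(\delta/2)$ simultaneously for all $i,j\in[n]$ with probability at least $1-c_0n^{-c}$. On this event, Lemma \ref{lemma:gh_Taylor_expansion}(b) applied termwise yields
\begin{align*}
&\left\|\frac{1}{n\rho_n^{1/2}}\sum_{j=1}^n[\bE]_{ij}\br_{\bg}\right\|_2 + \left\|\frac{1}{n\rho_n^{1/2}}\sum_{j=1}^n\rho_n\br_{\bh_{ij}}\right\|_2 \\
&\quad\lesssim \frac{d^{1/2}(\|\bE\|_\infty+n\rho_n)}{n\rho_n^{3/2}\delta^6}\|\bW\transpose\widetilde\bx_i-\rho_n^{1/2}\bx_i\|_2^2 + \frac{d^{1/2}}{n\rho_n^{3/2}\delta^6}\sum_{j=1}^n|[\bE]_{ij}|\,\|\bW\transpose\widetilde\bx_j-\rho_n^{1/2}\bx_j\|_2^2 \\
&\quad\quad + \frac{d^{1/2}}{n\rho_n^{1/2}\delta^6}\|\widetilde\bX\bW-\rho_n^{1/2}\bX\|_{\mathrm{F}}^2,
\end{align*}
so the task reduces to three separate concentration bounds already available in the paper.

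Next, each summand will be handled by an existing ingredient. For the first summand, I will combine Result \ref{result:Infinity_norm_concentration} (which gives $\|\bE\|_\infty\lesssim_c n\rho_n$) with the single-row bound of Lemma \ref{lemma:single_x_i_perturbation_bound}; squaring the latter produces the target factor $\max\{t/\lambda_d(\bDelta_n)^4,\kappa(\bDelta_n)^2/\lambda_d(\bDelta_n)^4,t^2\}$ multiplied by $d^{1/2}\|\bU_\bP\|_{2\to\infty}^2\rho_n^{-1/2}\delta^{-6}\lambda_d(\bDelta_n)^{-2}$. The second summand is bounded verbatim by the first display of Lemma \ref{Lemma:Taylor_remainder_second_order_analysis}, which outputs an expression of the form $d^{1/2}\|\bU_\bP\|_{2\to\infty}^2\rho_n^{-1/2}\delta^{-6}\lambda_d(\bDelta_n)^{-2}\max\{t,\lambda_d(\bDelta_n)^{-2}\}$. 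The third summand is handled by squaring the Frobenius-norm bound of Lemma \ref{lemma:RX_Frobenius_norm_bound}, producing the benign term $d^{1/2}n^{-1}\rho_n^{-1/2}\delta^{-6}\lambda_d(\bDelta_n)^{-2}$.

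To finish, I will compare the three contributions and absorb the latter two into the first. Since $\lambda_d(\bDelta_n)\leq\|\bX\|_{2\to\infty}^2/d\leq 1$ and $\kappa(\bDelta_n)\geq 1$, the single-row term's maximum dominates $\max\{t,\lambda_d(\bDelta_n)^{-2}\}$ and dominates $n^{-1}$ for sufficiently large $n$; a union bound over the three high-probability events then delivers the claimed bound with failure probability $1-c_0n^{-c}-c_0e^{-t}$. The only place that demands any care is verifying the envelope condition $(\rho_n^{-1/2}\bW\transpose\widetilde\bx_i,\rho_n^{-1/2}\bW\transpose\widetilde\bx_j)\in\calX_2(\delta/2)$ uniformly in $i,j$, which is exactly where the two-to-infinity norm bound of Corollary \ref{corr:Two_to_infinity_norm_eigenvector_bound} is needed; the heavy lifting on the random sums themselves was already absorbed into Lemma \ref{Lemma:Taylor_remainder_second_order_analysis}, so no genuinely new probabilistic argument is required at this stage.
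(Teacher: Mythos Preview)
Your proposal is correct and follows essentially the same route as the paper: the paper presents Lemma \ref{Lemma:Taylor_remainder_error_bound} as the immediate combination of the decomposition displayed just before Lemma \ref{Lemma:Taylor_remainder_second_order_analysis} with the ingredients Result \ref{result:Infinity_norm_concentration}, Lemma \ref{lemma:single_x_i_perturbation_bound}, Lemma \ref{lemma:RX_Frobenius_norm_bound}, and Lemma \ref{Lemma:Taylor_remainder_second_order_analysis}, which is exactly your plan. Your absorption argument for the second and third summands (using $\lambda_d(\bDelta_n)\leq 1$, $\kappa(\bDelta_n)\geq 1$, and $\|\bU_\bP\|_{2\to\infty}^2\geq d/n\geq 1/n$) is the right way to finish.
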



\subsection{Concentration bound for \eqref{eqn:ri1}} 
\label{sub:concentration_bound_for_eqn:ri1}

We are now in a position to obtain a concentration bound for term \eqref{eqn:ri1} by collecting the results in Sections \ref{sub:concentration_bound_for_eqn:ri1_analysis_term_ii}, \ref{sub:concentration_bound_for_eqn:ri1_analysis_term_iii}, \ref{sub:concentration_bound_for_eqn:ri1_analysis_term_iv}, and \ref{sub:concentration_bound_for_eqn:ri1_analysis_term_v}. 

\begin{lemma}\label{lemma:Taylor_expansion_phi}
Let $\bA\sim\mathrm{RDPG}(\rho_n^{1/2}\bX)$ and assume the conditions of Theorem \ref{thm:asymptotic_normality_OS} hold. 
Let $\bphi_{ij}(\bu, \bv)$, $\bg$, $\bh_{ij}$ be defined as in Section \ref{sub:outline_of_the_proof_of_theorem_thm:asymptotic_normality_os} and $\br_{i1}$ be defined as in \eqref{eqn:ri1}. 
Then given any fixed $c > 0$, for each fixed row index $i\in [n]$, 
 for all $t\geq 1$, $t\lesssim \log n$, and sufficiently large $n$,
\begin{align*}
&\|\br_{i1}\|_2
\lesssim_c\frac{\|\bU_\bP\|_{2\to\infty}}{\delta^4\lambda_d(\bDelta_n)}\max\left\{
\frac{t^{1/2}}{\lambda_d(\bDelta_n)^2}, \frac{\kappa(\bDelta_n)}{\lambda_d(\bDelta_n)^2}, t
\right\},\\
&\left\|\br_{i1} + \frac{1}{n\rho_n^{1/2}}\sum_{j = 1}^n
\frac{\rho_n\bx_{j}\bx_{j}\transpose{}(\rho_n^{-1/2}\bW\transpose{}\widetilde{\bx}_i - \bx_{i})}{\bx_{i}\transpose{}\bx_{j}(1 - \rho_n\bx_{i}\transpose{}\bx_{j})}\right\|_2
\\&\quad 
\lesssim_c \frac{d^{1/2}\|\bU_\bP\|_{2\to\infty}}{(n\rho_n)^{1/2}\delta^6\lambda_d(\bDelta_n)^{5/2}}
\max\left\{\frac{t}{\lambda_d(\bDelta_n)^4}, \frac{\kappa(\bDelta_n)^2}{\lambda_d(\bDelta_n)^4}, t^2\right\}
\end{align*}
with probability at least $1 - c_0n^{-c} - c_0e^{-t}$ for sufficiently large $n$, where $c_0 > 0$ is an absolute constants. 
\end{lemma}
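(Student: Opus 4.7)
The plan is to combine the decomposition of $\br_{i1}$ into the five pieces \eqref{eqn:ri1_analysis_term_I}--\eqref{eqn:ri1_analysis_term_V} already sketched in Section \ref{sub:outline_of_the_proof_of_theorem_thm:asymptotic_normality_os}, and then invoke the concentration bounds for pieces II--V from Lemmas \ref{lemma:R12k_analysis}, \ref{lemma:R13k_analysis}, \ref{Lemma:R14k_analysis}, and \ref{Lemma:Taylor_remainder_error_bound}. To justify the first-order Taylor expansion of $\bphi_{ij}$ at $(\bu,\bv)=(\rho_n^{-1/2}\bW\transpose\widetilde\bx_i,\rho_n^{-1/2}\bW\transpose\widetilde\bx_j)$ underlying that decomposition, I would first use Corollary \ref{corr:Two_to_infinity_norm_eigenvector_bound}, together with the standing sparsity/eigengap assumption of Theorem \ref{thm:asymptotic_normality_OS} and the lower bound $\min_{i,j}\{\bx_i\transpose\bx_j\wedge(1-\bx_i\transpose\bx_j)\}\geq\delta$, to verify that $(\rho_n^{-1/2}\bW\transpose\widetilde\bx_i,\rho_n^{-1/2}\bW\transpose\widetilde\bx_j)\in\calX_2(\delta/2)$ simultaneously for all $i,j\in[n]$ with probability at least $1-c_0n^{-c}$ for large $n$. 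On this event Lemma \ref{lemma:gh_Taylor_expansion} applies and the decomposition is rigorous.

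For the second assertion, the key observation is that the subtracted term equals $-\eqref{eqn:ri1_analysis_term_I}$ exactly, so that
\[
\br_{i1}+\frac{1}{n\rho_n^{1/2}}\sum_{j=1}^n\frac{\rho_n\bx_j\bx_j\transpose(\rho_n^{-1/2}\bW\transpose\widetilde\bx_i-\bx_i)}{\bx_i\transpose\bx_j(1-\rho_n\bx_i\transpose\bx_j)}=\eqref{eqn:ri1_analysis_term_II}+\eqref{eqn:ri1_analysis_term_III}+\eqref{eqn:ri1_analysis_term_IV}+\eqref{eqn:ri1_analysis_term_V}.
\]
For terms \eqref{eqn:ri1_analysis_term_III} and \eqref{eqn:ri1_analysis_term_IV}, the deterministic coefficient matrices $\bB_{nij}$ required by Lemmas \ref{lemma:R13k_analysis} and \ref{Lemma:R14k_analysis} are read off from the first-order Taylor coefficients of $\bg$ and $\bh_{ij}$ computed in the proof of Lemma \ref{lemma:gh_Taylor_expansion}, and automatically satisfy $\|\bB_{nij}\|_{\mathrm{F}},\|\bB_{nij}\|_2\lesssim\delta^{-4}$. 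Applying the four lemmas, taking a union of their high-probability events, and comparing the four bounds term by term under the standing assumptions, I expect the piece \eqref{eqn:ri1_analysis_term_V} controlled by Lemma \ref{Lemma:Taylor_remainder_error_bound} to dominate, producing the claimed upper bound on $\|\br_{i1}-\eqref{eqn:ri1_analysis_term_I}\|_2$.

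The first assertion then follows by bounding \eqref{eqn:ri1_analysis_term_I} separately: the operator norm of $(n\rho_n^{1/2})^{-1}\sum_j \rho_n\bx_j\bx_j\transpose/\{\bx_i\transpose\bx_j(1-\rho_n\bx_i\transpose\bx_j)\}$ is at most $\rho_n^{1/2}\lambda_1(\bDelta_n)/\delta^2$, and Lemma \ref{lemma:single_x_i_perturbation_bound} controls $\|\rho_n^{-1/2}\bW\transpose\widetilde\bx_i-\bx_i\|_2$. Multiplying these two and noting that this estimate is larger than the second-assertion bound under the regime $t\lesssim\log n$ would give the stated result after absorbing sub-leading factors.

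The main obstacle is not analytical but combinatorial: verifying term by term, under the standing assumptions $n\rho_n\lambda_d(\bDelta_n)^2\gtrsim\kappa(\bDelta_n)^2\log n$ etc., that the four bounds from Lemmas \ref{lemma:R12k_analysis}--\ref{Lemma:Taylor_remainder_error_bound} really do aggregate into the single clean expression in the statement, with Lemma \ref{Lemma:Taylor_remainder_error_bound} providing the dominant term because it has the largest exponents in both $\lambda_d(\bDelta_n)^{-1}$ and $t$. A subsidiary concern is tracking the tail probabilities; each sub-lemma contributes an event of probability at least $1-c_0n^{-c}-c_0e^{-t}$, so their intersection enjoys the same guarantee up to adjusting the absolute constant $c_0$.
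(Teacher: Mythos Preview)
Your handling of the second assertion is correct and matches the paper: it is exactly the decomposition $\br_{i1}-\eqref{eqn:ri1_analysis_term_I}=\eqref{eqn:ri1_analysis_term_II}+\eqref{eqn:ri1_analysis_term_III}+\eqref{eqn:ri1_analysis_term_IV}+\eqref{eqn:ri1_analysis_term_V}$ followed by Lemmas~\ref{lemma:R12k_analysis}, \ref{lemma:R13k_analysis}, \ref{Lemma:R14k_analysis}, \ref{Lemma:Taylor_remainder_error_bound}, with Lemma~\ref{Lemma:Taylor_remainder_error_bound} dominating. Your preliminary step verifying $(\rho_n^{-1/2}\bW\transpose\widetilde\bx_i,\rho_n^{-1/2}\bW\transpose\widetilde\bx_j)\in\calX_2(\delta/2)$ via Corollary~\ref{corr:Two_to_infinity_norm_eigenvector_bound} is also what the paper does.

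There is a genuine gap in your argument for the \emph{first} assertion. You propose $\|\br_{i1}\|_2\le\|\eqref{eqn:ri1_analysis_term_I}\|_2+\|\br_{i1}-\eqref{eqn:ri1_analysis_term_I}\|_2$ and claim the second-assertion bound is dominated by the bound on term~I. This fails in the critical sparse regime. Take $\lambda_d(\bDelta_n)\asymp1$, $\kappa(\bDelta_n)\asymp1$, $d$ fixed, $n\rho_n\asymp\log n$, and $t\asymp\log n$ (the top of the allowed range). Then the first-assertion target is $\asymp n^{-1/2}\delta^{-4}\log n$, while the second-assertion bound is $\asymp n^{-1/2}(n\rho_n)^{-1/2}\delta^{-6}(\log n)^2\asymp n^{-1/2}\delta^{-6}(\log n)^{3/2}$, which is \emph{larger} by a factor $(\log n)^{1/2}\delta^{-2}$. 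So the triangle inequality with the second-assertion bound cannot deliver the first assertion as stated.

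The paper avoids this by proving the first assertion directly, without the full Taylor decomposition. It applies only the crude Lipschitz estimate of Lemma~\ref{lemma:gh_Taylor_expansion}(a) to $\bphi_{ij}$, obtaining
\[
\|\br_{i1}\|_2\lesssim\frac{1}{n\rho_n\delta^4}\Bigl\{(\|\bE\|_\infty+n\rho_n)\|\bW\transpose\widetilde\bx_i-\rho_n^{1/2}\bx_i\|_2+\sum_j|[\bE]_{ij}|\,\|\bW\transpose\widetilde\bx_j-\rho_n^{1/2}\bx_j\|_2\Bigr\}+\frac{\|\widetilde\bX\bW-\rho_n^{1/2}\bX\|_{\mathrm F}}{\sqrt n\,\delta^4},
\]
and then controls these pieces via Result~\ref{result:Infinity_norm_concentration}, Lemma~\ref{lemma:single_x_i_perturbation_bound}, Lemma~\ref{lemma:RX_Frobenius_norm_bound}, and the \emph{first-order} part of Lemma~\ref{Lemma:Taylor_remainder_second_order_analysis} (the bound on $\frac{1}{n\rho_n}\sum_j|[\bE]_{ij}|\,\|\bW\transpose\widetilde\bx_j-\rho_n^{1/2}\bx_j\|_2$), which you do not invoke. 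The point is that this route keeps the exponents in $t$ and $\lambda_d(\bDelta_n)^{-1}$ at first order, whereas routing through the second assertion squares them.
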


\begin{proof}[\bf Proof of Lemma \ref{lemma:Taylor_expansion_phi}]
For convenience, we denote $\widetilde{\by}_i = \rho_n^{-1/2}\bW\transpose\widetilde{\bx}_i$, $i \in [n]$. We first show that 
for sufficiently large $n$,
$(\widetilde\by_i, \widetilde\by_j)\in\calX_2(\delta/2)$ for all $(i, j)\in [n]\times [n]$ with large probability. 
By Corollary \ref{corr:Two_to_infinity_norm_eigenvector_bound}, for all $c > 0$, 
\begin{align*}
\max_{i\in [n]}\|\widetilde\by_i - \bx_i\|_2
& = \rho_n^{-1/2}\|\widetilde\bX\bW - \rho_n^{1/2}\bX\|_{2\to\infty}\\
&\lesssim_c \frac{1}{n \rho_n\lambda_d(\bDelta_n)^{3/2}}\max\left\{\frac{(\log n)^{1/2}}{\lambda_d(\bDelta_n)^3}, \frac{\kappa(\bDelta_n)}{\lambda_d(\bDelta_n)^3}, \frac{\log n}{\lambda_d(\bDelta_n)}\right\}
 + \frac{(\log n)^{1/2}}{(n\rho_n)^{1/2}\lambda_d(\bDelta_n) }.
\end{align*}
 with probability at least $1 - c_0n^{-c}$ for sufficiently large $n$. 
The upper bound on the preceeding display converges to $0$ as $n\to\infty$ by our assumption. Therefore, for sufficiently large $n$,
with probability at least $1 - c_0n^{-c}$,
\[
\max_{i\in [n]}\|\widetilde\by_i - \bx_i\|_2\leq \frac{\delta}{6}.
\]
Then for any $i,j\in [n]$, with probability at least $1 - c_0n^{-c}$,
\begin{align*}
\widetilde\by_i\transpose{}\widetilde\by_j
& = (\widetilde\by_i - \bx_{i} + \bx_{i})\transpose{}(\widetilde\by_j - \bx_{j} + \bx_{j})
  \leq \bx_{i}\transpose{}\bx_{j} + \|\widetilde\by_i - \bx_{i}\|_2^2 + 2\|\widetilde\by_i - \bx_{i}\|_2\\
&\leq \bx_{i}\transpose{}\bx_{j} + 3\|\widetilde\by_i - \bx_{i}\|_2\leq 1 - \delta + 3(\delta/6) = 1 - \delta/2,\\
\widetilde\by_i\transpose{}\widetilde\by_j
&\geq  \bx_{i}\transpose{}\bx_{j} - 3\|\widetilde\by_i - \bx_{i}\|_2\geq \delta - 3(\delta/6) = \delta/2. 
\end{align*}
This further implies that $\|\widetilde\by_i\|_2\leq 1$ for all $i\in [n]$, and hence, $(\widetilde\by_i, \widetilde\by_j)\in\calX_2(\delta/2)$ for all $(i, j)\in [n]\times [n]$ with probability at least $1 - c_0n^{-c}$ for sufficiently large $n$. 

\vspace*{1ex}\noindent 
We now proceed to the first assertion. 
Invoking assertion (a) of Lemma \ref{lemma:gh_Taylor_expansion}, we have
\begin{align*}
\left\| \bphi_{ij}(\widetilde\by_i, \widetilde\by_j) - \bphi_{ij}(\bx_{i}, \bx_{j})\right\|_2\lesssim (|[\bE]_{ij}| + \rho_n)\|\rho_n^{-1/2}\widetilde{\bX}\bW - \bX\|_{2\to\infty}
\end{align*}
with probability at least $1 - c_0n^{-c}$ for sufficiently large $n$. 
It follows from Result \ref{result:Infinity_norm_concentration}, Lemma \ref{lemma:single_x_i_perturbation_bound}, and Lemma \ref{Lemma:Taylor_remainder_second_order_analysis} that, for any $t\geq 1$ and $t\lesssim \log n$,
\begin{align*}
&\frac{1}{n\rho_n^{1/2}}\left\|\sum_{j = 1}^n\left\{\bphi_{ij}(\widetilde\by_i, \widetilde\by_j) - \bphi_{ij}(\bx_{i}, \bx_{j})\right\}\right\|_2\\
&\quad\lesssim \frac{1}{\delta^4}\left(\|\bE\|_\infty + n\rho_n\right)\|\bW\transpose\widetilde{\bx}_i - \rho_n^{1/2}\bx_i\|_2 + \frac{1}{n\rho_n\delta^4}\sum_{j = 1}^n|[\bE]_{ij}|\|\bW\transpose\widetilde{\bx}_j - \rho_n^{1/2}\bx_j\|_2\\
&\quad\quad + \frac{1}{n\delta^4}\sum_{j = 1}^n\|\bW\transpose\widetilde{\bx}_j - \rho_n^{1/2}\bx_j\|_2\\
&\quad\lesssim_c \frac{n\rho_n}{n\rho_n\delta^4}\frac{\|\bU_\bP\|_{2\to\infty}}{\lambda_d(\bDelta_n)}\max\left\{
\frac{t^{1/2}}{\lambda_d(\bDelta_n)^2}, \frac{\kappa(\bDelta_n)}{\lambda_d(\bDelta_n)^2}, t
\right\} + \frac{1}{\sqrt{n}\delta^4\lambda_d(\bDelta_n)}\\
&\quad\quad + \frac{\|\bU_\bP\|_{2\to\infty}}{(n\rho_n)^{1/2}\lambda_d(\bDelta_n)}\max\left\{t, \frac{t^{1/2}}{\lambda_d(\bDelta_n)}, \frac{\kappa(\bDelta_n)}{\lambda_d(\bDelta_n)^2}\right\} + \frac{1}{\sqrt{n}\delta^4\lambda_d(\bDelta_n)}\\
&\quad\lesssim_c \frac{\|\bU_\bP\|_{2\to\infty}}{\delta^4\lambda_d(\bDelta_n)}\max\left\{
\frac{t^{1/2}}{\lambda_d(\bDelta_n)^2}, \frac{\kappa(\bDelta_n)}{\lambda_d(\bDelta_n)^2}, t
\right\} + \frac{1}{\sqrt{n}\delta^4\lambda_d(\bDelta_n)}\\
&\quad\quad + \frac{\|\bU_\bP\|_{2\to\infty}}{\lambda_d(\bDelta_n)}\max\left\{\frac{t}{(n\rho_n)^{1/2}}, \frac{t^{1/2}}{(n\rho_n)^{1/2}\lambda_d(\bDelta_n)}, \frac{\kappa(\bDelta_n)}{(n\rho_n)^{1/2}\lambda_d(\bDelta_n)^2}\right\}\\
&\quad\lesssim \frac{\|\bU_\bP\|_{2\to\infty}}{\delta^4\lambda_d(\bDelta_n)}\max\left\{
\frac{t^{1/2}}{\lambda_d(\bDelta_n)^2}, \frac{\kappa(\bDelta_n)}{\lambda_d(\bDelta_n)^2}, t
\right\}
\end{align*}
with probability at least $1 - c_0n^{-c} - c_0e^{-t}$ for sufficiently large $n$. 
This completes the proof of the first assertion. 
For the second assertion, we recall that
\begin{align*}
\left\|\br_{i1} + \frac{1}{n\rho_n^{1/2}}\sum_{j = 1}^n\frac{\rho_n\bx_j\bx_j\transpose{}}{\bx_i\transpose{}\bx_j(1 - \rho_n\bx_i\transpose{}\bx_j)}(\rho_n^{-1/2}\bW\transpose{}\widetilde\bx_i - \bx_i)\right\|_2
\end{align*}
can be decomposed into the four terms \eqref{eqn:ri1_analysis_term_II}, \eqref{eqn:ri1_analysis_term_III}, \eqref{eqn:ri1_analysis_term_IV}, and \eqref{eqn:ri1_analysis_term_V}. 
More specifically, 
\begin{align*}
&\br_{i1} + \frac{1}{n\rho_n^{1/2}}\sum_{j = 1}^n\frac{\rho_n\bx_j\bx_j\transpose{}}{\bx_i\transpose{}\bx_j(1 - \rho_n\bx_i\transpose{}\bx_j)}(\rho_n^{-1/2}\bW\transpose{}\widetilde{\bx}_i - \bx_i)\\
&\quad = - \frac{1}{n\rho_n^{1/2}}\sum_{j = 1}^n\frac{\rho_n \bx_{j}\bx_{i}\transpose}{\bx_{i}\transpose\bx_{j}(1 - \rho_n\bx_{i}\transpose\bx_{j})}
(\rho_n^{-1/2}\bW\transpose{}\widetilde{\bx}_j - \bx_{j})\\
&\quad  \quad - \frac{1}{n\rho_n^{1/2}}\sum_{j = 1}^n\left[\frac{[\bE]_{ij}(1 - 2\rho_n\bx_{i}\transpose\bx_{j})\bx_{j}\bx_{j}\transpose}{\{\bx_{i}\transpose\bx_{j}(1 - \rho_n\bx_{i}\transpose\bx_{j})\}^2}\right](\rho_n^{-1/2}\bW\transpose\widetilde{\bx}_i - \bx_{i})\\
&\quad \quad + \frac{1}{n\rho_n^{1/2}}\sum_{j = 1}^n
\frac{[\bE]_{ij}\{\bx_{i}\transpose\bx_{j}(1 - \rho_n\bx_{i}\transpose\bx_{j})\eye_d - (1 - 2\rho_n\bx_{i}\transpose\bx_{j})\bx_{j}\bx_{i}\transpose\}}
{\{\bx_{i}\transpose\bx_{j}(1 - \rho_n\bx_{i}\transpose\bx_{j})\}^2}
(\rho_n^{-1/2}\bW\transpose\widetilde{\bx}_j - \bx_{j})\\
&\quad \quad + \frac{1}{n\rho_n^{1/2}}\sum_{j = 1}^n[\bE]_{ij}\br_{\bg}
 + \frac{1}{n\rho_n^{1/2}}\sum_{j = 1}^n\rho_n\br_{\bh_{ij}}
.
\end{align*}
By Lemma \ref{lemma:R12k_analysis}, 
for all $t\geq 1$, $t\lesssim n\rho_n$, for sufficiently large $n$, with probability at least $1 - c_0n^{-c} - c_0e^{-t}$, 
\begin{align*}
\left\|
\frac{1}{n\rho_n^{1/2}}\sum_{j = 1}^n
\frac{\rho_n \bx_{j}\bx_{i}\transpose(\rho_n^{-1/2}\bW\transpose\widetilde{\bx}_j - \bx_{j})}{\bx_{i}\transpose\bx_{j}(1 - \rho_n\bx_{i}\transpose\bx_{j})}
\right\|_2\lesssim_c 
\frac{1}{n\rho_n^{1/2}\delta^2\lambda_d(\bDelta_n)}\max\left\{\frac{\kappa(\bDelta_n)}{\lambda_d(\bDelta_n)}, t^{1/2}\right\}. 
\end{align*}
We next apply Lemma \ref{lemma:R13k_analysis} with 
\[
\bB_{nij} = \frac{(1 - 2\rho_n\bx_{i}\transpose{}\bx_{j})\bx_{j}\bx_{j}\transpose{}}{\{\bx_{i}\transpose{}\bx_{j}(1 - \rho_n\bx_{i}\transpose{}\bx_{j})\}^2}
\]
to obtain that 
for sufficiently large $n$,
for all $t\geq 1$, $t\lesssim n\rho_n$, with probability at least $1 - c_0n^{-c} - c_0e^{-t}$, 
\begin{align*}
&\left\|
\frac{1}{n\rho_n^{1/2}}\sum_{j = 1}^n[\bE]_{ij}
\frac{(1 - 2\rho_n\bx_{i}\transpose{}\bx_{j})\bx_{j}\bx_{j}\transpose(\rho_n^{-1/2}\bW\transpose\widetilde{\bx}_j - \bx_{j})}{\{\bx_{i}\transpose\bx_{j}(1 - \rho_n\bx_{i}\transpose\bx_{j})\}^2}
\right\|_2\\
&\quad \lesssim_c \frac{\|\bU_\bP\|_{2\to\infty}}{(n\rho_n)^{1/2}\delta^4\lambda_d(\bDelta_n)}
\max\left\{\frac{t}{\lambda_d(\bDelta_n)^2}, \frac{\kappa(\bDelta_n)t^{1/2}}{\lambda_d(\bDelta_n)^2}, t^{3/2}\right\}.
\end{align*}
In addition, by Lemma \ref{Lemma:R14k_analysis}, with
\[
\bB_{nij} =  \frac{\{\bx_{i}\transpose\bx_{j}(1 - \rho_n\bx_{i}\transpose\bx_{j})\eye_d - (1 - 2\rho_n\bx_{i}\transpose\bx_{j})\bx_{j}\bx_{i}\transpose\}}
{\{\bx_{i}\transpose\bx_{j}(1 - \rho_n\bx_{i}\transpose\bx_{j})\}^2},
\]
for all $t\geq 1$, $t\lesssim n\rho_n$, 
\begin{align*}
&\left\|
\frac{1}{n\rho_n^{1/2}}\sum_{j = 1}^n
\frac{[\bE]_{ij}\{\bx_{i}\transpose\bx_{j}(1 - \rho_n\bx_{i}\transpose\bx_{j})\eye_d - (1 - 2\rho_n\bx_{i}\transpose\bx_{j})\bx_{j}\bx_{i}\transpose\}}
{\{\bx_{i}\transpose\bx_{j}(1 - \rho_n\bx_{i}\transpose\bx_{j})\}^2}
(\rho_n^{-1/2}\bW\transpose\widetilde{\bx}_j - \bx_{j})
\right\|_2
\\&\quad
\lesssim_c \frac{\|\bU_\bP\|_{2\to\infty}}{(n\rho_n)^{1/2}\delta^4\lambda_d(\bDelta_n)}
\max\left\{\frac{\kappa(\bDelta_n)^{1/2}t^{1/2}}{\lambda_d(\bDelta_n)}, \frac{\kappa(\bDelta_n)}{\lambda_d(\bDelta_n)^2}, t\right\}
\end{align*}
with probability at least $1 - c_0n^{-c} - c_0e^{-t}$ for sufficiently large $n$.
Finally, by Lemma \ref{Lemma:Taylor_remainder_error_bound},
for all $t\geq 1$, $t\lesssim n\rho_n$,
\begin{align*}
&\left\|\frac{1}{n\rho_n^{1/2}}\sum_{j = 1}^n[\bE]_{ij}\br_{\bg} + \frac{1}{n\rho_n^{1/2}}\rho_n\br_{\bh_{ij}}\right\|_2
\lesssim_c \frac{d^{1/2}\|\bU_\bP\|_{2\to\infty}}{(n\rho_n)^{1/2}\delta^6\lambda_d(\bDelta_n)^{5/2}}
\max\left\{\frac{t}{\lambda_d(\bDelta_n)^4}, \frac{\kappa(\bDelta_n)^2}{\lambda_d(\bDelta_n)^4}, t^2\right\}
\end{align*}
with probability at least $1 - c_0n^{-c} - c_0e^{-t}$ for sufficiently large $n$. The proof is then completed by combining the aforementioned concentration bounds. 
\end{proof}


\subsection{Concentration bound for \eqref{eqn:Ri2}} 
\label{sub:concentration_bound_for_eqn:ri2}

In this section, we work on a concentration bound for term \eqref{eqn:Ri2}. Since $\widetilde\bX\bW$ is close to $\rho_n^{1/2}\bX$ in the stringent two-to-infinity norm distance by Theorem \ref{thm:eigenvector_deviation}, it is expected that term \eqref{eqn:Ri2} is asymptotically negligible by the continuous mapping theorem. A formal description of this result requires some work. To begin with, we first oberve the following fact that guarantees that $\rho_n^{-1/2}\widetilde\bX\bW$ is close to $\bX$ in the two-to-infinity norm distance. 
\begin{result}\label{result:two_to_infinity_norm_consistency_X}
By Corollary \ref{corr:Two_to_infinity_norm_eigenvector_bound}, given any fixed $c > 0$,
\begin{align*}
\|\rho_n^{-1/2}\widetilde\bX\bW - \bX\|_{2\to\infty}
&\lesssim_c \frac{1}{n \rho_n\lambda_d(\bDelta_n)^{3/2}}\max\left\{\frac{(\log n)^{1/2}}{\lambda_d(\bDelta_n)^3}, \frac{\kappa(\bDelta_n)}{\lambda_d(\bDelta_n)^3}, \frac{\log n}{\lambda_d(\bDelta_n)}\right\}\\
&\quad + \frac{(\log n)^{1/2}}{(n\rho_n)^{1/2}\lambda_d(\bDelta_n) } 
\end{align*}
with probability at least $1 - c_0n^{-c}$ for sufficiently large $n$. By assumption, 
\[
\frac{1}{n \rho_n\lambda_d(\bDelta_n)^{3/2}}\max\left\{\frac{(\log n)^{1/2}}{\lambda_d(\bDelta_n)^3}, \frac{\kappa(\bDelta_n)}{\lambda_d(\bDelta_n)^3}, \frac{\log n}{\lambda_d(\bDelta_n)}\right\}\quad\Longrightarrow
\quad
\frac{(\log n)^{1/2}}{(n\rho_n)^{1/2}\lambda_d(\bDelta_n) }\to 0.
\]
Therefore, for sufficiently large $n$,
\[
(\rho_n^{-1/2}\bW\transpose{}\widetilde\bx_i, \rho_n^{-1/2}\bW\transpose{}\widetilde\bx_j)\in\calX_2(\delta/2)\quad\text{for all }i,j\in [n]
\] 
with probability at least $1 - c_0n^{-c}$. 
\end{result}
\begin{lemma}\label{lemma:Lipschitz_continuity_Hmatrix}
Let $\bA\sim\mathrm{RDPG}(\rho_n^{1/2}\bX)$  and assume the conditions of Theorem \ref{thm:asymptotic_normality_OS} hold. 
Then given any fixed $c > 0$, for each fixed row index $i\in [n]$, 
for all $t\geq 1$, $t\lesssim n\rho_n$, and sufficiently large $n$,
\begin{align*}
\left\|\bH_i(\rho_n^{-1/2}\widetilde\bX\bW) - \bH_i(\bX)\right\|_2\lesssim_c \frac{1}{(n\rho_n)^{1/2}\delta^4\lambda_d(\bDelta_n)^{3/2}}\max\left\{\frac{t^{1/2}}{\lambda_d(\bDelta_n)^2}, \frac{\kappa(\bDelta_n)}{\lambda_d(\bDelta_n)^2}, t\right\}
\end{align*}
with probability at least $1 - c_0n^{-c} - c_0e^{-t}$ for sufficiently large $n$. 
\end{lemma}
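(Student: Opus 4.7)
The plan is to reduce the claim to a pointwise Lipschitz estimate, in the spirit of Lemma~\ref{lemma:gh_Taylor_expansion}, and then invoke Lemma~\ref{lemma:single_x_i_perturbation_bound} for the $i$-th row together with Lemma~\ref{lemma:RX_Frobenius_norm_bound} for the aggregate contribution of the remaining rows. Write $\widetilde\by_k := \rho_n^{-1/2}\bW\transpose\widetilde\bx_k$, so that $[\rho_n^{-1/2}\widetilde\bX\bW]_{k*}\transpose = \widetilde\by_k$. Define the $d\times d$ matrix-valued map
\[
\bF_{ij}(\bu,\bv) = \frac{\bv\bv\transpose}{\bu\transpose\bv\,(1-\rho_n\bu\transpose\bv)},
\]
so that $\bH_i(\rho_n^{-1/2}\widetilde\bX\bW) - \bH_i(\bX) = (1/n)\sum_{j=1}^n\{\bF_{ij}(\widetilde\by_i,\widetilde\by_j) - \bF_{ij}(\bx_i,\bx_j)\}$. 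By Result~\ref{result:two_to_infinity_norm_consistency_X}, with probability at least $1-c_0n^{-c}$ all pairs $(\widetilde\by_i,\widetilde\by_j)$ lie in $\calX_2(\delta/2)$ for sufficiently large $n$, so the computation can take place on this ``safe'' domain.

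Next I would repeat the partial-derivative bookkeeping carried out in the proof of Lemma~\ref{lemma:gh_Taylor_expansion}(a) for the entries $[\bF_{ij}(\bu,\bv)]_{kl} = v_kv_l/\{\bu\transpose\bv(1-\rho_n\bu\transpose\bv)\}$. All four partial derivatives of $[\bF_{ij}]_{kl}$ with respect to the components of $\bu$ and $\bv$ are uniformly bounded on $\calX_2(\delta/2)$ by a constant multiple of $\delta^{-4}$, since $\|\bu\|_2,\|\bv\|_2\le 1$ and $\bu\transpose\bv(1-\rho_n\bu\transpose\bv)\ge \delta(1-\delta)/2\gtrsim \delta$. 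Applying the mean-value inequality entry-wise and collecting the $d^2$ entries yields
\[
\|\bF_{ij}(\widetilde\by_i,\widetilde\by_j)-\bF_{ij}(\bx_i,\bx_j)\|_2\lesssim \frac{1}{\delta^4}\bigl(\|\widetilde\by_i-\bx_i\|_2 + \|\widetilde\by_j-\bx_j\|_2\bigr),
\]
on the event that $(\widetilde\by_i,\widetilde\by_j)\in\calX_2(\delta/2)$. Summing over $j$ and dividing by $n$,
\[
\bigl\|\bH_i(\rho_n^{-1/2}\widetilde\bX\bW) - \bH_i(\bX)\bigr\|_2\lesssim \frac{1}{\delta^4}\|\widetilde\by_i - \bx_i\|_2 + \frac{1}{n\delta^4}\sum_{j=1}^n\|\widetilde\by_j - \bx_j\|_2.
\]

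For the first summand I invoke Lemma~\ref{lemma:single_x_i_perturbation_bound} and divide by $\rho_n^{1/2}$; together with the delocalization bound $\|\bU_\bP\|_{2\to\infty}\lesssim \{n\lambda_d(\bDelta_n)\}^{-1/2}$ from Result~\ref{result:UP_delocalization}, this yields
\[
\|\widetilde\by_i - \bx_i\|_2\lesssim_c \frac{1}{(n\rho_n)^{1/2}\lambda_d(\bDelta_n)^{3/2}}\max\!\left\{\frac{t^{1/2}}{\lambda_d(\bDelta_n)^2},\frac{\kappa(\bDelta_n)}{\lambda_d(\bDelta_n)^2},t\right\}
\]
with probability at least $1-c_0n^{-c}-c_0e^{-t}$. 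For the averaged term, Cauchy--Schwarz gives $(1/n)\sum_j\|\widetilde\by_j-\bx_j\|_2\le n^{-1/2}\rho_n^{-1/2}\|\widetilde\bX\bW-\rho_n^{1/2}\bX\|_{\mathrm{F}}\lesssim_c (n\rho_n)^{-1/2}\lambda_d(\bDelta_n)^{-1}$ by the second assertion of Lemma~\ref{lemma:RX_Frobenius_norm_bound}. The latter quantity is dominated by the former (since $\lambda_d(\bDelta_n)^{-3/2}\ge \lambda_d(\bDelta_n)^{-1}$), so combining the two bounds and taking a union bound over the three events (the ``safe domain'' event, the row-wise event, and the Frobenius-norm event) delivers the stated inequality.

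The only real obstacle is the bookkeeping for the uniform Lipschitz constant of $\bF_{ij}$ on $\calX_2(\delta/2)$, but this is structurally identical to what was done for $\bg$ and $\bh_{ij}$ in Lemma~\ref{lemma:gh_Taylor_expansion}, so no new idea is needed; the rest of the argument merely aggregates existing high-probability concentration results.
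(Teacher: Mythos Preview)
Your proposal is correct and follows essentially the same route as the paper: establish a uniform $O(\delta^{-4})$ Lipschitz bound for the map $(\bu,\bv)\mapsto \bv\bv\transpose/\{\bu\transpose\bv(1-\rho_n\bu\transpose\bv)\}$ on $\calX_2(\delta/2)$, then average over $j$ and control the $i$-th row via Lemma~\ref{lemma:single_x_i_perturbation_bound} and the remaining rows via Cauchy--Schwarz and Lemma~\ref{lemma:RX_Frobenius_norm_bound}. The only cosmetic difference is that the paper computes the Lipschitz constant by differentiating $\vect\{\bH(\bu,\bv)\}$ via matrix calculus rather than working entry-wise (which also avoids any stray dimension factor in the bookkeeping).
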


\begin{proof}[\bf Proof of Lemma \ref{lemma:Lipschitz_continuity_Hmatrix}]
For any $\bu, \bv$ with $\|\bu\|_2,\|\bv\|_2\leq 1$ and $\delta/2\leq \bu\transpose{}\bv\leq 1 - \delta/2$, define
\[
\bH(\bu, \bv) = \frac{\bv\bv\transpose{}}{\bu\transpose{}\bv(1 - \rho_n\bu\transpose{}\bv)}. 
\]
By the matrix differential calculus (see, e.g., \cite{MAGNUS1985474}), we can compute
\begin{align*}
\frac{\partial}{\partial\bu\transpose{}}\vect\{\bH(\bu, \bv)\} & = \frac{(1 - 2\rho_n\bu\transpose{}\bv)(\bv\otimes\bv)\bu\transpose{}}{\{\bu\transpose{}\bv(1 - \rho_n\bu\transpose{}\bv)\}^2},\\
\frac{\partial}{\partial\bv\transpose{}}\vect\{\bH(\bu, \bv)\} & = \frac{(1 - 2\rho_n\bu\transpose{}\bv)(\bv\otimes\bv)\bv\transpose{}}{\{\bu\transpose{}\bv(1 - \rho_n\bu\transpose{}\bv)\}^2} + \frac{\eye_d\otimes \bv + \bv\otimes\eye_d}{\bu\transpose{}\bv(1 - \rho_n\bu\transpose{}\bv)}.
\end{align*}
Clearly, 
\begin{align*}
\sup_{(\bu, \bv)\in\calX_2(\delta/2)}\left\{\left\|\frac{\partial}{\partial\bu\transpose{}}\vect\{\bH(\bu, \bv)\}\right\|_2 + \left\|\frac{\partial}{\partial\bu\transpose{}}\vect\{\bH(\bu, \bv)\}\right\|_2\right\}\leq \frac{40}{\delta^4}.
\end{align*}
Then by the mean-value inequality, for any $\bu,\bv$ with $\|\bu\|_2,\|\bv\|_2\leq 1$, $\delta/2\leq \bu\transpose{}\bv\leq 1 - \delta/2$, 
\[
\|\bH(\bu, \bv) - \bH(\bx_{i}, \bx_{j})\|_2\leq \frac{40}{\delta^4}(\|\bu - \bx_{i}\|_2 + \|\bv - \bx_{j}\|_2)
\]
Denote $\widetilde\by_i = \rho_n^{-1/2}\bW\transpose{}\widetilde\bx_i$, $i\in [n]$. 
We then apply Result \ref{result:two_to_infinity_norm_consistency_X}, Lemma \ref{lemma:RX_Frobenius_norm_bound}, and Lemma \ref{lemma:single_x_i_perturbation_bound} to obtain that
 for all $t\geq 1$, $t\lesssim n\rho_n$,
\begin{align*}
&\|\bH_i(\rho_n^{-1/2}\widetilde\bX\bW) - \bH_i(\bX)\|_2\\
&\quad\leq \frac{1}{n}\sum_{j = 1}^n\|\bH(\widetilde\by_i,\widetilde\by_j) - \bH(\bx_{i}, \bx_{j})\|_2
\lesssim \frac{1}{n\delta^4}\sum_{j = 1}^n(\|\widetilde{\by}_i - \bx_{i}\|_2 + \|\widetilde\by_j - \bx_{j}\|_2)\\
&\quad\leq \frac{1}{\rho_n^{1/2}\delta^4}\|\bW\transpose{}\widetilde{\bx}_i - \rho_n^{1/2}\bx_{i}\|_2 + \frac{1}{n\rho_n^{1/2}}\sum_{j = 1}^n\|\bW\transpose{}\widetilde{\bx}_j - \rho_n^{1/2}\bx_{j}\|_2\\
&\quad\leq \frac{1}{\rho_n^{1/2}\delta^4}\|\bW\transpose{}\widetilde{\bx}_i - \rho_n^{1/2}\bx_{i}\|_2 + \frac{1}{(n\rho_n)^{1/2}\delta^4}\|\widetilde\bX\bW - \rho_n^{1/2}\bX \|_{\mathrm{F}}\\
&\quad\lesssim_c \frac{\|\bU_\bP\|_{2\to\infty}}{\rho_n^{1/2}\delta^4\lambda_d(\bDelta_n) }\max\left\{\frac{t^{1/2}}{\lambda_d(\bDelta_n)^2}, \frac{\kappa(\bDelta_n)}{\lambda_d(\bDelta_n)^2}, t\right\} + \frac{1}{(n\rho_n)^{1/2}\delta^4\lambda_d(\bDelta_n)}\\
&\quad\lesssim \frac{1}{(n\rho_n)^{1/2}\delta^4\lambda_d(\bDelta_n)^{3/2}}\max\left\{\frac{t^{1/2}}{\lambda_d(\bDelta_n)^2}, \frac{\kappa(\bDelta_n)}{\lambda_d(\bDelta_n)^2}, t\right\}
\end{align*}
with probability at least $1 - c_0n^{-c} - c_0e^{-t}$ for sufficiently large $n$. The proof is thus completed. 
\end{proof}


\subsection{Proofs of Theorems \ref{thm:asymptotic_normality_OS} and \ref{thm:Berry_Esseen_OSE_multivariate}} 
\label{sub:proofs_of_theorems_thm:asymptotic_normality_os_and_thm:berry_esseen_ose_functional}

\begin{proof}[\bf Proof of Theorem \ref{thm:asymptotic_normality_OS}]
We first recall the following decomposition of $\bW\transpose\widetilde{\bx}_i - \rho_n^{1/2}\bx_i$ in Section \ref{sub:outline_of_the_proof_of_theorem_thm:asymptotic_normality_os}:
\begin{align*}
\bG_n(\bx_i)^{1/2}(\bW\transpose{}\widetilde\bx_i - \rho_n^{1/2}\bx_i)
& = \frac{1}{n\rho_n^{1/2}}\sum_{j = 1}^n\frac{[\bE]_{ij}\bG_n(\bx_i)^{-1/2}\bx_j}{\bx_i\transpose{}\bx_j(1 - \rho_n\bx_i\transpose{}\bx_j)} + \widehat{\br}_i,
\end{align*}
where
\begin{align*}
&\widehat{\br}_i = \bG_n(\bx_i)^{-1/2}\{\bG_n(\bx_i)(\bW\transpose{}\widetilde\bx_i - \rho_n^{1/2}\bx_i) + \br_{i1}\}
 + \bG_n(\bx_i)^{1/2}\bR_{i2}\br_{i1} + \bG_n(\bx_i)^{1/2}\br_{i3},\\
&\br_{i1} = \frac{1}{n\sqrt{\rho_n}}\sum_{j = 1}^n\left\{\frac{(A_{ij} - \widetilde\bx_i\transpose\widetilde\bx_j)(\rho_n^{-1/2}\bW\transpose\widetilde\bx_j)}{\rho_n^{-1}\widetilde\bx_{i}\transpose\widetilde\bx_{j}(1 - \widetilde\bx_i\transpose\widetilde\bx_j)} - \frac{(A_{ij} - \rho_n\bx_{i}\transpose\bx_{j})\bx_{j}}{\bx_{i}\transpose\bx_{j}(1 - \rho_n\bx_{i}\transpose\bx_{j})}\right\},\\
&\bR_{i2} = \bH_i(\rho_n^{-1/2}\widetilde\bX\bW)^{-1} - \bG_n(\bx_{i})^{-1},
\end{align*}
$\bH_i(\cdot)$ is the function defined in \eqref{eqn:Hi_function}, and
\begin{align*}
\br_{i3}  = \frac{1}{n\sqrt{\rho_n}}\sum_{j = 1}^n\frac{(A_{ij} - \rho_n\bx_{i}\transpose\bx_{j})}{\bx_{i}\transpose\bx_{j}(1 - \rho_n\bx_{i}\transpose\bx_{j})}\bR_{i2}\bx_{j}.
\end{align*}
By Lemma \ref{lemma:Taylor_expansion_phi}, 
for all $t\geq 1$, $t\lesssim \log n$,
\begin{align*}
\|\br_{i1}\|_2&\lesssim_c \frac{\|\bU_\bP\|_{2\to\infty}}{\delta^4\lambda_d(\bDelta_n)}\max\left\{
\frac{t^{1/2}}{\lambda_d(\bDelta_n)^2}, \frac{\kappa(\bDelta_n)}{\lambda_d(\bDelta_n)^2}, t
\right\}\\
&\leq \frac{1}{\sqrt{n}\delta^4\lambda_d(\bDelta_n)^{3/2}}\max\left\{
\frac{t^{1/2}}{\lambda_d(\bDelta_n)^2}, \frac{\kappa(\bDelta_n)}{\lambda_d(\bDelta_n)^2}, t
\right\}
\end{align*}
with probability at least $1 - c_0n^{-c} - c_0e^{-t}$ for sufficiently large $n$, and
\begin{align*}
\left\|
\br_{i1} + \bG_n(\bx_i)(\bW\transpose\widetilde{\bx}_i - \rho_n^{1/2}\bx_i)
\right\|_2
&\lesssim_c \frac{d^{1/2}\|\bU_\bP\|_{2\to\infty}}{(n\rho_n)^{1/2}\delta^6\lambda_d(\bDelta_n)^{5/2}}\max\left\{\frac{t}{\lambda_d(\bDelta_n)^4}, \frac{\kappa(\bDelta_n)^2}{\lambda_d(\bDelta_n)^4}, t^2\right\}\\
&\leq \frac{d^{1/2}}{n\rho_n^{1/2}\delta^6\lambda_d(\bDelta_n)^3}\max\left\{\frac{t}{\lambda_d(\bDelta_n)^4}, \frac{\kappa(\bDelta_n)^2}{\lambda_d(\bDelta_n)^4}, t^2\right\}
\end{align*}
with probability at least $1 - c_0n^{-c} - c_0e^{-t}$. We next focus on $\bR_{i2}$. Observe that
\[
\bG_n(\bx_i) = \frac{1}{n}\sum_{j = 1}^n\frac{\bx_j\bx_j\transpose}{\bx_i\transpose\bx_j(1 - \rho_n\bx_i\transpose\bx_j)} = \bH_i(\bX)\succeq \frac{1}{n}\sum_{j = 1}^n\bx_j\bx_j\transpose = \bDelta_n.
\]
By definition of $\bH_i(\cdot)$ and Result \ref{result:two_to_infinity_norm_consistency_X}, 
\begin{align*}
\bH_i(\rho_n^{-1/2}\widetilde\bX\bW)& = \bW\transpose\left\{\frac{1}{n}\sum_{j = 1}^n\frac{\widetilde{\bx}_j\widetilde{\bx}_j\transpose}{\widetilde\bx_i\transpose\widetilde\bx_j(1 - \widetilde\bx_i\transpose{}\widetilde\bx_j)}\right\}\bW\\
& = \bW\transpose\left[\frac{1}{n}\sum_{j = 1}^n\frac{\widetilde{\bx}_j\widetilde{\bx}_j\transpose}{\rho_n(\rho_n^{-1/2}\widetilde\bx_i\transpose\rho_n^{-1/2}\widetilde\bx_j)(1 - \rho_n\rho_n^{-1/2}\widetilde\bx_i\transpose{}\rho_n^{-1/2}\widetilde\bx_j)}\right]\bW\\
&\gtrsim\frac{1}{\rho_n}\bW\transpose
\left(\frac{1}{n}\sum_{j = 1}^n{\widetilde{\bx}_j\widetilde{\bx}_j\transpose}\right)
\bW = \frac{1}{n\rho_n}\bW\transpose(\widetilde\bX\widetilde\bX\transpose)\bW
\end{align*}
with probability at least $1 - c_0n^{-c}$ for sufficiently large $n$.
Namely,
\begin{align*}
\lambda_d\left\{\bH_i(\rho_n^{-1/2}\widetilde\bX\bW)\right\}
&\gtrsim \frac{1}{n\rho_n}\lambda_d(\widetilde\bX\transpose\widetilde\bX) = \frac{1}{n\rho_n}\lambda_d(\bA),
\end{align*}
and hence, by Result \ref{result:S_A_concentration},
\[
\|\bH_i(\rho_n^{-1/2}\widetilde\bX\bW)^{-1}\|_2\lesssim n\rho_n\|\bS_\bA^{-1}\|_2\lesssim \frac{1}{\lambda_d(\bDelta_n)}
\]
with probability at least $1 - c_0n^{-c}$ for sufficiently large $n$. Also, by Lemma \ref{lemma:Lipschitz_continuity_Hmatrix}, 
for all $t\geq 1$, $t\lesssim n\rho_n$,
\begin{align*}
\|\bH_i(\rho_n^{-1/2}\widetilde\bX\bW) - \bH_i(\bX)\|_2
&\lesssim_c \frac{1}{(n\rho_n)^{1/2}\delta^4\lambda_d(\bDelta_n)^{3/2}}\max\left\{\frac{t^{1/2}}{\lambda_d(\bDelta_n)^2}, \frac{\kappa(\bDelta_n)}{\lambda_d(\bDelta_n)^2}, t\right\}
\end{align*}
with probability at least $1 - c_0n^{-c} - c_0e^{-t}$ for sufficiently large $n$. It follows that
\begin{align*}
 \|\bG_n(\bx_{i})^{1/2}\bR_{i2}\|_{2}
& =  \|\bG_n(\bx_{i})^{-1/2}\{\bH_i(\rho_n^{-1/2}\widetilde\bX\bW) - \bG_n(\bx_{i})\}\bH_i(\rho_n^{-1/2}\widetilde\bX\bW)^{-1}\|_2\\
&\leq  \|\bG_n(\bx_{i})^{-1/2}\|_2\|\bH_i(\rho_n^{-1/2}\widetilde\bX\bW) - \bG_n(\bx_{i})\|_{\mathrm{2}}\|\bH_i(\rho_n^{-1/2}\widetilde\bX\bW)^{-1}\|_2\\
&\lesssim \frac{1}{\lambda_d(\bDelta_n)^{3/2}} \|\bH_i(\rho_n^{-1/2}\widetilde\bX\bW) - \bG_n(\bx_{i})\|_2\\
&\lesssim_c \frac{1}{(n\rho_n)^{1/2}\delta^4\lambda_d(\bDelta_n)^3}\max\left\{\frac{t^{1/2}}{\lambda_d(\bDelta_n)^2}, \frac{\kappa(\bDelta_n)}{\lambda_d(\bDelta_n)^2}, t\right\}
\end{align*}
with probability at least $1 - c_0n^{-c} - c_0e^{-t}$ for sufficiently large $n$. 
We then move forward to the analysis of $\br_{i3}$.  Write
\begin{align*}
 \|\bG_n(\bx_{i})^{1/2}\br_{i3}\|_{2}
&\leq \|\bG_n(\bx_{i})^{1/2}\bR_{i2}\|_{2}\frac{1}{n\sqrt{\rho_n}}\left\|\sum_{j = 1}^n\frac{(A_{ij} - \rho_n\bx_{i}\transpose\bx_{j})\bx_{j}}{\bx_{i}\transpose\bx_{j}(1 - \rho_n\bx_{i}\transpose\bx_{j})}\right\|_2.
\end{align*}
By Lemma \ref{lemma:Bernstein_concentration_EW}, for all $t\geq 1$ and $t\lesssim n\rho_n$, we have
\begin{align*}
 \left\|\sum_{j = 1}^n\frac{(A_{ij} - \rho_n\bx_{i}\transpose\bx_{j})\bx_{j}}{\bx_{i}\transpose\bx_{j}(1 - \rho_n\bx_{i}\transpose\bx_{j})}\right\|_2
 &\leq 3t\max_{j\in [n]}\left\|\frac{ \bx_{j}}{\bx_{i}\transpose\bx_{j}(1 - \rho_n\bx_{i}\transpose\bx_{j})}\right\|_2\\
 &\quad + (6\rho_nt)^{1/2}\left\{\sum_{j = 1}^n\left\|\frac{ \bx_{j}}{\bx_{i}\transpose\bx_{j}(1 - \rho_n\bx_{i}\transpose\bx_{j})}\right\|_2^2\right\}^{1/2}\\
 &\lesssim (n\rho_nt)^{1/2}\max_{j\in [n]}\left\|\frac{ \bx_{j}}{\bx_{i}\transpose\bx_{j}(1 - \rho_n\bx_{i}\transpose\bx_{j})}\right\|_2
 \lesssim \frac{(n\rho_nt)^{1/2}}{\delta^2}
\end{align*}
with probability at least $1 - c_0e^{-t}$. It follows immediately that
\begin{align*}
\|\bG_n(\bx_{i})^{1/2}\br_{i3}\|_{2}
&\lesssim_c \frac{1}{(n\rho_n)^{1/2}\delta^4\lambda_d(\bDelta_n)^3}\max\left\{\frac{t^{1/2}}{\lambda_d(\bDelta_n)^2}, \frac{\kappa(\bDelta_n)}{\lambda_d(\bDelta_n)^2}, t\right\}\frac{(n\rho_nt)^{1/2}}{n\rho_n^{1/2}\delta^2}\\
& = \frac{1}{n\rho_n^{1/2}\delta^6\lambda_d(\bDelta_n)^3}\max\left\{\frac{t}{\lambda_d(\bDelta_n)^2}, \frac{\kappa(\bDelta_n)t^{1/2}}{\lambda_d(\bDelta_n)^2}, t^{3/2}\right\}
\end{align*}
with probability at least $1 - c_0n^{-c} - c_0e^{-t}$ for all sufficiently large $n$.

\vspace*{2ex}\noindent
Summarizing the above large probability bounds, 
for all $t\geq 1$, $t\lesssim \log n$,
\begin{align*}
\|\widehat\br_i\|_2 & \leq \|\bG_n(\bx_{i})^{-1/2}\|_2\|\bG_n(\bx_{i})(\bW\transpose\widetilde \bx_i - \rho_n^{1/2}\bx_{i}) + \br_{i1}\|_2\\
&\quad + \|\bG_n(\bx_{i})^{1/2}\bR_{i2}\|_2\|\br_{i1}\|_2 + \|\bG_n(\bx_{i})^{1/2}\br_{i3}\|_2
\\&
\lesssim_c \frac{d^{1/2}}{n\rho_n^{1/2}\delta^6\lambda_d(\bDelta_n)^{7/2}}\max\left\{\frac{t}{\lambda_d(\bDelta_n)^4}, \frac{\kappa(\bDelta_n)^2}{\lambda_d(\bDelta_n)^4}, t^2\right\}\\
&\quad + \frac{1}{(n\rho_n)^{1/2}\delta^4\lambda_d(\bDelta_n)^3}\max\left\{\frac{t^{1/2}}{\lambda_d(\bDelta_n)^2}, \frac{\kappa(\bDelta_n)}{\lambda_d(\bDelta_n)^2}, t\right\}\\
&\quad\quad\times\frac{1}{\sqrt{n}\delta^4\lambda_d(\bDelta_n)^{3/2}}\max\left\{
\frac{t^{1/2}}{\lambda_d(\bDelta_n)^2}, \frac{\kappa(\bDelta_n)}{\lambda_d(\bDelta_n)^2}, t
\right\}\\
&\quad + \frac{1}{n\rho_n^{1/2}\delta^6\lambda_d(\bDelta_n)^3}\max\left\{\frac{t}{\lambda_d(\bDelta_n)^2}, \frac{\kappa(\bDelta_n)t^{1/2}}{\lambda_d(\bDelta_n)^2}, t^{3/2}\right\}
\\& 
\leq\frac{d^{1/2}}{n\rho_n^{1/2}\delta^6\lambda_d(\bDelta_n)^{7/2}}\max\left\{\frac{t}{\lambda_d(\bDelta_n)^4}, \frac{\kappa(\bDelta_n)^2}{\lambda_d(\bDelta_n)^4}, t^2\right\}\\
&\quad + \frac{1}{n\rho_n^{1/2}\delta^8\lambda_d(\bDelta_n)^{9/2}}\max\left\{
\frac{t}{\lambda_d(\bDelta_n)^4}, \frac{\kappa(\bDelta_n)^2}{\lambda_d(\bDelta_n)^4}, t^2
\right\}\\
&\quad + \frac{1}{n\rho_n^{1/2}\delta^6\lambda_d(\bDelta_n)^3}\max\left\{\frac{t}{\lambda_d(\bDelta_n)^2}, \frac{\kappa(\bDelta_n)t^{1/2}}{\lambda_d(\bDelta_n)^2}, t^{3/2}\right\}\\
&\lesssim \frac{1}{n\rho_n^{1/2}\delta^8\lambda_d(\bDelta_n)^{9/2}}\max\left\{
\frac{t}{\lambda_d(\bDelta_n)^4}, \frac{\kappa(\bDelta_n)^2}{\lambda_d(\bDelta_n)^4}, t^2
\right\}
\end{align*}
 with probability at least $1 - c_0n^{-c} - c_0e^{-t}$ for sufficiently large $n$. The proof is thus completed. 
\end{proof}

\begin{proof}[\bf Proof of Theorem \ref{thm:Berry_Esseen_OSE_multivariate}]
We apply Theorem \ref{thm:Berry_Esseen_Multivariate} to obtain the desired Berry-Esseen bound. Let
\begin{align*}
&\bxi_j = \frac{(A_{ij} - \rho_n\bx_{i}\transpose{}\bx_{j})\bG_n(\bx_{i})^{-1/2}\bx_{j}}{\sqrt{n\rho_n}\bx_{i}\transpose{}\bx_{j}(1 - \rho_n\bx_{i}\transpose{}\bx_{j})},
\quad
\bD = \sqrt{n}\widehat{\br}_i,\\
&\Delta^{(j)} = \Delta = \frac{C}{n\rho_n^{1/2}\delta^8\lambda_d(\bDelta_n)^{9/2}}\max\left\{
\frac{\log n\rho_n}{\lambda_d(\bDelta_n)^4}, \frac{\kappa(\bDelta_n)^2}{\lambda_d(\bDelta_n)^4}, (\log n\rho_n)^2
\right\},
\\
&\calO = \{\bA: \Delta > \|\bD\|_2\},
\end{align*}
where $C > 0$ is some absolute constant. 
By definition of $\bxi_j$ and $\bG_n(\bx_{i})$, $\expect(\bxi_j) = 0$ and $\bSigma_n(\bx_{i})$, 
\begin{align*}
\sum_{j = 1}^n\expect_0(\bxi_j\bxi_j\transpose{})
& = \frac{1}{n}\sum_{j = 1}^n\frac{\bG_n(\bx_{i})^{-1/2}\bx_{j}\bx_{j}\transpose{}\bG_n(\bx_{i})^{-1/2}}{\bx_{i}\transpose\bx_{j}(1 - \rho_n\bx_{i}\transpose{}\bx_{j})}= \eye_d. 
\end{align*}
We now proceed to $\sum_{j = 1}^n\expect(\|\bxi_j\|_2^3)$ and $\expect(\|\sum_{j = 1}^n\bxi_j\|_2)$. For the first term, we have
\begin{align*}
&\sum_{j = 1}^n\expect(\|\bxi_j\|_2^3)
= \frac{1}{(n\rho_n)^{3/2}}\sum_{j = 1}^n\expect|A_{ij} - \rho_n\bx_{i}\transpose{}\bx_{j}|^3\left\|\frac{\bG_n(\bx_{i})^{-1/2}\bx_{j}}{\bx_{i}\transpose{}\bx_{j}(1 - \rho_n\bx_{i}\transpose{}\bx_{j})}\right\|_2^3\\
&\quad\leq \frac{\|\bG_n(\bx_{i})^{-1/2}\|_2}{(n\rho_n)^{3/2}\delta^2}\sum_{j = 1}^n\expect\{(A_{ij} - \rho_n\bx_{i}\transpose{}\bx_{j})^2\}\left\|\frac{\bG_n(\bx_{i})^{-1/2}\bx_{j}}{\bx_{i}\transpose{}\bx_{j}(1 - \rho_n\bx_{i}\transpose{}\bx_{j})}\right\|_2^2\\
&\quad\leq \frac{1}{(n\rho_n)^{3/2}\delta^2\lambda_d(\bDelta_n)^{1/2}}\sum_{j = 1}^n\rho_n\bx_{i}\transpose{}\bx_{j}(1 - \rho_n\bx_{i}\transpose{}\bx_{j})
\mathrm{tr}\left[\frac{\bG_n(\bx_{i})^{-1/2}\bx_{j}\bx_{j}\transpose{}\bG_n(\bx_{i})^{-1/2}}{\{\bx_{i}\transpose{}\bx_{j}(1 - \rho_n\bx_{i}\transpose{}\bx_{j})\}^2}\right]\\
&\quad = \frac{1}{(n\rho_n)^{1/2}\delta^2\lambda_d(\bDelta_n)^{1/2}}\mathrm{tr}\left[\bG_n(\bx_{i})^{-1/2}\left\{\frac{1}{n}\sum_{j = 1}^n\frac{\bx_{j}\bx_{j}\transpose{}}{\bx_{i}\transpose{}\bx_{j}(1 - \rho_n\bx_{i}\transpose{}\bx_{j})}\right\}
\bG_n(\bx_{i})^{-1/2}
\right]\\
&\quad = \frac{1}{(n\rho_n)^{1/2}\delta^2\lambda_d(\bDelta_n)^{1/2}}\mathrm{tr}(\eye_d) = \frac{d}{(n\rho_n)^{1/2}\delta^2\lambda_d(\bDelta_n)^{1/2}}.
\end{align*}
For $\expect(\|\sum_{j = 1}^n\bxi_j\|_2)$, we use Jensen's inequality to write
\begin{align*}
\expect\left(\left\|\sum_{j = 1}^n\bxi_j\right\|_2\right)
&\leq \left\{\expect\left(\left\|\sum_{j = 1}^n\bxi_j\right\|_2^2\right)\right\}^{1/2}
  = \left(\sum_{j = 1}^n\expect\|\bxi_j\|_2^2\right)^{1/2}
  = \left\{\mathrm{tr}\left(\sum_{j = 1}^n\expect\bxi_j\bxi_j\transpose\right)\right\}^{1/2}
  = d^{1/2}.
\end{align*}
This immediately implies that
\begin{align*}
\expect\left(\left\|\sum_{j = 1}^n\bxi_j\right\|_2\Delta\right)
&\lesssim \frac{d^{1/2}}{n\rho_n^{1/2}\delta^8\lambda_d(\bDelta_n)^{9/2}}\max\left\{
\frac{\log n\rho_n}{\lambda_d(\bDelta_n)^4}, \frac{\kappa(\bDelta_n)^2}{\lambda_d(\bDelta_n)^4}, (\log n\rho_n)^2
\right\}
\end{align*}
Finally, for $\prob(\calO^c)$, the concentration bound in Theorem \ref{thm:asymptotic_normality_OS} implies that $\prob(\calO^c)\lesssim (n\rho_n)^{-1/2}$ for sufficiently large $n$. 
We hence conclude from Theorem \ref{thm:Berry_Esseen_Multivariate} that
\begin{align*}
&\sup_{A\in\calA}\left|\prob\left\{\sqrt{n}\bG_n(\bx_{i})^{-1/2}(\bW_n\transpose\widehat\bx_i - \rho_n^{1/2}\bx_{i})\in A\right\} - \prob\left(\bz\in A\right)\right|\\
&\quad\lesssim d^{1/2}\gamma + \expect\left(\left\|\sum_{j = 1}^n\bxi_j\right\|_2\Delta\right)
 + \prob(\calO^c)\\
 &\quad\lesssim \frac{d^{1/2}}{n\rho_n^{1/2}\delta^8\lambda_d(\bDelta_n)^{9/2}}\max\left\{
\frac{\log n\rho_n}{\lambda_d(\bDelta_n)^4}, \frac{\kappa(\bDelta_n)^2}{\lambda_d(\bDelta_n)^4}, (\log n\rho_n)^2
\right\}.
\end{align*}
The proof is thus completed.
\end{proof}

\section{Proofs for Section \ref{sub:subsequent_graph_inference}}
\label{sec:proof_subsequent_graph_inference}

\subsection{Proof of Theorem \ref{thm:MMSBM}}
\label{sub:proof_theorem_MMSBM}

We follow the notations and definitions in Sections \ref{sec:preliminaries} and \ref{sec:entrywise_limit_theorem_for_the_eigenvectors}. 

\vspace*{1ex}\noindent
$\blacksquare$ \textbf{Row-wise concentration bound for the membership profile matrix estimate.}
First note that $\sigma_1(\bTheta)\leq \sqrt{n}$ because
\[
\sigma_1(\bTheta)\leq \|\bTheta\|_{\mathrm{F}} \leq \sqrt{n}\|\bTheta\|_{2\to\infty}\leq \sqrt{n}\|\bTheta\|_\infty = \sqrt{n}.
\]
Also, we have $\bTheta\transpose\bTheta \preceq c_1^2n\eye_d$ by the condition of Theorem \ref{thm:MMSBM}. Therefore,
\[
\lambda_d(\bDelta_n) = \frac{1}{n}\lambda_d\{(\bX^*)\transpose\bTheta\transpose\bTheta(\bX^*)\}\geq c_1^2\lambda_d\{(\bX^*)\transpose(\bX^*)\}\geq c_1^4.
\]
Namely, $\lambda_d(\bDelta_n)$ is bounded away from $0$. By Corollary \ref{corr:Two_to_infinity_norm_eigenvector_bound}, there exists constants $K_1, c_1 > 0$, such that
\[
\|\bU_\bA - \bU_\bP\bW^*\|_{2\to\infty}\leq \frac{K_1(\log n)^{1/2}}{n\rho_n^{1/2}}\quad\mbox{with probability at least }1 - c_1n^{-2}.
\]
By Lemma \ref{lemma:U_A_two_to_infinity_norm}, Result \ref{result:spectral_norm_concentration}, and Davis-Kahan theorem, 
\begin{align*}
\|\bU_\bA\bU_\bA\transpose - \bU_\bP\bU_\bP\transpose\|_{2\to\infty}
&\leq \|\bU_\bA - \bU_\bP\bW^*\|_{2\to\infty}\|\bU_\bA\transpose\|_2 + \|\bU_\bP\|_{2\to\infty}\|\bU_\bA - \bU_\bP\bW^*\|_{2}\\
&\leq \|\bU_\bA - \bU_\bP\bW^*\|_{2\to\infty}\|\bU_\bA\transpose\|_2 + \|\bU_\bP\|_{2\to\infty}\frac{4\|\bE\|_2}{n\rho_n\lambda_d(\bDelta_n)}\\
&\leq \frac{K_2(\log n)^{1/2}}{n\rho_n^{1/2}}\quad\mbox{with probability at least }1 - c_0n^{-2}
\end{align*}
for constants $K_2, c_0 > 0$ for sufficiently large $n$. By Lemma 2.1 in \cite{doi:10.1080/01621459.2020.1751645}, $\bU_\bP = \bTheta\bV_\bP$, where $\bV_\bP\in\mathbb{R}^{d\times d}$ is the submatrix of $\bU_\bP$ corresponding to the pure node indices $\{i_1,\ldots,i_d\}$. By Lemma II.3 in \cite{doi:10.1080/01621459.2020.1751645}, we have $\sigma_1(\bV_\bP)\leq (c_1^2n)^{-1/2}$ and $\sigma_d(\bV_\bP)\geq n^{-1/2}$. Since 
\begin{align*}
&\prob\left\{
\|\bU_\bA\bU_\bA\transpose - \bU_\bP\bU_\bP\transpose\|_{2\to\infty}
\leq \frac{K_2(\log n)^{1/2}}{n\rho_n^{1/2}}
\right\}\geq 1 - c_0n^{-2},\\
&\sigma_d(\bU_\bP\bV_\bP\transpose) = \sigma_d(\bV_\bP) \geq n^{-1/2},\quad
\|(\bU_\bP\bV_\bP)\transpose\|_{2\to\infty} = \|\bV_\bP\transpose\|_{2\to\infty}\leq \sigma_1(\bV_\bP)\leq \frac{1}{\sqrt{n}},\\
&\frac{K_2(\log n)^{1/2}}{n\rho_n^{1/2}} = o(n^{-1/2})\leq \frac{1}{\sqrt{n}}\min\left(\frac{1}{2\sqrt{d - 1}}, \frac{1}{4}\right)\left(1 + 80\frac{n^{-1}}{n^{-1}}\right)^{-1}\\
&\quad\quad\quad\quad\quad\quad\leq \sigma_d(\bU_\bP\bV_\bP\transpose)\min\left(\frac{1}{2\sqrt{d - 1}}, \frac{1}{4}\right)\left(1 + 80\frac{\|(\bU_\bP\bV_\bP)\transpose\|_{2\to\infty}^2}{\sigma_d(\bU_\bP\bV_\bP\transpose)^2}\right)^{-1},
\end{align*}
then by Theorem 3 in \cite{6656801}, there exists a permutation matrix $\bPi_n\in\{0,1\}^{d\times d}$ such that
\[
\max_{k\in [d]}\|(\bU_\bA\bV_\bA\transpose - \bU_\bP\bV_\bP\transpose\bPi_n)\be_k\|_2\leq \frac{K_3(\log n)^{1/2}}{n\rho_n^{1/2}}\quad\mbox{with probability at least }1 - c_0n^{-2}
\]
for constants $K_3, c_0 > 0$ for sufficiently large $n$. It follows that
\begin{align*}
\|\bV_\bA - \bPi_n\transpose\bV_\bP\bU_\bP\transpose\bU_\bA\|_{2\to\infty}
&= \max_{k\in [d]}\|\be_k\transpose(\bV_\bA\bU_\bA\transpose - \bPi_n\transpose\bV_\bP\bU_\bP\transpose)\bU_\bA\|_2\\
&\leq \max_{k\in [d]}\|(\bU_\bA\bV_\bA\transpose - \bU_\bP\bV_\bP\transpose\bPi_n)\be_k\|_2\\
&\leq \frac{K_3(\log n)^{1/2}}{n\rho_n^{1/2}}\quad\mbox{with probability at least }1 - c_0n^{-2}
\end{align*}
for sufficiently large $n$. By Lemma 6.7 in \cite{cape2017two}, Result \ref{result:eigenvalue_concentration}, and Davis-Kahan theorem, we further have
\begin{align*}
\|\bV_\bA - \bPi_n\transpose\bV_\bP\bW^*\|_2
&\leq \|\bV_\bA - \bPi_n\transpose\bV_\bP\bU_\bP\transpose\bU_\bA\|_2 + \|\bPi_n\transpose\bV_\bP(\bU_\bP\transpose\bU_\bA - \bW^*)\|_2\\
&\leq \|\bV_\bA - \bPi_n\transpose\bV_\bP\bU_\bP\transpose\bU_\bA\|_2 + \|\bV_\bP\|_{2}\|\sin\Theta(\bU_\bA, \bU_\bP)\|_2^2\\
&\leq \sqrt{d}\|\bV_\bA - \bPi_n\transpose\bV_\bP\bU_\bP\transpose\bU_\bA\|_{2\to\infty} + \frac{4\|\bE\|_2^2}{c_1\sqrt{n}(n\rho_n)^2\lambda_d(\bDelta_n)^2}\\
&\leq \frac{K_4(\log n)^{1/2}}{n\rho_n^{1/2}}\quad\mbox{with probability at least }1 - c_0n^{-2}
\end{align*}
for sufficiently large $n$, where $K_4, c_0 > 0$ are constants. By Weyl's inequality, we have $\sigma_d(\bV_\bA)\geq (1/2)\sigma_d(\bV_\bP)\geq (1/2)n^{-1/2}$ with probability at least $1 - c_0n^{-2}$ for sufficiently large $n$ since $\log n = o(n\rho_n)$. Namely, 
\[
\|\bV_\bA^{-1}\|_2\leq 2\sqrt{n}\quad\mbox{with probability at least }1 - c_0n^{-2}
\]
for sufficiently large $n$. Hence, we have
\begin{align*}
\|\widehat{\bTheta} - \bTheta\bPi_n\|_{2\to\infty}
& = \max_{i\in [n]}\|\be_i\transpose(\bU_\bA\bV_\bA^{-1} - \bU_\bP\bV_\bP^{-1}\bPi_n)\|_2\\
&\leq \max_{i\in [n]}\|\be_i\transpose(\bU_\bA - \bU_\bP\bW^*)\|_2\|\bV_\bA^{-1}\|_2 + \|\bU_\bP\|_{2\to\infty}\|\bW^* - \bU_\bP\transpose\bU_\bA\|_2\|\bV_\bA^{-1}\|_2\\
&\quad + \max_{i\in [n]}\|\be_i\transpose\bU_\bP\bU_\bP\transpose\bU_\bA(\bPi_n\transpose\bV_\bP\bU_\bP\transpose\bU_\bA)^{-1}(\bPi_n\transpose\bV_\bP\bU_\bP\transpose\bU_\bA - \bV_\bA)\bV_\bA^{-1}\|_2\\
&\leq \|\bU_\bA - \bU_\bP\bW^*\|_{2\to\infty}\|\bV_\bA^{-1}\|_2 + \|\bU_\bP\|_{2\to\infty}\|\sin\Theta(\bU_\bA, \bU_\bP)\|_2^2\|\bV_\bA^{-1}\|_2\\
&\quad + \max_{i\in [n]}\|\be_i\transpose\bU_\bP\bV_\bP^{-1}\bPi_n(\bPi_n\transpose\bV_\bP\bU_\bP\transpose\bU_\bA - \bV_\bA)\bV_\bA^{-1}\|_2\\
&\leq \|\bU_\bA - \bU_\bP\bW^*\|_{2\to\infty}\|\bV_\bA^{-1}\|_2 + \|\bU_\bP\|_{2\to\infty}\frac{4\|\bE\|_2^2}{(n\rho_n)^2\lambda_d(\bDelta_n)^2}
\|\bV_\bA^{-1}\|_2
\\
&\quad + \|\bTheta\bPi_n\|_\infty\|\bPi_n\transpose\bV_\bP\bU_\bP\transpose\bU_\bA - \bV_\bA\|_{2\to\infty}\|\bV_\bA^{-1}\|_{2}\\
&\leq \frac{K_1(\log n)^{1/2}}{n\rho_n^{1/2}}\times 2\sqrt{n} + \frac{1}{\sqrt{n}\lambda_d(\bDelta_n)^{1/2}}\times \frac{K_5}{(n\rho_n)\lambda_d(\bDelta_n)^2}\times 2\sqrt{n}\\
&\quad + \frac{K_4(\log n)^{1/2}}{n\rho_n^{1/2}}\times 2\sqrt{n}\\
&\leq K\sqrt{\frac{\log n}{n\rho_n}}\quad\mbox{with probability at least }1 - c_0n^{-2}
\end{align*}
for sufficiently large $n$ for constants $K, K_5, c_0 > 0$. This completes the proof for the two-to-infinity norm error bounds on the membership profile matrix estimation. 

\vspace*{2ex}\noindent
$\blacksquare$ \textbf{Asympototic normality of the estimators for the pure nodes.}
Let $j_k\in [n]$ be the row index such that $\bPi_n\transpose\btheta_{j_k} = \be_k$. For each $k\in [d]$, define
\begin{align*}
\calI_k &= \{i\in [n]:\be_i\transpose\bTheta = \be_k\},\quad
\calJ_k = \left\{i\in [n]:\|\be_i\transpose\widehat{\bTheta} - \be_k\transpose\|_2\leq \eta\right\}.
\end{align*}
Let $\pi_n\in\calS_d$ be the permutation such that $\bPi_n\be_k = \be_{\pi_n(k)}$. We claim that $\calI_{\pi_n(k)} = \calJ_k$ with probability at least $1 - c_0n^{-2}$ for sufficiently large $n$. 
For any $i\in\calI_{\pi_n(k)}$, we know that $\be_i\transpose\bTheta = \be_{\pi_n(k)} = \bPi_n\be_k$. 
Therefore,
\begin{align*}
\|\be_i\transpose\widehat{\bTheta} - \be_k\transpose\|_2& = \|\be_i\transpose\widehat{\bTheta}\bPi_n\transpose - \be_k\transpose\bPi_n\transpose\|_2 = \|\be_i\transpose\widehat{\bTheta}\bPi_n\transpose - \be_i\transpose\bTheta\|_2\\
& = \|\be_i\transpose(\widehat{\bTheta} - \bTheta\bPi_n)\|_2\leq \|\widehat{\bTheta} - \bTheta\bPi_n\|_{2\to\infty}\\
&\leq K\sqrt{\frac{\log n}{n\rho_n}}\leq \eta
\quad\mbox{with probability at least }1 - c_0n^{-2}
\end{align*}
for sufficiently large $n$.
This shows that $\calI_{\pi_n(k)}\subset\calI_j$ with probability at least $1 - c_0n^{-2}$ for sufficiently large $n$. Conversely, for any $j\in \calJ_k$, we have
\begin{align*}
\|\btheta_{j} - \be_{\pi_n(k)}\|_2 
& = \|\be_j\transpose\bTheta - \be_k\transpose\bPi_n\transpose\|_2
  = \|\be_j\transpose\bTheta\bPi_n - \be_k\|_2
\leq \|\be_j\transpose(\bTheta\bPi_n - \widehat{\bTheta})\|_2 + \|\be_j\transpose\widehat{\bTheta} - \be_k\transpose\|_2\\
&\leq \|\widehat{\bTheta} - \bTheta\bPi_n\|_{2\to\infty} + \eta\leq K\sqrt{\frac{\log n}{n\rho_n}} + \frac{c_2}{2}\quad\mbox{with probability at least }1 - c_0n^{-2}
\end{align*}
for sufficiently large $n$. Then for sufficiently large $n$, we have 
\[
\|\btheta_{j} - \be_{\pi_n(k)}\|_2 \leq \frac{3c_2}{4} < \min_{i\in [n],\btheta_i\neq \be_{\pi_n(k)}}\|\btheta_i - \be_{\pi_n(k)}\|_2,
\]
with probability at least $1 - c_0n^{-2}$, implying that for sufficiently large $n$, $\btheta_j = \be_{\pi_n(k)}$ by the condition of the theorem. Therefore, $\calJ_k\subset\calI_{\pi_n(k)}$ with probability at least $1 - c_0n^{-2}$ for sufficiently large $n$, implying that
\[
\iota_k = \min_{j\in \calJ_k}j = \min_{i\in \calI_{\pi_n(k)}}i = i_{\pi_n(k)}\quad\mbox{with probability at least }1 - c_0n^{-2}
\]
for sufficiently large $n$. Now for any $k\in [d]$, define
\begin{align*}
\widetilde{\bt}_{nk} = \sqrt{n}\bSigma_n(\bx_k^*)^{-1/2}(\bW\transpose\widetilde{\bx}_{i_k} - \rho_n^{1/2}\bx_k^*)\quad\mbox{and}\quad
\widehat{\bt}_{nk} = \sqrt{n}\bG_n(\bx_k^*)^{1/2}(\bW\transpose\widehat{\bx}_{i_k} - \rho_n^{1/2}\bx_k^*).
\end{align*}
By Theorem \ref{thm:ASE_Berry_Esseen_RDPG}, we know that for any convex measurable $A\subset\mathbb{R}^d$, 
\[
\prob\left(\widetilde{\bt}_{nk} \in A\right)\to \prob(\bz\in A)\quad\mbox{and}\quad
\prob\left(\widehat{\bt}_{nk} \in A\right)\to \prob(\bz\in A),
\]
where $\bz\sim\mathrm{N}_d(\zero_d, \eye_d)$. This implies that
\begin{align*}
\max_{\pi\in\calS_d}\prob\left(\widetilde{\bt}_{n\pi(k)} \in A\right)\to \prob(\bz\in A),\quad
\min_{\pi\in\calS_d}\prob\left(\widetilde{\bt}_{n\pi(k)} \in A\right)\to \prob(\bz\in A),\\
\max_{\pi\in\calS_d}\prob\left(\widehat{\bt}_{n\pi(k)} \in A\right)\to \prob(\bz\in A),\quad
\min_{\pi\in\calS_d}\prob\left(\widehat{\bt}_{n\pi(k)} \in A\right)\to \prob(\bz\in A).
\end{align*}
Hence, we have
\begin{align*}
&\prob\left\{\sqrt{n}\bSigma_n(\bx_{\pi_n(k)}^*)^{-1/2}(\bW\transpose\widetilde{\bx}_{\iota_k} - \rho_n^{1/2}\bx_{\pi_n(k)}^*)\in A\right\}\\
&\quad \leq \prob\left\{\sqrt{n}\bSigma_n(\bx_{\pi_n(k)}^*)^{-1/2}(\bW\transpose\widetilde{\bx}_{\iota_k} - \rho_n^{1/2}\bx_{\pi_n(k)}^*)\in A, \iota_k = i_{\pi_n(k)}\right\} + \prob(\iota_k \neq i_{\pi_n(k)})\\
&\quad \leq \prob\left\{\sqrt{n}\bSigma_n(\bx_{\pi_n(k)}^*)^{-1/2}(\bW\transpose\widetilde{\bx}_{i_{\pi_n(k)}} - \rho_n^{1/2}\bx_{\pi_n(k)}^*)\in A\right\} + c_0n^{-2}\\
&\quad\leq \max_{\pi\in\calS_d}\prob(\widetilde{\bt}_{n\pi(k)} \in A) + c_0n^{-2}\to \prob(\bz\in A),\\
&\prob\left\{\sqrt{n}\bSigma_n(\bx_{\pi_n(k)}^*)^{-1/2}(\bW\transpose\widetilde{\bx}_{\iota_k} - \rho_n^{1/2}\bx_{\pi_n(k)}^*)\in A\right\}\\
&\quad \geq \prob\left\{\sqrt{n}\bSigma_n(\bx_{\pi_n(k)}^*)^{-1/2}(\bW\transpose\widetilde{\bx}_{\iota_k} - \rho_n^{1/2}\bx_{\pi_n(k)}^*)\in A, \iota_k = i_{\pi_n(k)}\right\}\\
&\quad = \prob\left(\widetilde{\bt}_{n\pi_n(k)}\in A\right) + \prob\left(\iota_k = i_{\pi_n(k)}\right) - \prob\left[
\left\{\widetilde{\bt}_{n\pi_n(k)}\in A\right\} \cup \left\{\iota_k = i_{\pi_n(k)}\right\}
\right]\\
&\quad \geq \prob\left(\widetilde{\bt}_{n\pi_n(k)}\in A\right) + 1 - c_0n^{-2} - 1\\
&\quad \geq \min_{\pi\in\calS_d}\prob(\widetilde{\bt}_{n\pi(k)} \in A) - c_0n^{-2}\to \prob(\bz\in A).
\end{align*}
This implies that $\widetilde{\bt}_{n\pi_n(k)}\to\mathrm{N}_d(\zero_d, \eye_d)$. The same reasoning also implies that $\widehat{\bt}_{n\pi_n(k)}\to\mathrm{N}_d(\zero_d, \eye_d)$, and the proof is thus completed.

\subsection{Proof of Theorem \ref{thm:ASE_OSE_testing}}
\label{sub:proof_of_ASE_OSE_testing}

By the proof of Theorem \ref{thm:ASE_Berry_Esseen_RDPG}, Theorem \ref{thm:eigenvector_deviation}, and Theorem \ref{thm:asymptotic_normality_OS}, under the condition that $d$ is fixed and $\lambda_d(\bDelta_n)$ is bounded away from $0$, we have
\begin{align*}
(\bW\transpose\widetilde\bx_i - \rho_n^{1/2}\bx_{0i}) &= \frac{1}{n\rho_n^{1/2}}\bDelta_n^{-1}\sum_{a = 1}^n(A_{ia} - \rho_n\bx_i\transpose\bx_a)\bx_a + o_{\prob}(n^{-1/2}),\\
(\bW\transpose\widehat\bx_i - \rho_n^{1/2}\bx_{0i}) &= \frac{1}{n\rho_n^{1/2}}\sum_{a = 1}^n\frac{(A_{ia} - \rho_n\bx_i\transpose\bx_a)\bx_a}{\bx_i\transpose\bx_a(1 - \rho_n\bx_i\transpose\bx_a)} + o_{\prob}(n^{-1/2}).
\end{align*}
Since $|A_{ij} - \rho_n\bx_i\transpose\bx_j| \leq 1$ and $n\rho_n^{1/2} = \omega(n^{1/2})$, we have,
\begin{align*}
\sqrt{n}\left\{\bW\transpose(\widetilde\bx_i - \widetilde\bx_j) - \rho_n^{1/2}(\bx_i - \bx_j)\right\}
& = \frac{1}{(n\rho_n)^{1/2}}\bDelta_n^{-1}\sum_{a = 1}^n(A_{ia} - \rho_n\bx_i\transpose\bx_a)\bx_a\\
&\quad - \frac{1}{(n\rho_n)^{1/2}}\bDelta_n^{-1}\sum_{b\neq i}^n(A_{jb} - \rho_n\bx_j\transpose\bx_b)\bx_b + o_{\prob}(1),\\
\sqrt{n}\left\{\bW\transpose(\widehat\bx_i - \widehat\bx_j) - \rho_n^{1/2}(\bx_i - \bx_j)\right\}
& = \frac{1}{(n\rho_n)^{1/2}}\sum_{a = 1}^n\frac{(A_{ia} - \rho_n\bx_i\transpose\bx_a)\bx_a}{\bx_i\transpose\bx_a(1 - \rho_n\bx_i\transpose\bx_a)}\\
&\quad - \frac{1}{(n\rho_n)^{1/2}} \sum_{b\neq i}^n\frac{(A_{jb} - \rho_n\bx_j\transpose\bx_b)\bx_b}{\bx_j\transpose\bx_b(1 - \rho_n\bx_j\transpose\bx_b)} + o_{\prob}(1).
\end{align*}
Note that 
\begin{align*}
&\frac{1}{(n\rho_n)^{1/2}}\bDelta_n^{-1}\sum_{a = 1}^n(A_{ia} - \rho_n\bx_i\transpose\bx_a)\bx_a - \frac{1}{(n\rho_n)^{1/2}}\bDelta_n^{-1}\sum_{b\neq i}^n(A_{jb} - \rho_n\bx_j\transpose\bx_b)\bx_b\quad\mbox{and}\\
&\frac{1}{(n\rho_n)^{1/2}}\sum_{a = 1}^n\frac{(A_{ia} - \rho_n\bx_i\transpose\bx_a)\bx_a}{\bx_i\transpose\bx_a(1 - \rho_n\bx_i\transpose\bx_a)} - \frac{1}{(n\rho_n)^{1/2}} \sum_{b\neq i}^n\frac{(A_{jb} - \rho_n\bx_j\transpose\bx_b)\bx_b}{\bx_j\transpose\bx_b(1 - \rho_n\bx_j\transpose\bx_b)}
\end{align*}
are sums of mean-zero independent random vectors. In addition, observe that $\bSigma_n(\bx_i)$,  $\bSigma_n(\bx_i)^{-1}$,  $\bG_n(\bx_i)$, and  $\bG_n(\bx_i)^{-1}$ are all $O(1)$ and $\Omega(1)$, and that
\begin{align*}
&\var\left\{
\frac{1}{(n\rho_n)^{1/2}}\bDelta_n^{-1}\sum_{a = 1}^n(A_{ia} - \rho_n\bx_i\transpose\bx_a)\bx_a - \frac{1}{(n\rho_n)^{1/2}}\bDelta_n^{-1}\sum_{b\neq i}^n(A_{jb} - \rho_n\bx_j\transpose\bx_b)\bx_b
\right\}
\\
&\quad
 = \bSigma_n(\bx_i) + \bSigma_n(\bx_j) + o(1),\\
&\var\left\{\frac{1}{(n\rho_n)^{1/2}}\sum_{a = 1}^n\frac{(A_{ia} - \rho_n\bx_i\transpose\bx_a)\bx_a}{\bx_i\transpose\bx_a(1 - \rho_n\bx_i\transpose\bx_a)} - \frac{1}{(n\rho_n)^{1/2}} \sum_{b\neq i}^n\frac{(A_{jb} - \rho_n\bx_j\transpose\bx_b)\bx_b}{\bx_j\transpose\bx_b(1 - \rho_n\bx_j\transpose\bx_b)}
\right\}
\\
&\quad
 = \bG_n(\bx_i)^{-1} + \bG_n(\bx_j)^{-1} + o(1),\\
&\sum_{a = 1}^n\expect\left\|\bDelta_n^{-1}\frac{(A_{ia} - \rho_n\bx_i\transpose\bx_a)\bx_a}{(n\rho_n)^{1/2}}\right\|_2^3 + \sum_{b\neq i}^n\expect\left\|\bDelta_n^{-1}\frac{(A_{jb} - \rho_n\bx_j\transpose\bx_b)\bx_b}{(n\rho_n)^{1/2}}\right\|_2^3\\
&\quad\leq\frac{ \|\bDelta_n^{-1}\|_2^3}{(n\rho_n)^{1/2}}\left\{\frac{1}{n}\sum_{a = 1}^n\bx_i\transpose\bx_a(1 - \rho_n\bx_i\transpose\bx_a)\|\bx_a\|_2^3 + \frac{1}{n}\sum_{b\neq i}^n\bx_j\transpose\bx_b(1 - \rho_n\bx_j\transpose\bx_b)\|\bx_b\|_2^3\right\}\\
&\quad\leq \frac{\|\bDelta_n^{-1}\|_2^3}{(n\rho_n)^{1/2}}\to 0,\\
&\sum_{a = 1}^n\expect\left\|\frac{(A_{ia} - \rho_n\bx_i\transpose\bx_a)\bx_a}{\bx_i\transpose\bx_a(1 - \rho_n\bx_i\transpose\bx_a)(n\rho_n)^{1/2}}\right\|_2^3 + \sum_{b\neq i}^n\expect\left\| \frac{(A_{jb} - \rho_n\bx_j\transpose\bx_b)\bx_b}{\bx_i\transpose\bx_a(1 - \rho_n\bx_i\transpose\bx_a)(n\rho_n)^{1/2}}\right\|_2^3\\
&\quad\leq \frac{1}{(n\rho_n)^{1/2}}\left\{\frac{1}{n}\sum_{a = 1}^n\frac{\|\bx_a\|_2^3}{\{\bx_i\transpose\bx_a(1 - \rho_n\bx_i\transpose\bx_a)\}^2} + \frac{1}{n}\sum_{b\neq i}^n\frac{\|\bx_b\|_2^3}{\{\bx_j\transpose\bx_b(1 - \rho_n\bx_j\transpose\bx_b)\}^2}\right\}\\
&\quad\leq \frac{2}{(n\rho_n)^{1/2}\delta^4}\to 0.
\end{align*}
Therefore, by Lyapunov's central limit theorem (see, for example, Theorem 7.1.2 in \cite{chung2001course}), 
\begin{align*}
\sqrt{n}\{\bSigma_n(\bx_i) + \bSigma_n(\bx_j)\}^{-1/2}\left\{\bW\transpose(\widetilde\bx_i - \widetilde\bx_j) - \rho_n^{1/2}(\bx_i - \bx_j)\right\}\overset{\calL}{\to}\mathrm{N}(\zero_d, \eye_d),\\
\sqrt{n}\{\bG_n(\bx_i)^{-1} + \bG_n(\bx_j)^{-1}\}^{-1/2}\left\{\bW\transpose(\widehat\bx_i - \widehat\bx_j) - \rho_n^{1/2}(\bx_i - \bx_j)\right\}\overset{\calL}{\to}\mathrm{N}(\zero_d, \eye_d).
\end{align*}
We next show that
\begin{align*}
&\rho_n^{-1}\bW\transpose\widetilde{\bDelta}_n\bW = \bDelta_n + o_{\prob}(1),
\quad\bW\transpose\widetilde{\bSigma}_n(\widetilde\bx_i)\bW = \bSigma_n(\bx_i) + o_{\prob}(1),
\quad\bW\transpose\widetilde{\bG}_n(\widetilde\bx_i)\bW = \bG_n(\bx_i) + o_{\prob}(1).
\end{align*}
For the first equation, by Lemma \ref{lemma:RX_Frobenius_norm_bound}, we have
\begin{align*}
\|\rho_n^{-1}\bW\transpose\widetilde{\bDelta}_n\bW - \bDelta_n\|_2
&\leq \frac{1}{n\rho_n}\|\widetilde{\bX}\bW - \rho_n^{1/2}\bX\|_{\mathrm{F}}\|\widetilde{\bX}\bW\|_2 + \frac{1}{n\rho_n}\|\rho_n^{1/2}\bX\|_2\|\widetilde{\bX}\bW - \rho_n^{1/2}\bX\|_2\\
& = \frac{1}{n\rho_n}O_{\prob}(1)\times O_{\prob}((n\rho_n)^{1/2}) = O_{\prob}\left(\frac{1}{(n\rho_n)^{1/2}}\right).
\end{align*}
For the second equation, we denote
\begin{align*}
\widetilde{\bD}_i &= \frac{1}{\rho_n}\mathrm{diag}\left\{\widetilde{\bx}_i\transpose\widetilde{\bx}_1(1 - \widetilde{\bx}_i\transpose\widetilde{\bx}_1), \ldots, \widetilde{\bx}_i\transpose\widetilde{\bx}_n(1 - \widetilde{\bx}_i\transpose\widetilde{\bx}_n)\right\},\\
\bD_i &= \mathrm{diag}\left\{\bx_i\transpose\bx_1(1 - \rho_n\bx_i\transpose\bx_1), \ldots, \bx_i\transpose\bx_n(1 - \rho_n\bx_i\transpose\bx_n)\right\}.
\end{align*}
By Result \ref{result:two_to_infinity_norm_consistency_X} and Corollary \ref{corr:Two_to_infinity_norm_eigenvector_bound}, we have
\begin{align*}
\max_{i,j\in [n]}|\widetilde{\bx}_i\transpose\widetilde{\bx}_j - \rho_n\bx_i\transpose\bx_j|
&\leq \left(\|\widetilde{\bX}\bW - \rho_n^{1/2}\bX\|_{2\to\infty} + 2\max_{i,j\in [n]}\|\rho_n^{1/2}\bX\|_{2\to\infty}\right)\|\widetilde{\bX}\bW - \rho_n^{1/2}\bX\|_{2\to\infty}
 = O_{\prob}\left(\rho_n\sqrt{\frac{\log n}{n\rho_n}}\right)
\end{align*}
and
\begin{align*}
\max_{i,j\in [n]}|(\widetilde{\bx}_i\transpose\widetilde{\bx}_j)^2 - (\rho_n\bx_i\transpose\bx_j)^2|
&\leq \max_{i,j\in [n]}|\widetilde{\bx}_i\transpose\widetilde{\bx}_j - \rho_n\bx_i\transpose\bx_j|\left(|\widetilde{\bx}_i\transpose\widetilde{\bx}_j - \rho_n\bx_i\transpose\bx_j| + 2\rho_n\bx_i\transpose\bx_j\right)
 = O_{\prob}\left(\rho_n^2\sqrt{\frac{\log n}{n\rho_n}}\right).
\end{align*}
It follows that
\begin{align*}
\|\widetilde{\bD}_i - \bD_i\|_2
&\leq \rho_n^{-1}\max_{i,j\in [n]}|\widetilde{\bx}_i\transpose\widetilde{\bx}_j - \rho_n\bx_i\transpose\bx_j| + \rho_n^{-1}\max_{i,j\in [n]}|(\widetilde{\bx}_i\transpose\widetilde{\bx}_j)^2 - (\rho_n\bx_i\transpose\bx_j)^2|
= O_{\prob}\left(\sqrt{\frac{\log n}{n\rho_n}}\right).
\end{align*}
Therefore, by Lemma \ref{lemma:RX_Frobenius_norm_bound},
\begin{align*}
&\left\|\bW\transpose\frac{1}{n\rho_n^2}\sum_{j = 1}^n\widetilde{\bx}_i\transpose\widetilde{\bx}_j(1 - \widetilde{\bx}_i\transpose\widetilde{\bx}_j)\widetilde{\bx}_j\widetilde{\bx}_j\transpose\bW - \frac{1}{n}\sum_{j = 1}^n\bx_i\transpose\bx_j(1 - \rho_n\bx_i\transpose\bx_j)\bx_j\bx_j\transpose\right\|_2\\
&\quad = \frac{1}{n}\left\|(\rho_n^{-1/2}\widetilde{\bX}\bW)\transpose\widetilde{\bD}_i(\rho_n^{-1/2}\widetilde{\bX}\bW) - \bX\transpose\bD_i\bX\right\|_2\\
&\quad\leq \frac{1}{n}\left\|\rho_n^{-1/2}\widetilde{\bX}\bW - \bX\right\|_2\|\widetilde{\bD}_i\|_2\|\rho_n^{-1/2}\widetilde{\bX}\bW\|_2 + \frac{1}{n}\|\bX\|_2\|\widetilde{\bD}_i - \bD_i\|_2\|\rho_n^{-1/2}\bX\bW\|_2\\
&\quad\quad + \frac{1}{n}\|\bX\|_2\|\bD_i\|_2\|\rho_n^{-1/2}\widetilde{\bX}\bW - \bX\|_2\\
&\quad = \frac{1}{n}\times O_{\prob}(\rho_n^{-1/2})\times O_{\prob}(1)\times O_{\prob}(n^{1/2}) + \frac{1}{n}\times O(n^{1/2})\times O_{\prob}\left(\sqrt{\frac{\log n}{n\rho_n}}\right)\times O_{\prob}(n^{1/2})\\
&\quad\quad + \frac{1}{n}\times O(n^{1/2})\times O(1)\times O_{\prob}(\rho_n^{-1/2}) = o_{\prob}(1).
\end{align*}
Hence, for the second equation, we have
\begin{align*}
\bW\transpose\widetilde{\bSigma}_n(\widetilde\bx_i)\bW
& = (\rho_n^{-1}\bW\transpose\widetilde{\bDelta}_n\bW)^{-1} \bW\transpose\left\{\frac{1}{n\rho_n^2}\sum_{j = 1}^n\widetilde{\bx}_i\transpose\widetilde{\bx}_j(1 - \widetilde{\bx}_i\transpose\widetilde{\bx}_j)\widetilde{\bx}_j\widetilde{\bx}_j\transpose\bW\right\}(\rho_n^{-1}\bW\transpose\widetilde{\bDelta}_n\bW)^{-1}\\
& = \{\bDelta_n + o_{\prob}(1)\}^{-1}
\left\{\frac{1}{n}\sum_{j = 1}^n\bx_i\transpose\bx_j(1 - \rho_n\bx_i\transpose\bx_j)\bx_j\bx_j\transpose + o_{\prob}(1)\right\}
\{\bDelta_n + o_{\prob}(1)\}^{-1}\\
& = \bDelta_n^{-1} 
\left\{\frac{1}{n}\sum_{j = 1}^n\bx_i\transpose\bx_j(1 - \rho_n\bx_i\transpose\bx_j)\bx_j\bx_j\transpose\right\}\bDelta_n^{-1} + o_{\prob}(1)\\
& = \bSigma_n(\bx_i) + o_{\prob}(1).
\end{align*}
For the third equation, it follows directly from Lemma \ref{lemma:Lipschitz_continuity_Hmatrix} with $t = \log n$. 
Hence, we conclude that 
\begin{align*}
&\bW\transpose\widetilde{\bSigma}_{ij}\bW = \bW\transpose\widetilde{\bSigma}_n(\widetilde{\bx}_i)\bW + \bW\transpose\widetilde{\bSigma}_n(\widetilde{\bx}_j)\bW = \bSigma_n(\bx_i) + \bSigma_n(\bx_j) + o_{\prob}(1),\\
&\bW\transpose\widetilde{\bG}_{ij}\bW = \bW\transpose\widetilde{\bG}_n(\widetilde{\bx}_i)^{-1}\bW + \bW\transpose\widetilde{\bG}_n(\widetilde{\bx}_j)^{-1}\bW = \bG_n(\bx_i)^{-1} + \bG_n(\bx_j)^{-1} + o_{\prob}(1).
\end{align*}
By Slutsky's lemma, under the null distribution $H_0:\bx_i = \bx_j$, we have
\begin{align*}
T_{ij}^{(\mathrm{ASE})}& = n(\widetilde{\bx}_i - \widetilde{\bx}_j)\transpose\widetilde{\bSigma}_{ij}(\widetilde{\bx}_i - \widetilde{\bx}_j)\\
& = n\{\bW\transpose(\widetilde{\bx}_i - \widetilde{\bx}_j)\}\transpose\bW\transpose\widetilde{\bSigma}_{ij}\bW\{\bW\transpose(\widetilde{\bx}_i - \widetilde{\bx}_j)\}\\
& = n\{\bW\transpose(\widetilde{\bx}_i - \widetilde{\bx}_j)\}\transpose[\{\bSigma_n(\bx_i) + \bSigma_n(\bx_j)\}^{-1} + o_{\prob}(1)]\{\bW\transpose(\widetilde{\bx}_i - \widetilde{\bx}_j)\}\\
&\overset{\calL}{\to}\chi^2_d,\\
T_{ij}^{(\mathrm{OSE})}& = 
n(\widehat{\bx}_i - \widehat{\bx}_j)\transpose\widetilde{\bG}_{ij}(\widehat{\bx}_i - \widehat{\bx}_j)\\
& = n\{\bW\transpose(\widehat{\bx}_i - \widehat{\bx}_j)\}\transpose\bW\transpose\widetilde{\bG}_{ij}\bW\{\bW\transpose(\widehat{\bx}_i - \widehat{\bx}_j)\}\\
& = n\{\bW\transpose(\widehat{\bx}_i - \widehat{\bx}_j)\}\transpose[\{\bG_n(\bx_i)^{-1} + \bG_n(\bx_j)^{-1}\}^{-1} + o_{\prob}(1)]
\{\bW\transpose(\widehat{\bx}_i - \widehat{\bx}_j)\}\\
&\overset{\calL}{\to}\chi^2_d.
\end{align*}
We now consider the distributions of $T_{ij}^{(\mathrm{ASE})}$ and $T_{ij}^{(\mathrm{OSE})}$ under the alternative $H_A:\bx_i\neq \bx_j$ but $(n\rho_n)^{1/2}(\bx_i - \bx_j) \to \bmu\neq\zero_d$. Under the condition that $\bSigma_n(\bx_i) \to \bSigma_i$ and $\bG_n(\bx_i)\to \bG_i$, we have,
\begin{align*}
\sqrt{n}\{\bSigma_n(\bx_i) + \bSigma_n(\bx_j)\}^{-1/2}\bW\transpose(\widetilde{\bx}_i - \widetilde{\bx}_j)
&\overset{\calL}{\to}\mathrm{N}\left(\bmu\transpose(\bSigma_i + \bSigma_j)^{-1}\bmu, \eye_d\right),\\
\sqrt{n}\{\bG_n(\bx_i)^{-1} + \bG_n(\bx_j)^{-1}\}^{-1/2}\bW\transpose(\widetilde{\bx}_i - \widetilde{\bx}_j)
&\overset{\calL}{\to}\mathrm{N}\left(\bmu\transpose(\bG_i + \bG_j)^{-1}\bmu, \eye_d\right).
\end{align*}
Since $\bW\transpose(\widetilde{\bx}_i - \widetilde{\bx}_j) = O_{\prob}(n^{-1/2})$ and $\bW\transpose(\widehat{\bx}_i - \widehat{\bx}_j) = O_{\prob}(n^{-1/2})$, 
it follows that under $H_A:\bx_i\neq \bx_j$ but $(n\rho_n)^{1/2}(\bx_i - \bx_j)\to \bmu\neq\zero_d$,
\begin{align*}
T_{ij}^{(\mathrm{ASE})}& = n(\widetilde{\bx}_i - \widetilde{\bx}_j)\transpose\widetilde{\bSigma}_{ij}(\widetilde{\bx}_i - \widetilde{\bx}_j)\\
& = n\{\bW\transpose(\widetilde{\bx}_i - \widetilde{\bx}_j)\}\transpose\bW\transpose\widetilde{\bSigma}_{ij}\bW\{\bW\transpose(\widetilde{\bx}_i - \widetilde{\bx}_j)\}\\
& = n\{\bW\transpose(\widetilde{\bx}_i - \widetilde{\bx}_j)\}\transpose[\{\bSigma_n(\bx_i) + \bSigma_n(\bx_j)\}^{-1} + o_{\prob}(1)]\{\bW\transpose(\widetilde{\bx}_i - \widetilde{\bx}_j)\}\\
& = \left\|\sqrt{n}\{\bSigma_n(\bx_i) + \bSigma_n(\bx_j)\}^{-1/2}\bW\transpose(\widetilde{\bx}_i - \widetilde{\bx}_j)\right\|_2^2 + o_{\prob}(n\|\bW\transpose(\widetilde{\bx}_i - \widetilde{\bx}_j)\|_2^2)\\
&\overset{\calL}{\to}\chi^2_d(\bmu\transpose(\bSigma_i + \bSigma_j)^{-1}\bmu),\\
T_{ij}^{(\mathrm{OSE})}& = 
n(\widehat{\bx}_i - \widehat{\bx}_j)\transpose\widetilde{\bG}_{ij}(\widehat{\bx}_i - \widehat{\bx}_j)\\
& = n\{\bW\transpose(\widehat{\bx}_i - \widehat{\bx}_j)\}\transpose\bW\transpose\widetilde{\bG}_{ij}\bW\{\bW\transpose(\widehat{\bx}_i - \widehat{\bx}_j)\}\\
& = n\{\bW\transpose(\widehat{\bx}_i - \widehat{\bx}_j)\}\transpose[\{\bG_n(\bx_i)^{-1} + \bG_n(\bx_j)^{-1}\}^{-1} + o_{\prob}(1)]
\{\bW\transpose(\widehat{\bx}_i - \widehat{\bx}_j)\}\\
& = \left\|\sqrt{n}\{\bG_n(\bx_i)^{-1} + \bG_n(\bx_j)^{-1}\}^{-1/2}\bW\transpose(\widehat{\bx}_i - \widehat{\bx}_j)\right\|_2^2 + o_{\prob}(n\|\bW\transpose(\widehat{\bx}_i - \widehat{\bx}_j)\|_2^2)\\
&\overset{\calL}{\to}\chi^2_d(\bmu\transpose(\bG_i^{-1} + \bG_j^{-1})^{-1}\bmu).
\end{align*}
The proof is thus completed.


\section{The successive projection algorithm}
\label{app:SPA_algorithm}

This section provides the detailed successive projection algorithm proposed in \cite{6656801}, which finds the row indices corresponding to the pure nodes based on the noisy observed adjacency matrix $\bA$. It is used to construct the estimator $\widehat{\bTheta}$ for the membership profile matrix $\bTheta$ in a mixed membership stochastic block model in Section \ref{sub:subsequent_graph_inference} of the manuscript. 
\begin{algorithm}[htbp] 
  \renewcommand{\algorithmicrequire}{\textbf{Input:}}
  \renewcommand{\algorithmicensure}{\textbf{Output:} }
  \caption{Successive projection algorithm (SPA)} 
  \label{alg:SPA} 
  \begin{algorithmic}[1] 
    \State{\textbf{Input:}
      Data matrix $\bA = [A_{ij}]_{n\times n}$, rank $d$}
    \State{Compute the leading eigenvectors of $\bA$: $\bA\bU_\bA = \bU_\bA\bS_\bA$, where $\bU_\bA\in\mathbb{O}(n, d)$, $\bS_\bA = \mathrm{diag}(\widehat{\lambda}_1,\ldots,\widehat{\lambda}_d)$, and  $|\widehat\lambda_1|\geq|\widehat\lambda_2|\geq\ldots\geq|\widehat\lambda_n|$. 
        } 

    \State{Let $\bR = \bU_\bA\bU_\bA\transpose$, $J = \varnothing$, and $k = 1$.}

    \State{While $\bR\neq \zero_{n\times n}$ and $j\leq d$

  Set $j^* \longleftarrow \argmax_{j\in [n]}\|\bR\be_j\|_2^2$. If there are ties, pick $j^*$ to be the smallest index. 

  Set $\bu_j \longleftarrow \bR\be_{j^*}$. 

  Set $\bR \longleftarrow (\eye_n - \|\bu_j\|_2^{-2}\bu_j\bu_j\transpose)\bR$.

  Let $J\longleftarrow J\cup\{j^*\}$.

  Set $j \longleftarrow j + 1$.

    \noindent
    End While}
    \State{\textbf{Output: } Set of indices $J$. }
  \end{algorithmic}
\end{algorithm}

\section{ Additional simulation examples}
\label{sec:additional_simulation_examples}

\subsection{ Symmetric noisy matrix completion}
\label{sub:SNMC_numerical}

We first consider a synthetic example for the symmetric noisy matrix completion problem and illustrate Theorem \ref{thm:SNMC}. The setup here is similar to the two-block stochastic block model in Section \ref{sub:a_motivating_example}. We set $n = 5000$, $a = 0.9$, $b = 0.05$, $\alpha = 5$, $n\rho_n = \log n$, $\lambda_1 = n\rho_n(a + b)/2$, $\lambda_2 = n\rho_n(a - b)/2$, $\bu_1 = n^{-1/2}[1,\ldots,1]\transpose$, $\bu_2 = n^{-1/2}[1,\ldots,1,-1,\ldots,-1]\transpose$ (the first $n/2$ entries of $\bu_2$ are $n^{-1/2}$ and the remaining entries are $-n^{-1/2}$), and $\bX = \sqrt{n}[\bu_1, \bu_2]$. Let $\eps_{ij}\sim\mathrm{N}(0, \rho_n^4)$ and $I_{ij}\sim\mathrm{Bernoulli}(\rho_n)$ independently for $i,j\in [n]$, $i\leq j$, and let $\eps_{ij} = \eps_{ji}$, $I_{ij} = I_{ji}$ if $i > j$. The noisy observed matrix $\bA = [A_{ij}]_{n\times n}$ is generated by taking $A_{ij} = (\rho_n\bx_i\transpose\bx_j + \eps_{ij})I_{ij}/\rho_n$, where $\bx_i$ is the $i$th row of $\bX$, $i\in [n]$. We follow the notations in Section \ref{sub:a_motivating_example} by letting $\widehat{\bu}_2$ be the unscaled eigenvector (i.e., $\|\widehat{\bu}_2\|_2 = 1$) of $\bA$ corresponding to $\lambda_2(\bA)$, $\widehat{\bv}_2 = \lambda_2(\bA)^{1/2}\widehat{\bu}_2$ be the scaled eigenvector, $\bv_2 = \lambda_2^{1/2}\bu_2$, $\widehat{u}_{i2},\widehat{v}_{i2}$, $u_{i2}$, and $v_{i2}$ be the $i$th coordinate of $\widehat{\bu}_2$, $\widehat{\bv}_2$, $\bu_2$, and $\bv_2$, respectively. Then by Theorem \ref{thm:ASE_Berry_Esseen}, 
\[
\sqrt{n}(\mathrm{sgn}(\widehat{\bu}_2\transpose\bu_2)\widehat{v}_{i2} - v_{i2})\overset{\calL}{\to}\mathrm{N}\left(0, \frac{a^2 + b^2}{a - b}\right),\quad
n\rho_n^{1/2}(\mathrm{sgn}(\widehat{\bu}_2\transpose\bu_2)\widehat{u}_{i2} - u_{i2})\overset{\calL}{\to}\mathrm{N}\left(0, \frac{2(a^2 + b^2)}{2(a - b)^2}\right).
\]

We next generate $3000$ independent Monte Carlo replicates of $\bA$ from $\mathrm{SNMC}(\rho_n\bX\bX\transpose, \rho_n, \rho_n^2)$. For each realization of $\bA$, we compute the unscaled eigenvector $\widehat{\bu}_2$, the scaled eigenvector $\widehat{\bv}_2$, and plot the histograms of $\sqrt{n}(\mathrm{sgn}(\widehat{\bu}_2\transpose\bu_2)\widetilde{v}_{12} - v_{12})$ and $n\rho_n^{1/2}(\mathrm{sgn}(\widehat{\bu}_2\transpose\bu_2)\widetilde{v}_{12} - v_{12})$ in Figure \ref{fig:Eigenvector_SNMC_normality}, together with their respective asymptotic normal densities. It is clear that the empirical distributions of the Monte Carlo samples of $\sqrt{n}(\mathrm{sgn}(\widehat{\bu}_2\transpose\bu_2)\widetilde{v}_{12} - v_{12})$ and $n\rho_n^{1/2}(\mathrm{sgn}(\widehat{\bu}_2\transpose\bu_2)\widetilde{v}_{12} - v_{12})$ can be well approximated by their respective asymptotic normal distributions.
 This numerical observation is in agreement with the asymptotic normality established in Theorem \ref{thm:SNMC}. 
\begin{figure}[t]
\includegraphics[width = 15cm]{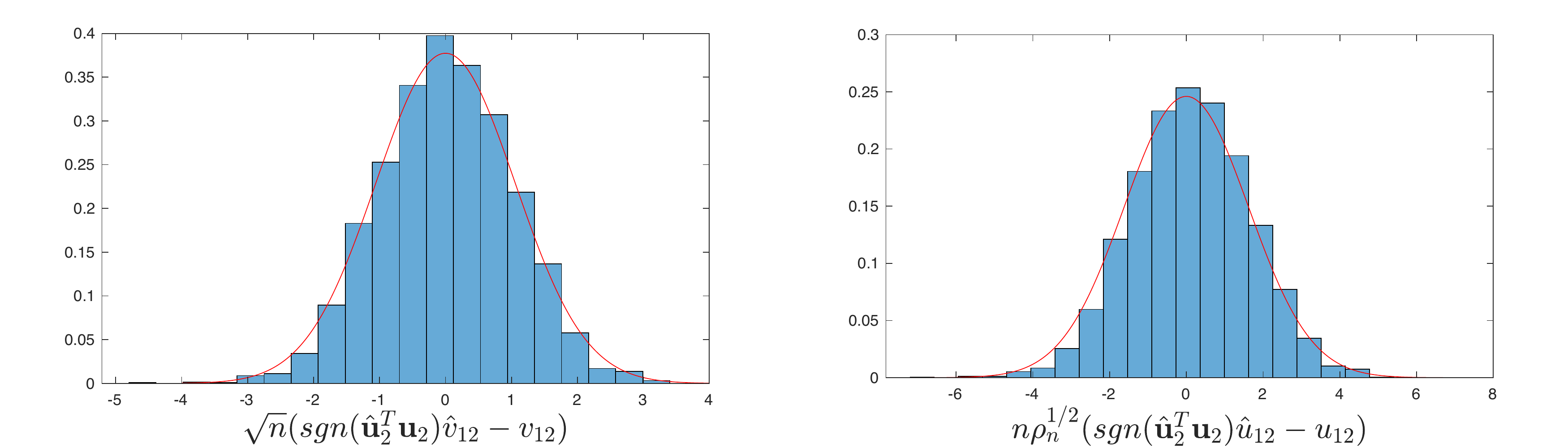}
\caption{Numerical results of Section \ref{sub:SNMC_numerical}. The left and the right panels are the histograms of $\sqrt{n}(\mathrm{sgn}(\widehat{\bu}_2\transpose\bu_2)\widetilde{v}_{12} - v_{12})$ and $n\rho_n^{1/2}(\mathrm{sgn}(\widehat{\bu}_2\transpose\bu_2)\widetilde{v}_{12} - v_{12})$ over the $3000$ Monte Carlo replicates with the asymptotic normal densities highlighted in the red curves, respectively. }
\label{fig:Eigenvector_SNMC_normality}
\end{figure}

\subsection{ Eigenvector-based inference in random graphs}
\label{sub:subsequent_inference_numerical}

We now consider the numerical experiments for the two subsequent random graph inference tasks in Section \ref{sub:subsequent_graph_inference}. Consider a mixed membership stochastic block model specified as follows. The block probability matrix $\bB$ is a $2\times 2$ symmetric matrix with the diagonals being $0.9$ and the off-diagonals being $0.1$. The corresponding two pure nodes are $\bx_1^* = [0.7071, 0.6325]\transpose$ and $\bx_2^* = [0.7071, -0.6325]\transpose$. We set the number of vertices to be $n = 4500$ with $n_0 = 900$ pure nodes in each community and set the sparsity factor to be $\rho_n = 5(\log n)^{3/2}/n$. The membership profile matrix $\bTheta$ has the form
\[
\bTheta = \begin{bmatrix*}
\one_{n_0} & \zero_{n_0}\\
\zero_{n_0} & \one_{n_0}\\
\bt & \one_{n - 2n_0} - \bt
\end{bmatrix*},
\]
where $\bt\in \mathbb{R}^{n - 2n_0}$ is the vector whose entries are equidistant points over $[0.2, 0.8]$ and $\one_{n_0}\in \mathbb{R}^{n_0}$ is the vector of all ones. Equivalently, the mixed membership stochastic block model can be written as $\mathrm{RPDG}(\rho_n^{1/2}\bX)$ with $\bX = \bTheta\bX^*$, where $\bX^* = [\bx_1^*, \bx_2^*]\transpose$. Let $\bx_i$ be the $i$th row of $\bX$ for each $i\in [n]$. 

We draw $2000$ independent Monte Carlo replicates of $\bA$ from $\mathrm{MMSBM}(\bTheta, \bB, \rho_n)$ specified above and investigate the performance of the two inference tasks in Section \ref{sub:subsequent_graph_inference}: The estimation of the pure nodes and the hypothesis testing of the equality of latent positions. For the first task, given a realization $\bA\sim\mathrm{MMSBM}(\bTheta, \bB, \rho_n)$, we 
first compute the adjacency spectral embedding $\widetilde{\bX}$ of $\bA$ into $\mathbb{R}^2$ and then apply Algorithm \ref{alg:SPA} to obtain the estimated pure node indices $J$. Next, we compute $\iota_k$ using formula \eqref{eqn:pure_node_index} for $k = 1,2$, with the tuning parameter $\eta$ being $0.1$. For each $k = 1,2$, we then
compute the two estimators given by $\widetilde{\bx}_{\iota_k}$ (the estimator based on the adjacency spectral embedding) and $\widehat{\bx}_{\iota_k}$ (the estimator based on the one-step refinement). For the second task, we let $i = 1$ and consider testing the null hypothesis $H_0:\bx_i = \bx_j$ against a collection of alternative hypotheses $H_A:\bx_i\neq \bx_j$ for $j\in \{1901,2101,\ldots,3701\}$ using the two test statistics $T_{ij}^{(\mathrm{ASE})}$ and $T_{ij}^{(\mathrm{OSE})}$ defined in Section \ref{sub:subsequent_graph_inference}. We set the significance level to be $0.05$. 

For the estimation of the pure nodes, for each $k = 1,2$, we compute the empirical mean-squared errors (MSE) for estimating $\bx_k^*$ using $\widetilde{\bx}_{\iota_k}$ and $\widehat{\bx}_{\iota_k}$ across the $2000$ repeated experiments. We also compute the corresponding sample covariance matrices. These numerical results are summarized in Table \ref{table:subsequent_inference_I}. Clearly, Table \ref{table:subsequent_inference_I} suggests that the estimators $\widehat{\bx}_{\iota_1},\widehat{\bx}_{\iota_2}$ based on the one-step refinement have smaller mean-squared errors and smaller variances compared to the estimators $\widetilde{\bx}_{\iota_1},\widetilde{\bx}_{\iota_2}$ based on the adjacency spectral embedding. This phenomenon validates Theorem \ref{thm:MMSBM} empirically. 
\begin{table}[htbp!]
  \caption{Numerical results for Section \ref{sub:subsequent_inference_numerical}: Summary statistic for estimating the pure nodes. }
  \centering{%
  \begin{tabular}{c | c c c c }
    \hline
    Pure node & MSE for $\widetilde{\bx}_{\iota_k}$ & MSE for $\widehat{\bx}_{\iota_k}$ & Sample covariance for $\widetilde{\bx}_{\iota_k}$ & Sample covariance for $\widehat{\bx}_{\iota_k}$\\
    \hline
    $\bx_1^*$  & $7.4\times 10^{-4}$ & $5.4\times 10^{-4}$ & $\frac{1}{n}\begin{bmatrix*}1.00 & 0.90\\0.90 & 2.25\end{bmatrix*}$ & $\frac{1}{n}\begin{bmatrix*}0.97 & 0.75\\0.75 & 1.34\end{bmatrix*}$ \\
    \hline
    $\bx_2^*$ & $7.8\times 10^{-4}$ & $5.4\times 10^{-4}$ & $\frac{1}{n}\begin{bmatrix*}0.99 & -0.96\\-0.96 & 2.36\end{bmatrix*}$ & $\frac{1}{n}\begin{bmatrix*}0.95 & -0.75\\-0.75 & 1.36\end{bmatrix*}$\\
    \hline
  \end{tabular}%
  }
  \label{table:subsequent_inference_I}
\end{table}%
For the hypothesis testing of the equality of the latent positions, we compare the empirical powers of the two testing procedures across the $2000$ repeated experiments. Below, Table \ref{table:subsequent_inference_II} tabulates the empirical powers of tests (i.e., the numbers of successful rejections divided by $2000$) based on $T_{ij}^{(\mathrm{ASE})}$ and $T_{ij}^{(\mathrm{OSE})}$ as functions of the distance $\|\bx_i - \bx_j\|_2$ when $j$ varies in $\{1901, 2101,\ldots, 3701\}$. It is clear from Table \ref{table:subsequent_inference_II} that the test statistic $T_{ij}^{(\mathrm{OSE})}$ is more powerful than the test statistic $T_{ij}^{(\mathrm{ASE})}$, which validates Theorem \ref{thm:ASE_OSE_testing} empirically.  
\begin{table}[htbp!]
  \caption{Numerical results for Section \ref{sub:subsequent_inference_numerical}: Power comparison for testing the equality of latent positions. }
  \centering{%
  \begin{tabular}{c | c c c c  c c c c  c c }
    \hline
    $\|\bx_i - \bx_j\|_2$ &     0.31  &  0.37  &  0.42  &  0.48  &  0.53  &  0.59  &  0.65  &  0.70  &  0.76  &  0.82  \\
    \hline
    Power of $T_{ij}^{(\mathrm{ASE})}$ &  0.33 & 0.42 & 0.52 & 0.61 & 0.70 & 0.76 & 0.85 & 0.89 & 0.94 &0.95 \\
    \hline
    Power of $T_{ij}^{(\mathrm{OSE})}$ & 0.35 & 0.45 & 0.56 & 0.67 & 0.75 & 0.80 & 0.88 & 0.92 & 0.95 & 0.97 \\
    \hline
  \end{tabular}%
  }
  \label{table:subsequent_inference_II}
\end{table}%

\end{appendix}

\bibliographystyle{acm} 
\bibliography{reference1, reference2}       

\begin{thebibliography}{10}

\bibitem{abbe2017community}
{\sc Abbe, E.}
\newblock Community detection and stochastic block models: recent developments.
\newblock {\em The Journal of Machine Learning Research 18}, 1 (2017),
  6446--6531.

\bibitem{7298436}
{\sc Abbe, E., Bandeira, A.~S., and Hall, G.}
\newblock Exact recovery in the stochastic block model.
\newblock {\em IEEE Transactions on Information Theory 62}, 1 (2016), 471--487.

\bibitem{10.1214/19-AOS1854}
{\sc Abbe, E., Fan, J., Wang, K., and Zhong, Y.}
\newblock {Entrywise eigenvector analysis of random matrices with low expected
  rank}.
\newblock {\em The Annals of Statistics 48}, 3 (2020), 1452 -- 1474.

\bibitem{agterberg2021entrywise}
{\sc Agterberg, J., Lubberts, Z., and Priebe, C.}
\newblock Entrywise estimation of singular vectors of low-rank matrices with
  heteroskedasticity and dependence.
\newblock {\em arXiv preprint arXiv:2105.13346\/} (2021).

\bibitem{airoldi2008mixed}
{\sc Airoldi, E.~M., Blei, D.~M., Fienberg, S.~E., and Xing, E.~P.}
\newblock Mixed membership stochastic blockmodels.
\newblock {\em Journal of Machine Learning Research 9\/} (2008), 1981--2014.

\bibitem{anandkumar2014tensor}
{\sc Anandkumar, A., Ge, R., Hsu, D., Kakade, S.~M., and Telgarsky, M.}
\newblock Tensor decompositions for learning latent variable models.
\newblock {\em Journal of machine learning research 15\/} (2014), 2773--2832.

\bibitem{Anderson2003multivariate}
{\sc Anderson, T.~W.}
\newblock {\em An Introduction to Multivariate Statistical Analysis}, 3rd~ed.
\newblock Wiley, Hoboken, NJ, 2003.

\bibitem{arroyo2021inference}
{\sc Arroyo, J., Athreya, A., Cape, J., Chen, G., Priebe, C.~E., and
  Vogelstein, J.~T.}
\newblock Inference for multiple heterogeneous networks with a common invariant
  subspace.
\newblock {\em Journal of Machine Learning Research (accepted for
  publication)\/} (2021).

\bibitem{JMLR:v18:17-448}
{\sc Athreya, A., Fishkind, D.~E., Tang, M., Priebe, C.~E., Park, Y.,
  Vogelstein, J.~T., Levin, K., Lyzinski, V., Qin, Y., and Sussman, D.~L.}
\newblock Statistical inference on random dot product graphs: a survey.
\newblock {\em Journal of Machine Learning Research 18}, 226 (2018), 1--92.

\bibitem{athreya2016limit}
{\sc Athreya, A., Priebe, C.~E., Tang, M., Lyzinski, V., Marchette, D.~J., and
  Sussman, D.~L.}
\newblock A limit theorem for scaled eigenvectors of random dot product graphs.
\newblock {\em Sankhya A 78}, 1 (2016), 1--18.

\bibitem{bai2010spectral}
{\sc Bai, Z., and Silverstein, J.~W.}
\newblock {\em Spectral analysis of large dimensional random matrices},
  vol.~20.
\newblock Springer, 2010.

\bibitem{Bean14563}
{\sc Bean, D., Bickel, P.~J., El~Karoui, N., and Yu, B.}
\newblock Optimal m-estimation in high-dimensional regression.
\newblock {\em Proceedings of the National Academy of Sciences 110}, 36 (2013),
  14563--14568.

\bibitem{BENAYCHGEORGES2011494}
{\sc Benaych-Georges, F., and Nadakuditi, R.~R.}
\newblock The eigenvalues and eigenvectors of finite, low rank perturbations of
  large random matrices.
\newblock {\em Advances in Mathematics 227}, 1 (2011), 494--521.

\bibitem{bennett2007netflix}
{\sc Bennett, J., Lanning, S., et~al.}
\newblock The netflix prize.
\newblock In {\em Proceedings of KDD cup and workshop\/} (2007), vol.~2007, New
  York, NY, USA., p.~35.

\bibitem{bhatia2013matrix}
{\sc Bhatia, R.}
\newblock {\em Matrix analysis}, vol.~169.
\newblock Springer Science \& Business Media, 1997.

\bibitem{10.1214/17-AOS1541}
{\sc Cai, T.~T., and Zhang, A.}
\newblock {Rate-optimal perturbation bounds for singular subspaces with
  applications to high-dimensional statistics}.
\newblock {\em The Annals of Statistics 46}, 1 (2018), 60 -- 89.

\bibitem{tight_oracle_inequalities}
{\sc Candes, E.~J., and Plan, Y.}
\newblock Tight oracle inequalities for low-rank matrix recovery from a minimal
  number of noisy random measurements.
\newblock {\em IEEE Transactions on Information Theory 57}, 4 (April 2011),
  2342--2359.

\bibitem{candes2009exact}
{\sc Cand{\`e}s, E.~J., and Recht, B.}
\newblock Exact matrix completion via convex optimization.
\newblock {\em Foundations of Computational mathematics 9}, 6 (2009), 717--772.

\bibitem{5452187}
{\sc Candes, E.~J., and Tao, T.}
\newblock The power of convex relaxation: Near-optimal matrix completion.
\newblock {\em IEEE Transactions on Information Theory 56}, 5 (2010),
  2053--2080.

\bibitem{cape2020orthogonal}
{\sc Cape, J.}
\newblock Orthogonal procrustes and norm-dependent optimality.
\newblock {\em The Electronic Journal of Linear Algebra 36}, 36 (2020),
  158--168.

\bibitem{cape2017two}
{\sc Cape, J., Tang, M., and Priebe, C.~E.}
\newblock The two-to-infinity norm and singular subspace geometry with
  applications to high-dimensional statistics.
\newblock {\em Annals of Statistics, accepted for publication\/} (2018).

\bibitem{cape2019signal}
{\sc Cape, J., Tang, M., and Priebe, C.~E.}
\newblock Signal-plus-noise matrix models: eigenvector deviations and
  fluctuations.
\newblock {\em Biometrika 106}, 1 (2019), 243--250.

\bibitem{chatterjee2015}
{\sc Chatterjee, S.}
\newblock Matrix estimation by universal singular value thresholding.
\newblock {\em Ann. Statist. 43}, 1 (02 2015), 177--214.

\bibitem{chung2001course}
{\sc Chung, K.~L.}
\newblock {\em A course in probability theory}.
\newblock Academic press, 2001.

\bibitem{doi:10.1137/0707001}
{\sc Davis, C., and Kahan, W.~M.}
\newblock The rotation of eigenvectors by a perturbation. iii.
\newblock {\em SIAM Journal on Numerical Analysis 7}, 1 (1970), 1--46.

\bibitem{1614066}
{\sc Donoho, D.}
\newblock Compressed sensing.
\newblock {\em IEEE Transactions on Information Theory 52}, 4 (2006),
  1289--1306.

\bibitem{10.1214/14-AOS1257}
{\sc Donoho, D., and Gavish, M.}
\newblock {Minimax risk of matrix denoising by singular value thresholding}.
\newblock {\em The Annals of Statistics 42}, 6 (2014), 2413 -- 2440.

\bibitem{eldar2012compressed}
{\sc Eldar, Y.~C., and Kutyniok, G.}
\newblock {\em Compressed sensing: theory and applications}.
\newblock Cambridge university press, 2012.

\bibitem{pmlr-v83-eldridge18a}
{\sc Eldridge, J., Belkin, M., and Wang, Y.}
\newblock Unperturbed: spectral analysis beyond davis-kahan.
\newblock In {\em Proceedings of Algorithmic Learning Theory\/} (07--09 Apr
  2018), F.~Janoos, M.~Mohri, and K.~Sridharan, Eds., vol.~83 of {\em
  Proceedings of Machine Learning Research}, PMLR, pp.~321--358.

\bibitem{fan2019simple}
{\sc Fan, J., Fan, Y., Han, X., and Lv, J.}
\newblock Simple: Statistical inference on membership profiles in large
  networks.
\newblock {\em arXiv preprint arXiv:1910.01734\/} (2019).

\bibitem{doi:10.1080/01621459.2020.1840990}
{\sc Fan, J., Fan, Y., Han, X., and Lv, J.}
\newblock Asymptotic theory of eigenvectors for random matrices with diverging
  spikes.
\newblock {\em Journal of the American Statistical Association 0}, 0 (2020),
  1--14.

\bibitem{fan2018eigenvector}
{\sc Fan, J., Wang, W., and Zhong, Y.}
\newblock An $\ell_\infty$ eigenvector perturbation bound and its application
  to robust covariance estimation.
\newblock {\em Journal of Machine Learning Research 18}, 207 (2018), 1--42.

\bibitem{gao2017achieving}
{\sc Gao, C., Ma, Z., Zhang, A.~Y., and Zhou, H.~H.}
\newblock Achieving optimal misclassification proportion in stochastic block
  models.
\newblock {\em The Journal of Machine Learning Research 18}, 1 (2017),
  1980--2024.

\bibitem{6656801}
{\sc Gillis, N., and Vavasis, S.~A.}
\newblock Fast and robust recursive algorithmsfor separable nonnegative matrix
  factorization.
\newblock {\em IEEE Transactions on Pattern Analysis and Machine Intelligence
  36}, 4 (2014), 698--714.

\bibitem{Girvan7821}
{\sc Girvan, M., and Newman, M. E.~J.}
\newblock Community structure in social and biological networks.
\newblock {\em Proceedings of the National Academy of Sciences 99}, 12 (2002),
  7821--7826.

\bibitem{goldberg1992using}
{\sc Goldberg, D., Nichols, D., Oki, B.~M., and Terry, D.}
\newblock Using collaborative filtering to weave an information tapestry.
\newblock {\em Communications of the ACM 35}, 12 (1992), 61--70.

\bibitem{Gopalan14534}
{\sc Gopalan, P.~K., and Blei, D.~M.}
\newblock Efficient discovery of overlapping communities in massive networks.
\newblock {\em Proceedings of the National Academy of Sciences 110}, 36 (2013),
  14534--14539.

\bibitem{5714248}
{\sc Gross, D.}
\newblock Recovering low-rank matrices from few coefficients in any basis.
\newblock {\em IEEE Transactions on Information Theory 57}, 3 (2011),
  1548--1566.

\bibitem{HOLLAND1983109}
{\sc Holland, P.~W., Laskey, K.~B., and Leinhardt, S.}
\newblock Stochastic blockmodels: First steps.
\newblock {\em Social Networks 5}, 2 (1983), 109--137.

\bibitem{hopkins2017efficient}
{\sc Hopkins, S.~B., and Steurer, D.}
\newblock Efficient bayesian estimation from few samples: community detection
  and related problems.
\newblock In {\em 2017 IEEE 58th Annual Symposium on Foundations of Computer
  Science (FOCS)\/} (2017), IEEE, pp.~379--390.

\bibitem{jain2013low}
{\sc Jain, P., Netrapalli, P., and Sanghavi, S.}
\newblock Low-rank matrix completion using alternating minimization.
\newblock In {\em Proceedings of the forty-fifth annual ACM symposium on Theory
  of computing\/} (2013), pp.~665--674.

\bibitem{javanmard2015biasing}
{\sc Javanmard, A., and Montanari, A.}
\newblock De-biasing the lasso: Optimal sample size for gaussian designs, 2016.

\bibitem{868688}
{\sc {Jianbo Shi}, and {Malik}, J.}
\newblock Normalized cuts and image segmentation.
\newblock {\em IEEE Transactions on Pattern Analysis and Machine Intelligence
  22}, 8 (Aug 2000), 888--905.

\bibitem{jin2017estimating}
{\sc Jin, J., Ke, Z.~T., and Luo, S.}
\newblock Estimating network memberships by simplex vertex hunting.
\newblock {\em arXiv preprint:1708.07852\/} (2017).

\bibitem{doi:10.1198/jasa.2009.0121}
{\sc Johnstone, I.~M., and Lu, A.~Y.}
\newblock On consistency and sparsity for principal components analysis in high
  dimensions.
\newblock {\em Journal of the American Statistical Association 104}, 486
  (2009), 682--693.
\newblock PMID: 20617121.

\bibitem{PhysRevE.83.016107}
{\sc Karrer, B., and Newman, M. E.~J.}
\newblock Stochastic blockmodels and community structure in networks.
\newblock {\em Phys. Rev. E 83\/} (Jan 2011), 016107.

\bibitem{keshavan2010matrix}
{\sc Keshavan, R.~H., Montanari, A., and Oh, S.}
\newblock Matrix completion from noisy entries.
\newblock {\em The Journal of Machine Learning Research 11\/} (2010),
  2057--2078.

\bibitem{10.1214/11-AOS894}
{\sc Koltchinskii, V., Lounici, K., and Tsybakov, A.~B.}
\newblock {Nuclear-norm penalization and optimal rates for noisy low-rank
  matrix completion}.
\newblock {\em The Annals of Statistics 39}, 5 (2011), 2302 -- 2329.

\bibitem{kosorok2008introduction}
{\sc Kosorok, M.~R.}
\newblock {\em Introduction to empirical processes and semiparametric
  inference.}
\newblock Springer, 2008.

\bibitem{lei2015}
{\sc Lei, J., and Rinaldo, A.}
\newblock Consistency of spectral clustering in stochastic block models.
\newblock {\em Ann. Statist. 43}, 1 (02 2015), 215--237.

\bibitem{lei2019unified}
{\sc Lei, L.}
\newblock Unified $\ell_{2\to\infty}$ eigenspace perturbation theory for
  symmetric random matrices.
\newblock {\em arXiv preprint:1909.04798\/} (2019).

\bibitem{levin2017central}
{\sc Levin, K., Athreya, A., Tang, M., Lyzinski, V., and Priebe, C.~E.}
\newblock A central limit theorem for an omnibus embedding of random dot
  product graphs.
\newblock {\em arXiv preprint arXiv:1705.09355\/} (2017).

\bibitem{10.1093/biomet/asaa006}
{\sc Li, T., Levina, E., and Zhu, J.}
\newblock {Network cross-validation by edge sampling}.
\newblock {\em Biometrika 107}, 2 (04 2020), 257--276.

\bibitem{lyzinski2014}
{\sc Lyzinski, V., Sussman, D.~L., Tang, M., Athreya, A., and Priebe, C.~E.}
\newblock Perfect clustering for stochastic blockmodel graphs via adjacency
  spectral embedding.
\newblock {\em Electron. J. Statist. 8}, 2 (2014), 2905--2922.

\bibitem{7769223}
{\sc Lyzinski, V., Tang, M., Athreya, A., Park, Y., and Priebe, C.~E.}
\newblock Community detection and classification in hierarchical stochastic
  blockmodels.
\newblock {\em IEEE Transactions on Network Science and Engineering 4}, 1 (Jan
  2017), 13--26.

\bibitem{MAGNUS1985474}
{\sc Magnus, J.~R., and Neudecker, H.}
\newblock Matrix differential calculus with applications to simple, hadamard,
  and kronecker products.
\newblock {\em Journal of Mathematical Psychology 29}, 4 (1985), 474--492.

\bibitem{pmlr-v70-mao17a}
{\sc Mao, X., Sarkar, P., and Chakrabarti, D.}
\newblock On mixed memberships and symmetric nonnegative matrix factorizations.
\newblock In {\em Proceedings of the 34th International Conference on Machine
  Learning\/} (06--11 Aug 2017), D.~Precup and Y.~W. Teh, Eds., vol.~70 of {\em
  Proceedings of Machine Learning Research}, PMLR, pp.~2324--2333.

\bibitem{doi:10.1080/01621459.2020.1751645}
{\sc Mao, X., Sarkar, P., and Chakrabarti, D.}
\newblock Estimating mixed memberships with sharp eigenvector deviations.
\newblock {\em Journal of the American Statistical Association 0}, 0 (2020),
  1--13.

\bibitem{MINSKER2017111}
{\sc Minsker, S.}
\newblock On some extensions of bernstein’s inequality for self-adjoint
  operators.
\newblock {\em Statistics \& Probability Letters 127\/} (2017), 111--119.

\bibitem{6623779}
{\sc {Neil}, J., {Uphoff}, B., {Hash}, C., and {Storlie}, C.}
\newblock Towards improved detection of attackers in computer networks: New
  edges, fast updating, and host agents.
\newblock In {\em 2013 6th International Symposium on Resilient Control Systems
  (ISRCS)\/} (Aug 2013), pp.~218--224.

\bibitem{nickel2008random}
{\sc Nickel, C. L.~M.}
\newblock {\em Random dot product graphs a model for social networks}.
\newblock PhD thesis, Johns Hopkins University, 2008.

\bibitem{PAUL20141}
{\sc Paul, D., and Aue, A.}
\newblock Random matrix theory in statistics: A review.
\newblock {\em Journal of Statistical Planning and Inference 150\/} (2014),
  1--29.

\bibitem{paulsen2002completely}
{\sc Paulsen, V.}
\newblock {\em Completely bounded maps and operator algebras}.
\newblock No.~78. Cambridge University Press, 2002.

\bibitem{pollard1990empirical}
{\sc Pollard, D.}
\newblock Empirical processes: theory and applications.
\newblock In {\em NSF-CBMS regional conference series in probability and
  statistics\/} (1990), JSTOR, pp.~i--86.

\bibitem{priebe2017semiparametric}
{\sc Priebe, C.~E., Park, Y., Tang, M., Athreya, A., Lyzinski, V., Vogelstein,
  J.~T., Qin, Y., Cocanougher, B., Eichler, K., Zlatic, M., et~al.}
\newblock Semiparametric spectral modeling of the drosophila connectome.
\newblock {\em arXiv preprint arXiv:1705.03297\/} (2017).

\bibitem{rohe2011}
{\sc Rohe, K., Chatterjee, S., and Yu, B.}
\newblock Spectral clustering and the high-dimensional stochastic blockmodel.
\newblock {\em Ann. Statist. 39}, 4 (08 2011), 1878--1915.

\bibitem{7745482}
{\sc {Rubin-Delanchy}, P., {Adams}, N.~M., and {Heard}, N.~A.}
\newblock Disassortativity of computer networks.
\newblock In {\em 2016 IEEE Conference on Intelligence and Security Informatics
  (ISI)\/} (Sep. 2016), pp.~243--247.

\bibitem{rubin2017statistical}
{\sc Rubin-Delanchy, P., Cape, J., Tang, M., and Priebe, C.~E.}
\newblock A statistical interpretation of spectral embedding: the generalised
  random dot product graph.
\newblock {\em arXiv preprint arXiv:1709.05506\/} (2017).

\bibitem{sarkar2015}
{\sc Sarkar, P., and Bickel, P.~J.}
\newblock Role of normalization in spectral clustering for stochastic
  blockmodels.
\newblock {\em Ann. Statist. 43}, 3 (06 2015), 962--990.

\bibitem{schonemann1966generalized}
{\sc Sch{\"o}nemann, P.~H.}
\newblock A generalized solution of the orthogonal procrustes problem.
\newblock {\em Psychometrika 31}, 1 (1966), 1--10.

\bibitem{SHABALIN201367}
{\sc Shabalin, A.~A., and Nobel, A.~B.}
\newblock Reconstruction of a low-rank matrix in the presence of gaussian
  noise.
\newblock {\em Journal of Multivariate Analysis 118\/} (2013), 67--76.

\bibitem{shao2021berry}
{\sc Shao, Q.-M., and Zhang, Z.-S.}
\newblock Berry--esseen bounds for multivariate nonlinear statistics with
  applications to m-estimators and stochastic gradient descent algorithms.
\newblock {\em Bernoulli (accepted for publication)\/} (2021).

\bibitem{Stewart90}
{\sc Stewart, G.~W., and Sun, J.-G.}
\newblock {\em Matrix Perturbation Theory}.
\newblock Academic Press, 1990.

\bibitem{5429099}
{\sc Sun, Y., Baricz, A., and Zhou, S.}
\newblock On the {M}onotonicity, {L}og-{C}oncavity, and {T}ight {B}ounds of the
  {G}eneralized {M}arcum and {N}uttall $q$-functions.
\newblock {\em IEEE Transactions on Information Theory 56}, 3 (2010),
  1166--1186.

\bibitem{sussman2012consistent}
{\sc Sussman, D.~L., Tang, M., Fishkind, D.~E., and Priebe, C.~E.}
\newblock A consistent adjacency spectral embedding for stochastic blockmodel
  graphs.
\newblock {\em Journal of the American Statistical Association 107}, 499
  (2012), 1119--1128.

\bibitem{6565321}
{\sc Sussman, D.~L., Tang, M., and Priebe, C.~E.}
\newblock Consistent latent position estimation and vertex classification for
  random dot product graphs.
\newblock {\em IEEE Transactions on Pattern Analysis and Machine Intelligence
  36}, 1 (Jan 2014), 48--57.

\bibitem{doi:10.1080/10618600.2016.1193505}
{\sc Tang, M., Athreya, A., Sussman, D.~L., Lyzinski, V., Park, Y., and Priebe,
  C.~E.}
\newblock A semiparametric two-sample hypothesis testing problem for random
  graphs.
\newblock {\em Journal of Computational and Graphical Statistics 26}, 2 (2017),
  344--354.

\bibitem{tang2017}
{\sc Tang, M., Athreya, A., Sussman, D.~L., Lyzinski, V., and Priebe, C.~E.}
\newblock A nonparametric two-sample hypothesis testing problem for random
  graphs.
\newblock {\em Bernoulli 23}, 3 (08 2017), 1599--1630.

\bibitem{tang2018}
{\sc Tang, M., and Priebe, C.~E.}
\newblock Limit theorems for eigenvectors of the normalized {L}aplacian for
  random graphs.
\newblock {\em Ann. Statist. 46}, 5 (10 2018), 2360--2415.

\bibitem{tang2013}
{\sc Tang, M., Sussman, D.~L., and Priebe, C.~E.}
\newblock Universally consistent vertex classification for latent positions
  graphs.
\newblock {\em Ann. Statist. 41}, 3 (06 2013), 1406--1430.

\bibitem{8570772}
{\sc {Tang}, R., {Ketcha}, M., {Badea}, A., {Calabrese}, E.~D., {Margulies},
  D.~S., {Vogelstein}, J.~T., {Priebe}, C.~E., and {Sussman}, D.~L.}
\newblock Connectome smoothing via low-rank approximations.
\newblock {\em IEEE Transactions on Medical Imaging 38}, 6 (June 2019),
  1446--1456.

\bibitem{tropp2012user}
{\sc Tropp, J.~A.}
\newblock User-friendly tail bounds for sums of random matrices.
\newblock {\em Foundations of computational mathematics 12}, 4 (2012),
  389--434.

\bibitem{van2000asymptotic}
{\sc Van~der Vaart, A.~W.}
\newblock {\em Asymptotic statistics}, vol.~3.
\newblock Cambridge university press, 2000.

\bibitem{vershynin2010introduction}
{\sc Vershynin, R.}
\newblock Introduction to the non-asymptotic analysis of random matrices.
\newblock {\em arXiv preprint arXiv:1011.3027\/} (2010).

\bibitem{wasserman1994social}
{\sc Wasserman, S., and Faust, K.}
\newblock {\em Social network analysis: Methods and applications}, vol.~8.
\newblock Cambridge university press, 1994.

\bibitem{wedin1972perturbation}
{\sc Wedin, P.-{\AA}.}
\newblock Perturbation bounds in connection with singular value decomposition.
\newblock {\em BIT Numerical Mathematics 12}, 1 (1972), 99--111.

\bibitem{xia2019sup}
{\sc Xia, D., and Zhou, F.}
\newblock The sup-norm perturbation of hosvd and low rank tensor denoising.
\newblock {\em J. Mach. Learn. Res. 20\/} (2019), 61--1.

\bibitem{xie2019optimal}
{\sc Xie, F., and Xu, Y.}
\newblock Optimal bayesian estimation for random dot product graphs.
\newblock {\em arXiv preprint arXiv:1904.12070\/} (2019).

\bibitem{xie2019efficient}
{\sc Xie, F., and Xu, Y.}
\newblock Efficient estimation for random dot product graphs via a one-step
  procedure.
\newblock {\em Journal of the American Statistical Association, accepted for
  publication\/} (2021).

\bibitem{yao2015sample}
{\sc Yao, J., Zheng, S., and Bai, Z.}
\newblock {\em Sample covariance matrices and high-dimensional data analysis}.
\newblock Cambridge University Press Cambridge, 2015.

\bibitem{young2007random}
{\sc Young, S.~J., and Scheinerman, E.~R.}
\newblock Random dot product graph models for social networks.
\newblock In {\em International Workshop on Algorithms and Models for the
  Web-Graph\/} (2007), Springer, pp.~138--149.

\bibitem{doi:10.1137/19M1272238}
{\sc Zhang, Y., Levina, E., and Zhu, J.}
\newblock Detecting overlapping communities in networks using spectral methods.
\newblock {\em SIAM Journal on Mathematics of Data Science 2}, 2 (2020),
  265--283.

\bibitem{doi:10.1137/17M1122025}
{\sc Zhong, Y., and Boumal, N.}
\newblock Near-optimal bounds for phase synchronization.
\newblock {\em SIAM Journal on Optimization 28}, 2 (2018), 989--1016.

\end{thebibliography}

\end{document}